\numberwithin{equation}{subsection}
\theoremstyle{plain}
\newtheorem{mythm}{Theorem}[section]
\newtheorem*{mythm*}{Theorem}
\newtheorem{myprop}[mythm]{Proposition}
\newtheorem{mylemma}[mythm]{Lemma}
\newtheorem{mycor}[mythm]{Corollary}
\newtheorem{thma}{Theorem}
\theoremstyle{definition}
\theoremstyle{remark}
\newenvironment{remark}
  {\pushQED{\qed}\remarkx}
  {\popQED\endremarkx}
\Crefname{subsection}{subsection}{subsections}
\Crefname{section}{section}{section}
\crefname{myprop}{Proposition}{Propositions}
\crefname{mylemma}{Lemma}{Lemmas}
\crefname{thma}{Theorem}{Theorems}
\def\R{\mathbb{Q}}
\def\Q2{\mathbb Q_2}
\def\Ga1{\Gamma_1}
\def\tr{\operatorname{tr}}
\DeclareFontFamily{U}{wncy}{}
\DeclareFontShape{U}{wncy}{m}{n}{<->wncyr10}{}
 \DeclareSymbolFont{mcy}{U}{wncy}{m}{n}
\newcommand{\toiso}{\xrightarrow[\raisebox{0.25em}{\smash{\ensuremath{\sim}}}]{}}
\renewcommand{\phi}{\varphi}
\DeclareMathOperator{\Hom}{Hom}
\newcommand{\ZZ}{\mathbb Z}
\newcommand{\QQ}{\mathbb Q}
\newcommand{\FF}{\mathbb F}
\newcommand{\CC}{\mathbb C}
\newcommand{\CCC}{{\mathcal C}}
\renewcommand{\sl}{\mathop{\rm SL}}
\newcommand{\gl}{\mathop{\rm GL}}
\newcommand{\lb}{\llbracket}
\newcommand{\rb}{\rrbracket}
\DeclareMathOperator{\im}{\rm im}
\newcommand{\onto}{\twoheadrightarrow}
\newcommand{\into}{\hookrightarrow}
\mathchardef\myhyphen="2D
\DeclareMathOperator{\End}{\rm End}
\DeclareMathOperator{\frob}{{\rm Frob}}
\DeclareMathOperator{\Frob}{{\rm Frob}}
\DeclareMathOperator{\Gal}{\rm Gal}
\DeclareMathOperator{\gal}{\rm Gal}
\DeclareMathOperator{\lcm}{{\rm lcm}}
\newcommand{\mm}{{\mathfrak m}}
\newcommand{\nn}{{\mathfrak n}}
\newcommand{\dd}{\Delta}
\newcommand{\old}{{\rm old}}
\newcommand{\univ}{{\rm univ}}
\newcommand{\new}{{\rm new}}
\newcommand{\hecke}{{\rm mod}}
\let\tan\relax
\DeclareMathOperator{\tan}{{\rm Tan}}
\DeclareMathOperator{\tandim}{\dim {\rm Tan}}
\newcommand{\rhobar}{{\bar\rho}}
\newcommand{\rhob}{\one}
\newcommand{\red}{{\rm red}}
\newcommand{\pf}{{\rm pf}}
\newcommand{\eps}{\varepsilon}
\newcommand{\charpoly}{\mathcal P}
\newcommand{\RRR}{{\mathcal R}}
\newcommand{\PPP}{{\mathcal P}}
\newcommand{\one}{{1 \oplus 1}}
\newcommand{\levn}{{}}
\newcommand{\protwo}{{\rm pro\myhyphen2}}
\newcommand{\Gprotwo}{G^\protwo}
\newcommand{\pp}{\mathfrak p}
\newcommand{\OO}{\mathcal O}
\newcommand{\bellaiche}{Bella\"iche}
\newcommand{\jb}{\bellaiche}
\newcommand{\thm}{Th\'eor\`eme}
\newcommand{\frat}{\Pi}
\newcommand{\barr}[1]{\mkern 1.5mu\overline{\mkern-1.5mu#1\mkern-1.5mu}\mkern 1.5mu}
\newcommand{\elleng}{{\rm vnew}}
\newcommand{\verynew}{very\ new}
\newenvironment{psmallmatrix}
	{\left( \begin{smallmatrix}}
	{\end{smallmatrix} \right)}
\newcommand{\cmod}[1]{\ \mathrm{mod}\ #1}
\begin{document}
\title{Mod-$2$ Hecke algebras of level $3$ and $5$}
\author{Shaunak V. Deo} 
\email{shaunakdeo@iisc.ac.in}
\address{Department of Mathematics, Indian Institute of Science, Bangalore 560012, India}
\author{Anna Medvedovsky}
\email{medvedov@post.harvard.edu \qquad {\rm \emph{Website:}} \url{https://people.mpim-bonn.mpg.de/medved/}}
\address{Max-Planck-Institut für Mathematik, Vivatsgasse 7, 53113 Bonn, Germany} 
\date{\today}

\subjclass[2020]{11F25; 11F33; 11F80 (primary); {20C08}}
\keywords{Mod-$p$ modular forms; mod-$2$ Hecke algebras; trivial $\rhobar$; deformations of Galois representations; pseudorepresentations; pseudodeformations; Steinberg condition}

\dedicatory{Dedicated to the memory of Jo\"{e}l Bella\"{i}che.}

\begin{abstract}
We use deformation theory to study the big Hecke algebra acting on mod-2 modular forms of prime level $N$ and all weights, especially its local component at the 
trivial representation. For $N = 3, 5$, we prove that the {maximal reduced quotient} of this big Hecke algebra is isomorphic to the maximal reduced quotient of the corresponding universal deformation ring. Then we completely determine the structure of this big Hecke algebra.
We also describe a natural grading on mod-$p$ Hecke algebras. 
\end{abstract}


\maketitle

\setcounter{tocdepth}{1} 
\begingroup
\makeatletter
\parskip\z@skip
\def\l@subsection{\@tocline{2}{0pt}{2.5pc}{5pc}{}}
\tableofcontents
\endgroup

\section{Introduction}

We study the local component at the trivial representation of \emph{big} mod-$2$ Hecke algebras acting on all mod-$2$ modular forms of a fixed level $N$, focusing on $N = 3$ and $N = 5$. To describe the history of the study of mod-$p$ Hecke algebras and our contributions, we introduce some notation, informally at first. 

For an integer $N$ and a prime $p$ not dividing $N$, write $M(N, \FF_p)$ for the space of modular forms modulo~$p$ of level $\Gamma_0(N)$ and any weight in the sense of Serre and Swinnerton-Dyer \cite{SerreBBK, SwDy}. Let $A(N, \FF_p)$ be the shallow Hecke algebra acting on $M(N, \FF_p)$: 
this is the closed 
subalgebra of $\End_{\FF_p}\!\big(M(N, \FF_p)\big)$ topologically generated by the Hecke operators $T_n$ for $n$ not dividing $Np$. 
Then $A(N, \FF_p)$ is a semilocal noetherian ring, which splits as a product of its completions at maximal ideals,
corresponding,
up to $\gal(\bar\FF_p/\FF_p)$-conjugacy, to semisimple Galois representations $\rhobar: G_{\QQ} \to \gl_2(\bar\FF_p)$ arising from eigenforms appearing in $M(N, \bar\FF_p)$.
Passing to a large enough finite extension $\FF/\FF_p$ to resolve the Galois conjugacy, let $A(N, \FF)_\rhobar$ denote the local component of $A(N, \FF_p) \otimes_{\FF_p} \FF$ corresponding to the Galois representation $\rhobar$.

The main object of study here is the complete noetherian local $\FF$-algebra $A(N, \FF)_\rhobar$ in the special case that~$p = 2$, $\rhobar = 1 \oplus 1$, and $N$ is a prime eventually specializing to $3$ or $5$.

\subsection{Historical context} 
The structure of $A(N, \FF)_\rhobar$ was first studied in the late 70s by Jochnowitz, who proved that $A(N, \FF)_\rhobar$ is an infinite-dimensional $\FF$-vector space \cite{J}. In the late 90s, Khare observed that deformation theory {implies that} 
$A(N, \FF)_\rhobar$ is noetherian, 
so that Jochnowitz's result may be reinterpreted in terms of Krull dimension:  $\dim A(N, \FF)_\rhobar \geq 1$
 \cite{K}. 
 There were no further developments until Nicolas and Serre revitalized the 
field of study of mod-$p$ Hecke algebras in~2012;
we survey the subsequent progress.
\begin{itemize}[topsep = -3pt, itemsep = 4pt]
\item[2012] Nicolas and Serre use the Hecke recurrence in characteristic $2$ (see proof of \cref{nilpy} for an example thereof) to show that $A(1, \FF_2)$ is a regular local $\FF_2$-algebra of dimension $2$. More precisely, they prove that $A(1, \FF_2) \cong \FF_2\lb T_3, T_5 \rb$.
\item[2015] Bella\"{i}che and Khare use very different methods --- careful comparison with the characteristic-zero Hecke algebra, known to be big by the Gouv\^ea-Mazur infinite fern {(\cite[Theorem 1]{GM}; see also \cite[Corollary 2.28]{Em})}\label{fin} ---  to show that that for $N = 1$ and $p \geq 5$ the Krull dimension of $A(N, \FF)_\rhobar$ is always at least 2, and that $A(N, \FF)_\rhobar \cong \FF\lb x, y \rb$ whenever $\rhobar$ is deformation-theoretically \emph{unobstructed}~\cite{BK}. 
\item[2017] Deo (first-named author here) generalizes the \jb-Khare result \cite{BK} to all $N \geq 1$, still {under the assumption} $p \geq 5$ \cite{D}. 
\item[2018] Medvedovsky (second-named author) determines the structure of $A(1, \FF_3)$,  developing the \emph{nilpotence method} (\cref{nilpthm} or see \cite{M, Mpaper}), which gives the bound $\dim A(N, \FF)_\rhobar \geq 2$  so long as the genus of $X_0(Np)$ is zero (forthcoming) by comparing the weight filtration on $M(N, \FF)$ with the nilpotence filtration. This method builds on ideas of Bella\"{i}che \cite[appendix]{BK}; like the Nicolas-Serre method it relies on the Hecke recurrence. 

\item[$\geq 2012$] Monsky studies  $A(N, \FF_p)_\rhobar$ and its subquotients for various small $N$ and $p$ by considering recurrences and power series in characteristic $p$ in the spirit of Nicolas-Serre; see \url{https://mathoverflow.net/users/6214/paul-monsky} for many conjectures. In particular, he determines the structure of
$A(3, \FF_2)$ and $A(5, \FF_2)$: see \cite{Mo3alg, Mo5alg, Mo35}. We recover these structure theorems here (see \cref{B}) using completely different methods. 
\end{itemize}

All this progress has left unresolved 
most of the cases for $p = 2, 3$, where the best Krull dimension bound for the mod-$p$ Hecke algebra is still the Jochnowitz-Khare bound $\dim A(N, \FF)_\rhobar \geq 1$,
as well as the many cases where $\rhobar$ is obstructed 
and 
the precise structure of $A(N, \FF)_\rhobar$ is not known. 

The aim of the present paper is to explore how much information one can get about the structure of~$A(N, \FF_2)_{1 \oplus 1}$ 
for $N$ prime by using deformation theory of 
Chenevier pseudorepresentations
combined with the nilpotence method. 
In particular, we prove that the maximal reduced quotients of $A(3,\FF_2)$ and $A(5,\FF_2)$ are isomorphic to those of suitable deformation rings, whose structure we determine explicitly.
We also determine the structure of $A(3, \FF_2)$ and $A(5, \FF_2)$ completely, recovering the structure results of Monsky \cite{Mo35} obtained by entirely different methods. 
In our proof, we use a purely local deformation condition that we call \emph{level-$N$ shape} to restrict {our} attention to {deformations} that look like those coming from modular forms of level $N$ (vs.~a power of $N$). The level-$N$ shape condition may be interpreted as a coarser version of Wake and Wang-Erickson's 
Steinberg condition if it were extended to $p = 2$ and beyond~$k = 2$~\cite{WWE}.
 Finally, we identify a natural grading on a mod-$p$ Hecke algebra by the 2-Frattini quotient of the Galois group, compatible with the grading on the universal constant-determinant deformation ring described, for~$p = 2$ and $\rhobar = \one$, by~Bella\"iche. 

\subsection{Main results} \label{structsec}
We prove a number of structure theorems, of the Hecke algebras $A(3, \FF_2)$ and $A(5, \FF_2)$, and of various related rings, including deformation rings, which we now introduce informally. For precise definitions, see \cref{defnote}. 

Let $N$ be an odd integer and $G_{\QQ, 2N}$  the Galois group of the maximal extension of $\QQ$ unramified outside $2N$. We consider lifts of the trivial representation $\one: G_{\QQ, 2N} \to \gl_2(\FF_2)$ (viewed as a Chenevier \emph{pseudorepresentation}, \cref{psrep}) to local pro-$2$ $\FF_2$-algebras with residue field $\FF_2$ subject to two conditions: the lifts must be \emph{odd} (here: trace of complex conjugation is $0$) and their determinant must be \emph{constant} (here: determinant is $1$). Such lifts are parametrized by a noetherian universal deformation ring $\hat \RRR(N, \FF_2)_\one$.

The $2$-adic Galois representations attached to classical modular forms of level $N$ glue together to form the modular pseudorepresentation of $G_{\QQ, 2N}$ taking values in $A(N, \FF_2)_\one$, so that the universal property of~$\hat \RRR(N, \FF_2)_\one$ gives us a unique local $\ZZ_2$-algebra morphism, a surjection,
\begin{equation}\label{hatphintro} 
\hat \varphi: \hat \RRR(N, \FF_2)_\one \onto A(N, \FF_2)_\one
\end{equation}
inducing the modular pseudorepresentation from the universal one (\cref{pdonA}). 
One does not expect this map to be an isomorphism for general $N$, and certainly not for $N$ prime (see \cref{levelnshapeintro}). 
 Our first result is that for $N = 3, 5$, this map is in fact an isomorphism on reduced quotients. Note that for $N = 3, 5$ the trivial mod-$2$ representation is the only (semisimple) modular one, so that $A(N, \FF_2) = A(N, \FF_2)_\one$ (\cref{loc}). 

\begin{thma}[see \cref{isomcor} and \cref{mainthm}]\label{a1}
For $N = 3$, $5$, the surjection $\hat\RRR(N, \FF_2)^\levn_\one \onto A(N, \FF_2)$ induces an isomorphism $$\hat\RRR(N, \FF_2)_\one^{\red} \toiso A(N, \FF_2)^\red.$$ 
Moreover, there are explicitly describable Hecke operators $X, Y, Z \in A(N, \FF_2)$ with images $\bar X, \bar Y, \bar Z$ in $A(N, \FF_2)^\red$, respectively, and power series $f$ and $g$ in two variables over $\FF_2$ so that the map 
$$\frac{\FF_2\lb x, y, z\rb}{\big(z - f(x, y)\big)\big(x - g(y, z)\big)} 
\longrightarrow
A(N, \FF_2)^\red$$
defined by $x \mapsto \bar X$,  $y \mapsto \bar Y$, and $z \mapsto \bar Z$ is an isomorphism. 
\end{thma}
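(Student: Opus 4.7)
My plan is to prove both halves of the theorem together by producing an explicit presentation of $\hat\RRR(N,\FF_2)_\one^{\red}$ and matching it to a corresponding presentation of $A(N,\FF_2)^{\red}$, using the level-$N$ shape condition on the Galois side to cut out exactly the right object.

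On the deformation side I would start by computing the cotangent space of $\hat\RRR(N,\FF_2)_\one$ via Galois cohomology. For the trivial residual pseudorepresentation at $p=2$, first-order deformations satisfying the oddness and constant-determinant conditions, together with the level-$N$-shape restriction at $N$, are controlled by a quotient of the $2$-Frattini quotient of $G_{\QQ,2N}$. For $N = 3, 5$ this quotient is small and explicit, and three traces of Frobenius at well-chosen auxiliary primes, viewed as Hecke operators $X$, $Y$, $Z$, give generators of the cotangent space. This produces a surjection $\FF_2\lb x, y, z\rb \onto \hat\RRR(N,\FF_2)_\one$.

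Next I would describe $\hat\RRR(N,\FF_2)_\one^{\red}$ as the union of two irreducible $2$-dimensional components: a \emph{reducible locus}, parametrizing pseudorepresentations that split as sums of two characters, and its complement. On the reducible locus the universal trace is $\chi_1 + \chi_2$ for a pair of characters, so one of the three trace generators --- call it $Z$ --- is forced to equal an explicit power series $f(X,Y)$ in the other two. Symmetrically, on the other component one finds $X = g(Y,Z)$. Both components are cut out by primes of height one, so the nilradical is generated by the product $(z - f(x,y))(x - g(y,z))$, which gives the presentation of $\hat\RRR(N,\FF_2)_\one^{\red}$ displayed in the theorem.

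On the Hecke side, the nilpotence method (\cref{nilpthm}) gives $\dim A(N, \FF_2) \geq 2$, available because $X_0(2N)$ has genus zero for $N = 3, 5$. Combined with the surjection \eqref{hatphintro} and the upper bound $\dim \hat\RRR(N,\FF_2)_\one^{\red} \leq 2$ from the presentation above, both rings have Krull dimension exactly $2$, so the induced surjection $\hat\RRR(N,\FF_2)_\one^{\red} \onto A(N,\FF_2)^{\red}$ is an isomorphism as soon as neither minimal prime of the source is killed. I would verify this by exhibiting mod-$2$ modular eigensystems of the two expected shapes. The main obstacle I anticipate is pinning down the explicit power series $f$ and $g$ and verifying $(Z - f(X,Y))(X - g(Y,Z)) = 0$ in $A(N, \FF_2)$; this likely reduces to a direct computation with the characteristic-$2$ Hecke recurrence on low-weight forms of $\G(N)$, and can be cross-checked against Monsky's independent determinations in \cite{Mo35}.
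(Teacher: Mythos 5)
Your overall architecture is off in a way that matters, and it hides the paper's key input. First, the decomposition you propose for $\hat\RRR(N, \FF_2)_\one^{\red}$ is not correct: the two minimal primes are \emph{not} the reducible (Eisenstein-type) locus and its complement. The reducible locus here is a proper closed subscheme of dimension at most one sitting inside the unramified-at-$N$ component, not a component. The actual two components (\cref{minprimes}) are distinguished by ramification at $N$: one is the ``old'' component, the quotient $\RRR(1)_\one \cong A(1) \cong \FF_2\lb x, y\rb$ of pseudodeformations unramified at $N$ (which contains plenty of irreducible points, e.g.\ those coming from $\Delta$), and the other is the ramified, Steinberg-shaped ``very new'' component, identified with $A(N, \FF_2)^\elleng \cong \FF_2\lb y, z\rb$. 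Consequently your mechanism for producing the relations is also wrong: $Z = f(X, Y)$ holds on the old component simply because $A(1)$ is a power series ring in $X, Y$, not because the trace splits as $\chi_1 + \chi_2$; and there is no ``symmetry'' giving $X = g(Y, Z)$ --- that relation comes from $\tau(\frob_N) \mapsto 0$ in $A(N)^\elleng$ together with the structure theorem $A(N)^\elleng \cong \FF_2\lb Y, Z\rb$. A smaller but real error: there is no surjection $\FF_2\lb x, y, z\rb \onto \hat\RRR(N, \FF_2)_\one$; one has $\tandim \hat\RRR(N)_\one = 6$, the level-$N$-shape quotient has tangent dimension $4$ (\cref{tandim}), and indeed $A(N, \FF_2)$ itself has tangent dimension $4$ and is non-reduced --- only the reduced quotients have three cotangent generators.

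Second, the direction of your argument cannot be carried out as stated: you propose to compute $\hat\RRR^{\red}$ directly on the Galois side and then match it to the Hecke algebra, but the Galois-side analysis only yields \emph{upper} bounds (every domain quotient of $\hat\RRR$ has tangent dimension at most $2$, \cref{tanlemma}); it does not by itself produce a second two-dimensional component, and the paper explicitly knows of no way to determine $\RRR^\red$ without the Hecke side. The essential missing ingredient in your plan is the application of the nilpotence method not to $A(N, \FF_2)$ but to the new/very-new quotient: one needs linearly independent forms of slowly growing filtration \emph{killed by} $(U_N+1)^2$ to conclude $\dim A(N)^\elleng \geq 2$ (\cref{nilpy}), which together with $\tandim A(N)^\elleng \leq 2$ (\cref{englem}) gives $A(N)^\elleng \cong \FF_2\lb y, z\rb$ (\cref{Aellengred}). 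Your statement that the nilpotence method gives $\dim A(N, \FF_2) \geq 2$ is both insufficient and unnecessary --- that bound already follows from the surjection onto $A(1) \cong \FF_2\lb x, y\rb$ and says nothing about the ramified component. Finally, ``exhibiting eigensystems of the two expected shapes'' shows the two quotient maps are nontrivial but does not show these are the \emph{only} minimal primes of $\RRR(N)_\one$; that step requires the argument of \cref{minprimes} (the ideal generated by $\tau^{\rm univ}(\frob_N)$ and the nilradical has quotient of tangent dimension $\leq 2$, hence equals the very-new prime), after which the explicit presentation of $A(N)^{\red}$ is assembled as in \cref{mainthm} from the two domain quotients and the UFD observation that the two height-one primes are generated by nonassociate irreducibles.
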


\cref{a1} gives us, in completely explicit terms, the structure of reduced deformation rings ${\hat\RRR(3, \FF_2)_{\one}^\red}$ and ${\hat\RRR(5, \FF_2)_{\one}^\red}$, not previously known. We know of no other way of determining the structure of these rings.
Moreover, \cref{a1} together with the fact that the tangent dimension of $A(N, \FF_2)$ is $4$ (see \cref{gen3,gen5}) immediately implies that $A(N, \FF_2)$ is not reduced, the first explicit examples of nonreduced mod-$p$ Hecke algebras for $\Gamma_0(N)$.\footnote{Examples of nonreduced Hecke algebras for $\Gamma_1(N)$ were previously found by the first-named author in \cite{D}, but the nilpotent elements came from diamond operators.} 
We also give the following refinement of \cref{a1} on the Hecke algebra side.

\begin{thma}[see \cref{Astructurethm}]\label{B}
For $N = 3$ and $5$, there exist Hecke operators $X, Y, Z, W$ in $A(N, \FF_2)$ such that the map 
$$ \frac{\FF_2\lb x, y, z, w\rb}{(xz, xw, (z + w)^2)} 
\longrightarrow
A(N, \FF_2)$$
sending $x \mapsto X$, $y \mapsto Y$, $z \mapsto Z$, and $w \mapsto W$ is an isomorphism. 
\end{thma}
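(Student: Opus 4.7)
The plan is to bootstrap from \cref{a1}: the reduced quotient $A(N,\FF_2)^{\red}$ is already completely determined, so only the nilradical needs new input. After absorbing the power series $f$ and $g$ of \cref{a1} into the coordinates (both lie in the maximal ideal, so this is a legitimate change of variables of $\FF_2\lb x, y, z\rb$), the presentation there simplifies to
$$
A(N, \FF_2)^{\red} \cong \FF_2\lb x, y, z\rb / (xz),
$$
with distinguished Hecke images $\bar X, \bar Y, \bar Z$. I would lift $X, Y, Z$ to Hecke operators in $A(N, \FF_2)$. The tangent-dimension equality $\tandim A(N, \FF_2) = 4$ from \cref{gen3,gen5} then forces the nilradical $\nn \subset A(N, \FF_2)$ to contribute a fourth independent direction, so there must exist a Hecke operator $W$ with $\bar W = \bar Z$ in $A(N, \FF_2)^{\red}$ but with $U := Z + W$ landing in $\nn \setminus \mm^2$. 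I would choose $W$ explicitly as a Hecke operator at a small prime whose trace forces $\bar W = \bar Z$ by the reduced relations while producing a genuinely new nilpotent tangent contribution.

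The second step is to verify the three relations $XZ = 0$, $XW = 0$, and $(Z+W)^2 = 0$ as identities in $A(N, \FF_2)$, not merely in the reduced quotient. Each of these amounts to a statement that an explicit Hecke operator annihilates all of $M(N, \FF_2)$. For $XZ$: the reduced relation $\bar X \bar Z = 0$ places $XZ \in \nn$, and a nilpotence-method calculation in the spirit of \cite{M, Mpaper} (cf.\ \cref{nilpthm}) upgrades this to $XZ = 0$ on the nose by checking that $XZ$ annihilates every form in a topological generating set of $M(N, \FF_2)$. The relation $XU = 0$ is verified analogously, and then $XW = X(Z+U) = XZ + XU = 0$ follows. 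The remaining relation $(Z+W)^2 = U^2 = 0$ is the essential square-zero input invisible to \cref{a1}; it would be verified by a direct $q$-expansion recurrence argument in the Nicolas--Serre style.

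Once these relations are in hand, the assignment $x \mapsto X$, $y \mapsto Y$, $z \mapsto Z$, $w \mapsto W$ induces a well-defined map
$$
\phi \colon B := \FF_2\lb x, y, z, w\rb/(xz,\, xw,\, (z+w)^2) \longrightarrow A(N, \FF_2).
$$
Surjectivity is a standard complete-Nakayama argument, since $X, Y, Z, W$ span $\mm/\mm^2$. For injectivity, I would exploit the structure of $B$: the element $z + w$ squares to zero, and $B/(z + w) \cong \FF_2\lb x, y, z\rb/(xz)$ is reduced, so $(z + w)$ is exactly the nilradical of $B$. Thus $\phi$ induces the isomorphism $B^{\red} \toiso A(N, \FF_2)^{\red}$ of \cref{a1}, and injectivity reduces to injectivity on the principal nilideal $(z+w)B$. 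Using $x(z+w) = xz + xw = 0$ and $(z+w)^2 = 0$, one computes $\ann_B(z+w) = (x, z+w)$, hence $(z+w)B \cong B/(x, z+w) \cong \FF_2\lb y, z\rb$ as a $B$-module. Injectivity of $\phi$ therefore amounts to showing that $(Z+W) \cdot p(Y, Z) \neq 0$ in $A(N, \FF_2)$ for every nonzero $p \in \FF_2\lb y, z\rb$ --- an infinitude-of-nilpotents statement that can be established by another application of the nilpotence method together with explicit weight-filtration control of $U\, Y^i Z^j$.

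The main obstacle is the input from the modular-forms side: the explicit production of $W$, the verification of $(Z+W)^2 = 0$, and the final infinitude statement controlling injectivity on the nilradical. These are authentic Hecke-recurrence computations, and they are the only ingredients not already supplied by \cref{a1} together with formal ring theory.
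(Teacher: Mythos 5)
Your ring-theoretic skeleton is sound --- the analysis of $B = \FF_2\lb x,y,z,w\rb/(xz,xw,(z+w)^2)$, the identification of its nilradical with $(z+w)B \cong \FF_2\lb y,z\rb$, and the reduction of injectivity to faithfulness of $A(N,\FF_2)$ on that nilradical all match what one needs. But the three inputs you defer to ``nilpotence-method calculations'' and ``Nicolas--Serre-style recurrences'' are exactly the substance of the theorem, and the tools you name are not the right ones: the nilpotence method (\cref{nilpthm}) produces lower bounds on Krull dimension via Hilbert--Samuel growth; it cannot upgrade ``$XZ$ lies in the nilradical'' to ``$XZ = 0$,'' nor can it verify that a specific operator annihilates $M(N,\FF_2)$, nor establish that $(Z+W)p(Y,Z)\neq 0$ for every nonzero $p$. (A direct recurrence/$q$-expansion verification is in principle possible --- that is Monsky's route in \cite{Mo35} --- but it is a separate, heavy computation that your proposal does not actually carry out or reduce to checkable statements.) Moreover, your plan to take $W$ to be ``a Hecke operator at a small prime'' is likely unworkable: any single $T_q$ is homogeneous for the $(\ZZ/8\ZZ)^\times$-grading of \cref{gradingthm}, and in the true structure the nilradical of $A(N,\FF_2)$ contains no nonzero homogeneous elements, so $Z+T_q$ cannot be the required nilpotent unless $T_q$ happens to coincide with the specific operator $U_N Z$; there is no reason for that, and no way to see from level-$N$ shallow Hecke operators alone why a relation like $(Z+W)^2=0$ should hold.

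The paper's proof supplies these missing inputs structurally rather than computationally, via the partially full Hecke algebra. One first proves $A(N,\FF_2)^\pf \cong \FF_2\lb y,z,u\rb/(zu^2)$ (\cref{pfprop}); the key points there are the cotangent bound of \cref{pflem}, the oldform quotient $\FF_2\lb U_N',Y\rb$, the fact that $(U_N')^2$ maps all of $M(N)$ into $M(N)^\old$ (\cref{s2killsnew}), and \cref{shaunakveryclever}, which shows every form in $K(N)^\elleng$ lies in the image of $U_N'$ --- this last lemma, combined with faithfulness of $A(N)^\elleng$ on $K(N)^\elleng$ \eqref{avnewfaith}, is what replaces your ``infinitude of nilpotents'' step. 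One then takes $X = F_N = \tau^\hecke(\frob_N)$, $Z = \tau^\hecke(ci)$, and, crucially, $W = U_N Z$, which lies in the shallow algebra because of the Calegari--Specter-type identity $U_N\,\tau(ci) = \tau(ci\,\frob_N) - \tau(c\,\frob_N)$ \eqref{funcp}, itself resting on Atkin--Lehner/Carayol local-global compatibility (\cref{levelelldecomp,fourlemma}). The relations $xz$, $xw$, $(z+w)^2$ then follow inside $A(N)^\pf$ from $F_N = F_NU_N' + (U_N')^2$ \eqref{fun2} and $(U_N')^2Z = 0$, and exactness of the kernel is a formal power-series computation with the embedding $A(N)\subset A(N)^\pf$. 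So the gap in your proposal is concrete: the existence of $W$ with the required relations, and the faithfulness of the nilradical action, cannot be obtained by the methods you cite; they require the $U_N$-identities and the structure of $A(N)^\pf$ (or, alternatively, a full Monsky-style computation that you would have to actually perform).
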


In \cref{B} we have the first explicit structure of a mod-$p$ Hecke algebra that is not a regular local ring. In particular, it is clear that $A(3, \FF_2)$ and $A(5,\FF_2)$ are not Gorenstein.

Additionally, we determine the structure of $A(N, \FF_2)^\new$, the Hecke algebra acting on the space of mod-$2$ newforms in the sense of \cite{deomed}, and the structure of $A(N, \FF_2)^\pf$, the \emph{partially full} Hecke algebra topologically generated by the action of Hecke operator $U_N$ as well as the $T_m$ for $m \nmid 2N$. 
See \cref{pfprop} and \cref{anewstructure}. 

On the deformation side, although we do not prove a structure theorem for $\hat\RRR(N, \FF_2)_\one$ or its level-$N$ quotient described in \cref{levelnshapeintro}, we do prove an $R = \mathbb T$ theorem for $A(N, \FF_2)^\pf$. See \cref{rtpfsec} for the relevant definitions and \cref{rt} for the exact statement.

\subsubsection{The grading on mod-$p$ Hecke algebras}\label{gradingintro}
The full statement of our Hecke algebra structure result (\cref{Astructurethm}) describes a natural $(\ZZ/8\ZZ)^\times$-grading on $A(N, \FF_2)_\one$, one that generalizes to mod-$p$ Hecke algebras and to local components at dihedral $\rhobar$: that is, $\rhobar$ for which $\rhobar \simeq \rhobar \otimes \omega_p^{(p-1)/2}$, where $\omega_p$ is the mod-$p$ cyclotomic character. Let $\Pi_{pN}$ be the $2$-Frattini quotient of $G_{\QQ, Np}$.
\begin{thma}[see \cref{gradingthm}] \label{c} 
The Hecke algebra $A(N, \FF)$ has a $\frat_{p}$-grading $A(N, \FF)= \bigoplus_{i \in \frat_p} A(N, \FF)^i$, natural in the sense that for $m \nmid Np$ we have $T_m \in A(N, \FF)^m$.

\vspace{-3pt}
If $p = 2$ or if $\rhobar: G_{\QQ, Np} \to \gl_2(\FF)$ satisfies $\rhobar \simeq \rhobar \otimes \omega_p^{(p-1)/2}$, then the same is true for $A(N, \FF)_\rhobar$. 
\end{thma}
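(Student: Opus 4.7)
The plan is to induce the $\Pi$-grading, writing $\Pi := \Pi_{pN}$, on the local component $A(N, \FF)_\rhobar$ from a natural $\Pi$-grading on the universal constant-determinant pseudodeformation ring $\hat\RRR(N, \FF)_\rhobar$, via the surjection $\hat\varphi$ defined in the introduction; the grading on the full algebra $A(N, \FF)$ follows by gluing the graded local components (or by a direct analog on the faithful Hecke module described below). The central structural input is that $\Pi$ has exponent two, so $\bar\sigma^{-1} = \bar\sigma$ in $\Pi$ for every $\sigma \in G_{\QQ, pN}$.

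For the grading on $\hat\RRR$, first choose lifts $\sigma_1, \ldots, \sigma_r \in G_{\QQ, pN}$ of an $\FF_2$-basis of $\Pi$. By the structure theory of two-dimensional Chenevier pseudorepresentations with constant determinant, $\hat\RRR(N, \FF)_\rhobar$ is topologically generated over $\FF$ by the universal traces $t^{\univ}(\sigma_i)$. Placing $t^{\univ}(\sigma)$ in degree $\bar\sigma \in \Pi$, the two-dimensional Cayley--Hamilton-type identity
\[
t^{\univ}(\sigma\tau) \;=\; t^{\univ}(\sigma)\, t^{\univ}(\tau) \;-\; d^{\univ}(\sigma)\, t^{\univ}(\sigma^{-1}\tau)
\]
is homogeneous with all three terms in degree $\bar\sigma \bar\tau$, using $\bar\sigma^{-1} = \bar\sigma$ and that the constant determinant $d^{\univ}$ lives in degree zero; the oddness condition $t^{\univ}(c) = 0$ is homogeneous of degree $\bar c$. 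Hence $\hat\RRR(N, \FF)_\rhobar$ inherits a natural $\Pi$-grading.

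To transfer the grading, class field theory identifies $\overline{\Frob_m} = \bar m$ in $\Pi$, and $\hat\varphi\bigl(t^{\univ}(\Frob_m)\bigr) = T_m$, so $T_m$ lies in $A^{\bar m} := \hat\varphi\bigl(\hat\RRR^{\bar m}\bigr)$. The Hecke recurrence $T_{\ell^{k+1}} = T_\ell T_{\ell^k} - \ell\, T_{\ell^{k-1}}$ at primes $\ell \nmid pN$ is itself homogeneous (all three terms have degree $\bar\ell^{k+1} = \bar\ell^{k-1}$), so the $A^g$ are closed under products as required. Consequently $A(N, \FF)_\rhobar = \sum_g A^g$; promoting this to a direct sum is equivalent to showing that $\ker\hat\varphi$ is a $\Pi$-graded ideal of $\hat\RRR$.

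This last step is the main obstacle. My plan is to realize $\ker\hat\varphi$ as the annihilator $\ann_{\hat\RRR}\bigl(M(N, \FF)_\rhobar\bigr)$ via faithfulness of the Hecke action, and to equip $M(N, \FF)_\rhobar$ with a compatible $\Pi$-grading --- the annihilator of a graded module over a graded ring is automatically a graded ideal. The natural candidate decomposes $f = \sum a_n q^n$ as $\bigoplus_g f^g$ with $f^g = \sum_{\bar n = g} a_n q^n$, where $\bar n \in \Pi$ is extended to all $n$ by reading off the coprime-to-$pN$ part of $n$; the Hecke formula $T_m q^n = \sum_{d \mid \gcd(m,n)} d^{k-1} q^{mn/d^2}$ shows $T_m$ is a degree-$\bar m$ operator, once more using $\bar d^2 = 1$. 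The delicate point is to verify that $M(N, \FF)_\rhobar$ is closed under this decomposition, i.e., that each graded piece $f^g$ of a modular form is again modular. For $p$ odd with $\rhobar$ dihedral in the stated sense, the quadratic characters fixing $\rhobar$ give genuine twist automorphisms of $M(N, \FF)_\rhobar$ with $\pm 1$ eigenvalues, and the grading arises as their joint eigenspace decomposition. For $p = 2$, where mod-$2$ twists act trivially on $q$-expansions and cannot project out graded pieces directly, my plan is instead to pass to a characteristic-zero integral lift of the Hecke module on which the twists act genuinely, and to descend the resulting eigenspace decomposition modulo $2$ via flatness over $W(\FF)$.
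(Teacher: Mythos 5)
Your first step --- endowing $\hat\RRR(N,\FF)_\rhobar$ with a $\Pi_{pN}$-grading in which $t^{\univ}(\sigma)$ is homogeneous of degree $\bar\sigma$ --- is not established, and it is doubtful in the generality the theorem requires. Checking that the trace--determinant identity, oddness, and constant determinant are homogeneous does not by itself produce a grading on the universal ring: one still has to show that the sum of the closed spans of the $t^{\univ}(G^i)$ is direct, and this is exactly where Bella\"iche's argument (quoted in the paper only as \cref{bellgrade}, for $p=2$ and $\rhobar = \one$) uses that \emph{all residual traces vanish}, so that $\hat\RRR$ is presented by generators $t^{\univ}(g)$ in degree $\bar g$ subject to homogeneous relations only. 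For the $\rhobar$ covered by the theorem the residual condition $t^{\univ}(g)\equiv \tr\rhobar(g) \pmod{\mm}$ is inhomogeneous whenever $\tr\rhobar(g)\neq 0$ for $\bar g\neq 1$: for $p=2$ the hypothesis on $\rhobar$ is vacuous (so, e.g., irreducible $\rhobar$ with traces in $\FF_4$ is included), and for odd $p$ the dihedral hypothesis only kills $\tr\rhobar$ on the single coset cut out by $\omega_p^{(p-1)/2}$, not on the other nontrivial cosets of your chosen group $\Pi_{pN}$. So the grading of the deformation ring, and hence the transfer along $\hat\varphi$, is a genuine gap. It is also an unnecessary detour: the paper never grades $\hat\RRR$ for these $\rhobar$; once the module of forms is graded, the grading of $A(N,\FF)_\rhobar$ is formal from faithfulness of its action (this is part (2) of \cref{gradingthm}), so your plan funnels into the forms-side statement anyway.

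That forms-side statement is where the real content lies, and your sketch of it has the second gap. Twisting by a quadratic Dirichlet character $\chi$ of $p$-power conductor does \emph{not} act on forms of level $N$: in characteristic zero $f_\chi$ has level $Np^{2r}$. So for odd $p$ your claimed ``twist automorphisms of $M(N,\FF)_\rhobar$,'' and for $p=2$ your ``characteristic-zero integral lift on which the twists act genuinely,'' both require the nontrivial input that mod-$p$ reductions of forms of level $Np^{2r}$ land in $M(N,\FF)$ (Serre/Hatada; this is the second half of the paper's \cref{charsplit}, together with \cref{upvp}). Moreover, for $p=2$, ``descend the eigenspace decomposition via flatness over $W(\FF)$'' is not a valid mechanism: the eigenprojections exist only after inverting $2$, and flatness does not make the splitting descend; what actually saves the argument is that the projections $f\mapsto f_{\chi,\pm}$ are selections of Fourier coefficients, hence carry integral forms to integral forms, after which one reduces mod $2$ and invokes the level-lowering fact above --- precisely the proof of \cref{charsplit}. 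Finally, you should grade $K(N,\FF)_\rhobar=\ker U_p$, on which $A(N,\FF)_\rhobar$ acts faithfully, rather than all of $M(N,\FF)_\rhobar$ with an ad hoc convention for indices divisible by $p$ or $N$; this is how the paper sidesteps the coefficients $a_n$ with $p\mid n$ and keeps the grading compatible with \cref{heckegrade}.
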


In the statement of \cref{c} we've implicitly identified $\Pi_p$, the $2$-Frattini quotient of $G_{\QQ, p}$, with $\ZZ_p^\times/(\ZZ_p^\times)^2$. \cref{gradingthm} itself says more: there is a corresponding $\Pi_{p}$-grading on the space $K(N, \FF)$ of forms killed by $U_p$ making $K(N, \FF)$ into a  graded $A(N, \FF)$-module, and the grading is compatible with the modular pseudorepresentation map~$G_{\QQ, Np} \to A(N, \FF)$, in the sense that the fibers of $G_{\QQ, Np} \onto G_{\QQ, p} \onto \frat_p$ map to the corresponding graded components of $A(N, \FF)$. 
These statements restrict to $A(N, \FF)_\rhobar$ and the $\rhobar$-generalized eigenform subspace $K(N, \FF)_\rhobar$ of $K(N, \FF)$ under the assumptions on $\rhobar$ as above. 

The grading in \cref{c} had been previously discovered in two special cases: $(p, N) = (2, 1)$~\cite{NS1} and~$(p, N) = (3, 1)$~\cite{medved3}. \cref{c} was inspired by a question of Serre, as well as a partial answer from \jb, which we take the opportunity to present below. 

\begin{mythm*}[\jb; see \cref{bellgrade}]\label{bellgrade} For any odd $N$, the universal deformation ring $\hat \RRR(N, \FF_2)_\one$ has a natural $\frat_{2N}$-grading compatible with the universal pseudorepresentation map $G_{\QQ, 2N} \to \hat \RRR(N, \FF_2)_\one$.
\end{mythm*}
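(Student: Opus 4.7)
The plan is to realize the asserted grading as a comodule structure on $\hat\RRR := \hat\RRR(N, \FF_2)_\one$ for the Hopf algebra $\mathcal{S} := \FF_2[\frat_{2N}]$, built via a universal character twist of the universal pseudodeformation. I use the standard correspondence: for a finite abelian group $\frat$, a $\frat$-grading on an $\FF_2$-algebra $R$ is the same as a coassociative counital $\FF_2$-algebra map $R \to R \otimes_{\FF_2} \FF_2[\frat]$.

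Since $\frat_{2N}$ is elementary abelian of exponent $2$ (being the Frattini quotient of a pro-$2$ group), $\mathcal{S}$ is a \emph{local} finite-dimensional $\FF_2$-algebra with residue field $\FF_2$: writing $\frat_{2N} \cong (\ZZ/2\ZZ)^r$ gives $\mathcal{S} \cong \FF_2[y_1, \ldots, y_r]/(y_1^2, \ldots, y_r^2)$ with maximal ideal $(y_1, \ldots, y_r)$. Let $(t^\univ, 1)$ denote the universal constant-determinant pseudodeformation of $\one$ valued in $\hat\RRR$, and let $\chi^\univ \colon G_{\QQ, 2N} \to \mathcal{S}^\times$ be the tautological character $g \mapsto [g]$; by construction $(\chi^\univ)^2 = 1$ and $\chi^\univ \equiv 1 \pmod{\mathfrak{m}_\mathcal{S}}$. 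The twisted pair $(\chi^\univ \cdot t^\univ, 1)$ is then a constant-determinant Chenevier pseudodeformation of $\one$ valued in the complete local pro-$2$ $\FF_2$-algebra $\hat\RRR \otimes_{\FF_2} \mathcal{S}$: the $2$-dimensional Cayley--Hamilton identity $t(g)t(h) = t(gh) + d(g)t(g^{-1}h)$ is preserved under the substitution $(t, d) \mapsto (\chi t, \chi^2 d)$ --- both sides scale by $\chi(g)\chi(h)$ --- and $(\chi^\univ)^2 = 1$ keeps the determinant at $1$. By the universal property of $\hat\RRR$, this twist is induced by a unique local $\FF_2$-algebra map
$$\psi \colon \hat\RRR \longrightarrow \hat\RRR \otimes_{\FF_2} \mathcal{S}, \qquad t^\univ(g) \longmapsto t^\univ(g) \otimes [g].$$

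To finish, I would verify that $\psi$ is a comodule coaction. Coassociativity $(\psi \otimes \mathrm{id}_\mathcal{S}) \circ \psi = (\mathrm{id}_{\hat\RRR} \otimes \Delta) \circ \psi$ follows from the uniqueness half of the universal property: both composites $\hat\RRR \to \hat\RRR \otimes \mathcal{S} \otimes \mathcal{S}$ induce the pseudodeformation obtained by twisting $t^\univ$ by the product $\chi_1^\univ \chi_2^\univ$ of two independent universal characters, so they coincide. The counit axiom $(\mathrm{id}_{\hat\RRR} \otimes \epsilon) \circ \psi = \mathrm{id}_{\hat\RRR}$ is analogous, with the twist by the trivial character giving back $t^\univ$. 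The resulting decomposition $\hat\RRR = \bigoplus_{\gamma \in \frat_{2N}} \hat\RRR^{\gamma}$ then satisfies $t^\univ(g) \in \hat\RRR^{[g]}$ directly from the defining formula of $\psi$, which is exactly the compatibility of the grading with the universal pseudorepresentation. The one subtle point to pin down is the ambient framework: one must confirm that $\hat\RRR \otimes_{\FF_2} \mathcal{S}$ and its iterated tensor powers lie in the category in which $\hat\RRR$ is universal (complete noetherian local pro-$2$ $\FF_2$-algebras with residue field $\FF_2$), so that the universal property genuinely supplies both existence and uniqueness of $\psi$ and its iterates. Since $\mathcal{S}$ is finite local artinian with residue field $\FF_2$, this is routine, but it is the only place where one must be careful with the formalism.
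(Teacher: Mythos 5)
Your argument is correct, and it takes a genuinely different route from the one sketched in the paper. The paper's (Bella\"iche's) proof works internally: for each coset $G_{\QQ,2N}^i$ it defines $B^i \subseteq \hat\RRR(N,\FF_2)_\one$ as the closed $\FF_2$-span of $\hat\tau^\univ(G_{\QQ,2N}^i)$ (together with $1$ when $i=1$), observes that the trace--determinant identity $\hat\tau^\univ(g)\hat\tau^\univ(h)=\hat\tau^\univ(gh)+\hat\tau^\univ(g^{-1}h)$ is homogeneous because $\frat_{2N}$ has exponent $2$, and then invokes universality to identify $\bigoplus_i B^i$ with the whole ring --- the delicate point there being the directness of the sum. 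You instead encode the grading as an $\FF_2[\frat_{2N}]$-comodule structure, produced by twisting $\hat\tau^\univ$ by the tautological character into the local artinian algebra $\FF_2[\frat_{2N}]$ and applying the universal property to get the coaction $\psi$, with coassociativity and counit following from uniqueness; this is exactly the right substitute for the eigenspace decomposition that is unavailable over $\FF_2$ (quadratic characters into $\FF_2^\times$ are trivial), and it gives the directness of the decomposition for free from the comodule formalism. Your checks are the right ones: the twist preserves the Chenevier axioms, constant determinant, oddness, and the residual pseudorepresentation since $\chi^\univ\equiv 1$ modulo the maximal ideal of $\FF_2[\frat_{2N}]$, and $\hat\RRR(N,\FF_2)_\one\otimes_{\FF_2}\FF_2[\frat_{2N}]$ is a finite free module over $\hat\RRR(N,\FF_2)_\one$, hence again a profinite local $\FF_2$-algebra with residue field $\FF_2$, so it lies in the category where universality applies; the graded pieces are closed because they are equalizers of continuous maps. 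What the paper's construction buys, and what you should add if you want to use your grading later in the paper (e.g.\ in the proof that the level-$N$-shape ideal $\hat J_N$ is graded), is the explicit description of the component $\hat\RRR(N,\FF_2)_\one^i$ as the closed span of $\hat\tau^\univ(G_{\QQ,2N}^i)$: in your setup this follows by noting that $\psi$ places each such span inside the $i$-component, while the sum of these spans is a closed subalgebra containing the topological generators $\hat\tau^\univ(G_{\QQ,2N})$ and hence equals the whole ring, forcing equality componentwise.
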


\subsection{Overview of proofs} 

We give a rough outline of the proofs of \cref{a1,B}. The proof of \cref{a1} has a spiral nature that may of be independent interest --- we move several times between the deformation side and the Hecke side to achieve our results. The proof of \cref{c} is more straightforward; see \cref{gradingsec}. 

For the proof of \cref{a1}, we first construct the universal deformation ring $\RRR(N, \FF_2)_\one$ subject to our local level-$N$-shape condition. For more on this condition, see \cref{levelnshapeintro} below or \cref{steinberg}. 
We use a computation of Chenevier to prove a result about the dimension of the tangent {space} of this restricted deformation ring $\RRR(N, \FF_2)_\one$ (\cref{tandim}). 
{Using true representations}, we get a bound on the dimension of the tangent space of quotients of $\RRR(N, \FF_2)_\one$ by prime ideals (\cref{tanlemma}).
Then we use this information to prove results about quotients of the Hecke algebra.
 In particular, we use the nilpotence method to determine the structure of $A(N, \FF_2)^\elleng$, the Hecke algebra acting on the space of forms killed by~$U_N + 1$, which we call the \emph{very new} modular forms because the newforms are those killed by $U_N^2 - 1 = (U_N + 1)^2$ (\cref{Avnewstruct}). Because $A(N, \FF_2)^\elleng$ has dimension $2$, we are able to conclude 
 that the minimal prime ideals of $\RRR(N, \FF_2)_\one$ are preimages of minimal primes of $A(N, \FF_2)$, so that the two rings have the same reduced quotient (\cref{minprimes}). Separately, we determine the structure of $A(N, \FF_2)^\red$ (\cref{mainthm}), completing the proof of \cref{a1}; in particular, the two integral-domain quotients of $A(N, \FF_2)^\red$, visible in the statement of \cref{a1}, are the old Hecke algebra $A(1, \FF_2)$ and very new Hecke algebra $A(N, \FF_2)^\elleng$.

The proof of \cref{B} is quite involved in its own way. We first determine the structure of the {partially full} Hecke algebra (\cref{pfprop}).
 Then we use the partially full Hecke algebra, the two integral domain quotients found in the proof of \cref{a1}, and the precise description of the cotangent space $A(N, \FF_2)$ to find the structure of $A(N, \FF_2)$ (\cref{Astructurethm}).

\subsection{The level-$N$-shape deformation condition}\label{levelnshapeintro}
A key tool in the proof of \cref{a1} is our purely local \emph{level-$N$-shape} deformation condition, which may be defined for any prime $p$ and prime level $N$. The condition is simple to describe: for a pseudorepresentation lifting $\rhobar:G_{\QQ, Np} \to \gl_2(\FF)$ we ask that the restriction to the decomposition group at $N$ contains the inertia subgroup in the kernel (\cref{steinberg}).  This captures the property of a representation with Artin conductor dividing~$N$, such as those contributing to~$A(N, \FF)_\rhobar$, rather than those whose Artin conductor is a power of~$N$, which may appear in $A(N^2, \FF)_\rhobar$.

In our setting, the {level-$N$-shape} deformation condition defines the universal level-$N$-shape deformation ring~$\RRR(N, \FF_2)_\one$, a nontrivial quotient of $\hat \RRR(N, \FF_2)_\one$ by a nilpotent ideal (\cref{redlemma}). The universality induces the surjection
$$\phi: \RRR(N, \FF_2)_\one \onto A(N, \FF_2)_\one$$
factoring $\hat \phi$ from \eqref{hatphintro}. It is this map rather than $\hat \phi$ that we use in the proof of \cref{a1}. 

We now compare our level-$N$-shape condition to both the \emph{unramified-or-$\pm$-Steinberg} at-$N$ condition appearing in the work of Wake and Wang-Erickson \cite{WWE} and the \emph{ordinary-at-$N$} condition from the work of Calegari--Specter \cite[source file (!) on arxiv]{CalegariSpecter}. For this it is helpful to recall that a constant-determinant pseudorepresentation lifting $\rhobar: G_{\QQ, Np} \to \gl_2(\FF)$ to a pro-$p$ local $\FF$-algebra $B$ with residue field $\FF$ is described by a function $t: G_{\QQ, Np} \to B$ lifting $\tr \rhobar$. {In particular, $t$ is \emph{central}: $t(gh) = t(hg)$ for all $g, h \in G_{\QQ, Np}$.} For more on $t$, 
see \cref{psrep}. Also let $D_N \subseteq G_{\QQ, Np}$ be {a decomposition subgroup at $N$}, $\frob_N \in D_N$ any Frobenius element, and $I_N \subseteq D_N$ the {inertia-at-$N$ subgroup of~$D_N$}. The pseudorepresentation $t$ satisfies the level-$N$-shape condition if 
\begin{equation}
\label{levelNintro} 
t(di) = t(d)\qquad \mbox{for $i \in I_N$, $d \in D_N$}.
\end{equation}
The Wake--Wang-Erickson condition is stated for residually multiplicity-free $p$-adic pseudorepresentations with $p \geq 5$ in fixed weight $2$, and expressed in terms of \emph{generalized matrix algebras} (GMAs), introduced earlier by Bellaïche and Chenevier \cite[Chapter 1]{BC}. It is possible to obtain an equivalent formulation of the Wake--Wang-Erickson GMA relation as a pseudorepresentation statement involving all the elements of~$G_{\QQ,Np}$ by using the notion of the kernel of a pseudorepresentation (of an algebra; see \cite[\S2.1.2]{BellaicheImages}). It is also straightforward to generalize their GMA relation to weight $k$.  With these adjustments, we might expect that Wake and Wang-Erickson 
would 
call 
a weight-$k$ pseudorepresentation $t: G_{\QQ,Np} \to B$ \emph{unramified-or-$\pm$Steinberg at $N$}
if the following conditions hold for all~$g \in G_{\QQ, Np}$ and $i \in I_N$:
\begin{align}
\label{wwe} t(g \Frob_N i) - t(g \frob_N) &= \mp N^{\frac{k}{2}} \big(t(gi) -  t(g) \big)\qquad \mbox{and}\\
\label{wwe2} t(g i\Frob_N ) - t(g \frob_N) &= \mp N^{\frac{k-2}{2}} \big(t(gi) -  t(g) \big).
\end{align}
Compare to \cite[Definitions~3.4.1 and 3.8.1]{WWE}. 
If $t$ is an unramified-or-$\pm$Steinberg at $N$ pseudorepresentation of weight~$k$, then \cite[Lemma~3.4.4]{WWE} implies that~$t(i)=2$ for all $i \in I_N$.
In particular, taking $g=1$ in~\eqref{wwe} above recovers our local level-$N$-shape condition.
Conversely, the level-$N$ shape condition implies \eqref{wwe} and \eqref{wwe2} for~$g$ in the local Galois group~$D_N$, but not more generally for~$g$ in $G_{\QQ, Np}$. This captures a key distinction between ours and the Wake--Wang-Erickson condition: our condition \eqref{levelNintro} is entirely \emph{local}, whereas the Wake--Wang-Erickson condition \cite[Definition 3.8.1]{WWE} is \emph{global} --- as reflected, necessarily, in our translation-cum-generalization in \eqref{wwe}--\eqref{wwe2}. Let us briefly dwell on this confusing point: Definition~3.8.1 in~\cite{WWE}, which relies on the purely local Definition 3.4.1 of loc.~cit., requires elements in the group algebra of a local Galois group to map to zero in a ``Cayley-Hamilton" (close to ``faithful") GMA carrying a representation of a  global Galois group. It is therefore fundamentally global in nature.

The Calegari-Specter ordinary-at-$N$ condition from \cite{CalegariSpecter} is similar to Wake and Wang-Erickson's. Calegari and Specter enhance the deformation ring with a new variable $U$ meant to capture the behavior of $U_N$. Their condition holds if $U$, in addition to satisfying the characteristic polynomial of $\frob_N$ (see \eqref{charpoly}), satisfies the following: for every $g \in G_{\QQ, Np}$ and every $i \in I_N$, 
\begin{equation}\label{CScond}
{t(g i \frob_N) - t(g \frob_N) =  \big(t(gi) - t(g) \big) U = 0.}
\end{equation}
As one can see, the difference between {\eqref{wwe2}} and \eqref{CScond} is the substitution of $U$ for {$\pm N^{k/2-1}$},
as one expects from the action of $U_N$ on newforms. Depending on the context, one may also expect to add a second condition here analogous to \eqref{wwe}.
We use an identity analogous to \eqref{CScond} for $g = c$ (see \eqref{funcp})
to describe the precise structure of $A(N, \FF_2)_\one$ for $N = 3, 5$ (\cref{Astructurethm}). And we use \eqref{CScond} and the related condition on the other side to obtain an $R = \mathbb T$ theorem for $A(N, \FF_2)^\pf$ (\cref{rt}).

\subsection{Comparison with Monsky's results} \label{monskysec}
Finally we briefly describe the recent results of Paul Monsky, which inspired and catalyzed both  \cite{deomed} and the present work, and relate them to ours. 
In \cite{Mo3alg}, motivated by recent work of Nicolas and Serre on the mod-$2$ level-$1$ Hecke algebra \cite{NS1,NS2}, 
Monsky was able to determined that the Hecke algebra acting on a certain subquotient of the space $M(3, \FF_2)$ of mod-$2$ level-$3$ modular forms, which he argued ought to be viewed as the newforms in this setting, is isomorphic to $\FF_2 \lb T_{7}, T_{13}, \eps\rb/(\eps^2)$. In~\cite{Mo5alg} he proved similar results in level $5$. Monsky's structure results were obtained by explicit computations with mod-$2$ power series and Nicolas--Serre--style Hecke recurrences; though beautiful and satisfying, they did not appear to be amenable to generalization. Our curiosity piqued, we sought to reinterpret Monsky's results in a more conceptual way. This required several steps. First, in \cite{deomed} we defined a space $M(N, \FF_p)^\new$ of newforms mod $p$ for any prime-to-$p$ level $N$, and proved that Monsky's level-$3$ Hecke algebra from \cite{Mo3alg} is isomorphic to the Hecke algebra acting on $M(3, \FF_2)^\new$. Next, we used deformation theory, commutative algebra, and a dash of the nilpotence method to arrive at \cref{a1}, a structure theorem for $A(N, \FF_2)^\red$. We also conjectured the statement of \cref{B}. After mutually beneficial discussions with Monsky, both he and we were able to sharpen our results using our independent methods: Monsky's in \cite{Mo35} and ours as \cref{B} here. 

\subsection*{Acknowledgements} The authors thank Jo\"{e}l Bella\"{i}che for introducing them to this satisfyingly explicit subject. They also thank Paul Monsky, who initially obtained closely related results on the Hecke algebra side without  deformation theory; his investigations motivated the present one. Subsequent communication in both directions allowed both Monsky and the present authors to sharpen results, still using completely different methods --- to the full \cref{B}. It's a rare and wonderful thing to have immediate independent verification of one's theorems! 
Finally, we thank the anonymous referee for a careful reading of the article.

Shaunak Deo was partially supported in this work by the Fonds National de Recherche Luxembourg \mbox{INTER} /ANR/18/12589973 in the project ``Galois representations, automorphic forms and their $L$-functions~(GALF)"; 
by a Young Investigator Award from the Infosys Foundation, Bangalore; and by the DST FIST program 2021 (TPN-700661). 
Anna Medvedovsky was partially supported by an NSF postdoctoral fellowship (DMS\nobreakdash-1703834). Both authors thank the Max Planck Institute for Mathematics and University of Luxembourg for hosting them during part of this collaboration and supporting opportunities for the authors to visit each other. Anna Medvedovsky also thanks the A Room of One's Own initiative\footnote{\url{https://services.math.duke.edu/~pierce/AROOO_2020.shtml}} for support for focused research time, as well as her husband and her nanny for months of childcare support. Finally, the second author is thankful to the first author for his considerable patience: 
much of the technical work on this project was completed by 2016, but life and other projects intervened, as they do, delaying the manuscript.

\section{Definitions and notation}\label{defnote}

In this section we review definitions, with references where appropriate, and introduce notation used in the rest of the article. The expert reader should check in with our notation for the universal pseudodeformation ring in \cref{rdef}, glance at \cref{nilpsec,fricke}, and otherwise skip to \cref{gradingsec}. 

\subsection{Preliminaries}
\subsubsection{Finite fields} We work with a prime $p$, and will write $\FF$ for a finite extension of $\FF_p$. 

\subsubsection{Rings} All rings are assumed to be commutative with identity. For a ring $B$, let $B^\red$ be its maximal reduced quotient, the quotient of $B$ by its nilradical, the intersection of its prime ideals. 

If $B$ is a local pro-$p$ ring with maximal ideal $\mm$ and residue field $\FF$, its (reduced) tangent space is the $\FF$-vector space $\tan B := \Hom(\mm/(\mm^2 + p), \FF)$. 
If $B$ is noetherian, then its local topology agrees with its profinite topology \cite[Proposition~2.4]{deSmitLenstra}, so it's better known as a complete local noetherian ring with finite residue field of characteristic $p$. 
Moreover, in this case $\tan B$ is finite, and its dimension $d$ as an $\FF$-vector space is the same as the dimension of the dual (reduced) cotangent space $\mm/(\mm^2 +p)$, so that
$B$ is a quotient of $W(\FF) \lb x_1, \ldots, x_d\rb$, where $W(\FF) = \ZZ_{p^n}$ is the ring of Witt vectors of $\FF = \FF_{p^n}$. Finally, $\dim \tan B +1 \geq \dim B$, where $\dim B$ is the Krull dimension of~$B$. 

\subsection{Galois groups}\label{frattini} 
For any number field $K$, we write $G_K = \gal(\bar\QQ/K)$ for the absolute Galois group of~$K$. If $N$ is any integer, write $\QQ_{\{N\}}$ for the maximal extension of $\QQ$ unramified outside the primes dividing~$N$ and $\infty$. Let $G_{\QQ, N} = \gal(\QQ_{\{N\}} / \QQ)$. 

Let $\chi_p: G_\QQ \to \ZZ_p^\times$ denote the $p$-adic cyclotomic character, normalized so that $\chi_p(\frob_\ell) = \ell$ for prime $\ell \neq p$. Let $\omega_p: G_\QQ \to \ZZ_p^\times$ be the mod-$p$ cyclotomic character. Both factor through $G_{\QQ, Np}$ for any integer $N$.

If $G$ is any quotient of $G_\QQ$, we write $c$ for any complex conjugation in $G$. For any prime $\ell$, let $D_{\QQ_\ell} \subset G_\QQ$ be a decomposition group at $\ell$ and $I_{\QQ_\ell}$ be the inertia subgroup of $D_{\QQ_\ell}$.
Then write $D_\ell(G)$, or simply $D_\ell$ if~$G$ is clear, for the image of $D_{\QQ_{\ell}}$ in $G$. Note that both $D_{\QQ_\ell}$ and $D_\ell$ are only well defined up to conjugacy.  Once $D_\ell(G)$ is fixed, we write $I_\ell(G)$, or simply $I_\ell$, for the inertia subgroup of $D_\ell(G)$: it is the image of $I_{\QQ_\ell}$ in $G$, a closed normal subgroup of $D_\ell(G)$ with procyclic abelian quotient. 
We write $\frob_\ell$ for any Frobenius element of $D_\ell(G)$. If $I_\ell(G)$ is trivial, then $\frob_\ell$ is well defined up to conjugacy; otherwise, it is an arbitrary element in an $I_\ell(G)$-coset of $D_\ell(G)$, with the whole setup only defined up to conjugacy. 

If $G$ is any quotient of $G_\QQ$, or more generally any profinite group, write $G^{\protwo}$ for the maximal continuous pro-$2$ quotient of $G$. Also write $G^2$ for the closed subgroup of $G$ generated by the squares of elements of~$G$. This is a closed normal subgroup containing the commutators $[G, G]$ (see, for example, \cite[footnote before Lemma 5.3]{C}), and the quotient $G/G^2 = G^{\protwo}/(G^\protwo)^2$ is the \emph{$2$-Frattini quotient} of $G$, its maximal continuous elementary $2$-group quotient. The basic theorem of Frattini theory for $p = 2$ is that generators of $G/G^2$ lift to generators of $G^\protwo$.\label{frat}

\subsection{The space of modular forms of level $N$ and all weights}\label{modformspace}
Fix a level $N$. For an even weight $k \geq 0$, let $M_k(N, \ZZ)$ be the space of modular forms of level $\Gamma_0(N)$ and weight $k$ whose Fourier expansion at the cusp at infinity has rational integer coefficients. We view $M_k(N, \ZZ)$ as a subspace of $\ZZ\lb q \rb$ using the $q$-expansion principle. For any 
ring
$B$, 
let $M_k(N, B) := M_k(N, \ZZ) \otimes_\ZZ B \subset B\lb q \rb$, 
{the space of modular forms of level $\Gamma_0(N)$ and weight $k$ defined over $B$}. Note that for $B \subseteq \CC$, this definition coincides with the usual notion of modular forms with 
Fourier coefficients in $B$ 
viewed as $q$-expansions. Also define $M_{\leq k}(N, B) := \sum_{k' \leq k} M_{k'}(N, B)$ and $M(N, B) : = \sum_{k \geq 0} M_k(N, B) \subset B\lb q \rb$: this is the space of all modular forms of level $\Gamma_0(N)$ defined over~$B$. For a form $f \in M(N, B)$, write $a_n(f)$ for the $n^{\rm th}$ Fourier coefficient of~$f$, so that~$f = \sum_{n \geq 0} a_n(f) q^n$. 

From now on we assume that \fbox{$p$ does not divide $N$}. For $B = \FF_p$, we have defined $M(N, \FF_p) \subset \FF_p \lb q \rb$, the space of all mod-$p$ modular forms of level $\Gamma_0(N)$. This is the space of mod-$p$ modular forms as studied by Serre \cite{SerreBBK} and Swinnerton-Dyer \cite{SwDy} in level $1$.

\subsection{The weight filtration on mod-$p$ modular forms}\label{weightsec}
In characteristic zero, spaces of $q$-expansions of modular forms are graded by their weight: if $B$ is a subring of $\CC$, then $$\qquad\qquad M(N, B) = \bigoplus_{k \geq 0} M_k(N,B) \qquad\qquad \mbox{\cite[Lemma 2.1.1]{miyake}}.$$ 
In characteristic $p$, this is not the case, and the weight grading is replaced by the weight filtration. For $p \geq 5$, let $E_{p-1}$ be the level-$1$ weight-$(p-1)$ Eisenstein series 
with $q$-expansion in $1 + p \ZZ_{(p)}\lb q \rb$. Multiplication by~$E_{p-1}$ induces Hecke-equivariant embeddings $M_k(N, \FF_p) \into M_{k + p-1}(N, \FF_p)$ for each even $k \geq 0$. In fact this is the only kind of weight ambiguity: if we define, for $i \in 2\ZZ/(p-1)\ZZ$, 
\begin{equation}\label{weightgrade}M(N, \FF_p)^i := \bigcup_{k \equiv i \cmod{p-1}} 
M(N, \FF_p), \quad \mbox{ then } \quad M(N, \FF_p)=\bigoplus_{i \in 2\ZZ/(p-1)\ZZ} M(N, \FF_p)^i.
\end{equation}
See \cite[Theorem 2.2]{katzhigher} or \cite[Theorem 2(iv)]{SwDy} for $N = 1$.
To resolve the weight ambiguity we define for~$f \in M(N, \FF_p)^i$ its \emph{weight filtration} $$w(f) := \min\{k : f \in M_k(N, \FF_p)\}.$$ 
An important property of the weight filtration for our purposes is its compatibility with powers: for any~$n \geq 0$, we have $w(f^n) = n w(f)$ \cite[proof of Fact 1.7]{JochCong}.

For $p = 2, 3$, the story is a little more subtle. We may still define the na\"ive weight filtration in the same way, i.e. $w(f) := \min\{k : f \in M_k(N, \FF_p)\},$ but with this definition properties the compatibility with powers need not be satisfied. For example, the form $f_3 \in M(3, \FF_2)$ defined in \cref{lemma:polyalg} has $w(f_3) = 4$ but $w(f_3^2) = 6$. More dramatically, the form $f_5 \in M(5, \FF_2)$ \emph{loc.~cit.} has $w(f_5) = w(f_5^2) = 4$. The difficulty arises because the Hasse invariant, a geometric mod-$p$ modular form that controls the weight filtration, does not always lift to a characteristic-zero $\Gamma_0(N)$ form in weight $p-1$ for $p = 2, 3$. The Hasse invariant has $q$-expansion $1$, and simple zeros at the supersingular points of $X_0(N)_{\FF_p}$ and nowhere else; $E_{p-1}$ is a lift for $p \geq 5$. A form in $M_k(N, \FF_p)$ with a zero of minimal order $s$ at each supersingular point is divisible by $s$ copies of Hasse, and so comes from a form of lower filtration $k - s(p-1)$. {See \cite[\S 1.6--1.8]{Calegari} for more details.}

However, the Hasse invariant does always lift to a form of weight $p - 1$ for $\Gamma_1(N)$ for $N > 1$. 
For $p = 3$, see~\cite[\S 2.1]{katzpadic} for $N > 2$; for $N = 2$, take the  $\Gamma_0(N)$ weight-$2$ Eisenstein series $E_{2, N}$. For $p = 2$, 
see \href{https://mathoverflow.net/a/228596}{user Electric Penguin's answer} to \href{https://mathoverflow.net/questions/228497}{MathOverflow question 228497} or \cite[Appendix B]{meier}.  Therefore we may resolve all our difficulties with the weight filtration by replacing $w(f)$ with the weight filtration coming from $\Gamma_1(N)$. For any  $f \in M_k(N, \FF)$ set
$$w_1(f) := \min\big\{k' : \mbox{$f$ is the reduction of a $q$-expansion of a form in $M_{k'}(\Gamma_1(N), \bar\ZZ_p)$}\big\}.$$
{Then $w_1(f)$ is an integer satisfying $0 \leq w_1(f) \leq k$ and $w_1(f) \equiv k \pmod{p-1}$.}
Because $w_1(f)$ is defined geometrically
 --- $\frac{k - w_1(f)}{p-1}$ is the minimal order of a zero of $f$ at any supersingular point of $X_0(N)_{\FF_p}$ --- 
 this definition {resolves the problems} with the na\"ive filtration $w(f)$.  In particular, 
\begin{align}
\label{weightfact} &w_1(f^n) = n w(f) && \hspace{-2cm} 
\mbox{for all $n\geq 0$;}\\
\label{filt} & {w_1(f) \leq w(f) \leq w_1(f) + 3} &&  \hspace{-2cm}
\mbox{(and $w_1(f) = w(f)$ if $p \geq 5$).}
\end{align}
Here \eqref{filt} holds because for $p = 2, 3$ the weight-$4$ and level-$1$ Eisenstein form $E_4$, normalized so $a_0(E_4) = 1$, has mod-$p$ $q$-expansion $1$; by the $q$-expansion principle it is the fourth power (for $p = 2$) or the square (for~$p = 3$) of the Hasse invariant.

In our bad examples from \cref{lemma:polyalg}, $f_3$ and $f_5$ are reductions of semicuspidal\footnote{A \emph{semicuspidal} form vanishes at infinity but is nonzero at at least one other cusp, so its $q$-expansion at $\infty$ ``looks" cuspidal without it being a cuspform.} forms appearing with nontrivial quadratic nebentype in weight~$3$ and weight $2$, respectively, so that their squares appear in their ``true" $\Gamma_0(N)$ weight while they themselves do not. In other words, for $N = 3$ we have~$w_1(f_3) = 3$ and $w_1(f_3^2) = w(f_3^2) = 6$; for $N = 5$, $w_1(f_5)=2$ and $w_1(f_5^2) = w(f_5^2) = 4$. 

\subsection{Hecke operators on mod-$p$ modular forms}\label{heckeopssec}
The spaces $M_k(N, \FF)$ carry actions of Hecke operators inherited from the action on $M_k(N, \ZZ)$. More precisely, for prime $\ell \nmid Np$, the action of the Hecke operator~$T_\ell$ is defined on the $q$-expansion of a form $f \in M_k(N, \FF)$ by 
$$a_m(T_\ell f) = a_{m\ell}(f) + \ell^{k-1} a_{m/\ell}(f),$$
where $a_{m/\ell}(f)$ is understood to be $0$ if $\ell \nmid n$. We extend this to prime powers $\ell^r$ via the recurrence $T_{\ell^{r}} = T_{\ell} T_{\ell^{r-1}} - \ell^{k-1} T_{\ell^{r-1}}$ for $r \geq 2$, and to general $n$ with $\gcd(n, Np) = 1$ multiplicatively via $T_{nn'} = T_{n} T_{n'}$ provided $\gcd(n,n') = 1$. Since $\ell^{k-1}$ is well defined in $\FF$ for $k$ in a $(p-1)$-congruence class, the action of $T_n$ for $n \nmid Np$ on $M_k(N, \FF)$ extends to an action on all of $M(N, \FF)$ by \eqref{weightgrade}.

Also inherited from characteristic-zero is the action on $M_k(N, \FF)$ and $M(N, \FF)$ of the Atkin-Lehner operators~$U_n$ for any $n \mid N$, defined on $q$-expansions by $a_m(U_n f) = a_{mn}(f)$.

Finally, the reduction of the operator $T_p$ on $M(N, \ZZ)$ coincides modulo $p$   
with the Atkin-Lehner operator~$U_p$, at least for $k \geq 2$. From now on we use $U_p$ in place of $T_p$ (including for $k = 0$) for the at-$p$ Hecke action on~$M(N, \FF)$.
 The kernel $K(N, \FF)$ of $U_p$
\begin{equation}\label{knf} 
K(N, \FF) = \{f \in M(N, \FF): a_n(f) = 0 \mbox{ if $p \mid n$}\}
\end{equation}
 is a key subspace of $M(N, \FF_p)$ in the sequel. Write $K_k(N, \FF)$ for $M_k(N, \FF) \cap K(N, \FF)$. 
 
All of the Hecke operators --- $T_n$ for $(n, Np) = 1$, $U_n$ for $n \mid N$, and $U_p$ --- commute.

\subsection{The $\theta$ operator and Verschiebung} \label{verthetasec}
The operator $U_p$ has a right inverse, the \emph{Verschiebung} operator~$V_p$, sending $f = \sum_n a_n q^n \in M(N, \FF)$ to $V_p f = \sum_n a_{n} q^{np}$. If $\FF = \FF_p$, then Verschiebung coincides with the $p^{\rm th}$ power map, so that $V_p f = f^p$. More generally, $V_p f$ is a $\Gal(\FF/\FF_p)$-conjugate of $f^p$. It follows from~\eqref{weightfact} that 
\begin{equation}\label{ver}
w_1(V_p f) = p\, w_1(f). 
\end{equation}

An important operator on $M(N, \FF)$ is the derivation $\theta = q \frac{d}{dq}$, constructed for $p \geq 5$ in \cite{SwDy} and for $p \geq 2$ in~\cite{katz}. The operator $\theta$ takes $f = \sum_n {a_n q^n}$
 to
  $\theta(f) = \sum_n n a_n q^n$.
 One may verify the following facts (see~\mbox{\cite[\S II Theorem(2),(3); Corollary(6)]{katz}} for \eqref{kertheta} and \eqref{wtheta} below):
  \begin{align}
  \nonumber \theta^{p-1}&=1 - V_p U_p \mbox{ is a projector onto $K(N, \FF)$;}\\
 \label{imtheta} \im \theta &= K(N, \FF);\\
\label{kertheta} \ker \theta &= \im V_p = \im \mbox{($p^{\rm th}$ power map)};\\
 \label{wtheta} w_1(\theta f) &\leq w_1(f) + p + 1\mbox{, with equality if and only if $p \nmid w_1(f)$.}
  \end{align}    

\subsection{The Hecke algebra on mod-$p$ modular forms}\label{heckealgmodpsec}
We denote by $A_{\leq k}(N, \FF_p)$ the $\FF_p$-subalgebra of $\End_{\FF_p}\big(M_{\leq k}(N, \FF_p)\big)$ generated by the the action of the 
Hecke operators~$T_n$ with $\gcd(n, Np) = 1$ as in \cref{heckeopssec}. Since the actions are compatible with restriction maps, we set $$A(N, \FF_p):=\varprojlim_k A_{\leq k}(N, \FF_p):$$ this is the (\emph{shallow}) Hecke algebra acting on {the space of} modular forms of level $\Gamma_0(N)$ modulo~$p$. Equivalently, considering $M(N,\FF_p)$ with the discrete topology, and $\End_{\FF_p}(M(N, \FF_p))$ with the induced compact-open topology, the shallow Hecke algebra $A(N, \FF_p)$ is the closed subalgebra of $\End_{\FF_p}(M(N, \FF_p))$ generated by the 
$T_n$ with $\gcd(n, Np) = 1$
\cite[Proposition 2.4]{M}.

From this setup it follows that $A(N, \FF_p)$ is a pro-$p$ semilocal ring, so that it factors as a product of its localization at its maximal ideals. Moreover, the maximal ideals of $A(N, \FF_p)$ are in {one-to-one} correspondence with certain Galois representations, which we now describe. 

To every normalized Hecke eigenform $f$ in $M_k(N, \bar \QQ_p)$ a construction of {Eichler-Shimura and} Deligne attaches a continuous Galois representation $\rho_f: G_\QQ \to \gl_2(\bar \QQ_p)$ with the properties that $\rho_f$ is unramified at primes~$\ell \nmid Np$ with $\tr \rho_f(\frob_\ell) = a_\ell(f)$ and that $\det \rho_f = \chi_p^{k-1}$. Reducing any $G_\QQ$-invariant $\bar\ZZ_p$-lattice of~$\rho_f$ modulo the maximal ideal and semisimplifying gives us $\rhobar_f: G_\QQ \to \gl_2(\bar \FF_p)$. Note that $\rhobar_f$ is independent of the chosen lattice. Galois representations of the form $\rho_f$ or $\rhobar_f$ will be called \emph{$\Gamma_0(N)$-modular}.\footnote{We will not use this below, but Serre reciprocity (formerly Serre's conjecture) implies that a representation $\rhobar: G_{\QQ, Np} \to \gl_2(\bar \FF_p)$ is $\Gamma_0(N)$-modular if and only if its determinant is an odd power of $\omega_p$ and it has prime-to-$p$ Artin conductor dividing $N$. See \cref{artindef} for more on the Artin conductor for prime $N$.} 

The maximal ideals of $A(N, \FF_p)$, then, are in correspondence with $\gal(\barr\FF_p/\FF_p)$-orbits of Hecke eigenforms in {$M(N, \barr\FF_p)$}, which, by the Deligne-Serre lifting lemma correspond to $\gal(\barr\FF_p/\FF_p)$-orbits of $\Gamma_0(N)$-modular representations $\rhobar: G_\QQ \to \gl_2(\barr\FF_p)$. 
Passing to an extension $\FF/\FF_p$ that contains all {the Hecke eigenvalues} of all the mod-$p$ level-$\Gamma_0(N)$ Hecke eigenforms, we resolve the $\gal(\bar\FF_p/\FF_p)$-conjugacy. 
{Write $A(N, \FF)$ for $A(N,\FF_p) \otimes_{\FF_p} \FF$, and let}
$A(N, \FF)_\rhobar$ be the localization of $A(N, \FF)$ at the maximal ideal corresponding to $\rhobar$. Then $A(N, \FF)_\rhobar$ is a profinite local ring with residue field $\FF$, and we have a decomposition of the Hecke algebra
\begin{equation}\label{Asplit} 
A(N, \FF) = \prod_{\rhobar\ \mbox{\tiny $\Gamma_0(N)$-modular}} A(N, \FF)_\rhobar
\end{equation}
and a corresponding decomposition of the ring of modular forms into $\rhobar$-eigencomponents
\begin{equation}\label{Msplit} 
M(N, \FF) = \bigoplus_{\rhobar\ \mbox{\tiny $\Gamma_0(N)$-modular}} M(N, \FF)_\rhobar
\end{equation}
{refining the decomposition in \eqref{weightgrade},}
with each $A(N, \FF)_\rhobar$ acting faithfully on $M(N, \FF)_\rhobar$. (See {\cite[I.5.1]{joel:eigenbook}} for this kind of statement for the finite-dimensional quotients/subs and take limits.) 
Since Hecke operators~$T_n$ are multiplicative and the prime power Hecke operators $T_{\ell^r}$ satisfy an order-$2$ linear recurrence in $r$ with coefficients $T_\ell$ and $\det \rhobar(\Frob_\ell) \in \FF$, the local Hecke algebra $A(N, \FF)_\rhobar$ is topologically generated as an $\FF$-algebra by the operators $T_\ell$ for $\ell$ prime not dividing $Np$. 

Write $\mm(N, \FF)_\rhobar$ for the maximal ideal of $A(N, \FF)_\rhobar$. The modified Hecke operators $T'_\ell := T_\ell - \tr \rhobar(\frob_\ell)$ for prime $\ell$ not dividing $Np$ are all in $\mm(N, \FF)_\rhobar$; indeed, they topologically generate it.

\subsection{The partially full mod-$p$ Hecke algebra}\label{Apfsec} 
Finally, we define the \emph{partially full} Hecke algebra $A(N, \FF_p)^\pf$, 
the closed subalgebra of $\End_{\FF_p}(M(N, \FF_p))$ topologically generated by 
both 
all the Hecke operators $T_n$ with~$(n, Np) = 1$ and by all the Hecke operators $U_\ell$ for $\ell \mid N$.
As in \cref{heckealgmodpsec} we can alternatively define~$A(N, \FF_p)^\pf$ as an inverse limit of finite-level partial Hecke algebras.

One can check that $A(N, \FF_p)$ acts faithfully on $K(N, \FF_p)$, defined in \eqref{knf}, and  the pairing
\begin{equation} \label{pairing}\begin{split}
A(N, \FF_p)^\pf&\times K(N, \FF_p) \longrightarrow \FF_p\\
\langle T&, f \rangle \mapsto a_1(Tf)
\end{split}\end{equation}
is (continuously) perfect, inducing $A(N, \FF_p)^\pf$-module duality isomorphisms 
\begin{equation}\label{dual}
K(N, \FF_p) \toiso \Hom_{{\rm cont}}\big(A(N, \FF_p)^\pf, \FF_p\big) \quad\mbox{ and }\quad
A(N, \FF_p)^\pf \toiso \Hom\big(K(N, \FF_p), \FF_p\big).
\end{equation}
Here the continuity is with respect to the profinite (equivalently, the local) topology on $A(N, \FF_p)^\pf$ and the discrete topology on $K(N, \FF_p)$. For similar constructions, see \cite[Lemma 6.5]{J}, \cite[Th\'eor\`eme 5.1]{NS2}, \cite[Lemma 23(iii)]{BK}, \cite[Propositions 2.23 and 2.35]{M}, and \cite[section 5]{Mpaper}. 

In particular, for any closed ideal $J$ of $A(N, \FF_p)^\pf$, the duality in \eqref{dual} restricts to a duality between between the $J$-torsion in $K(N, \FF_p)$ and the quotient of $A(N, \FF_p)^\pf$ by $J$:  
\begin{equation}\label{dualJ}
 \mbox{$K(N, \FF_p)[J]$ and $A(N, \FF_p)^\pf/J$ are in (continuous) duality as $A(N, \FF_p)^\pf$-modules}
\end{equation}

If $\FF/\FF_p$ is large enough to contain all Hecke eigenvalues {of all mod-$p$ eigenforms of level $\Gamma_0(N)$}, and $\rhobar: G_{\QQ, Np} \to \gl_2(\FF)$ is a $\Gamma_0(N)$-modular representation, then we can define $A(N, \FF)^\pf_\rhobar$ as the quotient of $A(N, \FF)^\pf$ acting faithfully on $M(N, \FF)_\rhobar$. Like the shallow Hecke algebra $A(N, \FF)$, the  partially full $A(N, \FF)^\pf$ will also break up into a product of the $\rhobar$-components
$$A(N, \FF)^\pf = \prod_{\rhobar\ \mbox{\tiny $\Gamma_0(N)$-modular}} A(N, \FF)^\pf_\rhobar,$$
with $A(N, \FF)^\pf_\rhobar$ the quotient of $A(N, \FF)^\pf$ acting faithfully on $M(N, \FF)_\rhobar,$ 
and, as in \eqref{dualJ}, in duality with $K(N, \FF)_\rhobar : = K(N, \FF) \cap M(N, \FF)_\rhobar$.
However, $A(N, \FF)^\pf_\rhobar$ will not be local in general: its maximal ideals are in bijection with systems of eigenvalues $\{\alpha_\ell: \ell\ \mbox{prime dividing $N$}\}$ of the Hecke operators $U_{\ell}$ with 
$\ell$ prime dividing $N$
appearing in mod-$p$ modular forms in $M(N, \FF)_\rhobar$. 

The natural inclusion map $A(N, \FF_p) \into A(N, \FF_p)^\pf$ sending $T_\ell$ to $T_\ell$ for $\ell$ prime not dividing $Np$ is finite (see \cref{apffinite} below for the case of prime $N$ or \cite[proof~of~Theorem~3,~p.~23]{D} in general) and induces finite inclusions $A(N, \FF)_\rhobar \into A(N, \FF)_\rhobar^\pf$ for $\Gamma_0(N)$-modular $\FF$-valued $\rhobar$. 

\subsection{Pseudorepresentations}\label{psrep} We recall the definition of a {(dimension-2)} pseudorepresentation:
 for a (topological) group $G$ and a (topological) commutative ring $B$, a \emph{(continuous) pseudorepresentation of $G$ on $B$ (of dimension~$2$}) is a pair of (continuous) functions $(t,d):G \to B$ satisfying the following properties:
\begin{enumerate}[topsep = -3pt]
\item $t(1) = 2$;
\item $d: G \to B^\times$ is a group homomorphism;
\item $t$ is central: for all $g, h \in G$, we have $t(gh) =t(hg)$;
\item \emph{trace-determinant identity}: for all $g, h \in G$, 
\begin{equation} \label{tracedet} d(g)t(g^{-1}h)+t(gh)= t(g)t(h).
\end{equation}
\end{enumerate}
Pseudorepresentations, 
introduced in the form above 
by Chenevier in \cite{C} (where they are called \emph{determinants}), generalize the earlier notion introduced by Wiles~\cite{wiles} and Taylor~\cite{taylor} and further studied by Rouquier~\cite{rouq} and Nyssen \cite{nyssen} (now called \emph{pseudocharacters} following \cite{rouq}), to arbitrary characteristic. The idea is that $(t, d)$ generalizes  the data of pairs $(\tr \rho, \det \rho)$ for true representations $\rho: G \to \gl_2(B)$:  
that is,
if $\rho : G \to \gl_2(B)$ is a representation, then $(\tr \rho, \det \rho)$ is a pseudorepresentation of $G$ on $B$.
The converse is not true over for arbitrary rings $B$, but
if $B$ is an algebraically closed field then every pseudorepresentation on $B$ comes from a true representation \cite[Theorem 2.12]{C}.

If {$2 \in B^\times$} and $(t, d)$ is a 
pseudorepresentation {of $G$} on $B$, then $d$ is determined by $t$ (the trace-determinant identity for $h = g$ gives $d(g) = \frac{t(g)^2 - t(g^2)}{2}$); and $t$ is a {pseudocharacter} in the earlier notion of Taylor et al.

The \emph{kernel} of a 
pseudorepresentation $(t, d): G \to B$ of a group $G$ on a ring $B$ is 
\begin{equation}\label{kerdef} \ker (t, d) := \{g \in G: d(g) = 1,\ t(gh) = t(h)\ \mbox{for all $h \in G$}\} \subset G.\end{equation}
One can check that $\ker (t, d)$ is a (closed) normal subgroup of~$G$. Both $t$ and $d$ factor through the quotient $G/\ker (t, d)$, descending to a 
pseudorepresentation $(t, d): G/\ker (t, d) \to B$ with trivial kernel. If $\rho: G\to \gl_2(B)$ is a representation, then the kernel of $\rho$ is contained in the kernel of the associated pseudorepresentation: $\ker \rho \subseteq \ker (\tr \rho, \det \rho)$. {An easy calculation implies that the reverse containment also holds if $B$ is a field and $\rho$ is absolutely irreducible}. 

We will work with two-dimensional pseudorepresentations of profinite groups on profinite rings, and from now on will 
tacitly assume that all relevant maps are continuous.

For a topological $\ZZ_p$-algebra $B$, we say that a pseudorepresentation $(t, d)$ of $G_\QQ$ on $B$ is \emph{unramified} at some prime $\ell$ if $I_\ell(G_{\QQ})$ is contained in $\ker(t, d)$. Moreover, {we say that} $(t, d)$ is \emph{$\Gamma_0(N)$-modular} if $(t, d)$ is the base change of $(\tr \rho, \det \rho)$ for some $\Gamma_0(N)$-modular representation $\rho$.

\subsection{Pseudodeformations of mod-$p$ Galois representations}\label{rdef}
Fix a representation $$\rhobar: G_{\QQ, Np} \to \textstyle \gl_2(\FF),$$ which we assume to be semisimple and \emph{odd}: that is, $\tr \rhobar(c) = 0$.  Let $\mathcal C$ be the category of profinite local $\FF$-algebras with residue field $\FF$. 
We will call the objects of $\CCC$ ($\FF$-)\emph{coefficient algebras}. 

Let $\hat {\mathcal D}_{\rhobar}$ be the functor from $\mathcal C$ to the category of sets sending a coefficient algebra $(B, \mm)$ to the set of pseudorepresentations $(t, d)$ of $G_{\QQ, Np}$ on $B$ that reduce to the pseudorepresentation $(\tr \rhobar, \det \rhobar)$ modulo $\mm$ subject to the additional conditions that $t(c) = 0$ (oddness) and that $d = \det \rhobar$ (\emph{constant determinant}). A pseudorepresentation $(t, d)$ lifting $(\tr \rhobar, \det \rhobar)$ with constant determinant is obviously determined by $t$, so by abuse of notation we call {such a} $t$ a \emph{pseudodeformation} of $\rhobar$.

The functor $\hat{\mathcal D}_\rhobar$ is representable, represented by noetherian universal deformation ring $\big(\hat \RRR(N, \FF)_\rhobar, \hat \nn(N, \FF)_\rhobar\big)$ in $\mathcal C$ equipped with a universal pseudodeformation $\hat \tau^\univ := \hat \tau^\univ_{\rhobar, N}: G_{\QQ,Np} \to \hat \RRR(N, \FF)_\rhobar$ of $\rhobar$, in the sense that, for a coefficient algebra $B$, any pseudodeformation $t: G_{\QQ, Np} \to B$ of $\rhobar$ comes from a unique 
morphism $\hat \RRR(N, \FF)_\rhobar \to B$ 
{in $\mathcal{C}$ of coefficient algebras}:
\begin{equation} \label{psdef}
\begin{tikzpicture}[node distance=2cm]
\node(B)       {$B$};
\node(G)         [above left of = B]                   {$G_{\QQ, Np}$};
\node(R)       [above right  of = B] {$\hat \RRR(N, \FF)_\rhobar$};
\draw(G)[->] -- (R) node[midway, above] {$\hat \tau^\univ$};
\draw(G)[->] --(B) node[midway, below left] {$t$};
\draw(R)[dashed, ->]  -- (B) node[midway, below right] {$\exists !$};
\end{tikzpicture}
\end{equation}
See \cite[Propositions 3.3, 3.7]{C} for details. Note that $\hat \tau^\univ = \tr \rhobar + \hat \beta^\univ$, where $\hat \beta^\univ$ maps $G_{\QQ, Np}$ to $\hat \nn(N, \FF)_\rhobar$. Following \cite[section A.2]{BK}, we define, for $\ell$ prime not dividing $Np$, elements $\hat t_\ell := \hat \tau^\univ(\frob_\ell)$ in $\hat \RRR(N, \FF)_\rhobar$ and $\hat t'_\ell := \hat \beta^\univ(\frob_\ell)$ in $\hat \nn(N, \FF)_\rhobar$. 

By universality, the $\hat t_\ell'$  topologically generate $\hat \RRR(N, \FF)_\rhobar$ as an $\FF$-algebra; hence they also generate $\hat \nn(N, \FF)_\rhobar$ as an ideal {of $\hat \RRR(N, \FF)_\rhobar$}. If $B$ is the \emph{trace algebra} of $t$, that is, if $B$ is topologically generated as an $\FF$-algebra by $t(G_{\QQ,Np})$ (equivalently, by the $t(\frob_\ell)$ for $\ell \nmid Np$), then the unique map $\hat \RRR(N, \FF)_\rhobar \to B$, guaranteed by universality as in \eqref{psdef}, is surjective.   

Using $\hat \beta^\univ$ we also obtain an isomorphism of $\FF$-vector spaces (standard in deformation theory; see, for example, \cite[Lemma 2.6]{gouvea}): if $\FF[\eps]$ are the dual numbers, with $\eps^2 =0$, then
\begin{equation}\label{taneq}
\tan \hat \RRR(N, \FF)_\rhobar = \Hom\big (\hat \nn(N, \FF)_\rhobar /\hat \nn(N, \FF)_\rhobar^2,\ \FF\big) \cong \hat{\mathcal D}_\rhobar (\FF[\eps]).
\end{equation}
This isomorphism identifies a linear functional $h: \hat \nn(N, \FF)_\rhobar  \to \FF$ factoring through $\hat \nn(N, \FF)_\rhobar^2$ with the pseudodeformation $g \mapsto \tr \rhobar(g) + \eps h \big(\hat \beta^\univ (g)\big)$ of $\rhobar$. 

If $\rhobar: G_{\QQ, Np} \to \gl_2(\FF)$ is a semisimple odd representation that factors through $G_{\QQ, Mp}$ for some divisor $M$ of $N$, {then the surjective map $G_{\QQ,Np} \onto G_{\QQ,Mp}$ induces a natural surjection 
\begin{equation}\label{surjR}
\hat \psi_{N, M}: \hat \RRR(N, \FF)_\rhobar \onto \hat \RRR(M, \FF)_\rhobar
\end{equation} and its kernel is} the closed ideal $\hat J_{N, M}$ generated by elements of the form 
$\hat \tau^\univ(gi)- \hat \tau^\univ(g)$
 for $g \in G_{\QQ, Np}$ and $i \in I_\ell(G_{\QQ, Np})$ with $\ell$ running over the primes that divide $N$ but not $M$.

\subsection{The pseudodeformation of $\rhobar$ carried by $A(N, \FF)_\rhobar$}\label{pdonA}
By gluing together all the $\Gamma_0(N)$-modular pseudorepresentations of $G_{\QQ, Np}$ that {Eichler-Shimura and} Deligne's construction attaches to characteristic-zero modular eigenforms of level $N$ and reducing modulo $p$ (see, for example, \cite[Step~$1$ of the proof of Theorem $1$]{B} for a detailed construction for $p = 2$, $N = 1$)
one obtains an odd pseudorepresentation 
\begin{equation}\label{tauheckeall}
 \tau^\hecke_{p, N} : G_{\QQ, Np} \to  A(N, \FF_p)
\end{equation}
satisfying $\tau^\hecke_{p, N}(\frob_\ell) = T_\ell$ for any prime $\ell \nmid pN$. If we fix a $\Gamma_0(N)$-modular $\rhobar: G_{\QQ, Np} \to \gl_2(\FF)$, then by extending scalars in \eqref{tauheckeall} and composing with the map $A(N, \FF) \onto A(N, \FF)_\rhobar$ from the decomposition in \eqref{Asplit} (or alternatively, by gluing together the $\Gamma_0(N)$-modular pseudodeformations of $\rhobar$ and reducing mod~$p$), one obtains a constant-determinant odd pseudodeformation of~$\rhobar$
\begin{equation}\label{tauhecke}
\tau^\hecke_{\rhobar} {:= \tau^\hecke_{\rhobar, N}} : G_{\QQ, Np} \to  A(N, \FF)_\rhobar
\end{equation}
again with  $\tau^\hecke_{\rhobar, N}(\frob_\ell) = T_\ell$ for  $\ell \nmid pN$. 
\begin{equation}\label{RtoA}
\hat \phi: \hat \RRR(N, \FF)_\rhobar \onto A(N, \FF)_\rhobar
\end{equation}
sending {$\hat t_\ell$} to $T_\ell$ for primes $\ell \nmid Np$. Since $A(N, \FF)_\rhobar$ is topologically generated by the $T_\ell$, the morphism $\hat \phi$ is surjective, which surjectivity 
tells us that $A(N, \FF)_\rhobar$ is noetherian, so that its profinite topology coincides with its local topology \cite[Proposition 2.4]{deSmitLenstra}.

\subsection{The nilpotence method for lower bounds on $\dim A(N, \FF)_\rhobar$}\label{nilpsec}
We summarize the method described in \cite{M, Mpaper} for obtaining a lower bound on the Krull dimension of a local piece of the mod-$p$ Hecke algebra acting on a subspace of a polynomial algebra of forms. In \cite{M} this method is applied for $A = A(1, \FF_p)_\rhobar$ with $p = 2, 3, 5, 7, 13$; more generally the method may be applied to $A = A(N, \FF_p)_\rhobar^\pf$ so long as the genus of~$X_0(Np)$ is zero.

\begin{mythm}[{Nilpotence method \cite{M, Mpaper}}]\label{nilpthm}
Suppose the following conditions are satisfied. 
\begin{enumerate}[topsep = -3pt, itemsep = 1pt] 
\item \label{power} $M(N, \FF) = \FF[f]$ for some form $f \in M$. 
\item \label{two} $A$ is a continuous local quotient of $A(N, \FF)^\pf$ acting faithfully on a subspace $K \subseteq K(N, \FF)$.
\item\label{max} The maximal ideal $\mm$ of $A$ is generated by Hecke operators $S_1, \ldots, S_d$ so that, for each $i$, 
\begin{enumerate}
\item $S_i$ is in every maximal ideal of $A(N, \FF)^\pf$; and 
\item \label{three}  the sequence $\{S_i(f^n)\}_n$ satisfies an $M$-linear recurrence of some order $d_i$ whose characteristic polynomial $X^{d_i} + a_{i, 1} X^{d_i - 1} + \cdots + a_{i, d_i - 1} X + a_{i, d_i} \in M[X]$ satisfies both $\deg_f a_{i, j} \leq j$ for all~$j$ and $\deg_f a_{i, d_i} = d_i$.
\end{enumerate}
\item \label{four} There exists a sequence of linearly independent forms $\{g_n\}_n$ in $K$ with $\deg_f g_n$ depending at most linearly on $n$. (In other words, $\deg_f g_n$ is $O(n)$.)
\end{enumerate}
Then $\dim A \geq 2$. 
\end{mythm}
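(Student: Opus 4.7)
The plan is to lower-bound the Krull dimension of $A$ via growth of its Hilbert--Samuel function. By the continuous duality in \eqref{dualJ}, applied to the closed ideals $\mm^n \subset A$, we have $\dim_\FF A/\mm^n = \dim_\FF K[\mm^n]$ for every $n$. Since $A$ is a complete noetherian local ring, Hilbert--Samuel theory says $\dim_\FF A/\mm^n$ eventually agrees with a polynomial in $n$ of degree $\dim A$, so it suffices to exhibit superlinear growth of $\dim_\FF K[\mm^n]$ to rule out $\dim A \leq 1$.

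The main technical lemma, which I will call weight-to-nilpotence compatibility, uses conditions~(1) and~(3b) to convert the $\deg_f$-filtration on $M = \FF[f]$ into a nilpotence bound in $A$. The idea is to interpret the characteristic polynomial $P_i(X) = X^{d_i} + a_{i,1}(f) X^{d_i-1} + \cdots + a_{i,d_i}(f)$ as a polynomial in two variables $(X, f)$, both given weight~$1$: the condition $\deg_f a_{i,j} \leq j$ says $P_i$ has total weight at most $d_i$, while the equality $\deg_f a_{i, d_i} = d_i$ pins the ``corner'' monomial $f^{d_i}$ with nonzero coefficient. Iterating the recurrence of~(3b) by descent on $\deg_f$ then yields a constant $C > 0$ depending only on the $d_i$ so that every $g \in M$ with $\deg_f g \leq n$ satisfies $\mm^{Cn} \cdot g = 0$. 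Applied to the linearly independent forms $\{g_n\}$ of condition~(4) --- which satisfy $\deg_f g_n \leq Dn$ for some $D$ --- this already gives $g_n \in K[\mm^{CDn}]$, i.e.\ a linear lower bound $\dim_\FF K[\mm^N] \gtrsim N$.

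The hard step is extracting superlinear growth. The strategy is to amplify the orbit of $\{g_n\}$ by the Hecke action: the forms $S_1^{a_1} \cdots S_d^{a_d} \cdot g_n$ remain in $K$ and satisfy the same nilpotence bound, landing in $K[\mm^{C(Dn + \sum a_j)}]$. The Newton-polygon shape of each $P_i$ --- its top $(X, f)$-homogeneous part factors as $\prod_k (X - \beta_{i, k} f)$ with $\beta_{i, k} \neq 0$ --- forces $S_i$ to act on the $\deg_f$-filtration of $M$ by a one-step shift, so distinct monomials $S_1^{a_1} \cdots S_d^{a_d}$ produce distinct nonzero leading-$\deg_f$-terms when applied to a suitably chosen $g_n$. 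Counting tuples $(n, \mathbf a)$ with $Dn + \sum a_j \leq N/C$ thus yields $\gtrsim N^2$ linearly independent forms in $K[\mm^N]$, and the theorem follows. The main obstacle is precisely this linear-independence step: preventing cancellations across distinct monomial contributions requires a delicate combinatorial analysis using the specific Newton-polygon constraints in~(3b), in particular the equality $\deg_f a_{i, d_i} = d_i$ rather than a mere inequality.
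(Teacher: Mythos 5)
Your outer frame—duality between $A/\mm^k$ and $K[\mm^k]$ via \eqref{dual}/\eqref{dualJ}, then superlinear Hilbert--Samuel growth forcing Krull dimension at least $2$ by \cite[Theorem 11.14]{AM}—is exactly the paper's, but the heart of the argument is missing and the patch you propose in its place does not work. The essential input is the Nilpotence Growth Theorem of \cite{Mpaper}: for an operator satisfying the recurrence of condition \eqref{three} (with the corner condition $\deg_f a_{i,d_i}=d_i$), its nilpotence index on forms of $f$-degree at most $n$ is $O(n^{\alpha})$ for some $\alpha<1$, i.e.\ \emph{strictly sublinear}; hence $\mm^{h(n)}$ kills $g_0,\dots,g_n$ with $h(n)$ sublinear, and duality gives $\dim_\FF A/\mm^k \gg k^{1/\alpha}$, which is the superlinearity you need. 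Your ``weight-to-nilpotence compatibility'' lemma only extracts the linear bound $\mm^{Cn}\cdot g=0$ for $\deg_f g\le n$ (and indeed naive iteration of the recurrence gives nothing better), and a linear bound is genuinely insufficient: together with condition \eqref{four} it yields only $\dim_\FF K[\mm^k]\gtrsim k$, which is perfectly consistent with $\dim A=1$ (compare $\FF\lb S\rb$ acting on $\bigoplus_n \FF e_n$ by the shift $Se_n=e_{n-1}$, where the nilpotence index grows exactly linearly and $\dim K[\mm^k]=k$). The sublinear exponent is the whole content of the hard theorem in \cite{Mpaper}; it cannot be bypassed.

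The amplification step you propose to recover superlinearity is, moreover, based on a false picture of how the $S_i$ act. By condition \eqref{max} the $S_i$ lie in every maximal ideal of $A(N,\FF)^\pf$, so they act locally nilpotently on $K$ and preserve or lower the $f$-degree (weight filtration); they do not shift the filtration up by one step, and repeated application annihilates any given form, so the claimed ``distinct nonzero leading $\deg_f$-terms'' of $S_1^{a_1}\cdots S_d^{a_d}g_n$ do not exist. Applying elements of $\mm$ also only moves you deeper into the torsion filtration ($Sg\in K[\mm^{m-1}]$ when $g\in K[\mm^m]$), so nothing in conditions \eqref{power}--\eqref{four} produces quadratically many independent vectors this way; you yourself flag the linear-independence step as the ``main obstacle'' and do not supply it. The correct route is the paper's: invoke the Nilpotence Growth Theorem to get a sublinear $h(n)$ with $\mm^{h(n)}$ annihilating $g_0,\dots,g_n$, then conclude by duality and the Hilbert--Samuel degree.
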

Condition \eqref{power} is crucial to the method, as it relies on the Nilpotence Growth Theorem \cite{Mpaper}.
If $K = K(N, \FF)$, then generators $S_1, \ldots, S_d$ of $\mm$ satisfying the conditions in \eqref{max} are  known to exist: \mbox{see \cite[4.3.3 and 6.3]{M}} for the case $N = 1$; the general case is similar. 

The idea of the proof of \cref{nilpthm} is as follows: 
by the main theorem of \cite{Mpaper} and the condition on the sequence $\{g_n\}$, the function $h(n)$ with the property that $\mm^{h(n)}$ annihilates $\{g_0, \ldots, g_n\}$ grows slower than linearly in $n$. By duality between $A$ and $K$ coming from the duality between $A(N, \FF)^\pf$ and $K(N, \FF)$ \eqref{dual}, the Hilbert-Samuel function 
$k \mapsto \dim_{\FF}{A}/{\mm^k}$
grows faster than linearly in~$k$. Therefore the Hilbert-Samuel degree of $A$ is strictly greater than $1$, hence at least $2$. But this degree is equal to the Krull dimension of~\mbox{$A$~\cite[Theorem 11.14]{AM}}.

In practice, in condition \eqref{four} one may replace the $f$-degree of a form $g$ in $M(N, \FF)$ with its weight filtration, as described in \cref{weightsec}.

\subsection{Oldforms and newforms mod $p$}\label{fricke}
In this subsection, we assume $N$ is prime. We briefly summarize the perspective of
\cite{deomed}. 
\subsubsection{The Fricke automorphism mod $p$} The characteristic-zero Fricke involution $w_N$ on $M_k(N, \CC)$ sending~$f(z)$ to $$w_N f = \left. f \right|_k \begin{psmallmatrix} 0 & -1 \\ N & 0 \end{psmallmatrix} = N^{k/2} (Nz)^{-k} f(\textstyle \frac{-1}{Nz})$$ is defined over $\ZZ[\frac{1}{N}]$, and therefore descends to $M_k(N, \FF_p)$. However, $w_N$ is not in general well defined as an algebra involution on all of $M(N, \FF_p)$: see \cite[section 3]{deomed} for a details. Since this is inconvenient, we replace~$w_N$ on $M_k(N, \FF_p)$ by $W_N := N^{k/2} w_N $. This renormalized Fricke operator is an algebra automorphism of order dividing $p-1$ for~$p$ odd \cite[Proposition 3.13(3)]{deomed}; for $p = 2$ the operator $W_N$ coincides with $w_N$ and is hence an algebra involution. In all cases $W_N$ commutes with all $T_n$ for $n$ prime to~$Np$. 

\subsubsection{Old and new forms mod $p$} \label{oldformsnewforms}
{We can now define the mod-$p$ ``oldforms" in level $N$ in the following way: let $M(N, \QQ)^\old := M(1, \QQ) + W_N M(1, \QQ) \subset M(N, \QQ),$ as usual. 
Set $$M(N,\ZZ)^\old := M(N,\QQ)^\old \cap \ZZ\lb q\rb \subset M(N,\ZZ);$$ let $M(N,\FF_p)^\old$ be the subspace of $M(N,\FF_p)$ obtained by reducing $M(N,\ZZ)^\old$ modulo $p$, and finally set $M(N, \FF)^\old : = M(N, \FF_p) \otimes_{\FF_p} \FF$ (see \cite[Section 5]{deomed} for more details).}

We also define the ``newforms": let $M(N, \FF)^\new := \ker (U_N^2 - N^{-2} {\mathcal S}_N)$, where ${\mathcal S}_N$ is a weight-separating operator that scales $M_k(N, \FF)$ by $N^k$ \cite[Section 6]{deomed}. Observe that replacing $\FF$ by $\CC$ recovers the usual classical notion of newforms.
Note that $M(N, \FF)^\old$ and $ M(N, \FF)^\new$ need not be disjoint \cite[Corollary 7.1]{deomed}.
However, we record the following fact.

\begin{mylemma}\label{s2killsnew}
The operator $(U_N^2 - N^{-2} {\mathcal S}_N)$ maps $M(N, \FF)$ to $M(N, \FF)^\old$.
\end{mylemma}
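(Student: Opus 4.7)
The plan is to deduce the statement from classical Atkin-Lehner theory by lifting $f$ to characteristic zero, showing the corresponding characteristic-zero operator sends everything into the classical old subspace, and then reducing modulo $p$.

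First, by the weight decomposition \eqref{weightgrade}, I would reduce to the case $f \in M_k(N, \FF)$ for a single (even) weight $k$. On $M_k(N, \FF)$, the operator $\mathcal{S}_N$ acts by the scalar $N^k$, so the operator in question becomes $T_k := U_N^2 - N^{k-2}$. Using $p \nmid N$ and the $q$-expansion principle, I lift $f$ to some $\tilde f \in M_k(N, W(\FF))$.

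Next, working over $K := W(\FF)[1/p]$, I would apply the classical Atkin-Lehner decomposition of $M_k(N, K)$ into cuspidal oldforms, cuspidal newforms, and the Eisenstein subspace, all preserved by $U_N$. On cuspidal newforms, $U_N$ acts as $\pm N^{k/2-1}$ (Atkin-Lehner theory for prime level $N$ and trivial nebentype), so $U_N^2 = N^{k-2}$ and $T_k$ vanishes there. The cuspidal oldforms lie in $M_k(1, K) + V_N M_k(1, K) \subseteq M_k(N, K)^{\old}$ (using that $W_N g = V_N g$ for level-$1$ $g$ by a direct computation with the transformation law, cf.~\cref{fricke}), and the Eisenstein subspace of $M_k(N, K)$, spanned by $E_k$ and $V_N E_k$, is also contained in $M_k(N, K)^{\old}$; both are $U_N$-stable. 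Hence $T_k \tilde f \in M_k(N, K)^{\old}$.

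Finally, I will upgrade to $W(\FF)$-integrality and reduce mod $p$. Because $\tilde f \in W(\FF)\lb q \rb$ and $N \in W(\FF)^\times$, the form $T_k \tilde f$ has $W(\FF)$-integral $q$-expansion. Combining this with $T_k \tilde f \in M_k(N, K)^{\old}$ and the fact that, for $p \nmid N$, the degeneracy map $M_k(1, W(\FF))^{\oplus 2} \to M_k(N, W(\FF))^{\old}$, $(g, h) \mapsto g + V_N h$, identifies $M_k(N, K)^{\old} \cap W(\FF)\lb q \rb$ with $M_k(N, W(\FF))^{\old}$, we obtain $T_k \tilde f \in M_k(N, W(\FF))^{\old}$. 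Reducing modulo $p$ yields the desired $(U_N^2 - N^{-2} \mathcal{S}_N) f \in M_k(N, \FF)^{\old}$. The main technical obstacle is this last integrality/$p$-saturation step, which is essentially classical (relying on $q$-expansion principles at primes coprime to $N$) but requires care to verify, and it is where the hypothesis $p \nmid N$ enters essentially.
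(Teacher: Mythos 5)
Your route is essentially the paper's: decompose a characteristic-zero lift into new and old parts, observe that on weight $k$ the operator becomes $U_N^2 - N^{k-2}$, which kills the new part by Atkin--Lehner and preserves integrality, then reduce mod $p$. Two points, one of which is an actual error in a stated step. First, your description of the Eisenstein part fails in weight $2$: for prime $N$ the Eisenstein subspace of $M_2(N,K)$ is one-dimensional, spanned by $E_{2,N}$, which is \emph{not} old in the sense of \cref{oldformsnewforms} (since $M_2(1)=0$); rather it is new, as $U_N E_{2,N} = E_{2,N}$ and $N^{k-2}=1$ there, so it is killed by the operator instead of being preserved inside the old space. As written, the claim ``the Eisenstein subspace, spanned by $E_k$ and $V_N E_k$, is contained in $M_k(N,K)^\old$'' is false at $k=2$; the conclusion survives once you move $E_{2,N}$ to the new side, which is what the paper's blanket decomposition $f = g + h$ with $g \in M_k(N,\QQ)^\new$, $h \in M_k(N,\QQ)^\old$ does automatically.

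Second, the step you flag as the main technical obstacle is unnecessary given the paper's definitions. In \cref{oldformsnewforms} the integral old space is \emph{defined} as $M(N,\ZZ)^\old := M(N,\QQ)^\old \cap \ZZ\lb q \rb$, and $M(N,\FF_p)^\old$ is defined as its reduction mod $p$ (then tensored up to $\FF$). So once you know that the image of your lift is integral and lies in the characteristic-zero old space, you are done by definition; no $p$-saturation statement about the integral degeneracy map $(g,h) \mapsto g + V_N h$ is needed, and indeed the paper's proof is exactly the two-line version of your argument: write $f = g+h$ over $\QQ$ with $g$ new and $h$ old, note the operator kills newforms and preserves integrality, conclude the image lies in $M_k(N,\ZZ)^\old$, and reduce. (You can also avoid $W(\FF)$ entirely by first reducing to $\FF = \FF_p$ by linearity, since the operator and the old space are defined over $\FF_p$.) A last nitpick: for level-$1$ $g$ of weight $k$ one has $W_N g = N^{k} V_N g$ (and $w_N g = N^{k/2} V_N g$), not $W_N g = V_N g$; this only changes scalars and not the span, so it does not affect your containment.
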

\begin{proof}
If $f \in M_k(N,\ZZ)$, then $f=g+h$ with $g \in M_k(N,\QQ)^\new$ and $h \in M_k(N,\QQ)^\old$.
Since the operator $(U_N^2-N^{-2}{\mathcal S}_N$) kills newforms and preserves integrality, we see that $(U_N^2-N^{-2}{\mathcal S}_N)f \in M_k(N,\ZZ)^\old$.
\end{proof}

\subsubsection{New and old Hecke algebra quotients}\label{newoldhecke}

We also consider {$A(N, \FF)^{\old}$} (respectively, {$A(N, \FF)^{\new}$}), the largest quotient of {$A(N, \FF)$} acting faithfully on {$M(N, \FF)^{\old}$} (respectively, {$M(N, \FF)^{\new}$}). For the shallow Hecke algebras, we have {$A(N, \FF)^\old \cong A(1, \FF)$}.
 
Since both $W_N$ and $U_N$, the operators whose actions define oldforms and newforms, commute with all the Hecke operators away from $Np$, both the spaces {$M(N, \FF)^\old$} and {$M(N, \FF)^\new$} and the Hecke algebras {$A(N, \FF)^\old$} and {$A(N, \FF)^\new$} split into local $\rhobar$-components. 

In particular, if $\rhobar: G_{\QQ, Np} \to \gl_2(\FF)$ is $\Gamma_0(N)$-modular, then $A(N, \FF)_\rhobar$ has two quotients
\begin{equation}\label{Aoldnew}\begin{split}
\begin{tikzpicture}[node distance=2cm]
\node(A) {$A(N, \FF)_\rhobar$};
\node(Anew) [below right = 10pt and 1.5cm of  A]{$A(N, \FF)_\rhobar^\new$.};
\node(Aold) [below left = 10pt and 1.5cm of A]{$A(N, \FF)_\rhobar^\old$};
\node(A1) [left = -4pt of Aold]{$A(1, \FF)_\rhobar \cong$};
\draw(A)[->>]--(Aold) node[near start, above left]{$\scriptstyle \pi^\old$};
\draw(A)[->>]--(Anew) node[near start, above right]{$\scriptstyle \pi^\new$};
\end{tikzpicture}\end{split}
\end{equation}
If $\rhobar$ is unramified at $N$, then $M(N, \FF)_\rhobar^\old$ is nonzero, so that the left quotient $\pi^\old: A(N, \FF)_\rhobar \onto A(N, \FF)^\old_\rhobar$ is nontrivial. If $\rhobar$ further satisfies the \emph{level-raising condition at $N$} --- namely, $\tr \rhobar(\frob_N) = \pm (N + 1) N^\frac{k-2}{2}$, where 
$\det \rhobar = \omega_p^{k-1}$
 --- then $M(N, \FF)^\new_\rhobar$ is nonzero, so that the right quotient $\pi^\new: A(N, \FF)_\rhobar \onto A(N, \FF)^\new_\rhobar$ is nontrivial.  See \cite[section 7]{deomed} for more details. 

\subsubsection{New and old quotients of the partially full Hecke algebra}\label{pfoldnew}
Finally we note that $M(N, \FF)^\old$ and $M(N, \FF)^\new$, as well as local pieces $M(N, \FF)_\rhobar^\old$ and $M(N, \FF)_\rhobar^\new$, are all stable by $U_N$. Indeed, suppose~$\rhobar$ is $\Gamma_0(1)$-modular and $f$ is in $M(1, \FF)_\rhobar$. Since we can ignore the action of $T_N$, or indeed of any finite set of Hecke operators, in defining the $\rhobar$-generalized eigenspace, and since $W_N$ commutes with all  Hecke operators prime to $Np$, we find that $W_N f \in M(N, \FF)_\rhobar$. Moreover, the $\FF$-span of $\{f, W_N f\}$ is a $U_N$-stable subspace of $M(N, \FF)_\rhobar^\old$. 
And if 
$\rhobar$ is $\Gamma_0(N)$-modular, then $f \in M(N, \FF)_{\rhobar}$ is in $M(N, \FF)_{\rhobar}^\new$ if and only if $f$ is in a 
$U_N^2$-eigenspace. Since~$U_N$ and $U_N^2$ commute, this property is preserved under the {action} 
of~$U_N$, 
so that $M(N, \FF)^\new$, and hence $M(N, \FF)^\new_\rhobar$, is $U_N$-stable. 
Therefore we can define $A(N, \FF)_\rhobar^{\pf, \old}$ and $A(N, \FF)_\rhobar^{\pf, \new}$, the faithful quotients of $A(N, \FF)_\rhobar^\pf$ acting on $M(N, \FF)_\rhobar^\old$ and $M(N, \FF)_\rhobar^\new$, respectively. See \cref{pfanal}.

\section{The grading on the mod-$p$ Hecke algebra} \label{gradingsec}

In this short section we exhibit natural compatible gradings on $K(N, \FF_p)$, $A(N, \FF_p)$, and $\tau_{p, N}^\hecke$ by the $2$-Frattini quotient of $\ZZ_p^\times$, which restrict to gradings on $A(N, \FF)_\rhobar$ if $p = 2$ or if $p$ is odd and $\rhobar \simeq \rhobar \otimes \omega_p^{(p-1)/{2}}.$ This result (\cref{gradingthm}) generalizes the $(\ZZ/8\ZZ)^\times$-grading in the $(p, N) = (2, 1)$ setting described by Nicolas and Serre~\cite{NS1} and the $(\ZZ/3\ZZ)^\times$-grading in the $(p, N) = (3, 1)$ setting in the forthcoming treatment of mod-$3$ modular forms by the second-named author \cite{medved3}. 
It owes an additional debt of inspiration to \jb's universal grading result for mod-$2$ constant-determinant
pseudodeformations of $\rhobar = \one$ (\cref{bellgrade}).  

Suppose that $B$ is a profinite ring and $Q$ a finite abelian group, written multiplicatively. Recall that $B$ is \emph{$Q$-graded} if $B$ splits as a direct sum $B = \bigoplus_{i \in Q} B^i$ of closed additive subgroups $B^i$ with $B^1 \subseteq B$ a subring and $B^{i} B^j \subseteq B^{ij}$ for every $i,j \in Q$. If $B$ is $Q$-graded, then a $B$-module $M$ is \emph{$Q$-graded} if $M = \bigoplus_{i \in Q} M^i$ with $B^i M^j \subseteq M^{ij}$. 

Now further suppose that $(t, d): G \to B$ is a (continuous) pseudorepresentation of a profinite group $G$ on~$B$, and that $Q$ is equipped with a continuous quotient map $\pi: G \onto Q$. For $i \in Q$ let $G^i := \pi^{-1}(i)$ be the {corresponding coset of $G$}. We say that $(t, d)$ is \emph{$Q$-graded} if $B = \bigoplus_{i \in Q} B^i$ is a $Q$-graded ring and for every $i \in Q$ we have $t(G^i) \subset B^i$ and $d(G^i) \subset B^{i^2}$. Note that all the pseudorepresentation relations from \cref{psrep} are homogeneous with respect to such a grading. 

We begin with two lemmas; the grading theorem is \cref{gradingthm}. These establish that $q$-expansions of mod-$p$ forms can be separated into coefficients whose indices are picked out by a character of conductor $p$. Note that \cref{charsplit} is stated in a more general form than strictly required for \cref{gradingthm} (for which $M = 1$ in \cref{charsplit} suffices); this generality necessitates \cref{upvp}. Both are well known. 

\begin{mylemma}\label{charsplit}
If $f = \sum_n a_n q^n \in M(N, \FF)$, and $\chi$ is a quadratic Dirichlet character of modulus $p^r M$, where~$r \geq 0$ is arbitrary and $M^2 \mid N$, then the following are also forms in $M(N, \FF)$: 
$$(1)\ f_\chi := \sum_n \chi(n) a_n q^n, \quad 
(2)\ f_{\chi, +} := \sum\limits_{n: \chi(n) = 1} a_n q^n, \quad
(3)\ f_{\chi, -} :=  \sum\limits_{n: \chi(n) = -1} a_n q^n.
$$
\end{mylemma}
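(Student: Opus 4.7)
I would first prove (1), then deduce (2) and (3) via character orthogonality, with separate care in characteristic $2$.

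For (1), decompose $\chi = \chi_p \chi_M$ via the Chinese remainder theorem, with $\chi_p$ of $p$-power modulus and $\chi_M$ of modulus $M$; since $\chi \mapsto f_\chi$ is multiplicative, it suffices to handle each factor. For the $\chi_M$-twist: lift $f$ to a characteristic-zero form in $M_k(\Gamma_0(N), \bar{\ZZ}_p)$; the classical Gauss-sum twist formula produces a modular form of level $\mathrm{lcm}(N, (m')^2) = N$, where $m' \mid M$ is the conductor of $\chi_M$ (using $(m')^2 \mid M^2 \mid N$), whose mod-$p$ reduction is the desired form. For the $\chi_p$-twist, work directly in characteristic $p$, exploiting the fact that a quadratic character of $p$-power modulus coincides mod $p$ with a power of $n$. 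Concretely: for $p$ odd and $\chi_p$ the Legendre symbol of conductor $p$, $\chi_p(n) \equiv n^{(p-1)/2} \pmod p$ for $\gcd(n,p)=1$, so $f_{\chi_p} = \theta^{(p-1)/2} f$; for $\chi_p$ the principal character of modulus $p^r$, $f_{\chi_p} = \theta^{p-1} f$; and for $p = 2$ with $\chi_p$ any quadratic character of $2$-power modulus, the values $\pm 1$ both reduce to $1 \in \FF_2$ on odd $n$, yielding $f_{\chi_p} = \theta f$. In each case the output lies in $M(N, \FF)$ since $\theta$ preserves $M(N, \FF)$ (\cref{verthetasec}).

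For (2) and (3) when $p$ is odd, write $f_{\chi, \pm} = \tfrac{1}{2}(f_{\chi_0} \pm f_\chi)$, where $\chi_0$ is the principal character of modulus $p^r M$ (which is quadratic in the trivial sense); both summands lie in $M(N, \FF)$ by (1). When $p = 2$, this identity collapses in characteristic $2$ since $\chi \equiv \chi_0 \pmod{2}$ on their common support, so instead I would lift $f$ to $\tilde f \in M_k(\Gamma_0(N), \bar{\ZZ}_2)$, form $\tilde f_{\chi, \pm} := \tfrac{1}{2}(\tilde f_{\chi_0} \pm \tilde f_\chi) \in \bar{\ZZ}_2\llbracket q \rrbracket$ (well-defined because the numerator has $2$-divisible $q$-expansion coefficients by the above congruence), and then reduce modulo $2$.

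\textbf{Main obstacle.} The hard part will be the mod-$2$ descent in (2) and (3). The characteristic-zero form $\tilde f_{\chi, \pm}$ is a priori of level $\mathrm{lcm}(N, (\mathrm{cond}\,\chi)^2)$, which can be as large as $4^r N$, so its reduction lives a priori in $M(4^r N, \FF_2)$ rather than $M(N, \FF_2)$. To descend, I would use that $\tilde f_{\chi, \pm}$ is supported on $n$ coprime to $2$, so its reduction is annihilated by $U_2$, combined with the geometric fact that $K(N \cdot 2^s, \FF_2) = K(N, \FF_2)$ for $s \geq 0$---a level-descent statement for $U_p$-kernels in the theory of mod-$p$ modular forms, rooted in the Hasse-invariant-controlled structure of $M(N \cdot 2^s, \FF_2)$. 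Identifying and citing (or proving) the precise descent statement in the form needed here will be the technical crux of the argument.
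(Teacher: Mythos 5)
Your part (1) is correct and in fact takes a cleaner route than the paper's: the paper lifts $f$ and twists by the full character $\chi$ in characteristic zero, which raises the level to $Np^{2r}$, whereas your factorization $\chi = \chi_p\chi_M$ handles the $M$-part at level $N$ (since $(\mathrm{cond}\,\chi_M)^2 \mid M^2 \mid N$) and the $p$-part entirely in characteristic $p$ via powers of $\theta$, which preserves $M(N, \FF)$ (\cref{verthetasec}); no level-at-$p$ issue ever arises. Your treatment of (2)--(3) for odd $p$ is likewise fine, modulo the small point that you need the principal character of modulus $p^rM$: its $p$-part is $\theta^{p-1}$ and its $M$-part is covered either by the same twisting theorem applied with modulus $M$ or by inclusion--exclusion with $V_dU_d$ and \cref{upvp}.

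The gap is exactly where you flag it, and it is not a minor one, because the $p = 2$ case of (2)--(3) is the case the paper actually needs (for $p=2$ the grading in \cref{gradingthm} requires splitting along a second quadratic character of $2$-power modulus). After lifting and forming $\tilde f_{\chi,\pm} = \tfrac12(\tilde f_{\chi_0} \pm \tilde f_\chi)$ you have characteristic-zero forms whose level at $2$ has gone up, and the descent of their reductions to $M(N, \FF_2)$ is left unproved; you propose to route it through an unproved statement $K(2^sN, \FF_2) = K(N, \FF_2)$. The paper closes precisely this point by citing Hatada \cite[Theorem 1]{hatada} (generalizing Serre's result for $p \geq 3$): every modular form of level $Np^{2r}$ is a $p$-adic limit of forms of level $N$, so the $\barr\FF_p$-reductions of the two spaces of forms coincide. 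This holds for $p = 2$ and for the full space of forms, so once it is invoked your argument goes through and the detour through $U_2$-kernels is unnecessary; without it (or an equivalent level-lowering-at-$p$ statement) the $p=2$ half of (2)--(3) remains incomplete.
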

\begin{proof}
We follow Jeremy Rouse's answer to \href{{https://mathoverflow.net/questions/202449}}{MathOverflow question \#202449}. Let $\tilde f \in M(N, \OO)$ be a lift of $f$ for some ring of integers $\OO$ with residue field $\FF$ in {a finite extension $L$ of $\QQ_p$}; we first show that the analogously defined characteristic-zero {forms} $\tilde f_\chi$, $\tilde f_{\chi, +}$, and $\tilde f_{\chi, -}$ are in {$M(Np^{2r}, \OO)$}. Indeed, since $\chi$ has order~$2$ and the square of its modulus divides $Np^{2r}$, the statement about $\tilde f_\chi$ follows from \cite[Theorem 7.4]{Iwaniec}. Let $S$ be the squarefree product of primes dividing $p^r M$; write $U_S$ as usual for the operator $\sum_n a_n q^n \mapsto \sum a_{nS} q^n$ and~$V_S$ for the operator $\sum_n a_n q^n \to \sum a_n q^{nS}$. By \cref{upvp} below, $\tilde f_{\chi, 0} : = V_S U_S \tilde f$ is in {$M(N p^{2r} , \OO)$}, so that $\tilde f_{\chi, +} = \frac{1}{2}(\tilde f  - \tilde f_{\chi, 0} + \tilde f_\chi)$ and $\tilde f_{\chi, -} = \frac{1}{2} (\tilde f - \tilde f_{\chi, 0} - \tilde f_\chi)$ are both in {$M(N p^{2r}, \OO)$}. 
Finally, $\barr\FF_p$-reductions of forms of level $N$ and of level $Np^{2r}$ coincide:  indeed, Hatada \cite[Theorem 1]{hatada}, generalizing earlier work of Serre for $p \geq 3$~\cite{serre:padic}, shows that every modular form of level $Np^{{2r}}$ is a $p$-adic limit of forms of level $N$.  
\end{proof}
\begin{mylemma}\label{upvp}
If $f$ is in $M(N, \ZZ)$ and $\ell$ is a prime, then 
$V_\ell U_\ell f \in M\big( \lcm(N, \ell^2), \ZZ\big)$. 
\end{mylemma}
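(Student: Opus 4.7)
The plan is to reduce to a single weight (using the weight decomposition of $M(N, \ZZ)$) and then verify integrality and modularity separately. For $f \in M_k(N, \ZZ)$ with $q$-expansion $\sum_n a_n q^n$, the form $V_\ell U_\ell f$ has $q$-expansion $\sum_{\ell \mid n} a_n q^n$, which is visibly in $\ZZ\lb q \rb$; so all that remains is to show that this $q$-expansion comes from a form of level $\Gamma_0(N')$, where $N' := \lcm(N, \ell^2)$. If $\ell \nmid N$, this is immediate from standard level-raising for $U_\ell$ and $V_\ell$ (each raises the level by a factor of $\ell$), and similarly if $\ell \| N$ (where $U_\ell$ preserves the level and $V_\ell$ raises it by $\ell$). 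The only nontrivial case is $\ell^2 \mid N$, in which $N' = N$ and one must show that $V_\ell U_\ell f$ has level $N$ rather than merely $N\ell$.

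To handle all three cases uniformly, I would invoke the classical averaging identity
$$V_\ell U_\ell f = \frac{1}{\ell} \sum_{j = 0}^{\ell-1} f \big|_k T_j, \qquad T_j := \begin{psmallmatrix} 1 & j/\ell \\ 0 & 1 \end{psmallmatrix} \in \sl_2(\QQ),$$
easily verified by $q$-expansion since $\frac{1}{\ell}\sum_j \zeta_\ell^{nj}$ equals $1$ or $0$ according as $\ell \mid n$. The central step is to show that for every $\gamma = \begin{psmallmatrix} a & b \\ c & d \end{psmallmatrix} \in \Gamma_0(N')$, the cosets $\{\Gamma_0(N) T_0, \ldots, \Gamma_0(N) T_{\ell-1}\}$ are permuted by right multiplication by $\gamma$: explicitly, I would check by direct computation that $T_j \gamma T_{\sigma(j)}^{-1} \in \Gamma_0(N)$ for the permutation $\sigma(j) \equiv a^{-2} j \pmod{\ell}$ (well-defined as $\ell \mid c$ and $\det \gamma = 1$ force $a$ to be invertible mod $\ell$). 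Summing $f \big|_k (T_j \gamma T_{\sigma(j)}^{-1}) = f$ over $j$ then yields $(V_\ell U_\ell f)\big|_k \gamma = V_\ell U_\ell f$, giving the desired $\Gamma_0(N')$-invariance.

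The main obstacle is the matrix computation verifying $T_j \gamma T_{\sigma(j)}^{-1} \in \Gamma_0(N)$: one must check that all four entries are integers and that the lower-left entry is divisible by $N$. Here both hypotheses $\ell^2 \mid N'$ and $N \mid N'$ are used crucially --- the former guarantees that the fractional entries arising from $jc/\ell$ and $c\sigma(j)/\ell$ actually collapse to integers (with the lower-left entry unchanged at $c$, hence divisible by $N' \geq N$), while the congruence $\sigma(j) \equiv a^{-2} j \pmod \ell$ is precisely what forces the top-right entry $b + (jd - a\sigma(j))/\ell - jc\sigma(j)/\ell^2$ to be integral. The computation is elementary but bookkeeping-heavy, and once in hand the conclusion assembles immediately.
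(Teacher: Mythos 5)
Your proof is correct, but it takes a genuinely different route from the paper. The paper argues purely at the level of "level bookkeeping": it notes the only nontrivial case is $\ell^2 \mid N$, and there it invokes Atkin--Lehner newform theory — any form of level $N$ is a combination of $\ell$-new forms (annihilated by $U_\ell$, since $a_\ell = 0$ for newforms of level divisible by $\ell^2$), forms from level $N/\ell$, and $V_\ell$ of forms from level $N/\ell$ — to conclude that $U_\ell$ actually drops the level to $N/\ell$, after which $V_\ell$ raises it back by a factor of $\ell$. You instead prove the statement uniformly in all cases by writing $V_\ell U_\ell f = \frac{1}{\ell}\sum_{j=0}^{\ell-1} f|_k \begin{psmallmatrix} 1 & j/\ell \\ 0 & 1 \end{psmallmatrix}$ and checking directly that for $\gamma = \begin{psmallmatrix} a & b \\ c & d \end{psmallmatrix} \in \Gamma_0(N')$, $N' = \lcm(N,\ell^2)$, one has $T_j \gamma T_{\sigma(j)}^{-1} \in \Gamma_0(N)$ with $\sigma(j) \equiv a^{-2} j \pmod{\ell}$; your matrix computation is right (the congruence $\sigma(j)\equiv a^{-1}d\, j \equiv a^{-2} j$ makes the top-right entry integral, and $\ell^2 \mid c$ handles the $jc\sigma(j)/\ell^2$ term), and reduction to a single weight is legitimate since the weight decomposition of $M(N,\QQ)$ is direct in characteristic zero. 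Your approach buys an elementary, self-contained argument with no input from newform theory and no case analysis; the paper's approach buys brevity plus the sharper intermediate fact that $U_\ell$ lowers the level from $N$ to $N/\ell$ when $\ell^2 \mid N$. The only (routine) point you leave implicit is holomorphy at the cusps of $V_\ell U_\ell f$, which follows since it is a finite sum of slashes $f|_k \alpha$ with $\alpha \in \gl_2^+(\QQ)$ of a holomorphic modular form.
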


\begin{proof}
Integrality of coefficients is clearly preserved, so it suffices to establish the level. 
The level of $U_\ell f$ is a priori $N\ell$ if $\ell\nmid N$ and $N$ if $\ell \mid N$. But in fact if $\ell^2 \mid N$, then $U_\ell f$ is of level $N / \ell$. Indeed, if $\ell^2 \mid N$, then any form in $M(N, \QQ)$ is a linear combination of $\ell$-new forms (which $U_\ell$ kills), forms from level $N/\ell$ (which~$U_\ell$ keeps at level $N/\ell$), and forms in the image of $V_\ell$ coming from level $N/\ell$ (which~$U_\ell$ sends back to level $N/\ell$). Finally, $V_\ell$ raises the level from $N \ell$ or $N/\ell$ by a factor of $\ell$.
\end{proof}

We now return to our setting of a fixed prime $p$ and a level $N$ prime to $p$. Let $\frat_p$ be the $2$-Frattini quotient of $G_{\QQ, p}$: that is, $\Pi_p = \gal(L_p/\QQ)$, where $L_2 = \QQ(\mu_8)$ and for $p$ odd $L_p$ is the quadratic subfield of~$\QQ(\mu_p)$. We also identify $\frat_p$ with the $2$-Frattini quotient of $\Gal(\QQ(\mu_{p^\infty})/\QQ) \simeq \ZZ_p^\times$: explicitly, $\frat_2 = (\ZZ/8\ZZ)^\times$ and for $p$ odd $\frat_p = \FF_p^\times/(\FF_p^\times)^2$. In this way, we may think of any $n \in \ZZ$ prime to $p$ as having a value in $\frat_p$.
Restriction to $L_p$ gives a quotient map $G_{\QQ, Np}  \onto \frat_p$ with the property that $\frob_\ell$ maps to 
$\ell$ 
for $\ell \nmid Np$ prime. For $i \in \frat_p$ and $\FF/\FF_p$, let $$K(N, \FF)^i := \{f \in K(N, \FF): a_n(f) \neq 0 \implies 
n= i \mbox{ in } \Pi_p\} \subset K(N, \FF).$$ 
The next lemma shows that Hecke operators act compatibly with this $\frat_p$-indexing:
 
\begin{mylemma}[cf. {\cite[(6) and ff.]{NS1}}] \label{heckegrade} For all $m$ prime to $p$ and $i \in \Pi_p$, the Hecke operator at $m$ maps 
{$K(N, \FF)^i$} into {$K(N, \FF)^{m i}$}. This includes $T_m$ if $m \nmid Np$ and $U_m$ if $m \mid N$. 
\end{mylemma}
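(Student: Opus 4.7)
The proof is a direct unpacking of the $q$-expansion formulas for Hecke operators, together with the crucial observation that $\Pi_p$ has exponent $2$, so that $\ell \equiv \ell^{-1}$ in $\Pi_p$ for every prime $\ell \nmid p$. This last fact will reconcile the two terms in the Hecke recurrence, which a priori shift indices in different ways.

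The plan is to reduce to primes and prime powers by multiplicativity of the Hecke operators (\cref{heckeopssec}), and then check each case by hand. First, for $\ell$ prime with $\ell \nmid Np$ and $f \in K(N, \FF)^i$ lying in the $k$-weight component (or a finite sum thereof by \eqref{weightgrade}), the formula
$$a_n(T_\ell f) \;=\; a_{n\ell}(f) + \ell^{k-1} a_{n/\ell}(f)$$
shows that $a_n(T_\ell f) \neq 0$ forces either $n\ell \equiv i$ or $n/\ell \equiv i$ in $\Pi_p$; since $\ell^2 = 1$ in $\Pi_p$, both conditions yield $n \equiv i\ell \pmod{\Pi_p}$, as desired. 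Second, for $\ell \mid N$, the formula $a_n(U_\ell f) = a_{n\ell}(f)$ gives the same conclusion more directly. In both cases $T_\ell f$ (resp.\ $U_\ell f$) is still in $K(N, \FF)$ because $U_p$ commutes with all other Hecke operators (\cref{heckeopssec}).

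For prime powers $\ell^r$ with $\ell \nmid N$, I would induct on $r$ using the recurrence $T_{\ell^r} = T_\ell T_{\ell^{r-1}} - \ell^{k-1} T_{\ell^{r-2}}$: by induction the first term shifts the $\Pi_p$-grading by $\ell\cdot \ell^{r-1} = \ell^r$ and the second by $\ell^{r-2}$, and these agree in $\Pi_p$ because $\ell^2 = 1$ there. For prime powers $\ell^r$ with $\ell \mid N$, the operator $U_\ell^r$ is a composition of operators each shifting by $\ell$, and so shifts by $\ell^r = m$. Finally, for general $m$ prime to $p$, multiplicativity of Hecke operators at coprime indices assembles the prime-power cases into the full statement.

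There is no real obstacle; the only subtlety is the bookkeeping reminder that $\Pi_p$ is an elementary abelian $2$-group, which makes the second term in the Hecke recurrence compatible with the first despite involving $n/\ell$ rather than $n\ell$. One could alternatively prove the lemma by invoking \cref{charsplit}: write $K(N, \FF)^i$ as a joint $\pm 1$-eigenspace for the projectors $f \mapsto f_{\chi,\pm}$ as $\chi$ runs over quadratic characters of conductor dividing $p$ cutting out $\Pi_p$, and observe that each $T_m$ intertwines these projectors in the expected way. The direct $q$-expansion approach above seems cleaner.
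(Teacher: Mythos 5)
Your proof is correct and follows essentially the same route as the paper's: reduce to prime-power index by multiplicativity, use the $q$-expansion formulas for $T_\ell$ and $U_{\ell^r}$, observe that $\ell \equiv \ell^{-1}$ in $\Pi_p$ since it is an elementary $2$-group, and handle $T_{\ell^r}$ by induction via the recurrence. The extra remark that the image stays in $K(N,\FF)$ because the operators commute with $U_p$ is a harmless (and correct) addition.
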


\begin{proof} Since the Hecke operators are multiplicative at relatively prime indices, it suffices to show this for prime-power-index Hecke operators. First let $\ell \nmid Np$ be a prime. 
For a form $f$ coming from weight $k$
the Fourier coefficients of $T_\ell f$ are well known: $$a_n(T_\ell f) = a_{\ell n}(f) + \ell^{k-1} a_{n/\ell}(f),$$ where $a_{n/\ell}(f) = 0$ if $\ell \nmid n$. The claim for $T_\ell$ follows {since $\Pi_p$ is an elementary $2$-group, so that 
$\ell \equiv \ell^{-1}$} in~$\Pi_p$.
For $r \geq 2$ the claim for $T_{\ell^r}$ follows by induction, since $T_{\ell^r} = T_\ell T_{\ell^{r -1}} - \ell^{k-1} T_{\ell^{r-2}}$. On the other hand, if~$\ell$ is a prime with $\ell^r \mid N$, then $a_n(U_{\ell^r} f) = a_{n\ell^r} (f)$ so that the claim for $U_{\ell^r}$ follows. 
\end{proof}  

Let $\rhobar$ be a $\Gamma_0(N)$-modular representation defined over $\FF$, and set $K(N, \FF)_\rhobar^i := K(N, \FF)^i \cap K(N, \FF)_\rhobar.$

\begin{mythm}\label{gradingthm}
\label{cm} Suppose that $p = 2$ or $p$ is odd and $\rhobar \cong \rhobar \otimes \omega^{\frac{p-1}{2}}$. 
\begin{enumerate}[itemsep = 3pt]
\item\label{formgrade} The space of forms has a natural $\frat_p$-grading: $K(N, \FF)_\rhobar= \bigoplus_{i \in \frat_p} K(N, \FF)_\rhobar^i$.
\item\label{alggrade} The Hecke algebra $A(N, \FF)_\rhobar$ has a natural $\frat_p$-grading $A(N, \FF)_\rhobar = \bigoplus_{i \in \frat_p} A(N, \FF)_\rhobar^i$, where for $m \nmid Np$ we have $T_m \in A(N, \FF)_\rhobar^m$. The decomposition from \eqref{formgrade} endows $K(N, \FF)_\rhobar$ with the structure of a $\frat_p$-graded $A(N, \FF)_\rhobar$-module.
\item\label{psrepgrade} The decomposition from \eqref{alggrade} and the quotient map $G_{\QQ, Np} \onto \frat_p$ gives a $\frat_p$-grading to the pseudorepresentation $\tau^\hecke_{\rhobar}$. 
\end{enumerate}
\label{sum} Moreover, \eqref{formgrade}--\eqref{psrepgrade} hold for  $A(N, \FF)_\rhobar \times A(N, \FF)_{\rhobar \otimes \omega^{(p-1)/{2}}}$ acting on $K(N, \FF)_\rhobar \oplus K(N, \FF)_{\rhobar \otimes \omega^{({p-1})/{2}}}$ and 
\label{whole}for $A(N, \FF_p)$ acting on $K(N, \FF_p)$.  
\end{mythm}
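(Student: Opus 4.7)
The plan is to prove all three parts in the order given by first establishing the $\Pi_p$-grading on the full ring $A(N, \FF_p)$ and the full module $K(N, \FF_p)$---that is, the ``whole $A(N, \FF_p)$'' clause of the moreover, which requires no hypothesis on $\rhobar$---and then descending to the $\rhobar$-local components under the hypothesis. The $q$-expansion-level splitting of $K(N, \FF_p)$ via \cref{charsplit} is the anchor; everything else is deduced from it using faithfulness of the Hecke action and Chebotarev density.

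I would first construct the grading on $K(N, \FF_p)$. For odd $p$, the unique nontrivial quadratic character of $\Pi_p$ is $\omega_p^{(p-1)/2}$ of modulus $p$, and \cref{charsplit} directly yields $K(N, \FF_p) = K(N, \FF_p)^{+1} \oplus K(N, \FF_p)^{-1}$. For $p = 2$, $\Pi_2 \cong (\ZZ/2\ZZ)^2$, so iterating \cref{charsplit} with two independent quadratic characters of conductor $8$ yields a four-part decomposition $K(N, \FF_2) = \bigoplus_{i \in \Pi_2} K(N, \FF_2)^i$. Each summand remains in $K(N, \FF_p)$ because restricting the support of a $q$-expansion to indices coprime to $p$ cannot introduce new nonzero coefficients at multiples of $p$, and directness is immediate from the $q$-expansion principle. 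Next, by \cref{heckegrade} every $T_m$ with $m \nmid Np$ shifts this grading by $m \in \Pi_p$. Embedding $A(N, \FF_p) \hookrightarrow \End_{\FF_p}\!\big(K(N, \FF_p)\big)$ via the faithful duality \eqref{dual}, I endow $\End(K)$ with the natural $\Pi_p$-grading by shift-degree $\End(K)^j := \{\phi : \phi(K^i) \subseteq K^{ji}\ \forall i\}$ and define $A(N, \FF_p)^j := A(N, \FF_p) \cap \End(K)^j$. These are closed subgroups with $A(N, \FF_p)^i\, A(N, \FF_p)^j \subseteq A(N, \FF_p)^{ij}$ containing every homogeneous $T_m$, so their sum is a closed subring containing a topological generating set of $A(N, \FF_p)$ and therefore equals $A(N, \FF_p)$. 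Directness descends from the ambient grading on $\End(K)$, and the $A$-module structure on $K$ is graded by construction. For part~(3), by construction $\tau^\hecke_{p,N}(\frob_\ell) = T_\ell \in A(N, \FF_p)^\ell$ and $\pi(\frob_\ell) = \ell$ under the restriction map $\pi: G_{\QQ, Np} \onto \Pi_p$ to $L_p$; Chebotarev density, continuity of $\tau^\hecke_{p,N}$, and closedness of each $A(N, \FF_p)^j$ then force $\tau^\hecke_{p,N}(g) \in A(N, \FF_p)^{\pi(g)}$ for every $g \in G_{\QQ, Np}$.

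Descending to local components amounts to showing that each $\rhobar$-generalized eigenspace $K(N, \FF)_\rhobar$ is stable under the character-twisting operations of \cref{charsplit}---equivalently, that the idempotent projector $K(N, \FF) \twoheadrightarrow K(N, \FF)_\rhobar$ commutes with the $\Pi_p$-grading. For $p = 2$, this is automatic: quadratic Dirichlet characters have image $\{\pm 1\}$, which collapses to $\{1\}$ in $\FF_2^\times$, so twisting a mod-$2$ Galois pseudorepresentation by any such $\chi$ is the identity and no $\rhobar$-mixing occurs. For odd $p$, the only relevant character is $\omega_p^{(p-1)/2}$, and the hypothesis $\rhobar \otimes \omega_p^{(p-1)/2} \cong \rhobar$ ensures the twist preserves $\rhobar$. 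The intermediate moreover clause for $A(N, \FF)_\rhobar \times A(N, \FF)_{\rhobar \otimes \omega_p^{(p-1)/2}}$---where the hypothesis is dropped---follows from the same argument applied to this twist-stable pair of components, which is always preserved whether or not $\rhobar \cong \rhobar \otimes \omega_p^{(p-1)/2}$.

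The main obstacle is the compatibility of the purely $q$-expansion-theoretic splitting from \cref{charsplit} with the Galois-theoretic $\rhobar$-decomposition. \cref{charsplit} is proved by lifting to characteristic zero, twisting by $\chi$, descending levels via Hatada's mod-$p$ descent theorem, and reducing mod $p$; one must carefully verify that the associated Galois pseudorepresentation of the twisted piece differs from that of the original only by the character twist, so that under our hypothesis the $\rhobar$-component is preserved setwise. This is the one place where either $p = 2$ or the hypothesis on $\rhobar$ is genuinely used; the remainder of the argument is a clean assembly of \cref{charsplit,heckegrade}, the faithful duality \eqref{dual}, and Chebotarev.
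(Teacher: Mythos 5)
Your proposal is correct and follows essentially the same route as the paper: the $q$-expansion splitting of \cref{charsplit}, the Hecke-shift lemma (\cref{heckegrade}), the definition of the algebra grading by shift degree on the faithful module together with topological generation by the $T_m$, and Chebotarev density plus continuity and closedness of the graded pieces for the pseudorepresentation. The only difference is organizational: you prove the hypothesis-free statement for all of $K(N, \FF_p)$ and $A(N, \FF_p)$ first and then descend to the $\rhobar$-component, whereas the paper works with $K(N, \FF)_\rhobar$ directly --- invoking the hypothesis $\rhobar \cong \rhobar \otimes \omega_p^{(p-1)/2}$ (automatic for $p=2$) at exactly the same point you do, namely to see that the support-splitting preserves the $\rhobar$-component --- and treats the global and paired statements as analogous.
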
 

\begin{proof}
\begin{enumerate}[itemsep = 3pt, topsep = -5pt]
\item Fix a quadratic Dirichlet character $\chi$ on $\frat_p$, so that {$\chi = \omega_p^{\frac{p-1}{2}}$} if $p$ is odd.
For $\eps = \pm 1$, let $K(N, \FF)^{\chi, \eps}_\rhobar = \{g \in K(N, \FF)_\rhobar : a_n(g) \neq 0 \mbox{ only if } \chi(n) = \eps \}$. For $f \in K(N, \FF)_\rhobar$ we have, by the assumption that $\rhobar \cong \rhobar \otimes \chi$ and in the notation of \cref{charsplit}, $f_{\chi, \eps}$ in $K(N, \FF)_\rhobar^{\chi, \eps}$, so that $K(N, \FF)_{\rhobar} = K(N, \FF)_{\rhobar}^{\chi, +} \oplus K(N, \FF)_{\rhobar}^{\chi, -}$, completing the proof for $p$ odd. For $p = 2$, decompose each {$K(N, \FF)_{\rhobar}^{\chi, \eps}$} further into a direct sum of two pieces corresponding to the values of a second quadratic Dirichlet character in {$\Pi_p$}.
\item Follows formally from \eqref{formgrade} and \cref{heckegrade} as follows. Let $B = A(N, \FF)_\rhobar$. For $i \in \frat_p$, let 
$$B^i := \big\{ T \in A(N, \FF)_\rhobar \mid \mbox{ for all $j \in \frat_p$, } T K(N, \FF)_\rhobar^j \subseteq K(N, \FF)^{ij}_\rhobar\big\}.$$
By considering finite weight and taking limits we see that each $B^i$ is a closed $\FF$-submodule of $B$.
Moreover, $1$ is in $B^1$ and $B^i B^j \subseteq B^{ij}$; by considering $q$-expansions and using \eqref{formgrade}, it's clear that the sum of the $B^i$ inside $B$ is direct, so that $B' := \bigoplus_{i \in \frat_p} B^i$ is a $\Pi_p$-graded $\FF$-algebra. Finally by \cref{heckegrade} for each $m \nmid Np$ we have $T_m \in {B^{m}}$. These operators generate $B$, so that $B = B'$. 
\item For every $i \in \frat_p$, the coset $G_{\QQ,Np}^i$ is closed in $G_{\QQ,Np}$.
By the Chebotarev density theorem, 
the $\frob_\ell$-conjugacy classes for those primes $\ell \nmid Np$ with $\ell \equiv i$ in $\frat_p$
are dense in  $G_{\QQ, Np}^i$. 
Since $\tau^\hecke_{\rhobar}(\frob_\ell) = T_\ell$ is in $A(N, \FF)_\rhobar^{{\ell}}$ by \eqref{alggrade}, and $\tau^\hecke_{\rhobar}$ is continuous with each $A(N, \FF)_\rhobar^i$ closed, the claim follows. 
\end{enumerate}
The proofs of the second and third statement are analogous. \qedhere
\end{proof}

\begin{mycor}\label{apfanewgrading} Under the conditions of \cref{gradingthm} we additionally have a $\frat_p$-grading on $A(N, \FF)_\rhobar^\pf$, and, if $N$ is prime, on $A(N, \FF)_\rhobar^\new$. These gradings are compatible with their structures as $A(N, \FF)_\rhobar$-algebras and their action on $M(N,\FF)_{\rhobar}$.
\end{mycor}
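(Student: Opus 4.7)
The plan is to adapt the proof of \cref{gradingthm}(\ref{alggrade}) to the partially full algebra $A(N,\FF)_\rhobar^\pf$ and to the new quotient $A(N,\FF)_\rhobar^\new$. For the partially full algebra, set
$$A(N, \FF)_\rhobar^{\pf, i} := \big\{ T \in A(N, \FF)_\rhobar^\pf \,:\, T K(N,\FF)_\rhobar^j \subseteq K(N,\FF)_\rhobar^{ij} \text{ for all } j \in \frat_p \big\}.$$
Following the proof of \cref{gradingthm}(\ref{alggrade}) verbatim, each $A^{\pf,i}$ is closed, $1 \in A^{\pf,1}$, $A^{\pf,i} \cdot A^{\pf,j} \subseteq A^{\pf,ij}$, and the sum $\sum_i A^{\pf,i}$ is direct by the faithful action of $A^\pf$ on $K(N,\FF)_\rhobar = \bigoplus_j K(N,\FF)_\rhobar^j$. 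The new ingredient is \cref{heckegrade} applied to the generators $U_\ell$ for $\ell \mid N$: combined with the $T_m$ for $m \nmid Np$, these topologically generate $A^\pf$ and each lies in the appropriate graded piece, so $A^\pf = \bigoplus_{i \in \frat_p} A^{\pf,i}$. Compatibility with the $A(N,\FF)_\rhobar$-algebra structure is immediate from $A(N,\FF)_\rhobar^i \subseteq A^{\pf,i}$.

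For the new Hecke algebra with $N$ prime, set $A^{\new,i} := \pi^\new\big(A(N,\FF)_\rhobar^i\big)$; the relations $A^{\new,i} \cdot A^{\new,j} \subseteq A^{\new,ij}$ and $A^\new = \sum_i A^{\new,i}$ are automatic, and the task is to show directness, i.e.\ that $\ker \pi^\new = \mathrm{Ann}_A\big(M(N,\FF)_\rhobar^\new\big)$ is a $\frat_p$-homogeneous ideal. The first step is that $M^\new$ is preserved by the quadratic character twists $f \mapsto f_\chi$ of \cref{charsplit}: for $\chi$ of conductor a power of $p$ (hence coprime to $N$), a direct $q$-expansion computation gives $U_N f_\chi = \chi(N)(U_N f)_\chi$ and hence $U_N^2 f_\chi = (U_N^2 f)_\chi$ (using $\chi(N)^2 = 1$), while the weight-scaling $\mathcal{S}_N$ commutes with twists; thus the defining relation $(U_N^2 - N^{-2}\mathcal{S}_N)f = 0$ is preserved. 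Consequently $K^\new := M^\new \cap K(N,\FF)_\rhobar$ decomposes as $\bigoplus_i (K^i \cap M^\new)$, making it a $\frat_p$-graded $A$-submodule of $K$ whose annihilator $\mathrm{Ann}_A(K^\new)$ is homogeneous by the argument of the first paragraph.

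The second step uses the Verschiebung to relate the annihilator on $M^\new$ to that on $K^\new$. Fermat's little theorem (in the form $\ell^{pk-1} \equiv \ell^{k-1} \pmod p$ for $\ell \nmid Np$, and $N^{p-1} \equiv 1 \pmod p$ for $\mathcal{S}_N$) shows that $V_p$ commutes in characteristic $p$ with every topological generator of $A^\pf$ and with $U_N^2 - N^{-2}\mathcal{S}_N$, so $V_p$ preserves $M^\new$ and is $A^\pf$-equivariant. The decomposition $M = K \oplus \im V_p$ restricts to $M^\new = K^\new \oplus V_p(M^\new)$, and iterating --- together with the $A$-module isomorphism $V_p: M^\new \toiso V_p(M^\new)$ --- should yield $\mathrm{Ann}_A(M^\new) = \mathrm{Ann}_A(K^\new)$, completing the proof. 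The same reasoning, with $A^\pf$ in place of $A$ throughout, handles the partially full new Hecke algebra $A(N,\FF)_\rhobar^{\pf, \new}$.

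I expect the main subtlety to lie in this last iteration: one needs to verify that the $V_p$-filtration on $M^\new$ is sufficiently separated that an element of $A$ annihilating every graded piece $V_p^n K^\new$ must in fact annihilate all of $M^\new$. Since $V_p$ multiplies the weight filtration by $p$ (see \eqref{ver}) while every form has finite weight filtration, one has $\bigcap_n V_p^n M \subseteq \FF \cdot 1$; the residual constant case, which can arise in $M^\new$ when the trivial representation contributes (as for $\rhobar = \one$ in characteristic $2$), requires a separate check that the graded pieces of any annihilator act trivially on the constant function.
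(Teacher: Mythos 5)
Your treatment of $A(N,\FF)_\rhobar^\pf$ is the paper's argument: define the graded pieces by their action on the $K(N,\FF)_\rhobar^j$, use the perfect pairing of \cref{Apfsec} for directness, and invoke \cref{heckegrade} for the extra generators $U_\ell$, $\ell\mid N$. For $A(N,\FF)_\rhobar^\new$ your two main points also match the paper in substance: your twist computation showing that $K(N,\FF)_\rhobar^\new$ is a graded submodule is equivalent to (but more roundabout than) the paper's one-line observation that $U_N^2-N^{-2}{\mathcal S}_N$ is $1$-graded (by \cref{heckegrade}, since $N^2=1$ in $\frat_p$ and ${\mathcal S}_N$ acts on the $\rhobar$-component by the scalar $N^{k_\rhobar}$), and homogeneity of $\mathrm{Ann}(K^\new)$ then gives homogeneity of $\ker\pi^\new$ once one knows $\mathrm{Ann}_A\big(M(N,\FF)_\rhobar^\new\big)=\mathrm{Ann}_A\big(K(N,\FF)_\rhobar^\new\big)$, i.e.\ that $A(N,\FF)_\rhobar^\new$ acts faithfully on $K(N,\FF)_\rhobar^\new$.

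That faithfulness is exactly where your proposal is incomplete, and the case you defer is not vacuous: in the paper's main setting ($p=2$, $\rhobar=\one$, $N$ prime) the constant $1$ satisfies $U_N 1=1$, so $1\in M(N,\FF)^\new_\one$ but $1\notin K(N,\FF)$, and your $V_p$-dévissage (which otherwise terminates fine by \eqref{ver} and finiteness of the weight filtration) only shows that an operator killing $K^\new$ kills $\bigoplus_n V_p^n K^\new$, not the constants; so the "separate check" you flag is genuinely needed. The paper's framework disposes of it without any dévissage: take $J\subseteq A(N,\FF)^\pf_\rhobar$ the closed ideal generated by $U_N^2-N^{-2}{\mathcal S}_N$ (an honest element of $A(N,\FF)^\pf_\rhobar$, since ${\mathcal S}_N$ is a scalar there); then $K(N,\FF)_\rhobar[J]=K(N,\FF)^\new_\rhobar$, the duality \eqref{dualJ} gives $\mathrm{Ann}_{A^\pf_\rhobar}\big(K(N,\FF)^\new_\rhobar\big)=J$, and $J$ visibly annihilates $M(N,\FF)^\new_\rhobar$, so any Hecke operator killing $K^\new$ kills $M^\new$ — constants included. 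Alternatively your route can be completed directly by checking that an operator annihilating the nonzero space $K(N,\FF)^\new_\rhobar$ must lie in $\mm(N,\FF)_\rhobar$ (the maximal ideal acts nilpotently on each finite-dimensional stable piece) and that $\mm(N,\FF)_\rhobar$ kills the constants whenever they lie in $M(N,\FF)_\rhobar$; but the duality is the intended one-line fix, and with it your argument agrees with the paper's proof.
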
 

\begin{proof} For $A(N, \FF)_\rhobar^\pf$, mimic the argument in \cref{gradingthm}\eqref{alggrade}, noting that \cref{heckegrade} covers all Hecke operators topologically generating $A(N, \FF)_\rhobar^\pf$. For $A(N, \FF)_\rhobar^\new$, first refine \cref{gradingthm}\eqref{formgrade} to give a grading on $K(N, \FF)_\rhobar^\new := K(N, \FF)_\rhobar \cap M(N, \FF)^\new$, on which $A(N, \FF)_\rhobar^\new$ acts faithfully. Namely, if $f \in K(N, \FF)_\rhobar^\new$ decomposes as $f = \sum_{i \in \frat_p} f_i$ with $f_i \in K(N, \FF)_\rhobar^i$, then in fact each $f_i$ is in $K(N, \FF)_\rhobar^\new$. Indeed, by \cref{heckegrade}, the operator $U_N^2 - N^{-2} S_N$ is $1$-graded and maps each $f_i$ to $K(N, \FF)_\rhobar^i$, so if it annihilates $f$, then it must annihilate each $f_i$. The grading on $A(N, \FF)_\rhobar^\new$ then follows as in \cref{gradingthm}\eqref{alggrade}.
\end{proof} 

\section{The level-$N$ shape deformation condition}\label{steinberg}

Recall that we are assuming that $N$ is a prime different from $p$. In level $1$, we expect $\hat\RRR(1, \FF)_\rhobar$ and $A(1, \FF)_\rhobar$ to be isomorphic reasonably often (details in \cite{BK}; in fact we do not know of any counterexamples). But already in prime level $N$ this is an unreasonable expectation: on the modular forms side, the prime-to-$p$ Artin conductor of a characteristic-zero $\Gamma_0(N)$-modular representation divides $N$ \cite{Car}.
But on the deformation side, the ramification at $N$ is unrestricted. 
In other words, by design and definition, $\hat \RRR(N, \FF)_\rhobar = \hat \RRR(N^j, \FF)_\rhobar$ for any $j \geq 1$, whereas a priori one expects $A(N^2, \FF)_\rhobar$ to surject onto $A(N, \FF)_\rhobar$ with nontrivial kernel.\footnote{In fact, one knows that $A(N^j, \FF)_\rhobar$ stabilizes for $j \gg 0$. For example, 
if $\rhobar$ is modular of level $1$, then it follows from \cite[Proposition 2]{Ca} that $A(N^j, \FF)_\rhobar = A(N^2, \FF)_\rhobar$ for every $j \geq 2$.\label{an2}}

In short, to compare a Hecke algebra of prime level to a deformation ring, we will have to impose a deformation condition. We write $G = G_{\QQ, Np}$, $D_N = D_N(G)$ and $I_N = I_N(G)$ for brevity.

\subsection{Pseudorepresentations of level-$N$ shape}\label{artindef}

Let $B$ be a $\ZZ_p$-algebra, and $(t, d): G \to B$ a pseudorepresentation with $d$ a power of $\chi_p$. We will say that $(t, d)$ has {\bf level-$N$ shape} if 
\begin{equation}
\fbox{for every $d \in D_N$ and $i \in I_N$, we have $t(di) = t(d)$.} 
\end{equation}
Equivalently, $(t, d)$ has level-$N$ shape if the kernel of $\left.(t, d)\right|_{D_N}$ contains $I_N$ (see  \eqref{kerdef}).

The level-$N$-shape condition is meant to capture the notion of a representation having Artin conductor dividing $N$ for pseudorepresentations. Recall that Artin conductor of a Galois representation is defined by measuring dimensions of subspaces of invariants by filtrations of inertia groups. It is not clear how to extend this notion to general Galois pseudorepresentations, as there is no underlying space on which the Galois group is acting.
But in fact, we will show that {a representation has prime-to-$p$ Artin conductor dividing $N$ if and only if its associated pseudorepresentation has level-$N$ shape.}

Let $K$ be a field that {is also a $\ZZ_p$-algebra}, and $\rho: G \to \gl_K(V)$ a two-dimensional representation with $\det \rho$ a power of $\chi_p$. Recall that $\rho$ has \emph{prime-to-$p$ Artin conductor $N$} (respectively, $1$) if the {inertial} invariants $V^{I_N}$ form a one-dimensional (respectively, two-dimensional) subspace of $V$. 

\begin{myprop}\label{fieldnshape} Let $K$, $(\rho, V)$ be as above; let $(t,d) = (\tr \rho, \det \rho)$. Then the following are equivalent. 
\begin{enumerate}[topsep = -5pt]
\item \label{artinn} $\rho$ has prime-to-$p$ Artin conductor dividing $N.$
\item \label{unipn} $\left.\rho\right|_{I_N}$ is unipotent. 
\item \label{twon} $\left.(t, d)\right|_{I_N} = (2,1)$. 
\item \label{decompn} $\left.(t,d)\right|_{D_N}$ splits over $\barr K$ as a sum of two unramified characters.
\item \label{shapen} $t$ has level-$N$ shape.
\end{enumerate}
\end{myprop}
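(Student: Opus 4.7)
The plan is to prove the implications in a cycle rather than each pairwise equivalence, along with a side implication for \eqref{decompn}. The key observation driving most of the argument is that $\det \rho = \chi_p^j$ is unramified at $N$ (since $N \neq p$), so $d|_{I_N} \equiv 1$ for free.

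First I would tackle \eqref{shapen} $\Longrightarrow$ \eqref{twon}: taking $d = 1$ in the level-$N$ shape equation $t(di) = t(d)$ yields $t(i) = t(1) = 2$ for all $i \in I_N$; combined with the free fact $d|_{I_N} \equiv 1$, this gives $(t,d)|_{I_N} = (2,1)$.

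Next, for \eqref{twon} $\Longrightarrow$ \eqref{unipn}: the hypothesis says the characteristic polynomial of each $\rho(i)$ for $i \in I_N$ is $(X-1)^2$, so each $\rho(i)$ is individually unipotent. Then Kolchin's theorem (applied after passing to $\bar K$) gives a common fixed line for $\rho(I_N)$, so that $\rho|_{I_N}$ lands in the unipotent upper-triangular subgroup of $\gl_2(\bar K)$. (Alternatively, $V^{I_N}$ is nonzero because each $\rho(i)$ has $1$ as an eigenvalue and $\rho(I_N)$ is generated by commuting-up-to-conjugation unipotents.)

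For \eqref{unipn} $\Longrightarrow$ \eqref{artinn}: if $\rho|_{I_N}$ is unipotent, then $V^{I_N}$ is either all of $V$ (Artin conductor $1$) or a $1$-dimensional subspace (Artin conductor~$N$, since Swan vanishes because $N \neq p$ makes wild-inertia-at-$N$ pro-$N$ while $\rho$ is valued in a pro-$p$-friendly group and tameness is automatic for the unipotent image). Either way, the prime-to-$p$ Artin conductor divides~$N$.

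To close the cycle \eqref{artinn} $\Longrightarrow$ \eqref{shapen}: conductor dividing $N$ means $V^{I_N}$ is nonzero and hence $D_N$-stable (as $I_N$ is normal in $D_N$). Pick a basis of $V$ extending a basis of $V^{I_N}$; then $\rho(d) = \bigl(\begin{smallmatrix} a & b \\ 0 & c \end{smallmatrix}\bigr)$ and $\rho(i) = \bigl(\begin{smallmatrix} 1 & n_i \\ 0 & 1 \end{smallmatrix}\bigr)$ (the bottom-right of $\rho(i)$ is $1$ because $\det\rho|_{I_N} = 1$). A direct computation gives $\tr\bigl(\rho(d)\rho(i)\bigr) = a + c = \tr\rho(d)$, which is the level-$N$ shape identity.

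Finally, \eqref{unipn} $\Longleftrightarrow$ \eqref{decompn}: from \eqref{unipn}, $\rho|_{D_N}$ preserves the line $V^{I_N}$ on which $I_N$ acts trivially; the quotient $V/V^{I_N}$ also sees trivial $I_N$-action (since $\rho|_{I_N}$ is unipotent upper triangular). Thus the semisimplification of $\rho|_{D_N}$ over $\bar K$ is a direct sum of two unramified characters, and the pseudorepresentation $(t,d)|_{D_N}$ depends only on the semisimplification. Conversely, if $(t,d)|_{D_N} = \psi_1 \oplus \psi_2$ with both $\psi_j$ unramified, then restricting to $I_N$ gives $t|_{I_N} = 2$ and $d|_{I_N} = 1$, i.e., \eqref{twon}, which has already been linked to \eqref{unipn}.

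The main obstacle is being careful about the passage from a pseudorepresentation identity (like level-$N$ shape) to a statement about a genuine representation $\rho$; this is essentially trivial here because $K$ is a field, so $\rho$ genuinely exists and $(t,d) = (\tr\rho, \det\rho)$. The only mild subtlety is verifying that the level-$N$ shape identity as formulated only on elements of the form $di$ with $d \in D_N$, $i \in I_N$ really does capture the Artin-conductor-divides-$N$ condition; the key ingredient making this work is the normality of $I_N$ in $D_N$, which forces $V^{I_N}$ to be $D_N$-stable.
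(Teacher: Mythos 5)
Your proof is correct, but it routes the implications differently from the paper. You close the cycle $\eqref{shapen}\Rightarrow\eqref{twon}\Rightarrow\eqref{unipn}\Rightarrow\eqref{artinn}\Rightarrow\eqref{shapen}$ with $\eqref{unipn}\Leftrightarrow\eqref{decompn}$ as a side branch, whereas the paper proves $\eqref{artinn}\Leftrightarrow\eqref{unipn}\Rightarrow\eqref{decompn}\Rightarrow\eqref{twon}\Rightarrow\eqref{unipn}$ and $\eqref{decompn}\Rightarrow\eqref{shapen}\Rightarrow\eqref{twon}$. Two genuine differences worth noting. For $\eqref{twon}\Rightarrow\eqref{unipn}$ you invoke Kolchin's theorem to simultaneously triangularize the unipotent elements, while the paper uses Brauer--Nesbitt: $(t,d)|_{I_N}=(2,1)$ says the semisimplification of $\rho|_{I_N}$ is trivial, so $\rho|_{I_N}$ is unipotent without any separate triangularization argument --- cleaner in the pseudorepresentation spirit. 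Your direct matrix computation for $\eqref{artinn}\Rightarrow\eqref{shapen}$ (exploiting the $D_N$-stability of $V^{I_N}$, which you correctly identify as the crucial point) is an attractive shortcut that the paper takes the scenic route around, going through $\eqref{decompn}$ instead; it also has the virtue of making the role of normality explicit. One small redundancy: your Swan-conductor aside in $\eqref{unipn}\Rightarrow\eqref{artinn}$ is harmless but unnecessary, since the paper simply \emph{defines} ``prime-to-$p$ Artin conductor $1$ or $N$'' in terms of $\dim V^{I_N}$, so the implication is immediate from the nonvanishing of $V^{I_N}$ for unipotent $\rho|_{I_N}$.
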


\begin{proof} We show \eqref{artinn} $\iff$ \eqref{unipn} $\implies$  \eqref{decompn} $\implies$ \eqref{twon} $\implies$ \eqref{unipn} and \eqref{decompn} $\implies$ \eqref{shapen} $\implies$ \eqref{twon}.
If $\rho$ has Artin conductor dividing $N$, then $\left. \rho \right|_{I_N}$ is reducible: if it is not trivial, then {$\left. \rho \right|_{I_N}$} has a one-dimensional invariant line {$L \subset V$} and {$I_N$} acts through a character on the quotient $V/L$; since $\det \rho$ is unramified at $N$, this character is trivial. In either case, $\left. \rho \right|_{I_N}$ is unipotent. The converse also holds, so \mbox{\eqref{artinn} $\iff$~\eqref{unipn}}. If $\left. \rho \right|_{I_N}$ is unipotent, then since $I_N$ is normal in $D_N$ with abelian quotient, $\left.\rho \right|_{D_N}$ is upper-triangularizable, possibly after a quadratic extension. Indeed, the normality guarantees that $D_N$ preserves the $1$-eigenspace of $I_N$, on which it acts through its abelian quotient $D_N/I_N$, so that there's a common eigenvector; extending $K$ may be necessary if $I_N$ acts trivially. So \eqref{unipn} implies~\eqref{decompn}. The implication \eqref{decompn} $\implies$ \eqref{twon} is clear. If $\left.(t, d)\right|_{I_N} = (2, 1)$, then by the Brauer-Nesbitt theorem the semisimplification of $\left.\rho\right|_{I_N}$ is trivial, so that $\left.\rho\right|_{I_N}$ is unipotent, so that \eqref{twon}~$\implies$~\eqref{unipn}.   If $\left.(t,d) \right|_{D_N}$ is a sum of unramified characters, then the semisimplification of $\left.\rho\right|_{D_N}$ contains $I_N$ in its kernel. Since semisimplifying $\rho$ does not change its pseudorepresentation, we conclude that $I_N$ is contained in $\ker \left.(t,d) \right|_{D_N}$, so that \eqref{decompn} implies \eqref{shapen}. Finally, if $I_N$ is in the kernel of {$(t,d)|_{D_N}$}, then~$t(i\cdot 1) = t(1) = 2$ for all $i \in I_N$. Therefore \eqref{shapen} implies \eqref{twon}.
\end{proof}

\subsection{Level-$N$ shape as a deformation condition}\label{levelnshapej}

Suppose $\rhobar: G_{\QQ, Np} \to \gl_2(\FF)$ is a semisimple representation with prime-to-$p$ Artin conductor {dividing} $N$. Let $\mathcal D_\rhobar$ be the functor from $\mathcal C$ to sets sending a local $\FF$-algebra $B$ in $\mathcal C$ to the set of odd, constant-determinant pseudodeformations of $\rhobar$ having level-$N$ shape. 

Then $\mathcal D_\rhobar$ is representable by a complete noetherian $\FF$-algebra $\big(\RRR(N, \FF)_\rhobar,\ \nn(N, \FF)_\rhobar\big)$, 
 the quotient of $\hat \RRR(N, \FF)_\rhobar$ by the closed ideal $\hat J_N$ generated by the set $$\{\hat\tau^\univ(di)-\hat\tau^\univ(d): d \in D_N(G_{\QQ, Np}), i \in I_N(G_{\QQ, Np}) \}.$$ This gives us a universal pseudodeformation of $\rhobar$ 
\begin{equation}\label{tauuniv}
\tau^\univ: G_{\QQ, Np} \to \RRR(N, \FF)_\rhobar
\end{equation}
factoring through $\hat \RRR(N, \FF)_\rhobar$ 
If $\ell \nmid Np$ is prime, then set $t_\ell : = \tau^\univ(\frob_\ell)$ and $t'_\ell := t_\ell - \tr \rhobar(\frob_\ell)$. 

As in equation \eqref{taneq}, we can identify the tangent space of this modified universal deformation ring with deformations to the dual numbers: 
\begin{equation}\label{tanlevneq}
\tan \RRR(N, \FF)_\rhobar = \Hom\big (\nn(N, \FF)_\rhobar /\nn(N, \FF)_\rhobar^2,\ \FF\big) \cong {\mathcal D}_\rhobar (\FF[\eps]).
\end{equation}

For $\Gamma_0(1)$-modular $\rhobar$, {write} $\RRR(1, \FF)_\rhobar$ for $\hat\RRR(1, \FF)_\rhobar$, $\phi$ for $\hat \phi$ 
{from~\eqref{RtoA}}, 
and $\tau^\univ$ for $\hat \tau^\univ$ 
{from~\eqref{psdef}}. 
Note that the map $\hat \psi_{N, 1}: \hat \RRR(N, \FF)_\rhobar \onto \RRR(1, \FF)_\rhobar$ described in \eqref{surjR} factors through $\RRR(N, \FF)_\rhobar$:
\begin{equation}\label{psiN1}
\begin{tikzpicture}
\node(hatR) {$\hat \RRR(N, \FF)_\rhobar $};
\node(R)[below = 0.5 cm of  hatR] {$\RRR(N, \FF)_\rhobar$};
\draw(hatR)[->>] -- (R);
\node(R1)[right = 2 cm of R]  {$\RRR(1, \FF)_\rhobar$,};
\draw(hatR)[->>] -- (R1) node[midway, above] {$\scriptstyle \hat\psi_{N,1}$};
\draw(R)[->>] -- (R1) node[midway, below] {$\scriptstyle \psi_{N, 1}$};
\end{tikzpicture}
\end{equation}
because $\hat J_N$, described above, is visibly contained in $\hat J_{N, 1} = \ker \hat\psi_{N, 1},$ described after \eqref{surjR}.

\subsection{Level-$N$ shape and $\Gamma_0(N)$-modular pseudorepresentations}  In this subsection we establish that all $\Gamma_0(N)$-modular pseudorepresentations have level-$N$ shape and record consequences for the relationship between the Hecke algebra and the level-$N$ deformation ring. Recall our notation in this section: $N$ is prime, $G = G_{\QQ, Np}$, $D_N = D_N(G)$ and $I_N = I_N(G)$.

\begin{mythm}[Atkin-Lehner, Carayol]\label{levelelldecomp}
Let $f$ be in $S_k(N, \bar\QQ_p)^\new$. Then $a_N(f) = \pm N^{(k-2)/2}$. Moreover, if $\rho_f: G \to \gl_2(\bar \QQ_p)$ is the attached $p$-adic Galois representation, then $\left.\rho_f \right|_{D_N} \sim 
\begin{pmatrix} \chi_p \, \psi &  \ast \\ 0 & \psi \end{pmatrix},$
where $\psi$ is the unramified character sending $\frob_N$ to $a_N(f)$, and the extension $\ast$ is ramified at $N$.  
\end{mythm}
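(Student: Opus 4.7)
The plan is to handle the two assertions separately, both of which are classical results from the 1970s--1980s era of automorphic forms; the work is to piece them together with the precise normalisations used in the paper.

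For the first assertion, the strategy is to invoke Atkin--Lehner theory. Since $N$ is prime and $f \in S_k(N, \bar\QQ_p)^\new$, the form $f$ is $N$-new, so the local Atkin--Lehner involution $w_N$ acts on the line $\bar\QQ_p f$ by an eigenvalue $\epsilon_N = \pm 1$. For newforms at prime level, one has the identity $U_N + N^{k/2-1} w_N = 0$ on the newline (this is the content of \cite[Theorem~3]{AtkinLehner}, combined with the relation $w_N^2 = N^{k-2}$ in the un-normalised version of Fricke). Taking Hecke eigenvalues yields $a_N(f) = -\epsilon_N N^{(k-2)/2}$, so in particular $a_N(f)^2 = N^{k-2}$.

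For the second assertion, I would apply the local-global compatibility theorem of Carayol \cite{Car}, together with local Langlands for $\gl_2(\QQ_N)$. The automorphic representation $\pi_f = \otimes' \pi_v$ attached to $f$ has the property that, because $f$ is new of prime level $N$, the local component $\pi_N$ is an unramified twist of the Steinberg representation $\mathrm{St}$ of $\gl_2(\QQ_N)$: that is, $\pi_N \cong \mathrm{St} \otimes (\psi_0 \circ \det)$ for some unramified character $\psi_0$ of $\QQ_N^\times$. Under the (suitably normalised) local Langlands correspondence, $\mathrm{St}$ corresponds to the Weil--Deligne representation $|\cdot|^{1/2} \oplus |\cdot|^{-1/2}$ equipped with a nonzero monodromy operator, so $\pi_N$ corresponds to the Weil--Deligne representation $\psi_0|\cdot|^{1/2} \oplus \psi_0|\cdot|^{-1/2}$ with nonzero monodromy. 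Translating to the $p$-adic Galois-theoretic side via Carayol's theorem, and accounting for the weight-$k$ Hodge--Tate twist that makes $\det \rho_f = \chi_p^{k-1}$, one obtains that $\rho_f|_{D_N}$ is an extension of an unramified character $\psi$ by $\chi_p \psi$, with $\psi$ unramified and the extension class nonzero (because the monodromy is nonzero), hence ramified. Finally, the fact that $\psi(\frob_N) = a_N(f)$ is read off by comparing the trace of Frobenius on the unramified quotient with the eigenvalue of $U_N$ on $f$, using that $U_N$ acts as $\psi_0(N) = \psi(\frob_N)$ on the newline via local Langlands.

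The main obstacle is bookkeeping rather than genuine difficulty: one must be careful with the normalisation conventions (geometric vs.\ arithmetic Frobenius, the choice $\chi_p(\frob_\ell) = \ell$ fixed in the paper, and whether $U_N$-eigenvalue equals $\psi(\frob_N)$ or $\psi(\frob_N^{-1})$). The only substantive content beyond citation is verifying that the extension class $\ast$ is ramified; this is equivalent to the monodromy operator on the Weil--Deligne representation being nonzero, which in turn is equivalent to $\pi_N$ being Steinberg rather than a principal series --- and this is forced by the $N$-new hypothesis at prime level, since an unramified principal series would descend $\pi_f$ to level prime to $N$, contradicting newness. No further computation is needed.
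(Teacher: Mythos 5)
Your proposal is correct and takes essentially the same route as the paper, whose proof consists of citing Atkin--Lehner \cite[Theorem 3]{AL} for $a_N(f) = \pm N^{(k-2)/2}$ and Carayol's local-global compatibility \cite{Car} (with \cite{We2} and \cite{EPW} for statements in this context) for the shape of $\rho_f|_{D_N}$. Your unpacking of the second citation via the unramified-twist-of-Steinberg local component and local Langlands is precisely the content behind Carayol's theorem here, so the two arguments coincide in substance.
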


\begin{proof} The first statement is due to Atkin and Lehner \cite[Theorem 3]{AL}. The second statement is implied by local-global compatibility {established by} Carayol \cite{Car}; see {also} \cite[Section 3]{We2} and \cite[Lemma 2.6.1]{EPW} for a statement in this context. Note that $\psi = \varepsilon \,\chi_p^{(k-2)/2}$ with {$\varepsilon^2=1$}.  
 \end{proof}

\begin{mycor} \label{RNtoA}\leavevmode
\begin{enumerate}[topsep = -5pt]
\item \label{mfnshape} Any $\Gamma_0(N)$-modular pseudorepresentation has level-$N$ shape.

\item \label{Anshape} If $\rhobar: G \to \gl_2(\FF)$ is $\Gamma_0(N)$-modular, then the  pseudorepresentation 
${\tau_{\rhobar}^\hecke}: G \to A(N, \FF)_\rhobar$ 
constructed in \eqref{tauhecke} has level-$N$ shape. 

\item \label{Rnshape} The surjection $\hat \phi: \hat \RRR(N, \FF)_\rhobar \onto A(N, \FF)_\rhobar$ from \eqref{RtoA} factors through $\RRR(N, \FF)_\rhobar$, inducing a continuous surjective map 
$\phi: \RRR(N, \FF)_\rhobar \onto A(N, \FF)_\rhobar$ sending $t_\ell$ to $T_\ell$. 
\end{enumerate}
\end{mycor}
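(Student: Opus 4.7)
The plan is to derive (1) directly from \cref{levelelldecomp} and \cref{fieldnshape}, then obtain (2) by transporting the level-$N$ shape condition through the gluing construction of $\tau^\hecke_\rhobar$, and finally deduce (3) as a formal consequence of (2) together with the definition of $\RRR(N, \FF)_\rhobar$ as the level-$N$-shape quotient of $\hat\RRR(N, \FF)_\rhobar$.

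For (1), since $N$ is prime, any $\Gamma_0(N)$-modular representation $\rho_f$ comes from an eigenform $f$ that is either a newform at level $N$ or arises from an eigenform of level $1$. In the oldform case $\rho_f$ is unramified at $N$, so $(t,d)|_{I_N} = (2,1)$ and $t(di) = t(d)$ trivially. In the newform case, \cref{levelelldecomp} says that $\rho_f|_{D_N}$ is conjugate (possibly over a quadratic extension of the coefficient field) to an upper-triangular representation with unramified diagonal characters $\chi_p\psi$ and $\psi$; this is exactly condition \eqref{decompn} of \cref{fieldnshape}, which by the equivalences there forces $(\tr \rho_f, \det \rho_f)$ to have level-$N$ shape (condition \eqref{shapen}). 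Any mod-$p$ $\Gamma_0(N)$-modular pseudorepresentation is a reduction of a characteristic-zero one, and the defining identities $t(di) = t(d)$ are preserved under ring homomorphisms, so they inherit the level-$N$ shape.

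For (2), recall that $\tau^\hecke_\rhobar$ is constructed by gluing reductions modulo $p$ of the characteristic-zero $\Gamma_0(N)$-modular pseudorepresentations contributing to the $\rhobar$-component; more concretely, $A(N, \FF)_\rhobar$ embeds into a product $\prod_f B_f$ indexed by $\gal(\bar\FF_p/\FF)$-orbits of eigenforms in $M(N, \bar\FF_p)_\rhobar$, and $\tau^\hecke_\rhobar$ projects in each factor to the mod-$p$ pseudorepresentation attached to $f$. Each of those pseudorepresentations satisfies the level-$N$-shape identities by (1), so the identities hold componentwise in $\prod_f B_f$, and then in $A(N, \FF)_\rhobar$ by injectivity of the embedding.

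Part (3) is then purely formal. By construction $\hat \phi$ sends $\hat\tau^\univ$ to $\tau^\hecke_\rhobar$, so each generator $\hat\tau^\univ(di) - \hat\tau^\univ(d)$ of the level-$N$-shape defining ideal $\hat J_N \subset \hat\RRR(N, \FF)_\rhobar$ maps to $\tau^\hecke_\rhobar(di) - \tau^\hecke_\rhobar(d) = 0$ by (2). Hence $\hat\phi$ factors through $\hat\RRR(N, \FF)_\rhobar/\hat J_N = \RRR(N, \FF)_\rhobar$, yielding the claimed $\phi$; surjectivity and the formula $\phi(t_\ell) = T_\ell$ are inherited from $\hat\phi(\hat t_\ell) = T_\ell$. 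The only substantive step is (2), whose main (mild) obstacle is verifying that the level-$N$-shape identities really do descend from the product $\prod_f B_f$ to $A(N, \FF)_\rhobar$; but this is tautological once one recalls that the very construction of $\tau^\hecke_\rhobar$ proceeds through the inclusion $A(N, \FF)_\rhobar \hookrightarrow \prod_f B_f$.
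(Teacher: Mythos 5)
Your part (1) reproduces the paper's own argument (split into the newform case, handled by \cref{levelelldecomp} and condition \eqref{decompn} of \cref{fieldnshape}, and the oldform case, handled by unramifiedness), and your part (3) is exactly the intended formal deduction. The genuine gap is in your justification of part (2). You claim that $A(N, \FF)_\rhobar$ embeds into a product $\prod_f B_f$ indexed by (Galois orbits of) eigenforms in $M(N, \barr\FF_p)_\rhobar$ and conclude by injectivity. No such embedding exists in general: each $B_f$ is (a finite field containing) the eigenvalue field of a mod-$p$ eigenform, so the product is reduced, whereas $A(N, \FF)_\rhobar$ is typically not --- indeed this paper proves that $A(3, \FF_2)$ and $A(5, \FF_2)$ are non-reduced. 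Worse, every eigenform lying in the single $\rhobar$-component has the same shallow eigensystem $a_\ell = \tr\rhobar(\frob_\ell)$, so the natural map $A(N, \FF)_\rhobar \to \prod_f B_f$ kills all the generators $T'_\ell$ of the maximal ideal and factors through the residue field $\FF$. Checking the identities $t(di)=t(d)$ componentwise in that product is therefore vacuous: it only gives the identities modulo the maximal ideal, which is far from what part (3) needs.

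The repair is the paper's actual (one-line) argument, which you in fact gesture at in your opening sentence of (2): verify the identities \emph{before} reduction. The shallow characteristic-zero Hecke algebra of level $N$ does embed into a product of the coefficient fields of characteristic-zero eigenforms (the operators $T_n$ with $(n, Np)=1$ are simultaneously diagonalizable on $M(N, \barr\QQ_p)$), and in each factor the pseudorepresentation is $(\tr\rho_f, \det\rho_f)$, which has level-$N$ shape by your part (1). Hence the glued characteristic-zero pseudorepresentation satisfies $t(di)=t(d)$, and $\tau^\hecke_\rhobar$ from \eqref{tauhecke} is obtained from it by composing with a ring homomorphism (reduction modulo $p$ and projection to the $\rhobar$-component). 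Since identities of the form $t(di)=t(d)$ are preserved by any ring homomorphism --- the very principle you already invoke at the end of your part (1) --- they hold in $A(N, \FF)_\rhobar$, and your part (3) then goes through verbatim.
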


\begin{proof} Any $\Gamma_0(N)$-modular pseudorepresentation comes from a $p$-adic representation $\rho_f$ attached to a $\Gamma_0(N)$-modular eigenform $f$; by \cref{fieldnshape}, it suffices to show that $\rho_f$ satisfies any of the equivalent conditions listed there. If $f$ is a cuspidal newform, then \cref{levelelldecomp} implies that $\rho_f$ visibly satisfies condition \eqref{decompn}. Otherwise $f$ comes from a level-$1$ form, so that $\rho_f$ is unramified at $N$, and satisfies, for example, condition~\eqref{twon}. 
Parts \eqref{Anshape} and \eqref{Rnshape} follow from \eqref{mfnshape} since $\tau_\rhobar^\hecke$ is obtained by gluing characteristic-zero $\Gamma_0(N)$-modular pseudorepresentations and then reducing modulo $p$.
\end{proof}

\section{$U_N$ and the partially full Hecke algebra} \label{pfanal}
We continue the notation of \cref{steinberg}; recall that $N \neq p$ is prime. 
Here we track several consequences of \cref{levelelldecomp,RNtoA}, in particular, the connection between the Atkin-Lehner operator $U_N$ and the trace of Frobenius-at-$N$ elements in the partially full Hecke algebra. 

\subsection{The polynomial satisfied by $U_N$}

Although $\frob_N$ is not well-defined, even up to conjugacy, as an element of $G_{\QQ, Np}$, it does determine a coset of $I_N(G_{\QQ, Np})$ inside $D_N(G_{\QQ, Np})$. Therefore, the level-$N$ shape of 
${\tau_{\rhobar}^\hecke}$ 
guarantees that 
\begin{equation}\label{FN}
F_N := 
{\tau_{\rhobar}^\hecke}
(\frob_N)\ \mbox{is a well-defined element of $A(N, \FF)_\rhobar$}.
\end{equation}
Now fix a $\Gamma_0(N)$-modular $\rhobar$.
In the case that $\rhobar$ is unramified at $N$, the surjection $\pi^\old: A(N, \FF)_\rhobar \onto A(1, \FF)_\rhobar$ from \eqref{Aoldnew} maps $F_N$ to $T_N$ and $a_N(\rhobar)$ to $\tr \rhobar(\frob_N)$. 

Now let $\rhobar$ be an arbitrary $\Gamma_0(N)$-modular representation appearing in weight {$k_\rhobar$}
and let 
\begin{align}\begin{split}\label{charpoly} \charpoly_N(X) & := X^2 - 
{\tau_{\rhobar}^\hecke}
(\frob_N) X + \det \rhobar (\frob_N)  \\ &= X^2 - F_N X + N^{{k_\rhobar}-1} \qquad\qquad\qquad\qquad \in A(N, \FF)_\rhobar[X],
\end{split} \end{align}
be the characteristic polynomial of any $\frob_N$ under $
{\tau_{\rhobar}^\hecke}
$. 

\begin{myprop} \label{ulemma} \label{uellpoly}
If $\rhobar$ is $\Gamma_0(N)$-modular, then $\charpoly_N(U_N) = 0$ in $A(N, \FF)_\rhobar^\pf$. 
\end{myprop}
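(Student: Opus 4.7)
The strategy is to verify the identity by acting on modular forms. By construction, $A(N, \FF)^\pf_\rhobar$ acts faithfully on $M(N, \FF)_\rhobar$ (equivalently, by the duality in \eqref{dual}, on $K(N, \FF)_\rhobar$), so it suffices to show that $\charpoly_N(U_N)$ annihilates every form in $M(N, \FF)_\rhobar$. Mod-$p$ forms are reductions of characteristic-zero forms, and the Hecke operators $T_n$, $U_N$, $V_N$ are compatible with reduction; so I would lift the question to characteristic zero and verify, for each eigenform $f \in M_k(N, \bar\QQ_p)$ contributing to the $\rhobar$-component, that the characteristic-zero analogue
\[
\bigl(U_N^2 - \tr\rho_f(\frob_N)\,U_N + N^{k-1}\bigr)f = 0
\]
holds. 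Then I reduce modulo $p$, using that $F_N = \tau^\hecke_\rhobar(\frob_N)$ is precisely the (well-defined, by level-$N$ shape) mod-$p$ shadow of $\tr\rho_f(\frob_N)$, and that $N^{k-1} \bmod p$ depends only on $k \bmod (p-1)$ and is thus determined by $\det\rhobar = \omega_p^{k_\rhobar-1}$.

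The characteristic-zero relation splits into two cases according to the old/new decomposition of $f$ at $N$. If $f$ is a level-$N$ newform, then \cref{levelelldecomp} gives $U_N f = \pm N^{(k-2)/2} f$, and the local semisimple representation $\rho_f|_{D_N}^{\rm ss} \cong \chi_p\psi \oplus \psi$ with $\psi(\frob_N) = \pm N^{(k-2)/2}$ yields $\tr\rho_f(\frob_N) = (N+1)(\pm N^{(k-2)/2})$, so the direct computation
\[
\bigl(N^{k-2} - (N+1)N^{k-2} + N^{k-1}\bigr)f = \bigl(-N^{k-1} + N^{k-1}\bigr)f = 0
\]
settles this case. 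If instead $f$ is an oldform coming from a level-$1$ eigenform $g$, then on the two-dimensional span $\langle g, V_N g\rangle$ the identity $U_N g = a_N(g) g - N^{k-1} V_N g$ together with $U_N V_N g = g$ gives $U_N$ the matrix $\left(\begin{smallmatrix} a_N(g) & 1 \\ -N^{k-1} & 0 \end{smallmatrix}\right)$, whose characteristic polynomial is $X^2 - a_N(g) X + N^{k-1}$. Since $\rho_g$ is unramified at $N$ with $\tr\rho_g(\frob_N) = a_N(g)$, this is exactly the desired polynomial, and Cayley--Hamilton closes the case.

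The main technical point is consistency: $F_N$ is one single element of $A(N,\FF)_\rhobar$, but its ``eigenvalues'' on eigenform lifts look superficially different --- $(N+1)(\pm N^{(k-2)/2})$ for a newform lift versus the level-$1$ $T_N$-eigenvalue for an oldform lift. That these do reduce to the same element $F_N \in A(N,\FF)_\rhobar$ is exactly the content of the level-$N$-shape property of $\tau^\hecke_\rhobar$ established in \cref{RNtoA}\eqref{Anshape}, which makes $\tau^\hecke_\rhobar(\frob_N)$ unambiguously well-defined on the $I_N$-coset. This is the only place the hypothesis that $\rhobar$ be $\Gamma_0(N)$-modular enters in a nontrivial way.
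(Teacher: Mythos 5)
Your proposal is correct and follows essentially the same route as the paper's proof: reduce via faithfulness to checking the identity on characteristic-zero eigenforms, then treat the newform case via \cref{levelelldecomp} and the oldform case via the stabilization matrix $\begin{psmallmatrix} a_N(g) & 1 \\ -N^{k-1} & 0\end{psmallmatrix}$ and Cayley--Hamilton. The only differences are cosmetic (you compute the newform case by direct arithmetic rather than observing that $a_N(f)$ is an eigenvalue of $\rho_f(\frob_N)$, and you spell out the reduction/consistency point), so there is nothing to add.
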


\begin{proof}
Since $A(N, \FF)_\rhobar^\pf$ acts faithfully on $M(N, \FF)_\rhobar$, and since the action of all the Hecke operators comes from their action on the characteristic-zero space $M(N, {\bar \QQ_p})$, which has a basis of eigenforms, it suffices to show the following: for any $k \geq 0$ even and any normalized Hecke eigenform $f \in M_k(N, {\bar \QQ_p})$, its $U_N$-eigenvalue~$a_N(f)$ is annihilated by $\charpoly_{N, f}(X) := X^2 - \tr \rho_f( \frob_N) X + \det \rho_f(\frob_N)$. Here $\rho_f: G_{\QQ, Np} \to \gl_2({\bar \QQ_p})$ is the $p$-adic Galois representation attached to $f$. 

If $f$ is a newform, then \cref{levelelldecomp} explicitly shows that $a_N(f)$ is an eigenvalue of $\rho_f$ evaluated at any $\frob_N$ (it suffices to consider the semisimplification of the matrix loc.~cit.). Thus $a_N(f)$ is a root of $\charpoly_{N, f}(X)$. On the other hand, if $f$ is an $N$-stabilization of a level-$1$ eigenform $g$, then $\rho_f = \rho_g$, and $a_N(f)$ is an eigenvalue of the matrix $\begin{pmatrix} a_N(g) & 1 \\ -N^{k-1} & 0  \end{pmatrix}$, which gives the action of $U_N$ on the two-dimensional Hecke-stable subspace with basis $g$ and $g(N z)$. Since $a_N(g) = \tr \rho_g(\frob_N)$, the characteristic polynomial of this matrix coincides with $\charpoly_{N, f}(X)$, and the claim follows. 
\end{proof}

\begin{mycor}\label{apffinite}
If $\rhobar$ is $\Gamma_0(N)$-modular, then the map sending $X$ to $U_N$ gives a surjection  
$$A(N, \FF)_\rhobar[X]/ \charpoly_N(X) \onto A(N, \FF)_\rhobar^\pf$$
compatible with the natural inclusion $A(N, \FF)_\rhobar \into A(N, \FF)^\pf_\rhobar$. 
\end{mycor}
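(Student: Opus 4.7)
The plan is first to invoke Proposition~\ref{uellpoly}, which says $\charpoly_N(U_N) = 0$ in $A(N, \FF)_\rhobar^\pf$; this immediately produces a well-defined $A(N, \FF)_\rhobar$-algebra morphism
$$\Phi: A(N, \FF)_\rhobar[X]/\charpoly_N(X) \longrightarrow A(N, \FF)_\rhobar^\pf$$
sending $X$ to $U_N$, compatible by construction with the inclusion $A(N, \FF)_\rhobar \into A(N, \FF)_\rhobar^\pf$. It remains only to establish surjectivity.

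Since $\charpoly_N(X)$ is monic of degree $2$, the source of $\Phi$ is a free $A(N, \FF)_\rhobar$-module of rank $2$, hence profinite in its natural topology (because $A(N, \FF)_\rhobar$ is itself profinite, being a complete local noetherian ring with finite residue field $\FF$). The image of the continuous map $\Phi$ is therefore compact, and hence closed in the Hausdorff topological ring $A(N, \FF)_\rhobar^\pf$.

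Finally, the image $\Phi\bigl(A(N, \FF)_\rhobar[X]/\charpoly_N(X)\bigr) = A(N, \FF)_\rhobar + A(N, \FF)_\rhobar \cdot U_N$ contains every topological generator of $A(N, \FF)_\rhobar^\pf$: the operators $T_\ell$ for $\ell \nmid Np$ sit inside the subring $A(N, \FF)_\rhobar$, and $U_N = \Phi(X)$ is in the image by construction, which is the only $U_\ell$ needed since $N$ is prime. A closed subset of $A(N, \FF)_\rhobar^\pf$ containing all topological generators must be the whole ring.

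The only point that requires any care is the closedness argument above; everything else is formal. Concretely, I will note that compactness of the source follows because $A(N,\FF)_\rhobar$ is a finitely generated $\ZZ_p$-module modulo a power of $p$ in each finite-level truncation, or more cleanly by the general principle that a complete noetherian local ring with finite residue field is profinite, and a finite free module over a profinite ring is profinite. Beyond that, there is no real obstacle.
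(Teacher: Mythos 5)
Your proposal is correct and matches the paper's intent: the paper's proof is simply ``follows from \cref{uellpoly}'' (with a pointer to \cite{D}), and your argument spells out exactly the routine part left implicit — the relation $\charpoly_N(U_N)=0$ gives the well-defined $A(N,\FF)_\rhobar$-algebra map, and surjectivity follows because the compact (hence closed) image contains all topological generators $T_n$ and $U_N$ of $A(N,\FF)_\rhobar^\pf$. No gaps; this is the same approach, just written out in full.
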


\begin{proof}
Follows from \cref{uellpoly}. See also \cite[section 8]{D}. 
\end{proof}

\subsection{A relation between $U_N$ and the image of 
${\tau_{\rhobar}^\hecke}$
} 
Write $\tau$ for $\tau^\hecke_\rhobar$.  
\begin{myprop}
\label{fourlemma} With $i$ any element of $I_N(G_{\QQ, Np})$, any $\frob_N$ in $D_N(G_{\QQ, Np})$, and $g \in G_{\QQ, Np}$, we have 
$$U_N \big( \tau(g i) - \tau(g)\big)= \tau(g i \frob_N) - {\tau}(g \frob_N).$$
In particular, for $g = c$ any complex conjugation,
\begin{equation}\label{funcp} U_N \, {\tau}(c\,i) = \tau(c\, i \frob_N) - \tau(c \frob_N).
\end{equation}
\end{myprop}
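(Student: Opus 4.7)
The plan is to verify the identity on the partially full Hecke algebra $A(N,\FF)_\rhobar^\pf$ by working one eigenform at a time. Since $A(N,\FF)_\rhobar^\pf$ acts faithfully on $M(N,\FF)_\rhobar$, and this action is the reduction of the characteristic-zero Hecke action on weight-$k$ subspaces of $M_k(N,\bar\QQ_p)_\rhobar$, which decompose into generalized Hecke eigenspaces, it suffices to check that for every normalized eigenform $f$ of level $N$ whose attached mod-$p$ pseudorepresentation is $(\tr\rhobar,\det\rhobar)$ we have the numerical identity
\[
a_N(f)\,\bigl(\tr\rho_f(gi) - \tr\rho_f(g)\bigr) = \tr\rho_f(gi\frob_N) - \tr\rho_f(g\frob_N)
\]
in $\bar\QQ_p$, where $\rho_f: G_{\QQ,Np} \to \gl_2(\bar\QQ_p)$ is the Galois representation attached to $f$. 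Both sides of the proposition then follow by lifting to characteristic zero, checking termwise, and reducing modulo $p$.

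I would split into two cases. First, if $f$ is an $N$-stabilization of a level-$1$ eigenform, then $\rho_f$ is unramified at $N$, so $\rho_f(gi) = \rho_f(g)$ and $\rho_f(gi\frob_N) = \rho_f(g\frob_N)$; both sides of the identity vanish.

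Second, if $f$ is a newform of level $N$, I would apply \cref{levelelldecomp} (Carayol) to fix a global basis for $\rho_f$ in which $\rho_f|_{D_N}$ has the upper-triangular shape $\begin{psmallmatrix}\chi_p\psi & \ast \\ 0 & \psi\end{psmallmatrix}$ with $\psi(\frob_N)=a_N(f)$. In this basis $\rho_f(i) = \begin{psmallmatrix}1 & \alpha \\ 0 & 1\end{psmallmatrix}$ for some $\alpha \in \bar\QQ_p$ depending on $i$, and $\rho_f(\frob_N) = \begin{psmallmatrix}Na_N(f) & \beta \\ 0 & a_N(f)\end{psmallmatrix}$ for some $\beta \in \bar\QQ_p$, while $\rho_f(g) = \begin{psmallmatrix}a & b \\ c & d\end{psmallmatrix}$ is arbitrary (note: $g$ is not assumed to lie in $D_N$, but that is fine, since the special shape is only needed for $\rho_f(i)$ and $\rho_f(\frob_N)$). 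A direct matrix multiplication then gives
\[
\tr\rho_f(gi) - \tr\rho_f(g) = c\alpha
\qquad\text{and}\qquad
\tr\rho_f(gi\frob_N) - \tr\rho_f(g\frob_N) = a_N(f)\cdot c\alpha,
\]
and multiplying the first by $a_N(f)$ establishes the desired equality for this $f$.

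The ``in particular'' statement then follows by specializing the main identity to $g = c$ and invoking $\tau(c) = 0$, which is built into the oddness condition on the pseudodeformation functor. I do not expect any real obstacle: once the setup is arranged, the identity reduces to a two-by-two trace computation, and the only genuine input is Carayol's explicit local description of $\rho_f|_{D_N}$ at newforms of squarefree level; the main care needed is to match Atkin--Lehner's identity $a_N(f) = \pm N^{(k-2)/2}$ with $\psi(\frob_N)$ so that the two diagonal entries of $\rho_f(\frob_N)$ come out correctly.
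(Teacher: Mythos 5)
Your proposal is correct and follows essentially the same route as the paper: reduce via faithfulness of the partially full Hecke algebra to a termwise check on characteristic-zero eigenforms, treat oldforms (where inertia at $N$ acts trivially, so both sides vanish) and newforms (via \cref{levelelldecomp}) separately, and finish with the $2\times 2$ trace computation, which the paper packages as \cref{matrixfact} and you carry out inline. The specialization to $g = c$ using $\tau(c) = 0$ also matches the paper's conclusion.
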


\begin{proof} As in the proof of \cref{uellpoly} it suffices to show that for every Hecke eigenform $f \in M(N,\bar \QQ_p)$, we have $a_N(f) \big(\tr \rho_f(g i) - \tr \rho_f(g) \big) = \tr \rho_f (g i \frob_N) - \tr \rho_f(g\frob_N)$, where $\rho_f: G_{\QQ, Np} \to \gl_2(\bar \QQ_p)$ is the $p$-adic Galois representation attached to $f$. If $f$ is old, then $i$ is in the kernel of $\rho_f $, so that on the left-hand side $\tr \rho_f(gi) = \tr \rho_f(g)$ and on the right hand side $ \tr \rho_f (g i \frob_N) =  \tr \rho_f (g \frob_N)$: both sides reduce to $0$. So it suffices to consider $f$ new. In this case, \cref{levelelldecomp} implies that there is a basis for $\rho_f$ so that 
$$\rho_f(\frob_N) = \begin{pmatrix} N a_N(f) & \ast \\ 0 & a_N(f) \end{pmatrix} \quad\mbox{and}\quad \rho_f(i) = \begin{pmatrix} 1 & \ast \\ 0 & 1\end{pmatrix}.$$ The main statement then follows from part \eqref{tot} of \cref{matrixfact} below by setting $T = \rho_f(\frob_N)$, $P = \rho_f(i)$, $M = \rho_f(g)$. For the second statement, take $g = c$ and note that $\tr \rho_f(c) = 0$. 
\end{proof}

\begin{mylemma}\label{matrixfact}
Let $B$ be a ring, $M = \begin{psmallmatrix} a_M & b_M \\ c_M & d_M\end{psmallmatrix} \in M_2(B)$, upper-triangular $T = \begin{psmallmatrix} a_T & b_T \\ 0 & d_T\end{psmallmatrix} \in M_2(B)$, and upper-triangular unipotent $P = \begin{psmallmatrix} 1 & b_P \\ 0 & 1\end{psmallmatrix} \in M_2(B)$. 
Then
\begin{enumerate}[topsep = -5pt]
\item \label{MU} $\tr MP - \tr M = c_M b_P$
\item \label{TZU}\label{tot} $\tr TMP - \tr TM =  d_T c_M b_P = d_T (\tr MP - \tr M )$
\end{enumerate} \end{mylemma}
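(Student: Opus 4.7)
The plan is to prove this by direct matrix computation, with part (2) following quickly from part (1) applied to a modified matrix.

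For part (1), I would compute $MP$ explicitly. Since $P$ is upper-triangular unipotent, right-multiplication by $P$ is a column operation: it leaves the first column of $M$ alone and adds $b_P$ times the first column to the second. Thus
\[
MP = \begin{pmatrix} a_M & a_M b_P + b_M \\ c_M & c_M b_P + d_M\end{pmatrix},
\]
so $\tr MP = a_M + d_M + c_M b_P = \tr M + c_M b_P$, giving the first identity.

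For part (2), I would observe that the statement of (1) holds for \emph{any} matrix in place of $M$, so in particular it holds for $TM$: we get $\tr (TM)P - \tr TM = (TM)_{21}\, b_P$. Since $T$ is upper-triangular, the bottom-left entry of $TM$ is simply $d_T c_M$. Substituting yields $\tr TMP - \tr TM = d_T c_M b_P$, and the right-hand equality $d_T c_M b_P = d_T(\tr MP - \tr M)$ is immediate from part (1).

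The entire argument is just bookkeeping on $2 \times 2$ matrix entries, so there is no real obstacle; the only thing to watch is that we do not need $T$ to be invertible or $\det P = 1$ used anywhere beyond the stated unipotency, and that the identity is valid over an arbitrary (not necessarily commutative) ring, though commutativity of $B$ is assumed in the paper so no subtlety arises.
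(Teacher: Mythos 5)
Your proof is correct and follows essentially the same route as the paper: part (1) by direct computation of $MP$, and part (2) by applying part (1) with $TM$ in place of $M$ and using that the bottom-left entry of $TM$ is $d_T c_M$. Nothing further is needed.
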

Part \eqref{MU} is an easy computation; \eqref{TZU} follows from \eqref{MU} by taking $TM$ for $M$; 
\begin{remark}\label{wwerk} By taking $MT$ for $M$ in \cref{matrixfact}, we obtain the similar  $\tr MTP - \tr MT =  a_T (\tr MP - \tr M )$, which leads to $N U_N \big( \tau(g i) - \tau(g)\big) = \tau( g \frob_N i) - \tau(g \frob_N).$ This and \eqref{funcp} are the two Calegari--Specter--style conditions that we use in \cref{rt}. 
\end{remark}

\section{The trivial $\rhobar$ mod $2$}

For the rest of this article, we specialize to $p = 2$ and $\rhobar = \one$, so that we can take $\FF = \FF_2$ and suppress $\FF_2$ from notation. 
Recall that $N$ is an odd prime and set $G := G_{\QQ, 2N}$; we're viewing $\one$ as a representation of $G$. Also let $D_N := D_N(G)$ and $I_N := I_N(G)$. Write $\tau: = \tau^\hecke_{\one}$.

In this setting, the Hecke algebra and the deformation rings all have compatible gradings, which we describe in \cref{rgradesec}. Moreover, the fact that the two Atkin-Lehner eigenvalues are glued together has several ramifications that we explore in \cref{pf2sec,verynewsec}. We review what's known in level one in \cref{level1sec}. 

\subsection{Galois group notation}\label{galoissplit}
We fix additional notation in use for the remainder of this document. 
Recall that the $2$-Frattini quotient of $G$ is
\begin{align}\begin{split}\label{frattinieq}
\frat_{2N} = G/G^2 = \gal\big(\QQ(i, \sqrt{2}, \sqrt{N})/\QQ\big) &\simeq \gal(\QQ(\zeta_8)/\QQ) \times \gal\big(\QQ(\sqrt{N})/\QQ\big)\\ &= (\ZZ/8\ZZ)^\times \times \{\pm 1\}.
\end{split} \end{align}
Let $\eta: G \onto  (\ZZ/8\ZZ)^\times \times \{\pm 1\}$  be the quotient map, 
with components $\eta_2: G \onto (\ZZ/8\ZZ)^\times$ and $\eta_N: G \onto \{\pm 1\}.$
For $i \in (\ZZ/8\ZZ)^\times$, let $G_i := \eta_2^{-1}(i)$, so that $G = G_1 \cup G_3 \cup G_5 \cup G_7$. Further refine each $G_i$ as $G_i^+\cup G_i^-$, with $G^{\varepsilon}_i := \eta^{-1}\big((i, \eps)\big)$. Moreover, for $(i, \eps) \in (\ZZ/8\ZZ)^\times \times \{\pm 1\}$, we'll let $g_i$ be an arbitrary element of $G_i$ and $g_i^\eps$ an arbitrary element of $G_i^\eps$. Finally, for a subset $S$ of $G$, write $\barr S$ for its image in $G/G^2$. 

\begin{mylemma}\label{galoissplitDI}
For any prime $N$ we have 
$$(a) \mbox{ $c$ is in $G_7^+$;} \qquad (b) \mbox{ $\barr I_N = \barr G_1$}; \qquad (c) \mbox{ $\barr D_N = \langle \barr I_N, \barr G_N^+\rangle$.}$$ 
\end{mylemma}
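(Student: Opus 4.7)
The plan is to exploit the explicit identification $G/G^2 \cong \gal(\QQ(\zeta_8)/\QQ) \times \gal(\QQ(\sqrt N)/\QQ) \cong (\ZZ/8\ZZ)^\times \times \{\pm 1\}$ from \eqref{frattinieq}: to identify $\bar g \in G/G^2$ it suffices to compute how a lift $g$ acts on $\zeta_8$ and on $\sqrt{N}$. Note that $\eta_2$ is (the mod-$8$ reduction of) the cyclotomic character and $\eta_N$ is the quadratic character cut out by $\QQ(\sqrt N)$.

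For (a), complex conjugation $c$ sends $\zeta_8 \mapsto \zeta_8^{-1} = \zeta_8^7$, so $\eta_2(c) = 7$; and since $N > 0$, the field $\QQ(\sqrt N)$ is real, so $c$ fixes $\sqrt N$ and $\eta_N(c) = +1$. Thus $\bar c = (7, +1)$, i.e.\ $c \in G_7^+$.

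For (b), since $N$ is odd, the cyclotomic extension $\QQ(\zeta_8)/\QQ$ is unramified at $N$, so $\eta_2(I_N) = 1$, giving $\bar I_N \subseteq \bar G_1$. For the reverse inclusion, the extension $\QQ(\sqrt N)/\QQ$ is ramified at $N$ (its discriminant is $N$ or $4N$ according to whether $N \equiv 1$ or $3 \pmod 4$), and being a quadratic totally-ramified extension of $\QQ_N$ locally, the map $I_N \to \gal(\QQ(\sqrt N)/\QQ) = \{\pm 1\}$ is surjective. Combined with $\eta_2(I_N) = 1$, this gives $\bar I_N = \{1\} \times \{\pm 1\} = \bar G_1$.

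For (c), any Frobenius element $\frob_N \in D_N$ acts on the (unramified-at-$N$) extension $\QQ(\zeta_8)$ by $\zeta_8 \mapsto \zeta_8^N$, so $\eta_2(\frob_N) \equiv N \pmod 8$; and, using the surjectivity of $I_N \to \gal(\QQ(\sqrt N)/\QQ)$ established in (b), we may modify $\frob_N$ by an element of $I_N$ so that $\eta_N(\frob_N) = +1$. Thus such a $\frob_N$ lies in $G_N^+$. Since $D_N = \langle I_N, \frob_N\rangle$, passing to $G/G^2$ and noting that $\bar G_N^+$ is the single coset $(N \bmod 8, +1)$ gives the desired equality $\bar D_N = \langle \bar I_N, \bar G_N^+\rangle$. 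The only subtle point to check is that such a Frobenius with trivial image in $\gal(\QQ(\sqrt N)/\QQ)$ genuinely exists inside $G = G_{\QQ, 2N}$, which is immediate from the surjectivity of inertia established in part~(b); no step should present serious obstacles.
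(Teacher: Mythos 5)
Your proof is correct and follows essentially the same route as the paper: identify $G/G^2$ with $\gal\big(\QQ(\zeta_8,\sqrt{N})/\QQ\big)$, compute the image of $c$ via its action on $\zeta_8$ and the reality of $\QQ(\sqrt{N})$, use that $\QQ(\zeta_8)$ is unramified at $N$ while $\QQ(\sqrt{N})$ is ramified there to pin down $\barr I_N$, and choose $\frob_N$ (modulo inertia) fixing $\sqrt{N}$ to get $\barr D_N$. The only cosmetic difference is that you justify ramification at $N$ via the discriminant of $\QQ(\sqrt{N})$, whereas the paper cites total ramification of the primes above $N$ in $\QQ(\zeta_8,\sqrt{N})/\QQ(\zeta_8)$ --- the same fact in substance.
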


\begin{proof} 
Let $K_1 = \QQ(\zeta_8)$. The first part follows from the fact that $c$ is $7$ in $\gal (K_1/ \QQ) = (\ZZ/8\ZZ)^\times$
and that~$\QQ(\sqrt{N})$ is a totally real field. For the other parts, since $\frob_N$ in $\gal (K_1/ \QQ) = (\ZZ/8\ZZ)^\times$ is the element $N$, the {decomposition group at $N$ in} $\gal(K_1/\QQ) =(\ZZ/8\ZZ)^\times$ is {$\langle N \rangle$}. Since every prime above $N$ of~$K_1$ totally ramifies in $K:=\QQ(\zeta_8, \sqrt{N})$, we 
{know that} $I_N$ is {contained in} the preimage $G_1$ of $\gal(K/K_1)$ in $G$ but not in $G_1^+$; the second part follows. {By the same token,} $D_N$ is {contained in} the preimage of ${\langle N \rangle} \subset (\ZZ/8\ZZ)^\times = \gal(K_1/\QQ)$ in $G$, and we can choose $\frob_N$ such that its image is in $G_N^+$. 
\end{proof}

\subsection{The structure of $\RRR(1)_\one$ and $A(1)$}\label{level1sec}

Before continuing with level $N$, we briefly describe the situation in level $1$. In this case, there is only one $\rhobar$, namely $\one$, so that $A(1) = A(1)_\one$ is a local ring.  Recall that the Galois group $G_{\QQ, 2}$ has $2$-Frattini quotient $\frat_2 = \Gal(\QQ(\zeta_8)/\QQ) = (\ZZ/8\ZZ)^\times$; for $i\in \frat_2$, let $G_{\QQ, 2}^i$ be the coset mapping to $i$ under the natural $2$-Frattini quotient map $G_{\QQ, 2} \onto (\ZZ/8\ZZ)^\times$. We've established a $(\ZZ/8\ZZ)^\times$-grading on $A(1)$ and $\tau$ (\cref{gradingthm}). We describe the structure of $A(1)$ and the isomorphism $\RRR(1)_\one \simeq A(1)$ following Nicolas, Serre, and \jb\ \cite{NS1, NS2, B}. 

Let $B = \FF_2\lb x, y\rb$, an abstract $\FF_2$-algebra that we endow with an $(\ZZ/8\ZZ)^\times$-grading in two different ways as follows. Fix $i = 3$ or $i = 5$ in $(\ZZ/8\ZZ)^\times$. Let $x$ have grading $i$ and $y$ have grading $-i$, so that $B^1 = \FF_2\lb x^2, y^2 \rb$, $B^i = xB^1$, $B^{-i} = y B^1$, and $B^7 = xy B^1$. 

\begin{mythm}[Nicolas, Serre, \jb] \label{level1} \
\begin{enumerate}[topsep = -5pt, itemsep = 5pt] \item For any choice of $g_i \in G^i_{2, \QQ}$ and $g_{-i} \in G^{-i}_{2, \QQ}$, the map 
$$\FF_2\lb x, y\rb \longrightarrow A(1) \qquad \mbox{given by} \quad x \mapsto \tau(g_i), \quad
y \mapsto \tau(g_{-i}),$$
is an isomorphism of $(\ZZ/8\ZZ)^\times$-graded $\FF_2$-algebras. 
In particular, if $i = 3$ then $x \mapsto T_{11}$ and $y \mapsto T_5$ is such an isomorphism; if $i = 5$, then $x \mapsto T_{13}$ and $y \mapsto T_3$ is such an isomorphism. 
\item \label{BR1} The map $\phi: \RRR(1)_\one \onto A(1)$ is an isomorphism; $\RRR(1)_\one$ and $\tau^\univ$ are $(\ZZ/8\ZZ)^\times$-graded.
\item Both $\tau^\univ$ and $\tau$ factor through $G_{\QQ, 2}^\protwo$, preserving the grading.
\end{enumerate}
\end{mythm}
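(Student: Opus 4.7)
I will prove (2) first, from which (1) follows with additional bookkeeping, and (3) is then essentially formal. The approach is the standard deformation-theoretic one, inspired by \jb's work \cite{B} and building on the Nicolas-Serre structure theorem for $A(1)$ \cite{NS1, NS2}.

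For (2), the strategy is to match tangent dimensions. Take as given the Nicolas-Serre isomorphism $A(1) \cong \FF_2\lb T_3, T_5\rb$, which yields $\dim_{\FF_2} \tan A(1) = 2$. To bound $\dim_{\FF_2} \tan \RRR(1)_\one$ from above, I will use \eqref{taneq} to identify this tangent space with the set of odd, constant-determinant pseudodeformations of $\one$ to the dual numbers $\FF_2[\eps]$. For such a pseudodeformation $\tau$, the trace-determinant identity \eqref{tracedet} reads $\tau(gh) + \tau(g^{-1}h) = \tau(g)\tau(h)$, and its right-hand side vanishes because the values of $\tau$ lie in $\eps\FF_2$ with $\eps^2 = 0$. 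Substituting $gh$ for $h$ yields $\tau(g^2 h) = \tau(h)$, so $\tau$ factors through $\frat_2 = G_{\QQ, 2}/(G_{\QQ, 2})^2 = (\ZZ/8\ZZ)^\times$. Since $\tau(1) = 2 = 0$ and $\tau(c) = 0$ with $c \in G^7$ by \cref{galoissplitDI}(a), only the values at the classes $3,5 \in \frat_2$ remain free; hence $\dim \tan \RRR(1)_\one \leq 2$, with equality forced by the surjection $\phi$ from \eqref{RtoA}. Both $\RRR(1)_\one$ and $A(1) \cong \FF_2\lb x, y\rb$ are therefore quotients of $\FF_2\lb x, y\rb$, so the composite
\[
\FF_2\lb x, y\rb \onto \RRR(1)_\one \xrightarrow{\phi} A(1) \cong \FF_2\lb x, y\rb
\]
is a surjective endomorphism of a Noetherian local ring, hence an isomorphism, and consequently $\phi$ is an isomorphism. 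The $\frat_2$-grading on $\RRR(1)_\one$ and $\tau^\univ$ can be transported from $A(1)$ via $\phi^{-1}$, or obtained independently from \cref{bellgrade}.

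For (1), I observe that the tangent analysis above applies verbatim to the Hecke pseudodeformation $\tau \colon G_{\QQ, 2} \to A(1)$: modulo $\mm^2$, the trace-determinant identity collapses because $(\mm/\mm^2)^2 = 0$, so $\tau \bmod \mm^2$ factors through $\frat_2$ and vanishes on the classes $1, 7 \in \frat_2$. Hence for $i \in \{3, 5\}$ and any $g_i \in G^i$, the residue $\tau(g_i) \bmod \mm^2$ is the unique nonzero element of the $i$-graded piece $(\mm/\mm^2)^i$, independent of the choice of representative; together $\tau(g_i)$ and $\tau(g_{-i})$ form a graded basis of the cotangent space. Nakayama's lemma then produces a continuous $\FF_2$-algebra surjection $\FF_2\lb x, y\rb \onto A(1)$ sending $x \mapsto \tau(g_i)$ and $y \mapsto \tau(g_{-i})$, and this surjection must be an isomorphism because source and target are complete Noetherian local $\FF_2$-algebras of Krull dimension $2$ with $A(1)$ regular. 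The explicit examples in the statement follow from the Chebotarev choices $\frob_{11}, \frob_3 \in G^3$ and $\frob_{13}, \frob_5 \in G^5$.

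Part (3) is formal: both $\RRR(1)_\one$ and $A(1)$ are pro-$2$ rings, so $\tau^\univ$ and $\tau$ factor through the maximal pro-$2$ quotient $G_{\QQ, 2}^\protwo$ by continuity; the grading is defined via the quotient map $G_{\QQ, 2} \onto \frat_2$, which further factors through $G_{\QQ, 2}^\protwo$ since $\frat_2$ is a $2$-group, so the factored maps remain graded. The most delicate point in the plan will be the tangent-space computation for $\RRR(1)_\one$: the ``mod-$\mm^2$ collapse'' of the trace-determinant identity is a characteristic-$2$ phenomenon, and I must verify carefully that the oddness and constant-determinant constraints cut the four-dimensional $\frat_2$-parameter space down to exactly the two dimensions matching $A(1)$, rather than some smaller or larger number.
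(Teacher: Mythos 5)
The paper does not actually prove this theorem: it is stated as a quoted result of Nicolas--Serre and Bella\"iche \cite{NS1, NS2, B}, with the deep input being the Nicolas--Serre structure theorem $A(1) \cong \FF_2\lb T_3, T_5\rb$. Your proposal is a correct reconstruction of the standard Bella\"iche-style deduction from that input, and it matches the route of the cited sources and the level-$N$ arguments the paper itself uses later (your dual-numbers computation of $\tan \RRR(1)_\one$ is exactly the level-$1$ case of \cref{chenny}/\cref{tandim}, using $c \in G_7^+$ from \cref{galoissplitDI}, and your surjective-endomorphism trick is the standard $R=\mathbb T$ closing move). Two small repairs: in part (3), ``factor through $G_{\QQ,2}^\protwo$ by continuity'' is not a sufficient justification --- continuity of a set map does not force factorization through the pro-$2$ quotient; the correct reference is Chenevier's lemma, quoted in the paper as \cref{factorthrough}, which applies because $A(1)$ and $\RRR(1)_\one$ are coefficient algebras and the pseudorepresentations deform the trivial one (alternatively, for $A(1)$ one can note its image lands in a pro-$2$ multiplicative setting, but the lemma is the clean statement). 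In part (1), the claim that the isomorphism is one of $(\ZZ/8\ZZ)^\times$-graded algebras needs the compatibility $\tau(G^i_{\QQ,2}) \subset A(1)^i$, which is \cref{gradingthm}\eqref{psrepgrade}; once the generators are homogeneous of degrees $i$ and $-i$, gradedness of the map is automatic. With these citations supplied, the argument is complete, granting Nicolas--Serre --- which is appropriate, since that is precisely the part of the theorem attributed to them.
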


The group $G_{\QQ, 2}^\protwo$ has been studied by Markshaitis and Serre; it has presentation $\langle g, c: c^2 = 1\rangle$ in the category of free pro-$2$ groups, where $g$ is any element that does not fix $\sqrt{2}$ \cite{markshaitis}. 

\cref{level1} begins to clarify why we eventually restrict to $N \equiv 3, 5 \cmod{8}$ for our main results. 

\subsection{The grading on $\hat \RRR(N)_\one$ and $\RRR(N)_\one$}\label{rgradesec}
By \cref{gradingthm} we know that the Hecke algebra $A(N)_\one$ and the modular pseudorepresentation $\tau^\hecke:=\tau^\hecke_{\one, N}$ are graded by $(\ZZ/8\ZZ)^\times$. \jb\ has described a richer grading by all of $G/G^2$ on $\hat \RRR(N)_\one$ and $\hat \tau^\univ:=\hat \tau^\univ_{\one, N}$ as well. The construction makes sense for any profinite group that is $2$-finite in the sense of Mazur, but we restrict to groups of the form $G_{\QQ, 2M}$. 

\begin{mythm}[\jb, unpublished]\label{bellgrade} Let $M \geq 1$ be any odd level, and 
let $\frat$ be the $2$-Frattini quotient of~$G_{\QQ, 2M}$. Then $\hat \RRR(M)_\one$ has a natural $\frat$-grading making $\hat \tau^\univ_{\one, M}$ into a graded $\frat$-pseudorepresentation. 
\end{mythm}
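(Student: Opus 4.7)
The natural strategy in odd characteristic --- twisting the universal pseudorepresentation against characters $\chi\colon \frat \to \bar\FF_2^\times$ --- is unavailable here because $\FF_2^\times$ is trivial. Instead, the plan is to produce the grading by constructing a Hopf-algebra coaction
$$\Delta\colon \hat\RRR(M)_\one \longrightarrow \hat\RRR(M)_\one \otimes_{\FF_2} \FF_2[\frat],$$
where $\FF_2[\frat]$ carries its standard group-like comultiplication $[\bar g] \mapsto [\bar g] \otimes [\bar g]$. A finite-group coaction on a commutative algebra still corresponds bijectively to a grading by that group even in the modular setting: given $\Delta$, expanding $\Delta(r) = \sum_i r_i \otimes [i]$ in the group-like basis of $\FF_2[\frat]$, the counit axiom gives $\sum_i r_i = r$, while coassociativity forces $\Delta(r_i) = r_i \otimes [i]$, producing the direct-sum decomposition $\hat\RRR(M)_\one = \bigoplus_{i \in \frat} \hat\RRR(M)_\one^i$ with $\hat\RRR(M)_\one^i := \{r : \Delta(r) = r \otimes [i]\}$. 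Multiplicativity of the grading is automatic from $\Delta$ being an algebra map.

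To build $\Delta$, I would first define the candidate twisted pseudorepresentation
$$\tilde\tau\colon G_{\QQ, 2M} \longrightarrow \hat\RRR(M)_\one \otimes_{\FF_2} \FF_2[\frat], \qquad \tilde\tau(g) := \hat\tau^{\univ}_{\one, M}(g) \otimes [\bar g],$$
with constant determinant $\tilde d = 1$ (inherited from $\det\one = 1$), and check that $\tilde\tau$ is an odd constant-determinant pseudodeformation of $\one$. Centrality follows from centrality of $\hat\tau^\univ$ together with commutativity of $\frat$; oddness $\tilde\tau(c) = 0$ is immediate from $\hat\tau^\univ(c) = 0$; reduction to the residue field $\FF_2$ is automatic since $2 = 0$. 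The crucial check is the trace-determinant identity: the left-hand side $\tilde\tau(g)\tilde\tau(h)$ acquires the factor $[\bar g \bar h]$, while $\tilde\tau(gh)$ and $\tilde\tau(g^{-1}h)$ pick up $[\bar g \bar h]$ and $[\bar g^{-1} \bar h]$ respectively, and these coincide \emph{precisely because} $\frat$ has exponent $2$. The target $\hat\RRR(M)_\one \otimes_{\FF_2} \FF_2[\frat]$ is a complete local $\FF_2$-algebra with residue field $\FF_2$ (since $\FF_2[\frat]$ is a finite local $\FF_2$-algebra), so $\tilde\tau$ falls within the universal property of $\hat\RRR(M)_\one$ and induces a unique local $\FF_2$-algebra morphism $\Delta$ with $\Delta\big(\hat\tau^\univ(g)\big) = \hat\tau^\univ(g) \otimes [\bar g]$. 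Then I would verify that $\Delta$ is a coaction --- $(\mathrm{id} \otimes \epsilon) \circ \Delta = \mathrm{id}$ and $(\Delta \otimes \mathrm{id}) \circ \Delta = (\mathrm{id} \otimes \Delta_{\FF_2[\frat]}) \circ \Delta$ --- by checking both identities on the topological generators $\hat\tau^\univ(g)$, where they reduce to tautologies in $\FF_2[\frat]$; alternatively, invoke uniqueness in the universal property twice. By construction $\hat\tau^\univ(g) \in \hat\RRR(M)_\one^{\bar g}$, and the constant determinant $1$ lies trivially in $\hat\RRR(M)_\one^1 = \hat\RRR(M)_\one^{i^2}$ for every $i$, so $\hat\tau^\univ_{\one, M}$ is a graded $\frat$-pseudorepresentation in the sense of \cref{gradingsec}.

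The main obstacle is really the initial conceptual shift from character twists to Hopf-algebra coactions; once that move is made, the exponent-$2$ property of $\frat$ keeps the trace-determinant identity twist-invariant essentially for free, and everything else is formal, dictated entirely by the universal property of $\hat\RRR(M)_\one$ together with the Hopf structure of $\FF_2[\frat]$.
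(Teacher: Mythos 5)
Your argument is correct, and it is a genuinely workable repackaging of what the paper does, though the formal mechanism differs. The paper's (sketched) proof defines the graded pieces explicitly: $B^i$ is the closed $\FF_2$-span of $\hat\tau^\univ_{\one,M}(G_{\QQ,2M}^i)$ (together with $1$ when $i=1$), multiplicativity $B^iB^j\subseteq B^{ij}$ is read off from the trace--determinant identity \eqref{tracedet} with constant determinant and the fact that $\barr{gh}$ and $\barr{g^{-1}h}$ agree in the exponent-$2$ group $\frat$, and universality is then invoked to identify $B$ with the abstract graded algebra $B'=\bigoplus_i B^i$ (equivalently, to see the sum inside $B$ is direct). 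You instead apply universality to the coset-regraded pseudorepresentation $\tilde\tau(g)=\hat\tau^\univ(g)\otimes[\bar g]$ valued in $\hat\RRR(M)_\one\otimes_{\FF_2}\FF_2[\frat]$ and extract the grading from the resulting $\FF_2[\frat]$-comodule structure; the verification that $\tilde\tau$ is an odd constant-determinant pseudodeformation is exactly the same computation (trace--determinant identity, centrality via commutativity of $\frat$, exponent $2$) that powers the paper's multiplicativity check, so the two proofs share their essential content. What your route buys: directness of the decomposition, the identity $B^iB^j\subseteq B^{ij}$, and compatibility with $\hat\tau^\univ$ all come out formally from the coaction axioms, and closedness of the $B^i$ is immediate since they are equalizers of continuous maps (a point worth stating, as the paper's notion of grading requires closed components); note also that locality of $\FF_2[\frat]$, hence of the tensor product as a coefficient algebra, uses precisely that $\frat$ is a $2$-group in characteristic $2$. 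What the paper's route buys is an explicit description of the graded pieces as closed spans of trace values on Frattini cosets, which is the form used downstream (e.g.\ in \cref{rgrade}, to see that the ideal $\hat J_N$ is graded); in your setup this description is recovered a posteriori from the fact that the $\hat\tau^\univ(g)$ topologically generate the ring.
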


\jb's grading on $B= \hat \RRR(M)_\one$ takes the following shape: for $i \in \frat$, let $G_{\QQ, 2M}^i \subset G_{\QQ, 2M}$ be the corresponding coset and set $B^i$ to be the closed {$\FF_2$}-submodule of $B$ generated by ${\hat\tau_{\one,M}^\univ}(G_{\QQ, 2M}^i)$, along with~$1$ if $i = 1$. The trace-determinant identity \eqref{tracedet} implies that $B' :=\bigoplus_{i \in \Pi} B^i$ is a $\Pi$-graded algebra; universality shows that $B \cong B'$.
For level $M = 1$, \cref{bellgrade} recovers the grading on $\RRR(1)_\one$ from \cref{level1}\eqref{BR1}. For $M = N$ prime, \jb's construction in \cref{bellgrade} endows $\hat \RRR(N)_\one$ with a grading by $(\ZZ/8\ZZ)^\times \times {\FF_N^\times/ (\FF_N^\times)^2}$. The proof of \cref{rgrade} below suggests that the {grading by} {$\FF_N^\times/ (\FF_N^\times)^2$} is lost when we pass from $\hat \RRR(N)_\one$ to $\RRR(N)_\one$, leaving only a $(\ZZ/8\ZZ)^\times$-grading --- as befits a deformation ring approximating $A(N)_\one$.

\begin{mycor} \label{rgrade} The level-$N$-shape universal deformation ring $\RRR(N)_\one$ has a natural $(\ZZ/8\ZZ)^\times$-grading, so that $\tau^\hecke$ is a $(\ZZ/8\ZZ)^\times$-graded pseudorepresentation and $\phi: \RRR(N)_\one \onto A(N)_\one$ is a map of  $(\ZZ/8\ZZ)^\times$-graded $\FF_2$-algebras. 
\end{mycor}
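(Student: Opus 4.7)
The plan is to deduce the grading on $\RRR(N)_\one$ from Bella\"{i}che's finer $\Pi$-grading on $\hat\RRR(N)_\one$ (\cref{bellgrade}), where $\Pi = G/G^2 = (\ZZ/8\ZZ)^\times \times \{\pm 1\}$ by \eqref{frattinieq}, by showing that the defining ideal of $\RRR(N)_\one$ inside $\hat\RRR(N)_\one$ is homogeneous after coarsening to $(\ZZ/8\ZZ)^\times$.

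First, I would coarsen Bella\"{i}che's $\Pi$-grading on $\hat\RRR(N)_\one$ to a $(\ZZ/8\ZZ)^\times$-grading by grouping the $\{\pm 1\}$-pieces together: set $\hat\RRR(N)_\one^i := \bigoplus_{\eps \in \{\pm 1\}} \hat\RRR(N)_\one^{(i, \eps)}$ for $i \in (\ZZ/8\ZZ)^\times$. This is formally a grading, and by construction $\hat\tau^\univ(G^\eps_i) \subseteq \hat\RRR(N)_\one^i$ for every $i, \eps$.

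The key step is to show that the closed ideal $\hat J_N$, generated by $\big\{\hat\tau^\univ(di) - \hat\tau^\univ(d) : d \in D_N,\ i \in I_N\big\}$, is homogeneous for this coarser $(\ZZ/8\ZZ)^\times$-grading. Here I would invoke \cref{galoissplitDI}(b), which tells us that $\barr I_N \subseteq \barr G_1$: that is, every element of $I_N$ has trivial image in the $(\ZZ/8\ZZ)^\times$-factor of $\Pi$. Consequently, for any $d \in D_N$ and $i \in I_N$, the elements $di$ and $d$ lie in the same $(\ZZ/8\ZZ)^\times$-coset, so $\hat\tau^\univ(di)$ and $\hat\tau^\univ(d)$ lie in the same coarsened graded component $\hat\RRR(N)_\one^{\eta_2(d)}$; their difference is therefore homogeneous of degree $\eta_2(d)$. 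Since $\hat J_N$ is generated by such homogeneous elements (and is closed), it is a homogeneous ideal for the $(\ZZ/8\ZZ)^\times$-grading, and the quotient $\RRR(N)_\one = \hat\RRR(N)_\one/\hat J_N$ inherits a $(\ZZ/8\ZZ)^\times$-grading. Note that $\hat J_N$ is \emph{not} homogeneous for the finer $\Pi$-grading (when $\bar i$ is the nontrivial element of $\barr G_1$), explaining the loss of the $\{\pm 1\}$-component---this is the point flagged after the statement of the corollary.

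Next, I would verify that $\tau^\univ: G \to \RRR(N)_\one$ is a $(\ZZ/8\ZZ)^\times$-graded pseudorepresentation: this is automatic from the fact that $\hat\tau^\univ$ is $\Pi$-graded and $\tau^\univ$ is its reduction modulo $\hat J_N$, together with the coarsening. Finally, to see that $\phi: \RRR(N)_\one \onto A(N)_\one$ is a graded homomorphism, I would invoke \cref{gradingthm} (for $p = 2$, $\rhobar = \one$), which endows $A(N)_\one$ with a natural $(\ZZ/8\ZZ)^\times$-grading under which $\tau^\hecke$ is $(\ZZ/8\ZZ)^\times$-graded. Since $\phi$ is the unique continuous $\FF_2$-algebra map characterized by $\phi \circ \tau^\univ = \tau^\hecke$, and since both pseudorepresentations are $(\ZZ/8\ZZ)^\times$-graded with $\tau^\univ(G^i)$ topologically generating $\RRR(N)_\one^i$ as $\FF_2$-module (for $i\neq 1$; with $1$ thrown in for the degree-$1$ piece), $\phi$ sends $\RRR(N)_\one^i$ into $A(N)_\one^i$ for every $i$.

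The main obstacle in this argument is purely the bookkeeping step of checking that $\barr I_N$ sits inside $\barr G_1$, but this is already recorded as \cref{galoissplitDI}(b); once that is in hand the rest is formal manipulation of graded structures under coarsening and quotienting.
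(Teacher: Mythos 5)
Your proposal is correct and takes essentially the same route as the paper: coarsen Bella\"iche's Frattini grading on $\hat\RRR(N)_\one$ to $(\ZZ/8\ZZ)^\times$ and observe that the generators $\hat\tau^\univ(di)-\hat\tau^\univ(d)$ of $\hat J_N$ are homogeneous because $I_N$ has trivial image in $\Gal(\QQ(\zeta_8)/\QQ)$ --- your appeal to \cref{galoissplitDI}(b) is exactly the fact the paper justifies by $\QQ(\zeta_8)$ being unramified at $N$. The only difference is that you spell out the routine final verifications that $\tau^\univ$ and $\phi$ respect the grading (and add an unproved aside that $\hat J_N$ fails to be homogeneous for the finer grading, which the paper only states as a suggestion), steps the paper leaves implicit.
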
 

\begin{proof} 
By \cref{bellgrade}, $\hat \RRR(N)_\one$ has a grading by $(\ZZ/8\ZZ)^\times \times {\FF_N^\times/ (\FF_N^\times)^2}$, so it suffices to show that the kernel of the quotient map $\hat \RRR(N)_\one \onto \RRR(N)_\one$ is $(\ZZ/8\ZZ)^\times$-graded. This kernel is the ideal $\hat J_N$ defined in \cref{levelnshapej}, topologically generated by elements of the form $\hat \tau^\univ(di) - \hat \tau^\univ(d)$ for $d \in D_N$ and~$i \in I_N$; to see that $\hat J_N$ is graded, we want $\hat \tau^\univ(di)$ and $\hat \tau^\univ(d)$ {to be} in the same $(\ZZ/8\ZZ)^\times$-component of $\hat \RRR(N)_\one$. Since~$\hat \tau^\univ$ is graded, it suffices to know that the map $D_N \to \Gal(\QQ(\zeta_8)/\QQ) = (\ZZ/8\ZZ)^\times$ factors through~$D_N/I_N$. But this is clear as $\QQ(\zeta_8)$ is unramified at $N$.
\end{proof}

\subsection{The partially full Hecke algebra} \label{pf2sec} We specialize \cref{pfanal} to $p = 2$. In this setting, the polynomial satisfied by $U_N$ over $A(N)$ (\eqref{charpoly} and \cref{uellpoly}) has the form
\begin{equation}\label{Pmod2} \PPP_N(X) = X^2 + F_N X + 1.
\end{equation}
Here recall that $F_N = \tau^\hecke(\Frob_N)$. In particular, since $a_N(\one) = 0$, the polynomial $\PPP_N(X)$ has a double root residually modulo $\mm(N)_\one$, so that from the description of the maximal ideals of the partially full Hecke algebra in \cref{Apfsec}, it's clear that
\begin{equation}\label{Apflocal} 
\mbox{$A(N)_\one^\pf$ is a complete local noetherian ring with residue field $\FF_2$.}
\end{equation}
The same will be true for any $N$-old $\rhobar$ mod $2$ satisfying the level-raising condition of {\cref{newoldhecke}}.
\label{apfdisc}

Let $U_N' = U_N + 1$, so that $U_N' \in \mm(N)_\one^\pf$. Then a simple manipulation of \eqref{Pmod2} implies that 
\begin{equation}\label{fun2}
{F_N} = {F_N} U_N' - (U_N')^2 
\end{equation}
from which it's immediately clear that
\begin{equation}\label{fninmpf2}
{F_N} \in (\mm(N, \FF)^\pf_\rhobar)^2.
\end{equation}
Finally, we record \cref{fourlemma} in our setting: for $i \in I_N$ we have 
\begin{equation} \label{fourlemma2}
U_N \, \tau^\hecke(c\,i) = \tau^\hecke(c\, i \frob_N) - \tau^\hecke(c \frob_N).
\end{equation}

\subsection{New and \emph{very new} forms and their Hecke algebras} \label{verynewsec}
Specializing \cref{oldformsnewforms} to $p = 2$ tells us that the subspace of newforms inside $M(N)$ is $\ker (U_N^2 - N^{-2} S_N) = \ker (U_N')^2$. We similarly define $K(N)^\new := K(N) \cap M(N)^\new$, so that 
\begin{equation}\label{knewdef}
M(N)^\new = M(N)[(U_N')^2] \mbox{ and } K(N)^\new = K(N)[(U_N')^2].
\end{equation}
From this description and the duality in \eqref{dualJ} we deduce that 
\begin{align}
\label{anewpf}
& A(N)^\new = A(N)^\pf/(U_N')^2.
\end{align}
In particular, $A(N)^\new$ will have nilpotent elements. 
We therefore define, in an ad-hoc way for $p = 2$, the spaces of \emph{\verynew}\ modular forms: 
\begin{equation*}
\label{kvnewdef} M(N)^\elleng:= \ker U_N' \subseteq M(N) \mbox{ and } 
K(N)^\elleng := K(N)[U_N'].
\end{equation*}
With this definition, the newforms are generalized very new forms. 
Like the newforms, the very new forms break up into local components for various $\rhobar$; we in particular consider $$M(N)^\elleng_\one: = M(N)^\elleng \cap M(N)_\one \mbox{ and } 
K(N)_\one^\elleng := K(N)_{\one}[U_N'].$$

Let $A(N)^\elleng$ be the largest quotient of $A(N)$ acting faithfully on $M(N)^\elleng$; extending scalars as necessary, it too breaks up into local $\rhobar$-components, with $A(N)^\elleng_\rhobar$ the largest quotient of $A(N)$, or of $A(N)^\elleng$, acting faithfully on $M(N)_\rhobar^\elleng$. By \eqref{dualJ} again, 
\begin{equation}\label{avnewpf}
A(N)^\elleng = A(N)^\pf/(U_N') \mbox{ and } A(N)^\elleng_\one = A(N)_\one^\pf/(U_N').
\end{equation}
In particular, 
\begin{equation}\label{avnewfaith}
\mbox{$A(N)^\elleng$ acts faithfully on $K(N)^\elleng$.}
\end{equation}
\begin{remark}\label{Avnewgrade} One can show that $K(N)^\elleng$, $A(N)^\elleng$, and the modular $A(N)^\elleng$-valued pseudorepresentation $\tau^{\elleng}$,
 as well as all their corresponding $\rhobar$ component analogues, are naturally and compatibly graded by $(\ZZ/8\ZZ)^\times/\langle N \rangle$ in the sense of \cref{gradingthm}. 
\end{remark}

\section{Infinitesimal deformations of the trivial $\rhobar$ mod $2$}

We continue our notation from the previous section; in particular $N$ is an odd prime.
We analyze first-order deformations of the pseudorepresentation associated to the representation $\one$ of $G_{\QQ,2N}$ to compute tangent space dimensions of $\hat \RRR(N)_\one$ and $\RRR(N)_\one$ and their maximal reduced quotients. For $N \equiv 3$ or $5$ modulo~$8$ we also find the generators of the cotangent space of $\RRR(N)_\one^\red$.

\subsection{Tangent dimensions of $\hat \RRR(N)_\one$ and $\RRR(N)_\one$} 

\begin{mylemma}\label{chenny} Let $G^2$ be the closed subgroup of $G$ generated by the squares of elements of $G$. Then
\begin{enumerate}[itemsep = 5pt, topsep = -3pt]
\item \label{plaintan} $\tan \hat \RRR(N)_\one \cong \{\mbox{set maps}\ b: G/G^2 \to \FF_2 \mid b(1) = b(c) = 0\};$
\item \label{leventan}
$\tan \RRR(N)_\one \cong  \{\mbox{set maps}\ b: G/G^2 \to \FF_2 \mid b(1) = b(c) = 0,\ b(di)=b(d) \text{ for all } d \in \barr D_{N}, i \in \barr I_{N}\}$.
\end{enumerate}
\end{mylemma}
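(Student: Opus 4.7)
The plan is to use the identification from \eqref{taneq} and \eqref{tanlevneq}: $\tan \hat\RRR(N)_\one \cong \hat{\mathcal D}_\one(\FF_2[\eps])$ and $\tan \RRR(N)_\one \cong \mathcal D_\one(\FF_2[\eps])$, and then explicitly parametrize constant-determinant odd pseudodeformations of $\one$ to the dual numbers.

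Since $\tr \one = 2 = 0$ in $\FF_2$, any pseudodeformation to $\FF_2[\eps]$ has the form $t(g) = \eps \beta(g)$ for some function $\beta: G \to \FF_2$; constant determinant means $d = \det \one = 1$ throughout. The plan is then to translate each pseudorepresentation axiom from \cref{psrep}, together with the oddness condition, into a condition on $\beta$. Explicitly, $t(1) = 2 = 0$ forces $\beta(1) = 0$; oddness $t(c) = 0$ forces $\beta(c) = 0$. The trace-determinant identity \eqref{tracedet} becomes $\eps\bigl(\beta(g^{-1}h) + \beta(gh)\bigr) = t(g)t(h) = \eps^2 \beta(g)\beta(h) = 0$, i.e.\ $\beta(gh) = \beta(g^{-1}h)$ for all $g,h \in G$. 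Substituting $h \mapsto g h$ in this identity yields $\beta(g^2 h) = \beta(h)$, so $\beta$ is invariant under left multiplication by any square, hence factors through the quotient $G/G^2$. Once this is established, centrality $\beta(gh) = \beta(hg)$ is automatic because $G/G^2$ is abelian (as commutators lie in $G^2$, see \cref{frat}), and the trace-determinant identity itself becomes automatic because $g^{-1}h$ and $gh$ differ by $g^{-2} \in G^2$. So part \eqref{plaintan} reduces to the bijection between $\hat{\mathcal D}_\one(\FF_2[\eps])$ and maps $b: G/G^2 \to \FF_2$ with $b(1) = b(c) = 0$, via $t = \eps(b \circ \pi)$ where $\pi: G \onto G/G^2$ is the quotient.

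For part \eqref{leventan}, the plan is simply to add in the level-$N$-shape condition of \cref{artindef}: $t(di) = t(d)$ for $d \in D_N$ and $i \in I_N$. Under the parametrization $t = \eps\beta$ this translates directly to $\beta(di) = \beta(d)$, which, since $\beta$ factors through $G/G^2$, is the condition $b(\bar d\, \bar i) = b(\bar d)$ for $\bar d \in \bar D_N$ and $\bar i \in \bar I_N$. One must check conversely that every such $b$ defines a genuine level-$N$-shape pseudodeformation, but this is immediate from the definition since the condition is imposed pointwise and already holds on $\ker \pi$.

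The only nontrivial step is the passage from the functional equation $\beta(gh) = \beta(g^{-1}h)$ to the factorization through $G/G^2$; everything else is bookkeeping. Continuity of $\beta$ (and hence of $b$) follows from continuity of $t$, using that $\FF_2[\eps]$ is discrete and $G^2$ is closed.
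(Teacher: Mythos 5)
Your proposal is correct and follows essentially the same route as the paper: identify the tangent space with dual-number pseudodeformations via \eqref{taneq}/\eqref{tanlevneq}, write $t = \eps b$, extract $b(1)=b(c)=0$ and the relation $b(g^2h)=b(h)$ from the trace--determinant identity (so $b$ factors through $G/G^2$, making centrality automatic), and note the extra conditions in part \eqref{leventan} come directly from the level-$N$-shape condition. Your added verification of the converse direction and the continuity remark are just details the paper leaves implicit.
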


\begin{proof}
See also \cite[Lemma 5.3]{C}. We use equations \eqref{taneq} and \eqref{tanlevneq}, and identify a pseudodeformation $t = \eps b: G \to \FF_2[\eps]$ of the trivial mod-$2$ representation with the set-theoretic map $b: G \to \FF_2$. The trace-determinant identity for elements $g$ and $gh$ of $G$ on $t$ 
simplifies to $b(g^2 h) = b(h)$, so that $b$ factors through~$G/G^2$. 
Since the latter is abelian, the condition $t(gh)=t(hg)$ is automatically satisfied for all~$g,h \in G$. The condition $t(1) = 2 = 0$ forces $b(1) = 0$; oddness forces $b(c) = 0$. The 
additional requirements on $b$ in \eqref{leventan} come from the level-$N$ shape condition.
\end{proof} 

\begin{mycor}\label{tandim} 
$\dim \tan \hat \RRR(N)_\one = 6.$ 
\end{mycor}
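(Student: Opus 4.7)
The proof is a direct computation from \cref{chenny}\eqref{plaintan}. The tangent space is identified with the $\FF_2$-vector space
$$V := \{b: G/G^2 \to \FF_2 \mid b(1) = b(c) = 0\},$$
so its dimension equals $|G/G^2| - 2$, provided the images of $1$ and $c$ in $G/G^2$ are distinct (so that the two vanishing conditions are linearly independent).

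The plan is therefore to carry out two quick checks. First, by \eqref{frattinieq},
$$G/G^2 \cong (\ZZ/8\ZZ)^\times \times \{\pm 1\},$$
which has order $8$. Second, by \cref{galoissplitDI}(a), the image of $c$ in $G/G^2$ lies in the coset $\barr{G}_7^+$, and in particular is nontrivial, so $\barr{c} \neq \barr{1}$ in $G/G^2$. Hence the two linear conditions $b(1) = 0$ and $b(c) = 0$ are independent, and
$$\dim_{\FF_2} V = 8 - 2 = 6.$$

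There is no real obstacle here — the work was already done in packaging the tangent space as \cref{chenny}\eqref{plaintan} and in identifying the $2$-Frattini quotient in \eqref{frattinieq}. The only thing to be careful about is invoking \cref{galoissplitDI}(a) to ensure $\barr{c} \neq \barr{1}$, which is what makes the two evaluation functionals independent.
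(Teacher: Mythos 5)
Your proposal is correct and follows exactly the paper's route: the paper's proof consists of citing \eqref{frattinieq} to identify $G/G^2$ (of order $8$) and then applying \cref{chenny}\eqref{plaintan}, yielding $8-2=6$. Your only addition is making explicit the check that $\barr c \neq \barr 1$ (via \cref{galoissplitDI}), which the paper leaves implicit; this is a harmless and valid refinement, not a different argument.
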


\begin{proof}
From \eqref{frattinieq}, $G/G^2 = \gal(\QQ(\sqrt{-1}, \sqrt{2}, \sqrt{N})/\QQ)$. Now we use \cref{chenny}~\eqref{plaintan}.
\end{proof}

The following lemma will help us with $\tandim \RRR(N)_\one$ and follows immediately from \cref{galoissplitDI}. 

\begin{mylemma}\label{decomplem}\leavevmode
\begin{enumerate}[topsep = -5pt]
\item\label{n1l} If $N \equiv 1 \pmod{8}$, then $|\barr D_{N}|=|\barr I_{N}|=2$ and $c \not\in \barr D_{N}$. 
\item\label{n35l} If $N \equiv 3,5 \pmod{8}$, then $|\barr D_{N}|=4$, $|\barr I_{N}|=2$, and $c \not\in \barr D_{N}$. 
\item\label{n7l} If $N \equiv 7 \pmod{8}$, then $|\barr D_{N}|=4$, $|\barr I_{N}|=2$, and $c \in \barr D_{N} \setminus \barr I_{N}$. \end{enumerate}
\end{mylemma}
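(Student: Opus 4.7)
The plan is to unpack \cref{galoissplitDI} case by case in the residue of $N$ modulo $8$, using the explicit identification $G/G^2 \cong (\ZZ/8\ZZ)^\times \times \{\pm 1\}$ from~\eqref{frattinieq} to track the images of $\barr I_N$, $\barr D_N$, and the complex conjugation $\barr c$. The three relevant inputs are: $\barr c = (7, +)$ (from part (a) of \cref{galoissplitDI}); $\barr I_N = \barr G_1 = \{(1, +), (1, -)\}$ (from part (b)), which has size $2$ independently of $N$; and $\barr D_N = \langle \barr I_N, \barr G_N^+ \rangle$ (from part (c)), so that $\barr D_N$ is the preimage in $(\ZZ/8\ZZ)^\times \times \{\pm 1\}$ of the cyclic subgroup $\langle N \rangle \subset (\ZZ/8\ZZ)^\times$ generated by the class of $\frob_N \equiv N$.

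First I will pin down $|\barr D_N|$ from this preimage description. For $N \equiv 1 \pmod 8$ the subgroup $\langle N \rangle$ is trivial and $\barr D_N$ collapses to $\barr I_N = \barr G_1$, giving $|\barr D_N| = |\barr I_N| = 2$ as required for~\eqref{n1l}. For $N \equiv 3, 5, 7 \pmod 8$ the subgroup $\langle N \rangle \subset (\ZZ/8\ZZ)^\times$ has order $2$, so $\barr D_N$ is the union $\{(1, \pm)\} \cup \{(N, \pm)\}$ and has size $4$, while $\barr I_N$ still has size $2$; this gives the first two assertions of both~\eqref{n35l} and~\eqref{n7l}.

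Next I will check membership of $\barr c = (7, +)$ in this explicit list. Clearly $(7, +) \in \barr D_N$ if and only if $7 \in \langle N \rangle \subset (\ZZ/8\ZZ)^\times$, which holds precisely when $N \equiv 7 \pmod 8$. In the case $N \equiv 7 \pmod 8$ we have $\barr c \in \barr G_7 \subset \barr D_N$, and $\barr c \notin \barr I_N = \barr G_1$ since the first coordinates differ; this yields~\eqref{n7l}. For $N \equiv 1, 3, 5 \pmod 8$, the element $(7, +)$ is visibly absent from $\barr D_N$, establishing the remaining statements of~\eqref{n1l} and~\eqref{n35l}.

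Since the entire argument is bookkeeping in the finite abelian group $(\ZZ/8\ZZ)^\times \times \{\pm 1\}$ once \cref{galoissplitDI} is in hand, there is no real obstacle; the one thing to be careful about is distinguishing $\barr G_1$ (which equals $\barr I_N$ and has both $\pm$-components) from $\barr G_1^+$ (the trivial element), so that the inertia genuinely contributes the two elements $(1, \pm)$ in every case.
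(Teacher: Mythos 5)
Your proof is correct and follows the same route as the paper: the paper's proof consists precisely of the remark that the lemma follows immediately from \cref{galoissplitDI}, and your case-by-case bookkeeping in $(\ZZ/8\ZZ)^\times \times \{\pm 1\}$ (including the point that $\barr I_N = \barr G_1$ has both $\pm$-components, hence order $2$) is exactly the intended unpacking of that lemma.
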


\begin{mycor}\label{tandim}
$\tandim {\RRR(N)_\one} = \begin{cases} 
5 & \mbox{if $N \equiv 1$ modulo $8$}\\
4 & \mbox{otherwise}.
\end{cases}$

In particular, suppose $N \equiv 3, 5$ modulo $8$. Choose any $g^{\pm}_N \in G_N^\pm,$ any $g_{-N}^+ \in G_{-N}^+$, any $g_{-N}^- \in G_{-N}^-$, and any $g_7^- \in G_7^-.$ The maximal ideal of $\RRR(N)_\one$ is generated by the images under $\tau^\univ$ of these four elements.
\end{mycor}

\begin{proof}
We use \cref{chenny}\eqref{leventan} and translate the conditions on $b$ via \cref{decomplem}, taking cases.
If \mbox{$N \equiv 1~\pmod{8}$} then the level-$N$ conditions on $b$ translate to $b(\barr D_{N} \cup \{c\})=\{0\}$. Hence we conclude that $\tandim {\RRR(N)_\one} =5$. If $N \equiv 7 \pmod{8}$, then the conditions give us $b(\barr D_{N})=\{0\}$, so we get that $\tandim \RRR(N)_\one =4.$ And if~$N \equiv 3,5 \pmod{8}$, then we get $b(\barr I_{N} \cup \{c\})=\{0\}$ and $b(g)=b(g')$ whenever $\{g,g'\}=\barr D_{N} \setminus \barr I_{N}$. In this case, we also get $\tandim \RRR(N)_\one = 4$. For the refinement for $N \equiv 3, 5 \cmod{8}$, use \cref{galoissplitDI}. 
\end{proof}

\subsection{Representation-theoretic lemmas}

In this subsection, we will study the properties of a pseudodeformation of the trivial mod-$2$ representation taking values in a domain. These results will be used to control the dimension of the tangent space of $\RRR(N)_\one^\red$. 

Before stating \cref{tracedecomp0}, we recall a result due to Chenevier that we will use in its proof. 
\begin{mylemma}{\cite[Lemma 3.8]{C}}
\label{factorthrough}
Suppose $B$ is an coefficient algebra and $t: G \to B$ 
is a pseudodeformation of the trivial mod-$2$ representation. Then $t$ factors through $\Gprotwo$. 
\end{mylemma}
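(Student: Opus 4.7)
The plan is to show $\ker(G \to G^{\protwo}) \subseteq \ker t$, exploiting the fact that $B$, being a profinite local $\FF_2$-algebra, has characteristic $2$, so that the trace-determinant identity collapses on cyclic subgroups of odd order.

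First I would establish a key recurrence: since $t$ is constant-determinant with $d \equiv 1$, applying \eqref{tracedet} to $(g, g^k h)$ gives
\begin{equation*}
b_{k-1} + b_{k+1} = t(g) \, b_k, \qquad \text{where } b_k := t(g^k h).
\end{equation*}
Taking $h = 1$ and writing $a_k := t(g^k)$ yields $a_k = P_k(t(g))$ for polynomials $P_k \in \FF_2[x]$ defined by $P_0 = 0$, $P_1 = x$, and $P_{k+1} = x P_k + P_{k-1}$. Factoring $P_k = x Q_k$ for $k \geq 1$ and reducing the induced recurrence $Q_{k+1} = x Q_k + Q_{k-1}$ modulo $x$ gives $Q_{k+1}(0) = Q_{k-1}(0)$, so $Q_k(0) = 1$ in $\FF_2$ for every odd~$k$.

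Next, for any $g \in G$ with $g^m = 1$ for some odd $m$, the identity $0 = t(g^m) = t(g) \, Q_m(t(g))$, combined with $t(g) \in \mathfrak m_B$ (since $t$ reduces to $\tr \one = 0$) and $Q_m(t(g)) \equiv 1 \pmod{\mathfrak m_B} \in B^\times$, forces $t(g) = 0$. Substituting back collapses the recurrence for $b_k$ to $b_{k+1} = b_{k-1}$ (this is where characteristic $2$ is essential), making $\{b_k\}$ $2$-periodic. For odd $m$ we then get $t(h) = b_0 = b_m = b_1 = t(gh)$ for every $h \in G$, so $g \in \ker t$.

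Finally I would globalize via finite quotients: for any open ideal $J \subset B$, the continuous reduction $\bar t: G \to B/J$ takes values in a finite discrete ring, hence factors through some finite quotient $H$ of $G$. The preceding analysis applied inside $H$ shows that every odd-order element of $H$ lies in $\ker \bar t$; consequently $\ker \bar t$ contains the characteristic subgroup generated by odd-order elements, which is precisely the kernel of the maximal-$2$-quotient map $H \onto H^{(2)}$. So $\bar t$ factors through the finite $2$-group $H^{(2)}$, hence through $G^{\protwo}$, and taking a limit over a cofinal family of open ideals $J$ gives the claim. The main obstacle is the joint use of the unit property of $Q_m(t(g))$ and the char-$2$ collapse of the $b_k$-recurrence: without characteristic $2$ one would only obtain $b_m = \pm b_1$, which does not suffice to force $g \in \ker t$; the remaining passage to the pro-$2$ quotient is then routine.
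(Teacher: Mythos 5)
Your proof is correct. Note that the paper itself offers no argument for this lemma: it is quoted verbatim from Chenevier \cite[Lemma 3.8]{C}, so the only "proof" in the paper is a citation, and what you have written is a self-contained replacement for it. Your route is a genuinely elementary, dimension-two, characteristic-two version of Chenevier's result: the recurrence $b_{k+1}=t(g)b_k+b_{k-1}$ extracted from the trace--determinant identity \eqref{tracedet} with $d\equiv 1$, the factorization $t(g^m)=t(g)\,Q_m(t(g))$ with $Q_m(0)=1$ for odd $m$, and the fact that $t(g)\in\mm_B$ (since $t$ lifts $\tr\one=0$) together force $t(g)=0$ and then the $2$-periodicity of $k\mapsto t(g^kh)$, so every element of odd order lies in $\ker(t,d)$ in the sense of \eqref{kerdef}; your passage to finite quotients $B/J$ of $B$ and finite quotients $H$ of $G$, together with the fact that the odd-order elements of a finite group generate exactly the kernel of $H\onto H^{(2)}$, correctly promotes this to the statement that $t$ is constant on cosets of $\ker(G\to\Gprotwo)$. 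By contrast, Chenevier proves the statement for determinants of arbitrary dimension over henselian local rings, via characteristic polynomials of elements of pro-order prime to $p$ and Hensel/Cayley--Hamilton-type arguments; your trace recurrence replaces that machinery in the residually trivial, constant-determinant, $2$-dimensional case at hand, at the cost of not generalizing beyond dimension $2$. The only points you gloss, harmlessly, are the descent of $(\bar t,\bar d)$ to a finite quotient $H=G/U$ (local constancy of $\bar t$ plus compactness gives an open normal $U$ with $U\subseteq\ker(\bar t,\bar d)$, using centrality of $\bar t$), and the compatibility of the factorizations as $J$ varies, which is automatic because $G\onto\Gprotwo$ is surjective, so the induced maps $\Gprotwo\to B/J$ are unique and assemble into a continuous map to $B=\varprojlim_J B/J$.
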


\begin{myprop}\label{tracedecomp0}
Let $B$ be an integral domain coefficient algebra and $K$ its field of fractions with algebraic closure $\barr K$. Suppose that $t: G \to B$ is a pseudodeformation of the trivial mod-$2$ representation. Then $t$ is the trace of a semisimple representation $\rho: G \to \sl_2(\barr K)$. Moreover: 
\begin{enumerate}[topsep = -3pt, itemsep = 3pt]
\item \label{pro2} $\rho$ factors through $\Gprotwo$.
\item
\label{unip} $\left.\rho\right|_{I_N}$ is unipotent: that is, $\left.\rho\right|_{I_N} \sim \begin{pmatrix} 1 & \eta \\ 0 & 1 \end{pmatrix}$ for some additive character $\eta: I_{N} \to \barr K.$
\item \label{commy} $\left.\rho\right|_{D_N}$ has abelian image. 
\item \label{tracedecomp0item} Either $\rho(I_N)$ is trivial or 
  $\left.\rho\right|_{D_N}$ is unipotent.
\item \label{kerishi} $t$ has level-$N$ shape: $I_N \subseteq \ker \left.t \right|_{D_N}$.
\item \label{fourtermitem} For all $g \in G$, $d \in D_N$, $i \in I_N$, we have 
$t(gi)-t(g) = t(dgi) - t(dg).$
\end{enumerate}
\end{myprop}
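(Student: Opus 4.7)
The plan is to extract $\rho$ from $t$ via Chenevier's theorem and then leverage the interplay between characteristic $2$ and the procyclic structure of tame inertia at $N$ to control $\rho|_{D_N}$, finally deducing \eqref{kerishi} and \eqref{fourtermitem} by case analysis on \eqref{tracedecomp0item}. First, since $B$ is a domain, applying \cite[Theorem 2.12]{C} over the algebraic closure $\bar K$ yields a semisimple representation $\rho: G \to \gl_2(\bar K)$ with $(\tr \rho, \det \rho) = (t, 1)$; constant determinant $1$ forces the image into $\sl_2(\bar K)$. Part \eqref{pro2} then combines \cref{factorthrough} (which puts $t$ through $\Gprotwo$) with the standard fact that $\ker \rho = \ker(t, \det \rho)$ for semisimple $\rho$ over an algebraically closed field (proven by Burnside in the absolutely irreducible case and by a direct eigenvalue computation in the reducible case).

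For \eqref{unip}, $\rho|_{I_N}$ factors through the pro-$2$ quotient of $I_N$, which is the procyclic group $\ZZ_2$ (the wild part at $N$ is pro-$N$, killed in any pro-$2$ image). Let $\sigma$ topologically generate $I_N^{\protwo}$, and let $\alpha, \alpha^{-1}$ be the eigenvalues of $\rho(\sigma)$ in $\bar K$. The crucial input is that $\Frob_N$-conjugation raises $\sigma$ to its $N$-th power in the tame quotient, so $\rho(\sigma)$ and $\rho(\sigma)^N$ are conjugate, giving $\{\alpha, \alpha^{-1}\} = \{\alpha^N, \alpha^{-N}\}$ and hence $\alpha^{N^2 - 1} = 1$. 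Writing $N^2 - 1 = 2^a m$ with $m$ odd and using that $x \mapsto x^2$ is injective on the domain $\bar K$ gives $\alpha^m = 1$, placing $\alpha$ in $\bar\FF_2 \subset \bar K$. Residual triviality of $t$ then yields $\alpha + \alpha^{-1} = t(\sigma) \in \mm_B \cap \bar\FF_2$, which equals $0$ because the residue field $\FF_2$ of $B$ forces $B \cap \bar\FF_2 = \FF_2$ (any element of $B$ algebraic over $\FF_2$ reduces to a root in $\FF_2$ and so has linear minimal polynomial). Hence $\alpha = 1$ in characteristic $2$, so $\rho(\sigma)$, and thus every $\rho(i)$, is unipotent; since $\rho|_{I_N}$ is abelian and simultaneously upper-unipotent, the upper-right entries assemble into an additive character $\eta: I_N \to \bar K$.

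For \eqref{commy}, write $\rho(\sigma) = I + E$ with $E^2 = 0$; binomial expansion gives $\rho(\sigma)^N = I + NE = I + E = \rho(\sigma)$ in characteristic $2$ (using $N$ odd), so any Frobenius lift commutes with $\rho(\sigma)$, making $\rho(D_N)$ abelian. For \eqref{tracedecomp0item}, if $\rho(I_N)$ is nontrivial then $\rho(D_N)$ lies in the centralizer of the nontrivial unipotent $\rho(\sigma)$ in $\sl_2(\bar K)$, which a direct computation shows consists exactly of matrices $\begin{psmallmatrix} a & b \\ 0 & a \end{psmallmatrix}$; intersecting with $\sl_2$ forces $a^2 = 1$, hence $a = 1$ in characteristic $2$, so $\rho|_{D_N}$ is entirely unipotent.

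Both \eqref{kerishi} and \eqref{fourtermitem} now follow by case analysis on \eqref{tracedecomp0item}. If $\rho(I_N)$ is trivial, both identities reduce to $0 = 0$. If $\rho|_{D_N}$ is unipotent, then every $\rho(d)$ has trace $2 = 0$, which immediately gives \eqref{kerishi}; for \eqref{fourtermitem}, rewriting both sides as $\tr\bigl(\rho(g)(\rho(i) - I)\bigr)$ and $\tr\bigl(\rho(d)\rho(g)(\rho(i) - I)\bigr)$ reduces the claim to $\tr\bigl((I - \rho(d))\rho(g)(\rho(i) - I)\bigr) = 0$, which holds by a direct matrix computation since, in a basis aligned with the common invariant line of the Borel containing $\rho(D_N)$, both $(I - \rho(d))$ and $(\rho(i) - I)$ are strictly upper-triangular rank-$1$ nilpotents, so their product sandwiching $\rho(g)$ is strictly upper triangular. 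The primary technical obstacle is the residual-triviality step in \eqref{unip}, where one must carefully relate algebraic-over-$\FF_2$ elements of $\bar K$ to $\mm_B$; the decisive input is that $B$'s residue field is $\FF_2$.
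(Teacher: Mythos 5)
Your proof is correct and follows essentially the same route as the paper's: Chenevier's theorem to produce $\rho$, factorization through the pro-$2$ quotient, the tame relation $\Frob_N\sigma\Frob_N^{-1}=\sigma^N$ plus characteristic-$2$ arguments to make $\rho|_{I_N}$ unipotent and $\rho|_{D_N}$ abelian (unipotent in the ramified case), and a case analysis with an upper-triangular trace computation for the last two items. The only notable (and valid) variation is at the inertia-eigenvalue step, where you use residual triviality of $t$ together with $B\cap\barr\FF_2=\FF_2$, whereas the paper simply notes that a finite-order character of $\ZZ_2$ into a characteristic-$2$ field is trivial.
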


\begin{proof}
The existence of $\rho$ with $\tr\rho=t$ is a theorem of Chenevier \cite[Theorem 2.12]{C}. 
\begin{enumerate}[topsep = -3pt, itemsep = 3pt]
\item Follows from \cref{factorthrough}, the semisimplicity of $\rho$, and the Brauer-Nesbitt theorem.
\item By \eqref{pro2}, the restriction $\left.\rho\right|_{I_N}$ factors through the pro-$2$ tame-inertia quotient of $I_N$, which is isomorphic to $\ZZ_2$. Therefore $\left.\rho\right|_{I_N}$ is reducible, and hence an extension of a character $\chi: \ZZ_2 \to {\barr K}^\times$ by $\chi^{-1}$. We claim that $\chi$ has finite order. Indeed, let $a$ be a lift of \mbox{a generator} of the $\ZZ_2$-quotient of $I_N$, and~$\sigma$ a lift of $\frob_N$ 
to {$D_N$}. Then $\sigma a \sigma^{-1} = a^N$  \cite[\href{https://stacks.math.columbia.edu/tag/0BU5}{Lemma~0BU5}]{stacks-project}. It follows that $\rho(a)$ and~$\rho(a)^N$ are conjugates of each other, so that if $\lambda$ is an eigenvalue of $\rho(a)$, then so is $\lambda^N$; in other words,~$\lambda^{N} = \lambda^{\pm 1}$. Therefore every eigenvalue of $\rho(a)$ has finite order; since $\rho(a)$ generates the image of~$\rho|_{I_N}$, the character $\chi$ does too.
Finally, 
any finite-order character from $\ZZ_2$ to a field of characteristic $2$ must be trivial.
\item From (\ref{unip}), we get that $\left.\rho\right|_{I_N} \sim \begin{pmatrix} 1 & \eta \\ 0 & 1 \end{pmatrix}$ for some additive character $\eta: I_{N} \to \barr K$ of order $1$ or $2$, depending on whether $\rho$ is ramified at $N$. Since $N$ is odd, we have $\rho(a)^N = \rho(a)$ in either case, so that $\rho(a)$ and $\rho(\sigma)$ commute.

\item Since $I_N$ is normal in $D_N$ with abelian quotient, $\left.\rho\right|_{D_N} = \begin{psmallmatrix} \alpha & \eta' \\ 0 & \alpha^{-1} \end{psmallmatrix}$ for some unramified character $\alpha: D_N \to {\barr K}^\times$ and extension class $\eta'$. Since $\rho(D_N)$ is abelian, we have, for every $i$ in $I_{N}$ and~$d$ in $D_{N}$, the matrix  $\rho(i) = \begin{psmallmatrix} 1 & \eta(i) \\ 0 & 1\end{psmallmatrix}$ commuting with $\rho(d) = \begin{psmallmatrix} \alpha(d) & \eta'(d) \\ 0 & \alpha^{-1}(d) \end{psmallmatrix}$. It follows that $\eta'(d) + \eta(i) \alpha^{-1}(d) = \eta(i) \alpha(d) + \eta'(d)$. If $\eta$ is nontrivial, then $\alpha = \alpha^{-1}$, so $\alpha = 1$ as we are in characteristic $2$. Otherwise $\rho(I_N) = 1$.
\item 
One checks that $t(di) = t(d)$ for all $d \in D_N$, $i \in I_N$ in both the possible settings from~\eqref{tracedecomp0item}.
\item Immediate if $\rho(I_N)$ is trivial; otherwise use \cref{matrixfact}.
\qedhere 
\end{enumerate}
\end{proof}
In summary,
the level-$N$ shape condition is automatic for pseudodeformations of $\one$ to a domain. In the next subsection we use this observation to show that the quotient map $\hat \RRR(N)_\one^\red {\onto} \RRR(N)_\one^\red$ induced from {the natural surjective map $\hat \RRR(N)_\one \onto \RRR(N)_\one$} (\cref{levelnshapej}) is an isomorphism. 

\subsection{The tangent space to $\RRR(N)_\one^\red$}\label{rredsec}

We now turn our attention to $\RRR(N)_\one^\red$, in particular its tangent dimension. For brevity, write $\RRR := \RRR(N)_\one$ and $\hat \RRR:=\hat \RRR(N)_\one$, and $\nn$ and $\hat \nn$, respectively, for their maximal ideals; keep the notation introduced above for $G = G_{\QQ, 2N}$, $\barr I_N := I_N(G/G^2)$ and $\barr D_N:= D_N(G/G^2)$. 

If $(B, \mm)$ is any integral-domain coefficient algebra carrying a pseudodeformation $t$ of $1 \oplus 1$, then \cref{tracedecomp0} implies that $t$  automatically has level-$N$ shape, so that the map $\hat \RRR \to B$ guaranteed by {the universal property} factors through the quotient map $\hat \RRR \to \RRR$. If~$\alpha :\RRR \to B$ satisfying $t = \alpha \circ \tau^\univ$ is surjective, then~$\alpha$ induces a surjection of finite-dimensional $\FF_2$-vector spaces $\hat\nn/\hat\nn^2 \onto \nn/\nn^2 \stackrel \alpha \onto \mm/\mm^2$, corresponding to an embedding of tangent spaces (see also \cref{chenny}): 
\begin{equation}\label{subtaneq}
\begin{tabular}{cccccccc}&$\tan B$& $=$ & 
$(\mm/\mm^2)^\ast$ &$\xrightarrow[\sim]{h \mapsto h \circ t}$& 
$T_B$\\
&$ \cap $& & $\cap$ && $\cap$ \\
&$\tan \RRR$ &=&
$ (\nn/\nn^2)^\ast $& $\xrightarrow[\sim]{h \mapsto h \circ \tau^\univ}$&
$T_{ \RRR} $&$: =$& $\{b \in T_{\hat \RRR}: b(di) = b(d)\ {\rm for\ all}\ d \in \barr D_N, i \in \barr I_N\}$\\
&$ \cap $& & $\cap$ && $\cap$ \\
&$\tan \hat \RRR$ &=&
$ (\hat \nn/\hat \nn^2)^\ast $& $\xrightarrow[\sim]{h \mapsto h \circ \hat\tau^\univ}$&
$T_{ \hat \RRR} $&$: =$& $\{\mbox{set maps}\ b: G/G^2 \to \FF_2 : b(1) = b(c) = 0\}.$
\end{tabular}
\end{equation}
To understand $\RRR^\red$, we begin to analyze $\RRR/\pp$ for prime ideals $\pp$.

\begin{myprop}\label{redlemma}\leavevmode
\begin{enumerate}[topsep = -3pt]
\item Every prime ideal $\hat \pp$ of $\hat \RRR$ is the preimage of a prime ideal $\pp$ of $\RRR$, so that $\hat \RRR/\hat \pp = \RRR/\pp$.
\item $\hat \RRR^\red = \RRR^\red$
\end{enumerate}
\end{myprop}

\begin{proof}
The second part follows from the first, so let $\hat \pp$ be a prime ideal of $\hat \RRR$. 
Let $t: G \to  \hat \RRR /\hat \pp$ be the pseudorepresentation obtained by composing $\hat\tau^\univ: G \to \hat\RRR$ with the quotient map $\hat\RRR \onto \hat\RRR/\hat \pp$. By \cref{tracedecomp0}\eqref{kerishi},~$t$ has level-$N$ shape so that the quotient map $\hat \RRR \to \hat \RRR /\hat \pp$ factors through $\RRR$; let $\alpha: \RRR \to \hat \RRR /\hat \pp$ be the corresponding map (that is, satisfying $t = \alpha \circ \tau^\univ$) and let $\pp = \ker \alpha$. Then $\RRR/\pp = \hat \RRR /\hat \pp$ since $\hat \RRR /\hat \pp$ is the trace algebra of $t$.
\end{proof}

\begin{myprop}
\label{tanlemma} Let $\pp$ be a prime ideal of $\RRR$. Then  
$$\tandim {\RRR/\pp} \leq \begin{cases} 
5 & \mbox{if $N \equiv 1$ modulo $8$;}\\
2 & \mbox{if $N \equiv 3, 5$ modulo $8$;} \\
3 & \mbox{if $N \equiv 7$ modulo $8$.}
\end{cases}$$ 
\end{myprop}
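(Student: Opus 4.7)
The plan is to extract both claims from \cref{tracedecomp0}, which encodes the extra structure forced on pseudodeformations of $\one$ by passing to a domain.

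For the first assertion, I would observe that since $\hat \RRR/\hat \pp$ is an integral-domain coefficient algebra, \cref{tracedecomp0}\eqref{kerishi} shows that the induced pseudodeformation automatically has level-$N$ shape. The universal property of $\RRR$ then forces the surjection $\hat \RRR \onto \hat \RRR/\hat \pp$ to factor through the natural quotient $\hat \RRR \onto \RRR$, so that $\hat \pp$ is the preimage of its image $\pp \subset \RRR$, with $\hat \RRR/\hat \pp = \RRR/\pp$; in particular $\pp$ is prime.

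For the tangent bound, I would set $B := \RRR/\pp$ and $t_\pp := \tau^\univ \bmod \pp$, and use the embedding $\tan B \hookrightarrow T_\RRR$ from \eqref{subtaneq}: any tangent vector $b: G/G^2 \to \FF_2$ for $B$ must satisfy every $\FF_2$-linear relation that holds among $\{t_\pp(g) \bmod \mm^2\}_{g \in G}$. Additional relations beyond those already visible in $T_\RRR$ (\cref{chenny}\eqref{leventan}) come from \cref{tracedecomp0} applied to the domain $B$: the dichotomy \eqref{tracedecomp0item} splits the analysis into an \emph{unramified} sub-case ($\rho(I_N) = 1$, giving $b(gi) = b(g)$ for all $g \in G$, $i \in \barr I_N$) and a \emph{unipotent} sub-case ($\rho|_{D_N}$ unipotent, giving $b(d) = 0$ for all $d \in \barr D_N$); in either sub-case the four-term relation \eqref{fourtermitem} $b(gi) + b(g) = b(dgi) + b(dg)$ for $g \in G$, $d \in \barr D_N$, $i \in \barr I_N$ is also available.

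What remains is an explicit dimension count on the eight-element group $G/G^2$, using \cref{galoissplitDI,decomplem} for the structure of $\barr D_N$ and $\barr I_N$ and the parameter basis for $T_\RRR$ coming out of \cref{tandim}. For $N \equiv 1 \pmod 8$ no new input is needed: the bound $\tandim B \leq \tandim \RRR = 5$ is free. For $N \equiv 3, 5 \pmod 8$, I expect each sub-case of the dichotomy to cut the dimension from $4$ to $2$ directly. For $N \equiv 7 \pmod 8$, the unramified sub-case easily gives $\tandim B \leq 2$, whereas the unipotent sub-case --- where $b(\barr D_N) = \{0\}$ is already automatic from level-$N$ shape --- requires the four-term relation (taking $d = c = g_7^+$, $i = g_1^-$, and varying $g$ over $g_3^{\pm}, g_5^{\pm}$) to extract the single extra linear condition, roughly $b(g_3^+) + b(g_3^-) = b(g_5^+) + b(g_5^-)$, needed to drop from $4$ to $3$. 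The main obstacle I expect is precisely this $N \equiv 7$ unipotent sub-case, where the ``obvious'' extra constraint from \cref{tracedecomp0}\eqref{tracedecomp0item} is vacuous and one must carefully invoke \eqref{fourtermitem}; the other cases should reduce to routine linear-algebra bookkeeping.
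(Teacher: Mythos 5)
Your proposal is correct and is essentially the paper's own argument: the factorization through $\RRR$ via \cref{tracedecomp0}\eqref{kerishi}, then the unramified/unipotent dichotomy from \cref{tracedecomp0}\eqref{tracedecomp0item} combined with the four-term relation \eqref{fourtermitem}, followed by the same linear-algebra count on $G/G^2$ using \cref{decomplem} and \eqref{subtaneq} (the paper merely shortcuts your unramified sub-case by quoting $\tandim \RRR(1) = 2$ from \cref{level1}). One small correction: for $N \equiv 3, 5$ the unipotent sub-case does \emph{not} drop from $4$ to $2$ ``directly'' --- vanishing of $b$ on $\barr D_N$ together with $b(c)=0$ only gives $\leq 3$, and the four-term relation (your condition that the $b$-values sum to zero off $\barr D_N$) is genuinely needed there, exactly as in your $N \equiv 7$ analysis and as in the paper's count.
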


\begin{proof}
Write $K$ for the field of fractions of $B:=\RRR/\pp$. From \cref{tracedecomp0}, $t$ is the trace of a unique semisimple representation $\rho: G \to \sl_2(\barr K)$. If $\rho$ is unramified at $N$, then $\alpha$ factors through the quotient~$\RRR \onto \RRR(1)$: see  \eqref{kerdef} and \eqref{psiN1}. By \cref{level1}, $$\tandim B \leq \tandim \RRR(1) = 2.$$ Otherwise, \eqref{subtaneq} above and \cref{tracedecomp0}\eqref{tracedecomp0item},\eqref{fourtermitem} tell us that $t$ satisfies the following properties:
\begin{enumerate}[topsep = -3pt]
\item\label{zerocond} $t(c) = t(D_N) = 0$; 
\item\label{fourtermcount} $t(g) - t(gi) = {t(dg) - t(dgi)}$ for $g \in G$, $d, \in D_N$, $i \in I_N$. 
\end{enumerate}

Let $b\in T_{B}$. By following the maps in \eqref{subtaneq} we see that $b$ is a set map from $G/G^2 \cong (\ZZ/2\ZZ)^3$ to $\FF_2$ subject to the same conditions as $t$ projected to $G/G^2$. We use \cref{decomplem} repeatedly. If $N \equiv 1 \bmod{8}$, then~$\{c \} \cup \barr D_{N}$ has size $3$, so that $b$ has at most $8-3=5$ degrees of freedom and $\tandim B \leq 5$. Otherwise, we can choose~$i \in \barr I_N$ and $d \in \barr D_N$ so that $\{1, i, d, di\}$ are the distinct elements of $\barr D_N$. 
The condition~\eqref{fourtermcount} for~$g \not\in \barr D_N$ implies that the sum of the $b$-values on $G/G^2 - \barr D_N$ is zero. 
Since $\barr D_N$ has size $4$ itself, that is a total of $5$ independent conditions on~$b$, so that $\tandim B \leq 3$. Finally, for $N \equiv 3, 5  \cmod{8}$, the condition~$b(c) = 0$ is an additional independent condition, so that $\tandim B \leq 2$.
\end{proof}

In any case since $ \RRR(1) \simeq \FF_2\lb x, y \rb$ is a quotient of $\mathcal \RRR$ (\cref{level1}), the following corollary is immediate.

\begin{mycor}\label{krulldimR}
If $N \equiv 3, 5 \bmod{8}$, then $\dim \hat \RRR = \dim \RRR = 2$.
\end{mycor}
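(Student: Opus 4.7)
The corollary is a clean package of the tangent space calculation in \cref{tanlemma} with the level-$1$ result in \cref{level1}, so the plan is to extract the Krull dimension bounds in both directions and then transfer from $\RRR$ to $\hat\RRR$.

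First, the lower bound. By \cref{level1}, $\RRR(1) \simeq \FF_2\lb x, y\rb$ has Krull dimension $2$, and via the surjection $\psi_{N,1}: \RRR \onto \RRR(1)$ from \eqref{psiN1} it is a quotient of $\RRR$, which in turn is a quotient of $\hat\RRR$. Hence $\dim \hat\RRR \geq \dim \RRR \geq 2$.

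Next, the upper bound on $\dim \RRR$. Since $\RRR$ is noetherian, its Krull dimension equals $\sup_\pp \dim \RRR/\pp$, where $\pp$ ranges over minimal primes of $\RRR$. Fix such a $\pp$. Then $\RRR/\pp$ is a complete local noetherian integral-domain coefficient algebra with residue field $\FF_2$; in particular it is an $\FF_2$-algebra (no mixed characteristic), so it is a quotient of $\FF_2\lb x_1, \ldots, x_d\rb$ with $d = \tandim(\RRR/\pp)$. By \cref{tanlemma} applied in the case $N \equiv 3, 5 \pmod{8}$, we have $\tandim(\RRR/\pp) \leq 2$, so $\dim \RRR/\pp \leq 2$. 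Taking the supremum gives $\dim \RRR \leq 2$, so $\dim \RRR = 2$.

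Finally, the transfer to $\hat\RRR$. The first assertion of \cref{tanlemma} states that every prime ideal $\hat\pp$ of $\hat\RRR$ is the preimage of a prime ideal of $\RRR$ under the surjection $\hat\RRR \onto \RRR$. Since preimages of primes under a surjection are always primes and the map $\pp \mapsto \pp \hat\RRR$ (taking preimages) is injective, the induced map $\Spec \RRR \to \Spec \hat\RRR$ is an order-preserving bijection. Thus chains of primes in $\hat\RRR$ correspond bijectively to chains of primes in $\RRR$, and $\dim \hat\RRR = \dim \RRR = 2$. No step is really an obstacle once \cref{tanlemma} is in hand; the content is all contained in that proposition.
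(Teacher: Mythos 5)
Your proof is correct and is essentially the paper's own argument spelled out in full: the lower bound from the quotient $\RRR \onto \RRR(1) \simeq \FF_2\lb x,y\rb$, the upper bound from the tangent-dimension bound on domain quotients in \cref{tanlemma} (valid since these are $\FF_2$-algebras, hence quotients of $\FF_2\lb x_1,\ldots,x_d\rb$), and the transfer to $\hat\RRR$ via the prime-ideal correspondence in \cref{tanlemma}. The paper simply declares this "immediate"; your write-up fills in exactly those steps (the only nitpick is the notation ``$\pp\hat\RRR$'' for a preimage, which reads like an extension, but your parenthetical makes the intent clear).
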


We now analyze the tangent dimension of $\RRR^\red$. 

\begin{myprop}
\label{propdef}
$\tandim \mathcal R^{\red} \leq \begin{cases} 5 & \mbox{ if $N \equiv 1 \bmod{8}$,} \\ 3& \mbox{ otherwise.} \end{cases}$ 
In particular, suppose  $N \equiv 3, 5 \cmod{8}$.
Choose $$g^{\pm}_N \in G_N^\pm,\quad g_{-N}^+ \in G_{-N}^+,\quad g_{-N}^- \in G_{-N}^-, \quad \mbox{and} \quad g_7^- \in G_7^-.$$ Then the maximal ideal of $\RRR^\red$ is generated by $\tau^\red (g_N^\pm)$ and any 
two 
of $\tau^\red(g_{-N}^+),$ $\tau^\red(g_{-N}^-),$ and $\tau^\red(g_7^-).$
\end{myprop}

\begin{proof}
 The proof is similar to that of \cref{tanlemma} and repeatedly uses \cref{decomplem}. By \cref{tracedecomp0}, the following hold modulo every prime ideal of $\RRR$, so that they hold in $\RRR^\red$:  
\begin{enumerate}
\item \label{1c} $\tau^\red(c) = \tau^\red(I_N) = 0.$
\item \label{2c} $\tau^\red(d i) = \tau^\red(d)$ for all $d \in D_N$ and $i \in I_N$.
\item \label{3c} $\tau^\red(gi) - \tau^\red(g) = {\tau^\red(dgi) - \tau^\red(dg)}$ for all $g \in G$, $d \in D_N$, and $i \in I_N$.
\end{enumerate} 
For $N \equiv 1 \cmod{8}$, then $\barr I_N = \barr D_N$, and the analysis and the conclusion are analogous to those of \cref{tanlemma}.
Otherwise, 
$\barr I_N$ has size $2$ and index $2$ in $\barr D_N$, which in turn has has index $2$ in $G/G^2$. For any $b \in T_{\RRR^\red}$, the first condition tells us that $b$ is zero on the $3$-element set $\bar I_N \cup \{c \}$. The second condition tells us that the $b$-values on the two elements of $\barr D_N$ that are not in $\barr I_N$ coincide. Now the third condition, as in the proof of \cref{tanlemma}, means that the sum of the $b$-values on $G/G^2 - \barr D_N$ is zero. {Thus we get} $5$ independent conditions, so that $\tandim \RRR^\red \leq 3$. For the refinement in the case $N \equiv 3, 5 \cmod{8}$, compare to \cref{tandim}. 
\qedhere
\end{proof}

\section{Hecke tangent dimensions at the trivial $\rhobar$ mod $2$}
Here we study $A(N)_\rhob := A(N, \FF_2)_\rhob$, the local component of trivial mod-$2$ representation of the big Hecke algebra acting on $M(N, \FF_2)$. 
Recall that $N$ is an odd prime.

\subsection{The shallow Hecke algebra and its reduced quotient}

As a consequence of the surjective map $\RRR(N)_\one \onto A(N)_\one$ (\cref{RNtoA}\eqref{Rnshape}) as well as \cref{tandim,propdef} we get the following:

\begin{mycor}\label{hecketandim}
$$(1)\ \tandim A(N)_\rhob \leq \begin{cases} 5 & \mbox{ if $N \equiv 1 \bmod{8}$} \\ 4& \mbox{ otherwise.} \end{cases} \qquad (2)\ \tandim A(N)^{\red}_\rhob \leq \begin{cases} 5 & \mbox{ if $N \equiv 1 \bmod{8}$} \\ 3& \mbox{ otherwise.} \end{cases}$$
\end{mycor}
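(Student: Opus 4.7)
The plan is to deduce both inequalities directly from the surjection $\phi: \RRR(N)_\one \onto A(N)_\one$ established in \cref{RNtoA}\eqref{Rnshape}, together with the tangent-dimension calculations on the deformation side carried out in \cref{tandim} and \cref{propdef}. So this corollary is essentially a formal consequence of earlier work.

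For part (1), I would invoke the standard fact that a continuous surjection $B \onto C$ of complete local noetherian rings with common residue field $\FF_2$ induces a surjection on cotangent spaces $\mm_B/(\mm_B^2 + 2) \onto \mm_C/(\mm_C^2 + 2)$, equivalently an injection $\tan C \hookrightarrow \tan B$, so that tangent dimensions are monotonic under surjection. Applied to $\phi$ and combined with \cref{tandim}, which gives $\tandim \RRR(N)_\one = 5$ for $N \equiv 1 \cmod{8}$ and $\tandim \RRR(N)_\one = 4$ otherwise, this immediately yields the stated bounds on $\tandim A(N)_\one$.

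For part (2), I would observe that $\phi$ necessarily sends the nilradical of $\RRR(N)_\one$ into the nilradical of $A(N)_\one$, and therefore descends to a surjection $\phi^\red: \RRR(N)_\one^\red \onto A(N)_\one^\red$. Applying the same tangent-dimension monotonicity to $\phi^\red$, together with the bound $\tandim \RRR(N)_\one^\red \leq 5$ for $N \equiv 1 \cmod{8}$ and $\tandim \RRR(N)_\one^\red \leq 3$ otherwise from \cref{propdef}, gives part (2).

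There is no real obstacle here: all the genuine content has already been established. The hard work — constructing $\phi$ via the universal property combined with the level-$N$-shape condition in \cref{RNtoA}, and then computing the tangent dimensions of $\RRR(N)_\one$ and its reduced quotient through direct analysis of odd constant-determinant pseudodeformations to $\FF_2[\eps]$ as in \cref{chenny,tracedecomp0} — lies entirely on the deformation side. The passage to the Hecke algebra is simply the monotonicity of $\tandim$ under surjection.
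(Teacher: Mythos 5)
Your argument is correct and is exactly the paper's proof: the paper deduces the corollary in one line from the surjection $\RRR(N)_\one \onto A(N)_\one$ of \cref{RNtoA}\eqref{Rnshape} together with \cref{tandim} and \cref{propdef}, and your write-up just makes explicit the standard facts (surjections of complete local rings induce surjections on cotangent spaces, and descend to reduced quotients) that the paper leaves implicit. Nothing is missing.
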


\subsection{The partially full Hecke algebra of level $N$} 
From the discussion in \cref{pf2sec} $A(N)_\one^\pf$ is a complete local noetherian ring, an $A(N)_\one$-algebra whose maximal ideal $\mm^\pf := \mm(N)^\pf_\one$ is generated by $U_N' := U_N + 1$ together with $\mm:=\mm(N)_\one$. Moreover with $\tau: G \to A(N)_\one$ the modular pseudodeformation of $\one$, for any $\frob_N$ element in $D_N$, 
the element $F_N := \tau(\frob_N) \in A(N)_\one$ is well defined independent of the choice and satisfies $F_N \in (\mm^\pf)^2$ \eqref{fninmpf2}.  Finally, \eqref{fourlemma2} implies that for any $i \in I_N$ and any complex conjugation $c$, so that
$$U_N' \, \tau(c\,i) = \tau(c\,i) + \tau(c \frob_N) + \tau(c\, i \frob_N),$$
so that $\tau(c\,i) + \tau(c \frob_N) + \tau(c\, i \frob_N) \in (\mm^\pf)^2$ as well.

\begin{mylemma}
\label{pflem}
If $N \equiv 3, 5 \cmod{8}$, then $\tandim A(N)_\rhob^\pf \leq 3$, with the maximal ideal generated by $U_N'$, $\tau(ci)$, and $\tau(c \frob_N)$ --- or, more generally, by $U_N'$, $\tau(g_7^-)$, and $\tau(g_{-N}^\pm)$ for any $g_7^- \in G_7^-$ and any $g_{-N}^\pm \in G_{-N}^\pm$. 
\end{mylemma}
\begin{proof} 
Let $i \in I_N$ be a lift of a generator of its $\ZZ_2$ tame-inertia quotient. It follows from the proof of \cref{tandim} that the images of $\tau(c \,i)$, $\tau(c \frob_N)$,  $\tau(c i \frob_N)$ and $ \tau(\frob_N)$ are an $\FF_2$-basis for the cotangent space $\mm/\mm^2$. Therefore, so are 
$$\tau(c \,i),\ \tau(c\frob_N),\ \tau(\frob_N),\ \mbox{and}\ \tau(c\,i) + \tau(c \frob_N) + \tau(c\, i \frob_N).$$
So these four elements generate $\mm$ as in ideal. Therefore $\mm^\pf$ is generated by these four elements together with $U_N'$. Since both $\tau(\frob_N)$ and $\tau(c\,i) + \tau(c \frob_N) + \tau(c\, i \frob_N)$ are in $(\mm^\pf)^2$, the cotangent space of the partially full Hecke algebra is spanned by the images of $U_N'$, $\tau(c \,i)$, and $\tau(c\frob_N).$ The claim follows. 
\end{proof}

\subsection{The Hecke algebra on very new forms} 

As above, let $i \in I_N$ be a lift of the generator of its $\ZZ_2$ quotient, $\frob_N \in D_N$ any Frobenius-at-$N$ element, and $c \in G$ any complex conjugation.
\begin{mylemma} \label{fnto0}
The element $F_N = \tau^\hecke(\frob_N)$ of $A(N)_{\one}$ maps to zero in $A(N)_\one^\elleng$. 
\end{mylemma}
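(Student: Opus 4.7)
The plan is to observe that this is essentially immediate from the structural identities already established in \cref{pf2sec,verynewsec}. Specifically, \cref{uellpoly} tells us that $U_N$ satisfies $\PPP_N(U_N) = 0$ in $A(N)_\one^\pf$, where, in the $p = 2$, $\rhobar = \one$ setting, $\PPP_N(X) = X^2 + F_N X + 1$ (see \eqref{Pmod2}). Rewriting this identity in terms of $U_N' = U_N + 1$, one arrives at equation \eqref{fun2}, namely
$$F_N = F_N U_N' - (U_N')^2 \qquad \mbox{in } A(N)_\one^\pf.$$

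First, I would lift $F_N$ from $A(N)_\one$ to $A(N)_\one^\pf$ via the natural inclusion (from \cref{apffinite}), so the identity \eqref{fun2} applies. Next, I would invoke \eqref{avnewpf}, which identifies $A(N)_\one^\elleng$ with the quotient $A(N)_\one^\pf/(U_N')$. Reducing the right-hand side of \eqref{fun2} modulo the ideal $(U_N')$ kills both terms $F_N U_N'$ and $(U_N')^2$, so the image of $F_N$ in $A(N)_\one^\elleng$ is $0$.

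There is no real obstacle here, since everything needed has been assembled: the polynomial satisfied by $U_N$, its reformulation in terms of $U_N'$, and the definition of $A(N)_\one^\elleng$ as killing $U_N'$. The only thing to be careful about is the distinction between $F_N$ regarded as an element of $A(N)_\one$ versus its image in $A(N)_\one^\pf$ or in $A(N)_\one^\elleng$; but the statement is precisely about the image in $A(N)_\one^\elleng$ under the composition $A(N)_\one \hookrightarrow A(N)_\one^\pf \twoheadrightarrow A(N)_\one^\elleng$, so \eqref{fun2} and \eqref{avnewpf} together deliver the conclusion.
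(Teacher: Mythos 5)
Your proposal is correct and follows exactly the paper's argument: both use the identity \eqref{fun2}, $F_N = F_N U_N' - (U_N')^2$, together with the presentation $A(N)_\one^\elleng = A(N)_\one^\pf/(U_N')$ from \eqref{avnewpf}, so that both terms on the right die in the quotient. Your extra care about viewing $F_N$ inside $A(N)_\one^\pf$ via the inclusion is a fine (implicit in the paper) bookkeeping point, but the proof is the same.
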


\begin{proof}
Since $U_N' = 0$ in $A(N)_\one^\elleng$ and $F_N = F_N U_N' - (U_N')^2$  \eqref{fun2},  we have $F_N = 0$ in $A(N)_\one^\elleng$.
\end{proof}
 
\begin{mylemma}\label{englem}
If $N \equiv 3, 5 \pmod{8}$, then $\tandim A(N)^\elleng_\rhob \leq 2$. Its maximal ideal is generated by {the images of} $\tau(ci)$ and either $\tau(c \frob_N)$ or $\tau(c i \frob_N)$. More generally still, it is generated by any $\tau(g_7^-)$ and $\tau({g^{\pm}_{-N}})$. 
\end{mylemma}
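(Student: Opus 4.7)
The plan is to deduce \cref{englem} from \cref{pflem} together with the isomorphism $A(N)^\elleng_\one = A(N)^\pf_\one/(U_N')$ from \eqref{avnewpf}. Since a surjection of complete local noetherian rings induces a surjection of cotangent spaces, the cotangent space of $A(N)^\elleng_\one$ is the quotient of $\mm^\pf/(\mm^\pf)^2$ by the image of $U_N'$. For $N \equiv 3, 5 \pmod 8$, \cref{pflem} tells us that $\mm^\pf$ is generated by $U_N'$, $\tau(ci)$, and $\tau(c\Frob_N)$, so killing $U_N'$ leaves a generating set of size two, yielding $\tandim A(N)^\elleng_\one \leq 2$ and one form of the generator statement.

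For the equivalent choice of $\tau(ci\Frob_N)$ in place of $\tau(c\Frob_N)$, the key input is the identity \eqref{fourlemma2}: $U_N\,\tau(ci) = \tau(ci\Frob_N) - \tau(c\Frob_N)$. In characteristic $2$, write $U_N = U_N' + 1$ and rearrange to obtain
\begin{equation*}
U_N'\,\tau(ci) \;=\; \tau(ci) + \tau(c\Frob_N) + \tau(ci\Frob_N)
\end{equation*}
as an equality in $A(N)^\pf_\one$. Passing to the quotient $A(N)^\elleng_\one = A(N)^\pf_\one/(U_N')$, the left-hand side is $0$, so
\begin{equation*}
\tau(ci) + \tau(c\Frob_N) + \tau(ci\Frob_N) \;=\; 0 \quad \text{in } A(N)^\elleng_\one.
\end{equation*}
Since all three terms lie in $\mm^\elleng$, this relation descends to the cotangent space, showing that any one of $\tau(ci), \tau(c\Frob_N), \tau(ci\Frob_N)$ is the sum of the other two in $\mm^\elleng/(\mm^\elleng)^2$. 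Thus $\tau(ci)$ together with either $\tau(c\Frob_N)$ or $\tau(ci\Frob_N)$ generates the cotangent space.

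For the most general form of the statement, referring to arbitrary representatives $g_7^-$ and $g^\pm_{-N}$: by \cref{galoissplitDI}, $ci \in G_7^-$, $c\Frob_N \in G_{-N}^+$, and $ci\Frob_N \in G_{-N}^-$, so the three explicit elements we have used indeed realize the three Frattini classes appearing in the statement. The final point is that the image of $\tau(g)$ in the cotangent space depends only on the Frattini class of $g$ in $G/G^2$. This is precisely the content of the analogous more-general generator statement in \cref{pflem}, whose proof identifies tangent vectors with set maps $b:G/G^2 \to \FF_2$ as in \cref{chenny} and \eqref{subtaneq}; so $\tau(g)-\tau(g')$ vanishes in the cotangent space of $A(N)^\pf_\one$, and therefore also in its quotient, whenever $\bar g = \bar g'$ in $G/G^2$. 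The main (minor) obstacle is simply to verify that this class-invariance transfers cleanly through the quotient by $(U_N')$, which it does because $U_N' \in \mm^\pf$ only contributes to killing the class of $U_N'$ itself and does not alter the relation among Galois-theoretic generators.
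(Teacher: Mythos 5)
Your proof is correct and follows essentially the same route as the paper: pass from $A(N)^\pf_\one$ to $A(N)^\elleng_\one = A(N)^\pf_\one/(U_N')$, and descend the generating set $\{U_N',\ \tau(ci),\ \tau(c\frob_N)\}$ of $\mm^\pf$ from \cref{pflem} to the cotangent space of the quotient. Your extra steps — using \eqref{fourlemma2} to get $\tau(ci)+\tau(c\frob_N)+\tau(ci\frob_N)=0$ in $A(N)^\elleng_\one$ for the either/or choice, and the Frattini-class invariance of $\tau(g)$ modulo the square of the maximal ideal for the general form — just make explicit what the paper leaves implicit in citing the proof of \cref{pflem}.
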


\begin{proof}
Let $\mm^\elleng$ be the maximal ideal of $A(N)_{\one}^\elleng$. By construction $U_N'$ kills $M(N)_\rhob^\elleng$. Therefore there is a surjective map $A(N)_\rhob^\pf \onto A(N)_{\one}^\elleng$ sending $U_N'$ to zero and restricting the action of the other Hecke operators. The proof of \cref{pflem} (and using the same notation) tells us that the images of $\tau^\hecke(ci)$ and $\tau^\hecke(c \frob_N)$ span {$\mm^\pf /\big((\mm^\pf)^2,U_N'\big)$} as an $\FF_2$-vector space. That space surjects onto $\mm^\elleng/(\mm^\elleng)^2$, proving the lemma.
\end{proof}

\section{Mod-2 modular forms of level $3$ and $5$ }\label{9}

In this section, we restrict ourselves to $N=3,5$ and study the properties of $A(N):=A(N, \FF_2)$ in these cases. We determine the structure of $M(N) := M(N, \FF_2)$ (\cref{struct}), prove that $A(N)$ is a local ring (\cref{loc}), and compute the exact dimensions of the tangent space of $A(N)$. 

\subsection{$M(3)$ and $M(5)$ are polynomial algebras} \label{struct} 
In level $1$, Swinnerton-Dyer showed that $M(1) = \FF_2 [\barr \Delta]$, where $\barr \Delta = q + q^9 + q^{25} +\cdots $ is the mod-$2$ $q$-expansion of $\Delta$, the unique normalized cuspform of level~$1$ and weight $12$ \cite[Theorem 3]{SwDy}. The fact that $M(1)$ is a polynomial algebra is a key ingredient for all the known proofs of the structure of $A(1)$ \cite{NS2,Mpaper,Mathilde,Monsky1}. In levels 3 and 5, we use an analogous structure result to allow us to apply the nilpotence method \cite{Mpaper} to prove that $\dim A(N)^\elleng \geq 2$ (\cref{nilpy}). The structure of $M(3)$ and $M(5)$ is certainly known to experts and we make no claim of originality.

\begin{mylemma}\label{lemma:polyalg}
If $N = 3, 5$, then $M(N) =  \FF_2[f_N]$ for some  $f_N \in M(N)$. More precisely, 
\begin{align*}
f_3 &= \barr E_4^{3\myhyphen{\rm crit}} = q + q^{2} + q^{3} + q^{4} + q^{6} + q^{8} + q^{9} + q^{12} + q^{16} + q^{18} + O(q^{20}) \in M_4(3, \FF_2),\\
f_5 &= \barr E_4^{5\myhyphen{\rm crit}} = q + q^{2} + q^{4} + q^{5} + q^{8} + q^{9} + q^{10} + q^{16} + q^{18} + O(q^{20})  \in M_4(5, \FF_2).
\end{align*}
\end{mylemma}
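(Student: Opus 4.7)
The plan is to prove $M(N,\FF_2)=\FF_2[f_N]$ for $N\in\{3,5\}$ by verifying the explicit $q$-expansion of $f_N$, establishing linear independence of its powers, and matching dimensions weight by weight.

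I identify $f_N$ with the mod-$2$ reduction of the critical-slope $N$-stabilization of $E_4$, namely $\tilde f_N(z):=(E_4(z)-E_4(Nz))/240\in M_4(\Gamma_0(N),\ZZ)$; the normalization is integral because every Fourier coefficient of $E_4(z)-E_4(Nz)$ is divisible by $240$. The coefficients reduce via $\sigma_3(n)\equiv\sigma_1(n)\pmod 2$ (from $d^3\equiv d\pmod 2$), and using that $\sigma_1(n)$ is odd iff $n$ is a square or twice a square, the stated $q$-expansions follow.

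The containment $\FF_2[f_N]\subseteq M(N,\FF_2)$ is then immediate from $f_N\in M_4(N,\FF_2)$, and linear independence of $\{f_N^j\}_{j\geq 0}$ in $\FF_2\lb q\rb$ follows because $f_N^j=q^j+O(q^{j+1})$.

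The substantive step is the reverse inclusion $M(N,\FF_2)\subseteq\FF_2[f_N]$, which I would prove by a weight-by-weight dimension count. In characteristic zero, the standard dimension formula yields $\dim_\QQ M_k(\Gamma_0(N),\QQ)=\lfloor k/3\rfloor+1$ for $N=3$ and $2\lfloor k/4\rfloor+1$ for $N=5$ (for even $k\geq 4$); the $q$-expansion principle then gives $\dim_{\FF_2}M_k(N,\FF_2)\leq\dim_\QQ M_k(\Gamma_0(N),\QQ)$. To saturate this bound by powers of $f_N$, I would exhibit in each $M_k(N,\FF_2)$ enough independent $f_N^j$ --- specifically those $j$ for which the $\Gamma_0(N)$-weight filtration satisfies $w(f_N^j)\leq k$ --- and count them using the $\Gamma_1(N)$-weight data $w_1(f_3)=3$ and $w_1(f_5)=2$ from \cref{weightsec} together with the bound $w_1(g)\leq w(g)\leq w_1(g)+3$ recorded at \eqref{filt}. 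Key mod-$2$ congruences like $\eta(z)^6\eta(3z)^6\equiv f_3+f_3^2\pmod 2$ (placing $f_3^2\in M_6(3,\FF_2)$) and $(E_6(z)-E_6(5z))/(-504)\equiv f_5\pmod 2$ (placing $f_5\in M_6(5,\FF_2)$) are verified directly by computing Fourier coefficients, again via $\sigma_5(n)\equiv\sigma_1(n)\pmod 2$.

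The main obstacle is pinning down $w(f_N^j)$ exactly for each $j$ and verifying enough such congruences: a finite computation in each small weight, which I would organize by reducing explicit lifts of $\Gamma_0(N)$-forms (Eisenstein series and the low-weight cusp forms coming from $\eta$-products) modulo $2$ and expressing them as polynomials in $f_N$.
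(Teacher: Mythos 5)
Your route is viable and is, at heart, a characteristic-$2$ shadow of the paper's argument, but it is organized differently. The paper first determines the full integral graded ring: it exhibits $e_2=E_{2,N}$, $g_4=E_4^{N\myhyphen{\rm crit}}$, and a third generator ($h_6=(e_2g_4-c_6)/27$ in level $3$, $d_4=(g_4-c_4)/13$ in level $5$) encoding a congruence with the low-weight cusp form, proves via a Victor--Miller-type monomial basis in \emph{every} even weight that these generate $M(N,\ZZ)$ with a single relation, and only then reduces mod $2$, where $e_2\equiv 1$ and the relation collapses to $\bar h_6=f_3^2$ (resp.\ $\bar d_4=f_5^2$), giving $M(N,\FF_2)=\FF_2[f_N]$ at once. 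You instead work directly mod $2$: match, weight by weight, the number of powers $f_N^j$ lying in $M_k(N,\FF_2)$ against the characteristic-zero dimension. That is legitimate (your independent powers, once they number $\dim_\CC M_k(\Gamma_0(N),\CC)$, must span $M_k(N,\FF_2)$ since $\dim_{\FF_2}M_k(N,\FF_2)\le\dim_\CC M_k(\Gamma_0(N),\CC)$), and it avoids writing down the integral relation; what it gives up is the integral structure of $M(N,\ZZ)$, which the paper gets for free, and the uniform basis that makes the ``all $k$'' step transparent.

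Two points need repair before your plan closes. First, for $N=5$ the congruence you highlight is not the operative one: $f_5\in M_6(5,\FF_2)$ is automatic (already $f_5\in M_4$, and $E_{2,5}\equiv 1\pmod 2$ pads weights by $2$) and contributes nothing to the count. What you actually need is the \emph{weight-$4$} fact $f_5^2\in M_4(5,\FF_2)$ --- equivalently, that the unique normalized cusp form $c_4\in S_4(\Gamma_0(5),\ZZ)$ satisfies $c_4\equiv f_5+f_5^2\pmod 2$ (this is the paper's $\bar d_4=f_5^2$); without it your count already fails at $k=4$, where $\dim M_4(5,\FF_2)=3$ forces $1,f_5,f_5^2$ all to lie in weight $4$. (Your level-$3$ congruence $\eta(z)^6\eta(3z)^6\equiv f_3+f_3^2$ is the right one there.) Second, ``pinning down $w(f_N^j)$ exactly for each $j$'' is both more than you need and, as phrased, an open-ended weight-by-weight task. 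Only \emph{upper} bounds are needed, and they follow uniformly from a finite input: from $f_3^2\in M_6(3,\FF_2)$ and $f_5^2\in M_4(5,\FF_2)$ you get $f_N^{2m}=(f_N^2)^m$ in weight $6m$ (resp.\ $4m$) and $f_N^{2m+1}$ in weight $6m+4$ (resp.\ $4m+4$), and $E_{2,N}\equiv 1\pmod 2$ lets you pad to any larger even weight; each of the two congruences is a finite check (Sturm bound). With these in place the elementary counting identity $\#\{j\ge 0: f_N^j\in M_k(N,\FF_2)\}=\dim_\CC M_k(\Gamma_0(N),\CC)$ for all even $k$ --- which you should verify explicitly, as it is the crux --- completes the argument; note also that the crude bound \eqref{filt} alone is too weak for this purpose (it would only give $w(f_3^2)\le 7$, not $6$), so the explicit congruences cannot be dispensed with.
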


\begin{remark}\label{serresupersing}
For $p \geq 5$, the {subspace} of $M(N,\FF_p)$ coming from weights divisible by $p-1$ is the coordinate algebra {of} $X_0(N)_{\FF_p}$ with the supersingular points removed, 
  with the weight filtration corresponding to the maximal order at the poles \cite[Corollaire 2]{SerreBBK}; for $p = 2,3$ something similar is true with the filtration adjusted. For example, $M(7, \FF_2) =  \FF_2[f_7, f_7^{-1}]$ for 
$f_7 = \eta(q^7) \eta(q)^{-1}$ a Hauptmodul on $X_0(7)$. 
\end{remark}

\begin{proof}[Proof of \cref{lemma:polyalg}] In all cases, we determine $M(N, \ZZ)$ and reduce modulo $2$. 

\fbox{Case $N = 3$:} 
We claim that $M(3, \ZZ)$ is generated by $e_2 = E_{2, 3}$, the unique Eisenstein {series} in weight~$2$; $g_4 := E_4^{3\myhyphen{\rm crit}}$, the normalized semicuspidal Eisenstein eigenform in weight $4$; and $h_6$, which captures a congruence in weight $6$; with a single relation: $g_4^2 = e_2 h_6$.  More precisely, let
$$e_2 := E_{2, 3} = \frac{E_2 - 3 E_2(q^3)}{-2} =1 + 12 q + 36 q^2 + 12 q^3 + 84 q^4 + 72 q^5 + O(q^{6})\in M_2(3, \ZZ),$$
$$g_4 := E_4^{3\myhyphen{\rm crit}} = \frac{E_4 - E_4(q^3)}{240}=q + 9q^{2} + 27q^{3} + 73q^{4} + 126q^{5} + O(q^{6}) \in M_4(3, \ZZ),$$
and let $$c_6 := q - 6 q^2 + 9 q^3 + 4 q^4 + 6 q^5 + O(q^{6})  \in S_6(3, \ZZ)$$
be the unique normalized cuspform. Then $e_2 g_4$ and $c_6$ are congruent modulo $27$, so set 
$$h_6  := \frac{e_2 g_4 - c_6}{27} =q^{2} + 6q^{3} + 27q^{4} + 80q^{5} + O(q^{6}) \in M_6(3, \ZZ).$$
We claim that monomials in $e_2$, $g_4$, and $h_6$ give a Victor-Miller basis in each even weight $k \geq 0$, by which here we mean simply a set of forms $\{f_{k, 0}, \ldots, f_{k, d_k-1} \}$, where $d_k = \dim M_k(3, \CC) = 1 + \lfloor{\frac{k}{3}}\rfloor$, satisfying $f_{k, i} = q^i + O(q^{i + 1})$ for each $i$, $k$.  For $0 \leq i \leq \lfloor \frac{k}{6} \rfloor$ we define the even Victor-Miller elements: let $f_{k, 2i} := e_2^{k/2 - 3i} h_6^i$. For $0 \leq i  {\leq} \lfloor\frac{k-4}{6}\rfloor$ we define the odd Victor-Miller elements: let $f_{k, 2i +1 }  :=  e_2^{k/2 - 2 - 3i} g_4 h_6^{i}$. To see that these $\{f_{k, i}\}$ define a basis of $M_k(3, \CC)$, observe that {for even~$k$}, we have $\lfloor \frac{k}{6} \rfloor + {1}+ \lfloor\frac{k-4}{6}\rfloor + {1} = {1} + \lfloor{\frac{k}{3}}\rfloor$; the Victor-Miller shape of their $q$-expansions guarantees that they are also a basis for $M_k(3, \ZZ)$. 

The dimension formulas guarantee that the relation $g_4^2 = e_2 h_6$ is the only one. 
In other words, $$M(3, \ZZ) = \ZZ[e_2, g_4, h_6]/(g_4^2 -e_2 h_6)$$
as a graded algebra. 
Finally, modulo $2$ we have $e_2 \equiv 1$, so that our relation becomes ${h_6} = g_4^2$, so that we can conclude that $M(3) = M(3, \FF_2)$ is a polynomial algebra in $g_4$ over $\FF_2$. 
 
 \fbox{Case $N = 5$:} Very similar to $N = 3$, except that forms of weight $2$ and $4$ already generate. Again, let $e_2 := E_{2, 5} = 1 + 6q + 18q^{2} + 24q^{3} + 42q^{4} + 6q^{5} + O(q^{6})$ and $g_4 := E_4^{5\myhyphen{\rm crit}} = q + 9q^{2} + 28q^{3} + 73q^{4} + 125q^{5} + O(q^{6})$. Let $c_4 = q - 4q^{2} + 2q^{3} + 8q^{4} - 5q^{5} + O(q^{6})$ be the unique normalized cuspform of weight $4$. Since $c_4$ and~$g_4$ have a congruence modulo $13$, let $d_4: = \displaystyle \frac{g_4 - c_4}{13} = q^2 + 2 q^3 + 5 q^4 + 10q^5 + O(q^6) \in M_4(5, \ZZ)$. Using the standard dimension result $\dim M_k(5, \CC) = 1 + 2 \lfloor{\frac{k}{4}} \rfloor$, combined with a Victor-Miller--basis construction similar to above,
 one can prove that, as a graded algebra,
 $$M(5, \ZZ) = \ZZ[e_2, g_4, d_4]/(g_4^2- e_2^2 d_4 - 4 g_4 d_4 + 8 d_4^2).$$ Over $\FF_2$ we have $e_2 = 1$, so that the relation reduces to $g_4^2 = d_4$, so that $M(5, \FF_2) = \FF_2[g_4]$. 
\end{proof}

\subsection{$A(3)$ and $A(5)$ are local rings}\label{loc} By the discussion in \cref{heckealgmodpsec}, to show that $A(3)$ and $A(5)$ are local $\FF_2$-algebras, it suffices to prove the following lemma. Recall that $G = G_{\QQ, 2N}$.

\begin{mylemma}
\label{proprep}
For $N =  3, 5$, if $\rhobar: G \to \gl_2(\barr\FF_2)$ is a $\Gamma_0(N)$-modular representation, then $\bar\rho = 1 \oplus 1$.
\end{mylemma}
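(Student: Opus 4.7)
The plan is to prove that $\det \rhobar \equiv 1$ and $\tr \rhobar \equiv 0$ on $G$, from which $\rhobar \cong \rhob$ follows by semisimplicity and Chenevier's pseudorep-to-rep theorem \cite[Theorem 2.12]{C}. The determinant vanishes for free: mod $2$ the cyclotomic character $\omega_2$ is trivial, so $\det \rhobar = \omega_2^{k-1} \equiv 1$. It remains to prove $\tr \rhobar \equiv 0$ by splitting into reducible and irreducible cases.

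For the reducible case, write $\rhobar = \chi \oplus \chi^{-1}$ using trivial determinant. By \cref{levelelldecomp}, $\rhobar$ has prime-to-$2$ Artin conductor dividing $N$; since the conductor of a direct sum is the sum of the summand conductors and $\chi, \chi^{-1}$ share the same conductor at $N$, this forces $\chi$ to be unramified at $N$. Thus $\chi$ factors through an abelian quotient of $G_{\QQ, 2}$ and, being $\barr\FF_2^\times$-valued, has odd order. By Kronecker--Weber, abelian extensions of $\QQ$ unramified outside $\{2\}$ lie in $\QQ(\mu_{2^\infty})$ with pro-$2$ Galois group $\ZZ_2^\times$, so the only odd-order character is trivial; hence $\chi = 1$ and $\rhobar \cong \rhob$.

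For the irreducible case I will derive a contradiction. By \cref{fieldnshape}, $\rhobar|_{I_N}$ is unipotent, so $\rhobar(I_N)$ has order $1$ or $2$ in $\gl_2(\barr\FF_2)$. If $\rhobar$ is unramified at $N$, then $\rhobar$ factors through $G_{\QQ, 2}$; by Serre's conjecture for $p=2$ (Khare--Wintenberger), an odd irreducible $\rhobar$ of $G_{\QQ, 2}$ with trivial determinant is $\Gamma_0(1)$-modular, but by Nicolas--Serre (\cref{level1}) the algebra $A(1)$ is local with the unique $\Gamma_0(1)$-modular representation $\rhob$, contradicting irreducibility. If instead $\rhobar(I_N) = \langle u \rangle$ for some nontrivial unipotent $u$, then since $N$ is odd the Frobenius--inertia relation $\sigma \tau \sigma^{-1} = \tau^N$ forces $\rhobar(\frob_N)$ to commute with $u$, and combined with trivial determinant this forces $\rhobar(\frob_N)$ itself to be unipotent. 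Thus $\rhobar(D_N)$ lies in the upper-unitriangular subgroup.

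By Dickson's classification of irreducible finite subgroups of $\gl_2(\barr\FF_2)$, an irreducible $\rhobar$ of $G$ with such local structure at $N$ and global ramification only in $\{2, N\}$ must be dihedral: $\rhobar \cong \mathrm{Ind}_{G_K}^{G_\QQ} \psi$ for some quadratic $K/\QQ$ unramified outside $\{2, N\}$ and some odd-order character $\psi : G_K \to \barr\FF_2^\times$. The main obstacle will be ruling out these dihedral constructions. For $N = 3, 5$ there are at most seven candidate fields $K$; the required check is a finite class-field-theoretic computation of the odd parts of the ray class groups of each $K$ with conductor supported on primes above $\{2, N\}$, showing that no valid odd-order $\psi$ satisfies the determinant and conductor constraints. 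The fact that $N - 1$ is a power of $2$ for $N = 3, 5$ makes the relevant local unit groups pro-$2$, so the odd parts of the ray class groups reduce to the odd parts of class groups and units of $K$, which one checks case by case are trivial.
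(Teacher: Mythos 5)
Your reducible case (forcing $\chi$ to be unramified at $N$ and then trivial via Kronecker--Weber) is fine and fleshes out what the paper dismisses as clear, and your local analysis at $N$ in the ramified irreducible case (order-$2$ unipotent inertia, $\rhobar(\frob_N)$ commuting with it and hence unipotent, so $\rhobar(D_N)$ unipotent) is exactly the paper's argument via \cref{fieldnshape} and \cref{tracedecomp0}\eqref{tracedecomp0item}. But the crucial step of your irreducible case is asserted, not proved: Dickson's classification does \emph{not} tell you that an irreducible $\rhobar$ with unipotent image of $D_N$ and ramification confined to $\{2,N\}$ has dihedral (projectively) image. The classification only lists the possible finite irreducible subgroups of $\gl_2(\barr\FF_2)$; besides the dihedral ones it contains the groups with projective image $\mathrm{PSL}_2(\FF_{2^r})$, $r \geq 2$ (e.g.\ $A_5 \cong \mathrm{SL}_2(\FF_4)$), and nothing in your local data at $N$ rules these out, since all the constraints you impose concern a single decomposition group while ramification at $2$ is left completely unrestricted. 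Eliminating these large-image possibilities is precisely the hard content of the lemma, and it cannot be reduced to a ``finite class-field-theoretic computation'' of odd parts of ray class groups --- that computation only ever sees abelian (hence, after induction, dihedral) constructions. This is where the paper inserts genuine arithmetic input: having shown $\rhobar(D_N)$ is unipotent, it restricts to $G_{\QQ(\sqrt{-N})}$, over which $\rhobar$ becomes unramified outside $\{2,\infty\}$, and then invokes the discriminant-bound nonexistence theorems of Sengun and Moon--Taniguchi for these quadratic fields (the analogues of Tate's theorem over $\QQ$) to force $\rhobar|_{G_{\QQ(\sqrt{-N})}}$ to be reducible; class field theory then makes the image a finite $2$-group, hence contained in a unipotent Borel, contradicting irreducibility.

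Two smaller remarks. In your unramified-at-$N$ subcase, invoking Khare--Wintenberger plus \cref{level1} works but is far heavier than necessary: Tate's theorem (no irreducible mod-$2$ representation of $G_{\QQ,2}$), which the paper cites as \cite{tate}, settles that subcase directly, and indeed the paper's whole point is to give an argument intermediate in machinery between Serre's elementary level-$1$ method and full Serre reciprocity. And even granting a dihedral reduction, your closing computation is glossed: when $2$ is inert in $K$ the local unit group has a nontrivial odd part ($\FF_4^\times$ of order $3$), so the odd part of the relevant ray class groups does not automatically reduce to class groups and global units; one must actually bound conductors and check those quotients. As written, the proposal therefore has a genuine gap at its central step.
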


\begin{proof}
It is clear that the only reducible semisimple such $\rhobar$ is $1 \oplus 1$, so it remains to prove that there are no such irreducible $\rhobar$. 
Since $M(N)$ is a polynomial algebra, one may use an elementary method of Serre, as sketched in \cite[footnote p.~398]{BK} in level $1$.  Alternatively, this fact follows from Serre reciprocity (formerly Serre's conjecture). We give a third argument that is in between these in terms of machinery involved. 

 Suppose $\rhobar: G \to \gl_2(\bar \FF_2)$ 
 $\Gamma_0(N)$-modular. We first show that the image of $\rhobar$ is a finite $2$-group. By a theorem of Tate, an irreducible $\rhobar$ must be ramified at $N$ \cite{tate}. Since the level is prime, $\rhobar|_{I_{N}} \simeq \begin{psmallmatrix} 1 & \ast \\ 0 & 1 \end{psmallmatrix}$, where $\ast$ is a nontrivial extension (\cref{levelelldecomp}). Thus $\ast$ is an additive character $I_N \to \bar \FF_2$, which must factor through the tame inertia {quotient} of $I_N$, an abelian group isomorphic to $\prod_{\ell \neq 2\ {\rm prime}} \ZZ_\ell$. The only nontrivial such $\ast$ factors through $\ZZ_2/2 \ZZ_2 = \FF_2$, so that $|\rhobar(I_{N})|=2$. 
{Following the proof of part~\eqref{tracedecomp0item} of \cref{tracedecomp0}, it follows that $\rhobar|_{D_N} \simeq \begin{psmallmatrix} 1 & \ast \\ 0 & 1 \end{psmallmatrix}$ which implies that $\rhobar(D_N)$ is an elementary abelian $2$-group.}
 Therefore, {it follows that} $\rhobar|_{G_{\QQ(\sqrt{-N})}}$ is unramified outside $\{2,\infty\}$. 
From \cite[Corollary to Theorem B]{Se} (see also \cite{MT}), it follows that $\bar\rho|_{G_{\R(\sqrt{-N})}}$ is reducible. From class field theory it follows that the maximal abelian extension of $\R(\sqrt{-N})$ unramified outside $\{2,\infty\}$ is a pro-$2$ extension of $\R(\sqrt{-N})$. 
Hence, the image $\rhobar(G)$ is a $2$-group, finite since~$\rhobar$ is continuous, as claimed. But any $2$-subgroup of $\gl_2(\bar\FF_2)$ is contained in a unipotent Borel. Therefore $\rhobar$ is reducible, hence trivial. 
\end{proof}

\begin{mycor}\label{Aloc}
$A(3)$ and $A(5)$ are pro-$2$ noetherian local rings with residue field $\FF_2$.
\end{mycor}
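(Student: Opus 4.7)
The plan is to combine \cref{proprep} with the general decomposition of $A(N,\FF_p)$ from \cref{heckealgmodpsec}. Recall from \eqref{Asplit} that for any level $N$ prime to $p$ and any sufficiently large $\FF/\FF_p$, the shallow Hecke algebra decomposes as a finite product
\[
A(N,\FF) \;=\; \prod_{\rhobar\ \Gamma_0(N)\myhyphen{\rm modular}} A(N,\FF)_\rhobar,
\]
indexed by the $\gal(\bar\FF_p/\FF_p)$-orbits of $\Gamma_0(N)$-modular semisimple representations $\rhobar : G \to \gl_2(\bar\FF_p)$. Each factor is a complete local ring with residue field $\FF$, and the whole ring is pro-$p$.

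First, I would observe that \cref{proprep} says that for $N = 3, 5$, the only $\Gamma_0(N)$-modular $\rhobar : G \to \gl_2(\bar\FF_2)$ is the trivial representation $\one = 1 \oplus 1$, which is already defined over $\FF_2$. In particular we may take $\FF = \FF_2$ in the discussion of \cref{heckealgmodpsec}, since all Hecke eigenvalues of mod-$2$ eigenforms lie in $\FF_2$ (they are all traces of $\frob_\ell$ under $\one$, which are zero). Thus the product above has a single factor, giving
\[
A(N) \;=\; A(N, \FF_2) \;=\; A(N, \FF_2)_\one,
\]
which is local with residue field $\FF_2$ and is pro-$2$ as a summand of the pro-$2$ ring $A(N, \FF_2)$.

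Finally, for noetherianness I would invoke the surjection $\hat\phi : \hat\RRR(N)_\one \onto A(N)_\one$ from \eqref{RtoA} in \cref{pdonA}: since $\hat\RRR(N)_\one$ is a complete noetherian local ring (as constructed in \cref{rdef}), its quotient $A(N)_\one$ is also noetherian. This gives all four properties (pro-$2$, noetherian, local, residue field $\FF_2$) and completes the proof. There is no real obstacle here — the lemma is essentially a bookkeeping corollary of \cref{proprep} together with the general formalism already set up in \cref{heckealgmodpsec,pdonA}.
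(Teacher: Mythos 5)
Your proposal is correct and matches the paper's (largely implicit) argument: the paper reduces locality to \cref{proprep} via the maximal-ideal/$\rhobar$ correspondence of \cref{heckealgmodpsec}, and noetherianness and the pro-$2$ property come from the surjection $\hat\phi$ of \cref{pdonA} and the general setup, exactly as you say. No gaps.
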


\subsection{Tangent spaces to $A(3,\FF_2)$ and $A(5, \FF_2)$}
For $N = 3, 5$, we can strengthen and refine \cref{hecketandim}. Let $\mm(N)$ be the maximal ideal of $A(N)$, and $\mm(N)^\red$ be the maximal ideal of $A(N)^\red$. We state the result for $N = 3$ and $N = 5$ separately, and then combine these to get a general result. 

\begin{myprop}\label{gen3}\
\begin{enumerate}[topsep=-5pt, itemsep = 3pt]
\item \label{gen3nil} $\tandim A(3) = 4$, {and} the maximal ideal {is} generated by $T_{11}$, $T_{5}$, $T_{13}$, and  $T_7$.
\item \label{gen3red} 
The maximal ideal of $A(3)^\red$ is generated by $T_{11}$ and any two of $T_5, T_7, T_{13}$. 
\item In \eqref{gen3nil} and \eqref{gen3red}, $T_{q'}$ may replace $T_{q}$ if $q'$ is a prime congruent to $q$ modulo $24$. 
\end{enumerate}
\end{myprop}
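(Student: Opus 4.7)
The bound $\tandim A(3) \leq 4$ (and $\tandim A(3)^\red \leq 3$) is \cref{hecketandim} applied to $N = 3 \equiv 3 \pmod{8}$. To identify the explicit generators, I would pull back along $\phi : \RRR(3)_\one \onto A(3)$ (\cref{RNtoA}) the cotangent generators of $\nn(3)_\one$ listed in \cref{tandim} and of $\nn(3)_\one^\red$ listed in \cref{propdef}. A quadratic-reciprocity computation of $\bigl(\tfrac{3}{\ell}\bigr)$ together with $\ell \bmod 8$ places $\Frob_{11} \in G_3^+$, $\Frob_{13} \in G_5^+$, $\Frob_5 \in G_5^-$, and $\Frob_7 \in G_7^-$, so $\phi$ sends $\tau^\univ$ at representatives of $G_N^\pm, G_{-N}^+, G_{-N}^-, G_7^-$ to $T_{11}, T_{13}, T_5, T_7$ respectively. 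The ``any two of $T_5, T_7, T_{13}$'' clause in part (2) reflects the three-way symmetry of $G_{-N}^+, G_{-N}^-, G_7^-$ in \cref{propdef}.

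For the equality $\tandim A(3) = 4$, I would prove $\FF_2$-linear independence of $T_{11}, T_5, T_{13}, T_7$ in $\mm(3)/\mm(3)^2$ in stages. First, $\pi^\old : A(3) \onto A(3)^\old = A(1)$ combined with \cref{level1} gives independence of $T_{11}$ and $T_5$. Next, the $(\ZZ/8\ZZ)^\times$-grading from \cref{gradingthm} places $T_{11}, T_5, T_{13}, T_7$ in graded components $3, 5, 5, 7$, so the remaining work is (a) $T_7 \notin \mm(3)^2$ and (b) $T_5, T_{13}$ independent inside the degree-$5$ cotangent piece. For (a), the surjection onto $A(3)^\elleng$ combined with \cref{englem} and the forthcoming nilpotence-method bound $\dim A(3)^\elleng \geq 2$ (see \cref{nilpy}) forces $\tandim A(3)^\elleng = 2$, with $T_7$ nonzero in its cotangent space. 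For (b), the cotangent functional on $\RRR(3)_\one$ that distinguishes $G_5^+$ from $G_5^-$, which exists by \cref{chenny}\eqref{leventan}, must be shown to descend to $A(3)$; I would establish this by exhibiting an explicit level-$3$ modular form whose mod-$4$ pseudorepresentation separates $\Frob_5$ from $\Frob_{13}$.

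The third claim is formal. Since $\QQ(i, \sqrt{2}, \sqrt{3}) = \QQ(\zeta_{24})$, we have $G/G^2 = \gal(\QQ(\zeta_{24})/\QQ)$, so the coset of $\Frob_\ell$ in $G/G^2$ depends only on $\ell \bmod 24$. By \cref{chenny}, every cotangent functional on $\RRR(3)_\one$ --- and therefore on $A(3)$ and on $A(3)^\red$ via $\phi$ --- factors through $G/G^2$. Hence $T_{q'} - T_q$ lies in $\mm(3)^2$ (respectively in the square of the maximal ideal of $A(3)^\red$) whenever $q'$ and $q$ are primes congruent modulo~$24$, and $T_{q'}$ may replace $T_q$ in any list of cotangent generators.

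The main obstacle is step (b) above: ruling out $T_{13} \equiv T_5 \pmod{\mm(3)^2}$. This is not purely formal, since it amounts to showing that $\phi$ does not collapse the degree-$5$ piece of $\nn(3)/\nn(3)^2$; without it, the degree-$5$ cotangent of $A(3)$ could drop to dimension~$1$ and $\tandim A(3)$ to $3$. Establishing (b) requires a concrete modular-forms input to witness a deformation of $\one$ surviving the passage from $\RRR(3)_\one$ to $A(3)$ that distinguishes the two $G_5$-cosets.
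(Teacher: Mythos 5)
Your upper bounds, your identification of $T_{11}, T_{13}, T_5, T_7$ with the cosets $G_3^+, G_5^+, G_5^-, G_7^-$, the ``any two'' clause in part (2) via \cref{propdef}, and the mod-$24$ statement via \cref{chenny} all match the paper. But part (1) is not proved: you yourself flag that the independence of $T_5$ and $T_{13}$ in $\mm(3)/\mm(3)^2$ (your step (b)) is an unresolved obstacle, and your proposed fix --- a ``mod-$4$ pseudorepresentation'' of a level-$3$ form --- does not obviously produce what is needed. Since $A(3)$ is an $\FF_2$-algebra, a cotangent functional is an $\FF_2$-algebra map $A(3) \to \FF_2[\eps]$, and by the duality \eqref{dual} such functionals are witnessed by forms in $K(3, \FF_2)$ killed by $\mm(3)^2$ but not by $\mm(3)$; a congruence modulo $4$ between characteristic-zero eigenforms lives over $\ZZ/4$, which is not an $\FF_2$-algebra, and does not by itself descend to a functional on the mod-$2$ Hecke algebra. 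Without (b), $\tandim A(3)$ could a priori be $3$, so the equality in part (1) is genuinely open in your write-up.

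The paper closes exactly this gap by a short explicit computation rather than by structural arguments: writing $\Delta$ for the mod-$2$ reduction of the Ramanujan form and $\Delta' = \Delta(q^3)$, one checks (e.g.\ in SageMath) the action of $T_5, T_7, T_{11}, T_{13}$ on $\Delta, \Delta^3, \Delta^5, \Delta', \Delta^2\Delta'$; the forms $\Delta^3, \Delta^5, \Delta^2\Delta'$ are killed by $\mm(3)^2$ but not by $\mm(3)$, and if $aT_5 + bT_7 + cT_{11} + dT_{13} \in \mm(3)^2$ then its values on these three forms force $a=b=c=d=0$. In particular the genuinely level-$3$ witness $\Delta^2\Delta'$, with $T_5(\Delta^2\Delta') = \Delta$ but $T_{13}(\Delta^2\Delta') = 0$, is precisely what separates $T_5$ from $T_{13}$ --- this is the ``concrete modular-forms input'' you were missing. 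Note also that this single table handles all four operators at once, so the paper never needs your heavier detour through \cref{nilpy} for $T_7$ (which, while not circular --- the proof of \cref{nilpy} only borrows the element table, not the independence statement --- would force a reordering and imports the full nilpotence method where a five-line $q$-expansion check suffices).
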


\begin{myprop}\label{gen5}\
\begin{enumerate}[topsep=-5pt, itemsep = 3pt]
\item \label{gen5nil} $\tandim A(5) = 4$, {and} the maximal ideal {is} generated by $T_{13}$, $T_{3}$, $T_{11}$ and $T_7$. 
\item \label{gen5red} 
The maximal ideal of $A(5)^\red$ is generated by $T_{13}$ and any two of $T_3, T_7, T_{11}$. 
\item In \eqref{gen5nil}, \eqref{gen5red}, $T_{q'}$ may replace $T_{q}$ if $\frob_q = \frob_{q'}$ in $\gal\big(\QQ(i, \sqrt{2}, \sqrt{5})/\QQ\big)$. 
\end{enumerate}
\end{myprop}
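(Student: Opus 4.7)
The plan is to mirror the proof of Proposition~\ref{gen3}, with primes reindexed for $N = 5$.

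First, I would establish the upper bounds and an abstract list of generators using the deformation ring. By Corollary~\ref{Aloc}, $A(5) = A(5)_\one$, and the surjection $\phi: \RRR(5)_\one \onto A(5)$ from Corollary~\ref{RNtoA}\eqref{Rnshape} is compatible with $\tau^\univ \mapsto \tau^\hecke$, sending $\tau^\univ(\Frob_\ell) \mapsto T_\ell$ for $\ell \nmid 10$. Since $5 \equiv 5 \pmod 8$, Corollary~\ref{tandim} gives $\tandim \RRR(5)_\one = 4$ with $\mm(\RRR(5)_\one)$ generated by the $\tau^\univ$-images of $g_5^\pm, g_{-5}^+, g_{-5}^-, g_7^-$, yielding $\tandim A(5) \leq 4$ with a corresponding set of generators. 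For~(2), Proposition~\ref{propdef} analogously gives $\tandim A(5)^\red \leq 3$ with $\mm(A(5)^\red)$ generated by $g_5^\pm$ together with any two of $g_3^+, g_3^-, g_7^-$.

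Next, I would identify these abstract generators with the stated Hecke operators by computing Frobenius classes in $G/G^2 = \gal\big(\QQ(i, \sqrt 2, \sqrt 5)/\QQ\big) \cong (\ZZ/8\ZZ)^\times \times \{\pm 1\}$. Here the second factor is the quadratic character of $\QQ(\sqrt 5)/\QQ$; since $5 \equiv 1 \pmod 4$, quadratic reciprocity reduces it to the Legendre symbol $(\ell/5)$. Combining with residues mod~$8$ gives $\Frob_{13} \in G_5^-$, $\Frob_{11} \in G_3^+ = G_{-5}^+$, $\Frob_3 \in G_3^- = G_{-5}^-$, and $\Frob_7 \in G_7^-$. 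This identifies $T_{13}, T_{11}, T_3, T_7$ as required in (1) and (2). Part~(3) would then follow formally from Lemma~\ref{chenny}: first-order pseudodeformations of $\one$ factor through $G/G^2$, so that $T_q$ and $T_{q'}$ are congruent modulo $\mm^2$ whenever $\Frob_q \equiv \Frob_{q'}$ in $\gal(\QQ(i, \sqrt 2, \sqrt 5)/\QQ)$.

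The main obstacle is the lower bound $\tandim A(5) \geq 4$ in~(1), equivalently the linear independence of $T_{13}, T_3, T_{11}, T_7$ modulo $\mm(A(5))^2$. I would separate this using the $(\ZZ/8\ZZ)^\times$-grading on $A(5)$ from Theorem~\ref{gradingthm}: the four generators sit in graded pieces of degrees $5, 3, 3, 7$, so it suffices to prove $T_{13}$ is nonzero in degree~$5$, $T_7$ is nonzero in degree~$7$, and $\{T_3, T_{11}\}$ is a basis of the degree-$3$ piece of $\mm/\mm^2$. Each of these claims I would verify by explicit $q$-expansion computation in the polynomial algebra $M(5) = \FF_2[f_5]$ from Lemma~\ref{lemma:polyalg}, using the duality~\eqref{dual} to pair the relevant Hecke operators against forms $f \in K(5)$ so that the functional $T \mapsto a_1(Tf)$ isolates each generator from $\mm^2$ in its graded component. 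The same technique, applied to the reduced quotient, would establish the matching lower bound needed to complete part~(2).
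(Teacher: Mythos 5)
Your proposal is essentially the paper's own proof: the upper bounds and abstract generators come from \cref{tandim} and \cref{propdef} via the surjection $\RRR(5)_\one \onto A(5)$, the identification of $T_{13}, T_3, T_{11}, T_7$ with the cosets $g_5^\pm, g_{-5}^-, g_{-5}^+, g_7^-$ is exactly the paper's Frobenius chart for $\gal(\QQ(i,\sqrt2,\sqrt5)/\QQ)$, and the lower bound in (1) is likewise settled by explicit computation on forms (the paper exhibits the witnesses $\dd^3$, $\dd^5$, $\dd^2\dd(q^5)$, mirroring \cref{gen3}), with your use of the $(\ZZ/8\ZZ)^\times$-grading being a harmless reorganization of that check. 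One caveat: your closing claim that the same computation gives a ``matching lower bound'' for part (2) is both unnecessary (part (2) asserts only generation, which \cref{propdef} already yields) and not actually attainable this way, since ruling out membership in $(\mm^\red)^2 = \mm^2 + \mathrm{nil}$ requires control of the nilradical, which the paper obtains only later (\cref{tandimAred}) via the nilpotence method.
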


\begin{mycor}\label{genN} For $N = 3, 5$, we have
\begin{enumerate}[topsep=-5pt, itemsep = 3pt]
\item \label{genNnil} $\tandim A(N) = 4$, with the maximal ideal generated by $\tau(g_N^\pm)$, $\tau(g_{-N}^-)$, $\tau(g_{-N}^+)$, and $\tau(g_{7}^-)$. 
\item \label{genNred} $\mm(N)^\red$ is generated by {$\tau^{\red}(g_N^\pm)$} and any two of {$\tau^{\red}(g_{-N}^-)$, $\tau^\red(g_{-N}^+)$, and $\tau^\red(g_{7}^-)$}. 
\end{enumerate}
\end{mycor}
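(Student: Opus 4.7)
The plan is to deduce \cref{genN} from Propositions \ref{gen3} and \ref{gen5} by reinterpreting each listed Hecke operator $T_p$ as $\tau(\frob_p)$ and identifying the $G^2$-coset of $\frob_p$. Recall $T_p = \tau(\frob_p)$ for $p \nmid 2N$, and the image of $\frob_p$ in $G/G^2 \cong (\ZZ/8\ZZ)^\times \times \{\pm 1\}$ is the pair $\big(p \bmod 8,\ \big(\tfrac{N}{p}\big)\big)$.

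First, I would carry out this coset identification prime by prime. For $N = 3$: $\frob_{11} \in G_N^+$ (since $\left(\tfrac{3}{11}\right) = 1$), $\frob_5 \in G_{-N}^-$ (since $3$ is a non-square mod $5$), $\frob_{13} \in G_{-N}^+$ (since $4^2 \equiv 3 \pmod{13}$), and $\frob_7 \in G_7^-$ (since $3$ is a non-square mod $7$). A parallel computation for $N = 5$ places $\frob_{13} \in G_N^-$, $\frob_3 \in G_{-N}^-$, $\frob_{11} \in G_{-N}^+$, and $\frob_7 \in G_7^-$.

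With this dictionary, part (1) of \cref{genN} is immediate from Propositions \ref{gen3}(1) and \ref{gen5}(1), which supply both the tangent dimension $\tandim A(N) = 4$ and the identification of the four Hecke operators just translated as generators of $\mm(N)$. The freedom to choose either sign in $\tau(g_N^\pm)$ comes from the level-$N$ shape property of $\tau$ (\cref{RNtoA}\eqref{Anshape}): for $g_N^+ \in G_N^+ \subseteq D_N$ and any $i \in I_N$ mapping to the nontrivial element of $\barr I_N$, the product $g_N^+ i$ lies in $G_N^-$ and satisfies $\tau(g_N^+ i) = \tau(g_N^+)$ in $A(N)$, so the sign choice is immaterial.

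Part (2) follows by the same coset dictionary applied to Propositions \ref{gen3}(2) and \ref{gen5}(2), which state the corresponding generation statement for $\mm(N)^\red$ via the same four Hecke operators (with one fixed and any two of the other three). The substantive content of the corollary lies entirely in the two preceding propositions; the only additional work is the finite coset computation above, which I expect to be the most tedious but not the most difficult step of the argument.
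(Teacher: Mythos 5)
Your coset dictionary is correct (and matches the tables in the proofs of \cref{gen3,gen5}), but the reduction to those two propositions does not prove the corollary as stated, because \cref{genN} is about \emph{arbitrary} elements $g_N^\pm, g_{-N}^+, g_{-N}^-, g_7^-$ of the Frattini cosets, while \cref{gen3,gen5} only speak of Frobenius elements at particular primes (and, in their part (3), primes in the same congruence class). The missing ingredient is the fact that the class of $\tau(g)$ in $\mm(N)/\mm(N)^2$ depends only on the image of $g$ in $G/G^2$, together with the identification of the $G_N^+$ and $G_N^-$ classes. That is exactly what the paper imports from the deformation side: its proof cites \cref{tandim} and \cref{propdef}, i.e.\ Chenevier's tangent-space computation for $\RRR(N)_\one$ (\cref{chenny}), pushed through the surjections $\RRR(N)_\one \onto A(N)$ and $\RRR(N)_\one^\red \onto A(N)^\red$; the explicit Hecke computations of \cref{gen3,gen5} are only needed to force equality $\tandim A(N) = 4$ (linear independence). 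Your proposal uses neither of these cotangent facts, so for an arbitrary $g_{-N}^-$, say, nothing in your argument shows $\tau(g_{-N}^-) \equiv T_5 \pmod{\mm(3)^2}$.

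Moreover, the one place where you do address representative-flexibility contains an error: you write ``$g_N^+ \in G_N^+ \subseteq D_N$,'' but $G_N^+$ is a full coset of $G^2$ and is certainly not contained in the decomposition group; \cref{galoissplitDI} only says $\barr D_N = \langle \barr I_N, \barr G_N^+\rangle$ in the Frattini quotient. The level-$N$-shape identity $\tau(di) = \tau(d)$ applies only to $d \in D_N$, $i \in I_N$, so your argument merges the two signs only for the (rare) representatives lying in $D_N$. A correct repair is available without the deformation ring: since $\det \equiv 1$ and $\tau \equiv 0 \pmod{\mm(N)}$, the trace--determinant identity \eqref{tracedet} gives $\tau(g^2h) + \tau(h) = \tau(g)\tau(gh) \in \mm(N)^2$, so cotangent classes factor through $G/G^2$; then apply level-$N$ shape to a Frobenius lift in $D_N$ (whose Frattini image lies in $\barr G_N^+$ by \cref{galoissplitDI}) to identify the $G_N^+$ and $G_N^-$ classes. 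With that added, your dictionary plus \cref{gen3,gen5} does yield both parts; as written, the proposal has a genuine gap.
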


\cref{genN} follows from  \cref{tandim,propdef,gen3,gen5}.

\begin{proof}[Proof of \cref{gen3}]  By \cref{Aloc} we know that $A({3}) = A({3})_\rhob$ is already local. 
Write $\mm, \mm^\red$ for~$\mm(3), \mm(3)^\red$, respectively. The method of \cref{tanlemma} and \cref{propdef} will yield explicit spanning sets for $\mm/\mm^2$ and $\mm^\red/(\mm^\red)^2$; we then establish {their} linear independence in $\mm/\mm^2$ by exhibiting their action on forms explicitly. 

Recall that $G_{\QQ,{2\cdot3}}/G_{\QQ,{2\cdot3}}^2 = \gal(\QQ(\sqrt{-1}, \sqrt{2}, \sqrt{3})/\QQ) = \gal(\QQ(\mu_{24})/\QQ)$, so every element is determined by its action on the triple $v_3 = (\sqrt{-1}, \sqrt{2}, \sqrt{3})$ and may be represented by $\frob_q$ for $q$ in $\{73, 5, 7,11, 13, 17, 19, 23\}$ (or other prime representatives of their congruence classes modulo $24$). 

Let $i \in I_3$ be nontrivial, and let $d \in \barr D_3 - \bar I_3$ {be the element fixing} $\sqrt{3}$.
Then the correspondence is as follows. 
\begin{center}
\begin{tabular}{c|c|c|c|c}
\multirow{2}{*}{\small elt.} & 
\multirow{2}{*}{\small lift} & 
{\small action }&
{\small sample}&
{\small local} \\
& 
&
{\small on $v_3$} &
 {\small $\frob_q$} & 
 {\small at $3$}\\
\hline
1 & $g_1^+$&$(0,0, 0)$ & 73 & trivial\\
$i $ &$g_1^-$ & $(0, 0, 1)$ & 17 & in $\bar I_3$ \\
$d $ &$g_3^+$ & $(1, 1, 0)$ & 11 & in $\barr D_3$ \\
$id $ &$g_3^-$ & $(1, 1, 1)$ & 19 & in $\barr D_3$
\\
\end{tabular}\qquad\qquad
\begin{tabular}{c|c|c|c|c}
\multirow{2}{*}{\small elt.} & 
\multirow{2}{*}{\small lift} & 
{\small action }&
{\small sample}&
{\small local} \\
& 
&
{\small on $v_3$} &
 {\small $\frob_q$} & 
 {\small at $3$}\\
\hline
$c $ & $g_7^+$ & $(1, 0, 0)$ & 23 & $-$\\
$ci$ & $g_7^-$ &$(1, 0, 1)$ & 7& $-$\\
$cd$ & $g_5^+$ &$(0, 1, 0)$ & 13& $-$\\
$cid$ &$g_5^-$ & $(0, 1, 1) $& 5& $-$
\end{tabular}\end{center}
Comparing this chart to \cref{tandim} tells us that $\mm(3)$ is generated by, for example, $T_{11}$, $T_5$, $T_{13}$, and~$T_7$. For the generators of $\mm(3)^\red$, similarly use \cref{propdef}. 
It remains to see that $T_{11}$, $T_5$, $T_{13}$, and~$T_7$ are linearly independent in $\mm/\mm^2$. Let $\dd(q) = \sum_{2 \nmid n} q^{n^2}$ be the mod-$2$ $q$-expansion of the Ramanujan $\dd$-function, so that $\dd$ and $\dd': = \dd(q^3)$ are both forms in $M(3)$. It is straightforward to verify the following table, for example using SageMath \cite{sage}.
\begin{center}
\begin{tabular}{c|cccc}
$f$ & $T_5(f)$ & $T_7(f)$ & $T_{11}(f)$ & $T_{13}(f)$ \\
\hline
$\dd$ & $0$ & $0$ & $0$ & $0$\\
$\dd^3$ & $0$ & $0$ & $\dd$ & $0$\\
$\dd^5$ & $\dd$ & $0$ & $0$ & $\dd$ \\
$\dd'$ & $0$ & $0$ & $0$ & $0$ \\
$\dd^2 \dd'$ & $\dd$ & $\dd'$ & $0$ & $0$\\
\end{tabular}
\end{center}
From the table, it is clear that $\dd^3$, $\dd^5$ and $\dd^2 \dd'$ are three forms annihilated by $\mm^2$ but not by $\mm$. Now suppose $T = a T_{5} + b T_7 + c T_{11} + d T_{13}$ is in $\mm^2$ for some $a, b, c, d \in \FF_2$. Then $T(\dd^3) = c \dd$ and $T(\dd^5) = (a + d) \dd$ and $T(\dd^2 \dd') = a \dd + b \dd'$ are all three equal to zero, so $a = b = c = d = 0$.
\end{proof}

\begin{proof}[Proof of \cref{gen5}]
The proof for $N = 5$ is analogous; we highlight a few details. The Frattini field $K = \QQ(\sqrt{-1}, \sqrt{2}, \sqrt{5})$ has index $2$ in $\QQ(\mu_{40})$. So $\frob_q$ in $\gal(K/\QQ)$ depends on congruences modulo $40$. Table of elements in $\gal(K/\QQ)$ with local-at-$5$ subgroups and action on $v_5 =(\sqrt{-1}, \sqrt{2}, \sqrt{5})$:    

\begin{center}
\begin{tabular}{c|c|c|c|c}
\multirow{2}{*}{\small elt.} & 
\multirow{2}{*}{\small lift} & 
{\small action }&
{\small sample}&
{\small local} \\
& 
&
{\small on $v_5$} &
 {\small $\frob_q$} & 
 {\small at $5$}\\
\hline
1 & $g_1^+$& $(0, 0, 0)$ & 41, 89 & trivial \\
$i $ & $g_1^-$& $(0, 0, 1)$ &17, 73 & in $\bar I_5$ \\
$d $ & $g_5^+$& $(0, 1, 0)$ & 61, 29& in $\bar D_5$ \\
$id $ & $g_5^-$& $(0, 1, 1)$ & 13, 37 & in $\bar D_5$
\\
\end{tabular}\qquad\qquad
\begin{tabular}{c|c|c|c|c}
\multirow{2}{*}{\small elt.} & 
\multirow{2}{*}{\small lift} & 
{\small action}&
{\small sample}&
{\small local} \\
& 
&
{\small on $v_5$} &
 {\small $\frob_q$} & 
 {\small at $5$}\\
\hline
$c $ & $g_7^+$ & $(1, 0, 0)$ & 31, 79 & $-$\\
$ci$ & $g_7^-$ & $(1, 0, 1)$ & 7, 23& $-$\\
$cd$ & $g_3^+$ & $(1, 1, 0)$ &11, 19 & $-$\\
$cid$ & $g_3^-$ & $(1, 1, 1) $&3, 67 & $-$
\end{tabular}\end{center}

The forms $\dd^3$, $\dd^5$, and $\dd^2 \dd(q^5)$ again serve as witnesses for the linear independence of $\{T_3, T_7, T_{11}, T_{13}\}$ in~$\mm(5)/\mm(5)^2$. The details are left to the reader. 
\end{proof}

\section{Structure of $A(N, \FF_2)^\elleng$ for $N = 3, 5$} 
We now focus on $A(N)^\elleng$, the Hecke algebra acting faithfully on the \verynew\ forms $M(N)^{\elleng} = \ker (U_N + 1)$. In \cref{delicate} we use the nilpotence method (\cref{nilpthm}) to show that $\dim A(N)^\elleng \geq 2$. In \cref{mainsec10} we deduce that $A(N)^\elleng$ is a complete regular local $\FF_2$-algebra of dimension $2$ (\cref{Aellengred}) and deduce that  $\tandim A(N)^\red = 3$ (\cref{tandimAred}).  

Recall that $U_N' = U_N + 1$. Let $\mm^\new$, $\mm^\pf$ be the maximal ideals of $A(N)^\new$, $A(N)^\pf$, respectively.

\subsection{Lower bound on $\dim A(N)^\elleng$ by the nilpotence method} \label{delicate}
\begin{mythm}\label{nilpy}
For $N = 3, 5$, the Krull dimension of $A(N)^\elleng$ is at least $2$.
\end{mythm}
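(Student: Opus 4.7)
The plan is to apply $\cref{nilpthm}$ (the nilpotence method) with $M = M(N) = \FF_2[f_N]$, $f = f_N$, $A = A(N)^\elleng$, and $K = K(N)^\elleng$. Condition (1) is $\cref{lemma:polyalg}$. Condition (2) holds since $A(N)^\elleng$ is a continuous local quotient of $A(N)^\pf$ by $\eqref{avnewpf}$ (local by $\eqref{Apflocal}$ combined with $\cref{proprep}$, which forces $A(N)^\pf = A(N)^\pf_\one$) that acts faithfully on $K(N)^\elleng$ by $\eqref{avnewfaith}$.

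For condition (3), $\cref{englem}$ shows $\mm^\elleng$ is generated by two Hecke operators---for instance $T_7$ and $T_5$ for $N = 3$, or $T_7$ and $T_3$ for $N = 5$---each lying in the unique maximal ideal of $A(N)^\pf$ since $\tr\one(\Frob_\ell) = 0$. The required Hecke recurrence for $\{T_\ell(f_N^n)\}_n$ with coefficients in $\FF_2[f_N]$ of order $\ell + 1$, satisfying the degree bounds of $\cref{nilpthm}$(3b), arises from the $\ell$-th modular equation for $f_N$ viewed as a Hauptmodul on $X_0(N)_{\FF_2}$ minus the supersingular locus (cf.\ $\cref{serresupersing}$); this parallels the Nicolas--Serre level-$1$ analysis of $T_\ell \bar\Delta^n$ (\cite{NS1,NS2}) and is carried out in the generality needed here in \cite{Mpaper}.

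For condition (4), we build the family via oldforms. For $h \in M(1, \FF_2)$ of weight $k$, the identity $T_N = U_N + N^{k-1} V_N$ at level prime to $N$ reduces, using $N$ odd, to $U_N h = T_N h + V_N h$, where $T_N$ denotes the level-$1$ Hecke operator. A short computation then gives
\[
(U_N + 1)(h + V_N h) = U_N h + U_N V_N h + h + V_N h = T_N h + V_N h + h + h + V_N h = T_N h.
\]
Hence the $\FF_2$-linear map $\phi : \ker T_N \cap K(1) \to K(N)^\elleng$ sending $h \mapsto h + V_N h$ is well-defined (its image lies in $K(N)$ since $V_N$ preserves odd-index support for $N$ odd) and injective (a nonzero $h \in K(1)$ with $h = V_N h$ would force its minimal $q$-index $n_0$ to satisfy $n_0 = N n_0$, impossible for $n_0 \geq 1$); furthermore $w_1(\phi(h)) \leq w_1(h)$ since $V_N$ preserves weight. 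By $\cref{level1}$ and the level-$1$ duality $\eqref{dual}$, $\ker T_N \cap K(1)$ is the continuous $\FF_2$-dual of $A(1)/(T_N) \cong \FF_2\lb T_m\rb$, a regular local ring of Krull dimension one (with $m = 5$ for $N = 3$, $m = 3$ for $N = 5$). Hilbert--Samuel theory for this one-dimensional quotient together with the Nicolas--Serre compatibility between the maximal-ideal-adic filtration on $A(1)$ and the weight filtration on $K(1)$ then implies that $\dim_{\FF_2}\{h \in \ker T_N \cap K(1) : w_1(h) \leq k\}$ grows linearly in $k$. Composing with $\phi$ and reindexing by $\deg_{f_N}$ yields the required $\{g_n\} \subset K(N)^\elleng$ with $\deg_{f_N}(g_n) = O(n)$.

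The principal obstacle is condition (4): infinite-dimensionality of $\ker T_N \cap K(1)$ is immediate, but the linear growth in weight filtration necessary to feed $\cref{nilpthm}$ relies on combining the explicit Nicolas--Serre--\jb\ level-$1$ structure ($\cref{level1}$) with Hilbert--Samuel for the one-dimensional quotient $A(1)/(T_N)$.
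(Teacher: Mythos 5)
Your reduction to the nilpotence method and your treatment of conditions (1)--(3) essentially track the paper, and your computation that $h + V_N h \in K(N)^\elleng$ for $h \in \ker T_N \cap K(1)$ is correct. The gap is in condition (4): the growth claim you need --- that $\dim_{\FF_2}\{h \in \ker T_N \cap K(1) : w_1(h) \leq k\}$ grows linearly in $k$ --- is false, and the argument you sketch for it conflates two filtrations that are only quadratically compatible. By the same duality you invoke (the finite-weight version of \eqref{dualJ}), $\ker T_N \cap K(1)_{\leq k}$ is dual to $A_{\leq k}(1)/(T_N)$, a cyclic quotient of $A(1)/(T_N) \cong \FF_2\lb T_m \rb$ (your $m = 5$ or $3$), so its dimension equals the nilpotency index of $T_m$ acting on it. But the Nicolas--Serre bound --- the very input that powers the nilpotence method --- says that every form of weight filtration $\leq k$ in $K(1)$ is annihilated by $\mm(1)^{O(\sqrt{k})}$, in particular by $T_m^{O(\sqrt{k})}$. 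Hence $\dim\big(\ker T_N \cap K(1)_{\leq k}\big) = O(\sqrt{k})$, not $\gg k$. Hilbert--Samuel for the one-dimensional ring $A(1)/(T_N)$ does give you $n$ independent $T_N$-killed forms of nilpotence order $\leq n$, but the $\mm$-adic and weight filtrations on $K(1)$ are quadratically, not linearly, related, so those forms have weights growing like $n^2$. Reindexed as a sequence $\{g_n\}$, your forms satisfy $\deg_{f_N} g_n \gg n^2$, which condition (4) of \cref{nilpthm} does not allow; so the proposed proof does not go through.

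This is exactly why the paper does not use oldforms at this step: the old part of $K(N)^\elleng$ is dual to the one-variable ring $A(N)^{\pf,\old}/(U_N') \cong \FF_2\lb T_m\rb$ and is therefore too sparse in each weight. Instead the paper passes to $A(N)^\new = A(N)^\pf/(U_N')^2$, which has the same Krull dimension as $A(N)^\elleng$, takes the space $W_n$ spanned by $\theta(f_N), \theta(f_N^3), \ldots, \theta(f_N^n)$ --- here linear weight growth is built in and $\dim W_n = \tfrac{n-1}{2}$ --- and then uses \cref{s2killsnew} together with a dimension count of $\theta\big(M_k(N)^\old\big)$ to show that $(U_N')^2$ has a kernel of positive proportion on $W_n$; the resulting forms lie in $K(N)^\new$ and have weight filtration $O(n)$. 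If you want to argue directly with $K(N)^\elleng$, you need a genuinely different source of very new forms of linearly growing weight; the oldform construction cannot supply it.
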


\begin{proof} 
Recall from \eqref{anewpf} and \eqref{avnewpf} the presentations of $A(N)^\new$ and $A(N)^\elleng$ as quotients of $A(N)^\pf$: to wit, $A(N)^\new = A(N)^\pf/(U_N')^2$ and $A(N)^\elleng = A(N)^\pf/(U_N')$. 
It is thus clear that~$U_N'$ is nilpotent in~$A(N)^\new$, so that $\dim A(N)^\new = \dim A(N)^\elleng$. It therefore suffices to show that $\dim A(N)^\new \geq 2$.
 We prove this with the nilpotence method, by showing that the four conditions of \cref{nilpthm} are satisfied for~$A := A(N)^\new$. 

\underline{Condition \eqref{power}:} By~\cref{lemma:polyalg}, $M(N) = \FF_2[f]$ for a form $f = f_N$. 

\underline{Condition \eqref{two}:} Set $K := K(N)^\new$ as in \eqref{knewdef}, by \eqref{dualJ} in duality with $A = A(N)^\pf/(U_N')^2$.

\underline{Condition \eqref{max}:} The Hecke operators $U_N'$ as well as every $T_\ell$ for $\ell\nmid 2N$ act locally nilpotently on $M(N)$ and are therefore in $\mm^\pf$. By \cite[Proposition 6.2]{M} for general level, generalizing \cite[{\thm}~3.1]{NS1}, given any prime $\ell \nmid 2N$, the sequence $\{T_\ell(f^n)\}_n$ satisfies a linear recurrence over $M(N)$ whose companion polynomial $P_{\ell, f} = X^{\ell + 1} + a_1 X^\ell + \cdots +a_{\ell + 1}$ has coefficients $a_j \in M(N)$ with $\deg_f a_j \leq j$ and $\deg_f a_{\ell + 1} = \ell + 1$. Equivalently, both the $f$-degree and the total degree (as a polynomial in $X$ and $f$) of $P_{\ell, f}$ coincide with its $X$-degree. By a similar argument, $\{U_N(f^n)\}_n$ satisfies an $M(N)$-linear recurrence whose polynomial $P_{N, f}$ also has its $f$-degree and total degree coincide with its $X$-degree~$N$. Therefore $\{U_N' (f^n)\} = \{U_N (f^n) + f^n\}$ will satisfy an $M(N)$-linear recurrence with characteristic polynomial $P_{N, f}' = (X - f)P_{N,f}$; the degree constraints of $P_{N, f}'$ follow from those of $P_{N, f}$. 

For completeness, we include below both $P_{N, f}$ and $P_{\ell, f}$ for $T_\ell$ that generate $\mm^\pf$ (\cref{pflem}) and hence its quotient $\mm^\new$.
For $N = 3$ we may take $\ell = 13$ and $\ell = 7$; for $N = 5$ we take $\ell = 11$ and $\ell = 7$: see the element tables in the proofs of \cref{gen3,gen5}.
\vspace{-10pt}
\renewcommand{\arraystretch}{1.2}
\begin{center}
\begin{tabular}{c|c|l}
$N = 3$ & $\ell$ & $P_{\ell, y}$ for $y = f_3$\\
\hline\hline
& 3 & $X^3 + yX^2 + (y^2 + y)X + y^3 + y$\\
\cline{2-3}
 &7 &  $X^8 + (y^2 + y)(X^4 + X^3) + (y^4 + y^3 + y^2 + y)(X^2  + X) + y^8$\\
\cline{2-3}
 &13 &  $X^{14} + y^2X^{12} + y^4X^{10} + y^6X^8 + (y^8 + y^4 + y^2)X^6 + (y^{10} + y^6 + y^2)X^4$\\ 
 &&\quad ${} + (y^{12} + y^6 + y^4 + y^2 + y)X^2 + (y^2 + y)X + y^{14}$\\
\hline\hline
$N =5$ & $\ell$ & $P_{\ell, y}$ for $y = f_5$\\
\hline\hline
& 5 & $X^5 + yX^4 + (y^2 + y)X^3 + (y^3 + y)X^2 + (y^4 + y^3 + y^2 + y)X + y^5 + y$
\\
\cline{2-3}
 &7 &  $X^8 + (y^2 + y)X^6 + (y^2 + y)X^5 + (y^6 + y^5 + y^2 + y)(X^2 + X) + y^8$
\\
\cline{2-3}
 &11 &  $X^{12} + y^2X^8 + (y^4 + y^2)X^6 + y^6X^4 + (y^8 + y^6 + y^2 + y)X^2 + (y^2 + y)X + y^{12}$
 \end{tabular}
\end{center}

\underline{Condition \eqref{four}:} We seek a linearly independent sequence of forms $\{g_n\}$ in $K(N)^\new$ with the property that~$\deg_f g_n$ grows no faster than linearly in $n$. 
By \eqref{weightfact} we may replace $\deg_f(g_n)$ in this estimate with its $\Gamma_1(N)$--weight filtration $w_1(g_n)$. 

Write $f = f_N$ and let $u = w_1(f_N)$; by \S\ref{weightsec} we know that $u= 3$ ($N = 3$) or $u = 2$ ($N = 5$). 

For $n$ odd, consider the $\FF_2$-vector space $W_n$ spanned by the forms $\theta(f)$, $\theta(f^3), \ldots,$ $\theta(f^n)$ inside $M(N) \subset \FF_2\lb q \rb$. On one hand, by \eqref{imtheta} we have $W_n \subseteq K(N)$. On the other hand, by \eqref{weightfact} for each $i$, we have  
$$w_1\big(\theta(f^i)\big) \leq w_1(f^i) + 3 = u i + 3.$$ 
Therefore \eqref{filt} implies that $W_n \subseteq K(N)_{u n + 6}$. Moreover, we claim that 
\begin{equation}\label{dimw} 
\dim W_n = \frac{n-1}{2}.
\end{equation} Indeed, $f = q + O(q^2)$, so that for odd $i$ we have $\theta(f^i) = q^i + O(q^{i+1})$. Thus $\theta(f), \theta(f^3), \ldots, \theta(f^n)$ are linearly independent, and there are $\frac{n-1}{2}$ of them. 

Now consider the image of $W_n$ under the operator $(U_N')^2$. By \cref{s2killsnew}, we have $(U_N')^2 W_n \subset M(N)^\old$. More precisely, since $\theta$ commutes with $U_N'$, we have  
 \begin{equation}\label{keyeq} 
 (U_N')^2 W_n \subseteq \theta \big(M_{u n + 6}(N)^\old\big),
 \end{equation} where we write 
 \begin{equation}\label{olddef} M_k(N)^\old := M_k(1) + W_N M_k(1).
 \end{equation} 

We now analyze $\dim \theta \big( M_k(N)^\old\big)$ for any even $k \geq 0$. Standard dimension formulas (for example, \cite[Theorem~3.5.1]{diamondshurman}) tell us that $\dim M_k(1) = \frac{k}{12} + O(1)$. Since $W_N$ is an involution and the sum in \eqref{olddef} is direct on cuspforms \cite[Proposition~5.4]{deomed}, we conclude that for any even $k \geq 0$, we have 
\begin{equation}\label{dimMold} 
\dim M_k(N)^\old =  \frac{k}{6} + O(1).
\end{equation}

On the other hand, by \eqref{kertheta}, \eqref{weightfact}, and \eqref{filt}, the kernel of $\theta$ on $M_k(N)^\old$ certainly contains the squares of forms in $M_{\lfloor \frac{k}{2} \rfloor}(N)^\old$. Thus by \eqref{dimMold} we obtain
\begin{equation}\label{dimkermold}
\dim \ker\big( \theta \big| M_k(N)^\old \big) \geq \frac{k}{12} + O(1),
\end{equation}
so that combining \eqref{dimMold} and \eqref{dimkermold} gives
\begin{equation}\label{dimimmold} 
\dim  \theta\big( M_k(N)^\old\big) \leq \frac{k}{12} + O(1).
\end{equation}

Finally, we return to \eqref{keyeq}. By \eqref{dimw}, the operator $(U_N')^2$ maps a space of dimension  $\frac{n-1}{2}$ to a space whose dimension, by \eqref{dimimmold}, grows no faster than $\frac{un}{12} + O(1)$. 
Therefore, 
$$\dim \ker\big( (U_N')^2 \big| W_n \big) \geq \frac{(6-u)n}{12} + O(1).$$ Since $u \leq 3$, 
we can certainly choose $g_n \in \ker\big( (U_N')^2 \big| W_n \big) \subset K(N)^\new$, at least for $n \gg 0$, so that the sequence $\{g_n\}$ is linearly independent. Moreover, $w_1(g_n) \leq u n$, as required.   

Finally, \cref{nilpthm} allows us to conclude that $\dim A(N)^\elleng = \dim A(N)^\new \geq 2$, as desired. 
\end{proof}

\subsection{Main result and corollaries}\label{mainsec10}

\begin{mycor}\label{Aellengred}
For $N = 3, 5$, $A(N)^{\elleng}$ is a complete regular local $\FF_2$-algebra of dimension $2$. 
\end{mycor}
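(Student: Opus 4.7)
The plan is to sandwich the Krull and tangent dimensions of $A(N)^{\elleng}$ between $2$ and $2$ using results already established, and then invoke the standard characterization of complete regular local rings.

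First I would observe that $A(N)^{\elleng}$ is a complete noetherian local $\FF_2$-algebra with residue field~$\FF_2$. Indeed, by \eqref{avnewpf} we have $A(N)^{\elleng} = A(N)^{\pf}/(U_N')$, and $A(N)^{\pf}$ is such a ring by \eqref{Apflocal} together with \cref{Aloc} (which ensures $A(N) = A(N)_{\one}$, so $A(N)^{\pf} = A(N)^{\pf}_{\one}$), so the quotient inherits all of these properties.

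Next, I would read off two bounds from earlier results. The upper bound on the tangent dimension $\tandim A(N)^{\elleng} \leq 2$ comes directly from \cref{englem}. The lower bound on the Krull dimension $\dim A(N)^{\elleng} \geq 2$ is precisely the content of \cref{nilpy}.

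To combine these, I would use the general fact recorded in \cref{defnote}: because $A(N)^{\elleng}$ is a complete noetherian local $\FF_2$-algebra with residue field $\FF_2$, and because it is killed by $2$, it is a quotient of the formal power series ring $\FF_2\lb x_1, \ldots, x_d\rb$, where $d = \tandim A(N)^{\elleng}$. In particular $\dim A(N)^{\elleng} \leq \tandim A(N)^{\elleng}$. Combined with the two bounds above, this forces
\[
2 \leq \dim A(N)^{\elleng} \leq \tandim A(N)^{\elleng} \leq 2,
\]
so both dimensions equal $2$. Since a complete noetherian local ring whose tangent dimension coincides with its Krull dimension is regular, this completes the proof; by Cohen's structure theorem one in fact obtains a (non-canonical) isomorphism $A(N)^{\elleng} \cong \FF_2\lb x, y\rb$.

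There is no real obstacle here beyond correctly assembling the inputs — the entire content is packaged into \cref{englem} and \cref{nilpy}, so the corollary is purely formal. The only point to be careful about is ensuring the inequality $\dim \leq \tandim$ (as opposed to $\dim \leq \tandim + 1$), which uses that $A(N)^{\elleng}$ is an equicharacteristic ring.
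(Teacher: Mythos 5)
Your proof is correct and is exactly the paper's argument: the paper simply declares the corollary ``immediate from'' \cref{nilpy} and \cref{englem}, i.e.\ the same sandwich $2 \leq \dim \leq \tandim \leq 2$ you spell out. Your extra care about $\dim \leq \tandim$ (rather than $\tandim + 1$) for an equicharacteristic quotient of $\FF_2\lb x_1,\ldots,x_d\rb$ is the right way to make the ``immediate'' step explicit.
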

The proof is immediate from \cref{nilpy,englem}. To give a more precise statement, we endow~$\FF_2\lb y, z \rb$ with a grading by $(\ZZ/8\ZZ)^\times/\langle N \rangle \simeq \{ \pm 1\}$ by giving $y$ and $z$ both the grading $-1$. Also write~$\tau^{\elleng}$ for the pseudorepresentation  $G_{\QQ,2N} \to A(N)^\elleng_{\one}$ coming from  $\tau^\hecke$.

\begin{mycor}\label{Avnewstruct} 
For $N = 3, 5$, the map $y \mapsto {\tau^\elleng}(g^\pm_{-N})$ and $z \mapsto {\tau^\elleng}(g_7^-)$ gives a graded isomorphism $\FF_2 \lb y, z \rb \simeq A(N)^\elleng$ of $(\ZZ/8\ZZ)^\times/\langle N \rangle$-graded $\FF_2$-algebras. In particular: 
\begin{itemize}[itemsep = 5pt, topsep = -5pt]
\item The map $\FF_2\lb y, z \rb \to A(3)^{\elleng}$ given by  $y \mapsto T_{13}$ and $z \mapsto T_7$ is an isomorphism of $\gal(\QQ(\sqrt{-2})/\QQ)$-graded algebras. 
\item The map $\FF_2\lb y, z\rb \to A(5)^{\elleng}$ given by $y \mapsto T_{11}$ and  $z \mapsto T_7$ is an isomorphism of $\gal(\QQ(i)/\QQ)$-graded algebras. 
\end{itemize}
\end{mycor}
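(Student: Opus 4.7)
The plan is to deduce this structure theorem almost entirely from results already in hand, assembling them with a standard fact about complete regular local rings.

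First, I would invoke \cref{Aellengred} to conclude that $A(N)^\elleng$ is a complete regular local $\FF_2$-algebra of dimension $2$, hence in particular its cotangent space has dimension at most $2$, with equality forced by regularity. Then \cref{englem} tells me that the maximal ideal is generated by the two elements $\tau^\elleng(g^\pm_{-N})$ and $\tau^\elleng(g_7^-)$; by the dimension count these must form a minimal set of generators, i.e.\ a regular system of parameters whose images give a basis of the cotangent space. By the standard fact that a complete Noetherian local $\FF_2$-algebra with residue field $\FF_2$ is a quotient of a power series ring on generators of its maximal ideal, we get a continuous surjection $\FF_2\lb y, z\rb \onto A(N)^\elleng$ sending $y\mapsto \tau^\elleng(g^\pm_{-N})$ and $z \mapsto \tau^\elleng(g_7^-)$. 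Both source and target are now complete regular local $\FF_2$-algebras of Krull dimension $2$ with residue field $\FF_2$, so the source is a domain of dimension $2$ and any nontrivial kernel would drop the target's dimension; therefore the surjection is an isomorphism.

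Next I would verify the grading claim. By \cref{Avnewgrade} (whose proof is analogous to that of \cref{gradingthm,apfanewgrading}), the very-new Hecke algebra $A(N)^\elleng$ carries a natural $(\ZZ/8\ZZ)^\times/\langle N\rangle$-grading compatible with $\tau^\elleng$, with $\tau^\elleng(g)$ in the component indexed by the image of $g$ in this quotient. For $N=3$ the quotient is $\{1,3\}\sqcup\{5,7\}$, and for $N=5$ it is $\{1,5\}\sqcup\{3,7\}$; in both cases $-N$ and $7$ land in the same nontrivial coset. So the isomorphism above sends the abstract degree-$(-1)$ variables $y$ and $z$ to elements of $A(N)^\elleng$ sitting in the nontrivial graded component, making it an isomorphism of $(\ZZ/8\ZZ)^\times/\langle N \rangle$-graded $\FF_2$-algebras.

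Finally, for the explicit bulleted consequences I would identify the quotient groups with Galois groups of quadratic fields: the fixed field of $\langle 3\rangle \subset (\ZZ/8\ZZ)^\times = \gal(\QQ(\zeta_8)/\QQ)$ is $\QQ(\sqrt{-2})$ and the fixed field of $\langle 5\rangle$ is $\QQ(i)$, giving the two Galois group identifications in the statement. Then I would read off from the element tables in the proofs of \cref{gen3,gen5} that, for $N=3$, $\frob_{13}$ represents $g_5^+ = g_{-N}^+$ and $\frob_7$ represents $g_7^-$, and similarly for $N=5$, $\frob_{11}$ represents $g_3^+ = g_{-N}^+$ while $\frob_7$ represents $g_7^-$; substituting these into the general isomorphism yields the two explicit maps. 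There is no real obstacle here: the entire argument is a matter of packaging \cref{Aellengred,englem,Avnewgrade} together, and the only mild bookkeeping is tracking which quadratic subfield of $\QQ(\zeta_8)$ controls the residual grading.
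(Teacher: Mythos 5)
Your proposal is correct and follows essentially the same route as the paper, whose proof simply cites \cref{Aellengred,englem} (regularity and dimension $2$ plus the two generators of the maximal ideal, forcing the surjection $\FF_2\lb y,z\rb \onto A(N)^\elleng$ to be an isomorphism) and reads off the explicit operators from the element tables in \cref{gen3,gen5}; the grading compatibility is exactly the content of \cref{Avnewgrade}, as you use it. The only difference is that you spell out the dimension/kernel argument and the coset bookkeeping, which the paper leaves implicit.
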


\begin{proof}
\cref{Aellengred,englem}. For explicit generators, see \cref{gen3,gen5}.
\end{proof}

\begin{mycor}\label{tandimAred}
For $N = 3, 5$, we have $\tandim A(N)^\red = 3$. 
\end{mycor}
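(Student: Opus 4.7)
The upper bound $\tandim A(N)^\red \leq 3$ follows from \cref{hecketandim}(2), so the work is in establishing the matching lower bound. My plan is to exhibit three Hecke operators in $\mm^\red$ whose images are $\FF_2$-linearly independent in $\mm^\red/(\mm^\red)^2$, by tracking them through two quotient maps onto reduced two-dimensional regular local rings of known structure.

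The two quotients I will use are the old and very new pieces: by \cref{newoldhecke}, $A(N)^{\old} \cong A(1)$, which by \cref{level1} is a power series ring in two variables; by \cref{Avnewstruct}, $A(N)^{\elleng}$ is also a power series ring in two variables. Since both targets are reduced, the surjections $A(N) \onto A(1)$ and $A(N) \onto A(N)^{\elleng}$ factor through $A(N)^\red$. The value of these structure theorems for my purposes is that they give explicit polynomial generators of each target cotangent space as Hecke operators, and the $(\ZZ/8\ZZ)^\times$-grading on $A(1)$ then lets me read off which Hecke operators in $A(N)$ become cotangent generators in each quotient and which are forced into $\mm^2$ of each quotient purely on degree grounds.

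For $N = 3$ I will take the triple $\{T_{11}, T_5, T_7\}$. Under the map to $A(1) \cong \FF_2\lb T_{11}, T_5 \rb$ supplied by \cref{level1} with $i = 3$, the operators $T_{11}$ and $T_5$ are polynomial generators, while $T_7$ lies in $\mm_{A(1)}^2$: it has degree $7 = 3 \cdot 5$ in $(\ZZ/8\ZZ)^\times$, and since $3^a 5^b = 7$ forces both $a$ and $b$ odd, every homogeneous degree-$7$ monomial in $\FF_2\lb T_{11}, T_5\rb$ is divisible by $T_{11} T_5$. Under the map to $A(3)^{\elleng} \cong \FF_2\lb T_{13}, T_7 \rb$ from \cref{Avnewstruct}, the operator $T_7$ is itself a polynomial generator. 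Given any relation $aT_{11} + bT_5 + cT_7 \in (\mm^\red)^2$ with $a,b,c \in \FF_2$, pushing to $A(1)$ gives $aT_{11} + bT_5 \in \mm_{A(1)}^2$ and hence $a = b = 0$, and then pushing $cT_7 \in (\mm^\red)^2$ to $A(3)^{\elleng}$ forces $c = 0$.

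For $N = 5$ the argument is strictly parallel using the triple $\{T_{13}, T_3, T_7\}$: \cref{level1} with $i = 5$ gives $A(1) \cong \FF_2\lb T_{13}, T_3 \rb$, in which $T_7$ again lies in $\mm^2$ by the same degree count ($5^a 3^b = 7$ forces $a,b$ both odd); and \cref{Avnewstruct} gives $A(5)^{\elleng} \cong \FF_2\lb T_{11}, T_7 \rb$, in which $T_7$ is a polynomial generator. I do not anticipate a serious obstacle: the two structure theorems supply exactly the data needed to identify the witnesses, and once a degree-$7$ operator is available that is forced into $\mm^2$ of $A(1)$ while surviving as a generator of $A(N)^{\elleng}$, the two-step projection argument produces the desired linear independence.
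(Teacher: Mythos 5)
Your proof is correct, and it takes a genuinely different route to the lower bound than the paper does. You directly exhibit a cotangent basis: pushing a putative relation $aT_{11}+bT_5+cT_7\in(\mm^\red)^2$ (resp.\ $aT_{13}+bT_3+cT_7$ for $N=5$) into the two reduced quotients $A(1)$ and $A(N)^\elleng$, and using the $(\ZZ/8\ZZ)^\times$-grading of \cref{level1} to see that $T_7$, sitting in the degree-$7$ component $xy\,\FF_2\lb x^2,y^2\rb$, is forced into $\mm_{A(1)}^2$ while \cref{Avnewstruct} makes it a polynomial generator of $A(N)^\elleng$ --- this is sound, and your degree count ($3^a5^b\equiv 7 \bmod 8$ forces $a,b$ odd) is exactly the Nicolas--Serre observation that $T_p\in\mm_{A(1)}^2$ iff $p\equiv\pm1\pmod 8$. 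The paper instead argues through kernels and Krull dimension: writing $J_1$ and $J_\elleng$ for the kernels of $A(N)^\red\onto A(1)$ and $A(N)^\red\onto A(N)^\elleng$, it notes that $F_N=\tau^\hecke(\frob_N)$ dies in $A(N)^\elleng$ (\cref{fnto0}, via \eqref{fun2}) yet maps to the cotangent generator $T_N$ of $A(1)$, so $A(N)^\red/(J_1+J_\elleng)$ is a quotient of $A(1)/(T_N)$ of dimension at most $1$, whence $J_1\neq 0$; combined with the upper bound $\tandim\leq 3$ and the (implicit) fact that a two-generated complete local $\FF_2$-algebra surjecting onto $\FF_2\lb x,y\rb$ must do so isomorphically, this forces $\tandim A(N)^\red=3$. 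In effect your witness ($T_7$: trivial in the cotangent space of the old quotient, a generator of the very new one) is dual to the paper's ($F_N$: a generator for the old quotient, zero in the very new one). Your argument buys an explicit cotangent basis of $A(N)^\red$ and avoids any Krull-dimension bookkeeping, at the cost of invoking the graded structure of the level-one algebra; the paper's version needs only the relation \eqref{fun2} and the dimension statements of \cref{level1} and \cref{Aellengred}, machinery it reuses immediately afterwards in \cref{minprimes} and \cref{mainthm}.
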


\begin{proof} 
On one hand, by  \cref{level1} the level-$1$ Hecke algebra $A(1)$ is a dimension-$2$ domain, so that the quotient map $A(N) \onto A(1)$  factors through $A(N)^\red$. \cref{Aellengred} and the same logic imply that the map~$A(N) \onto A(N)^\elleng$ factors through $A(N)^\red$ as well. On the other hand, \cref{gen3,gen5} tell us that $\tandim A(N)^\red$ is at most $3$. Therefore it suffices to prove, for example, that the kernel $J_1$ of the  induced quotient map $A(N)^\red \onto A(1)$ is nonzero. We show this by proving that $J_1 + J_\elleng \neq J_\elleng$, where~$J_\elleng$ is the kernel of the induced map $A(N)^\red \onto A(N)^\elleng$.

Since $A(N)^\elleng = A(N)^\pf/(U_N')$, the relation in \eqref{fun2} implies that $J_\elleng$, when projected to $A(1)$, contains~$T_N = \tau^\hecke_{2, 1}(\frob_N)$, one of the two generators of the maximal ideal of $A(1)$.  Therefore $A(N)^\red/(J_\elleng + J_1)$ is a quotient of $A(1)/(T_N)$, so has Krull dimension at most $1$. Since $A(N)^\red/J_\elleng = A(N)^\elleng$ has Krull dimension~$2$, we must have $J_\elleng + J_1 \neq J_\elleng$, so that $J_1 \neq 0$, as desired.  
\end{proof}  

\section{Structure of $\RRR(N, \FF_2)^\red$ and $A(N, \FF_2)^\red$ for $N=3,5$}
We are now ready to state and prove one of the main results of this article where for $N = 3, 5$ we describe the structure of $A(N)^\red := A(N, \FF_2)^\red$ precisely, by proving that it is isomorphic to $\RRR(N)^\red := \RRR(N, \FF_2)^\red$. In other words, we prove \cref{a1}.

\begin{myprop}\label{minprimes} For $N = 3, 5$, $\RRR(N)$ has two minimal prime ideals, and the quotient by each of them factors through $\phi: \RRR(N) \onto A(N)$.  
\end{myprop}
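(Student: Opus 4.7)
The plan is to exhibit two minimal primes of $\RRR := \RRR(N)_\one$ explicitly and then rule out other minimal primes using the dichotomy in \cref{tracedecomp0}\eqref{tracedecomp0item}. Define
$\pp_1 := \ker\bigl(\RRR \xrightarrow{\phi} A(N) \xrightarrow{\pi^\old} A(1)\bigr)$ and $\pp_2 := \ker\bigl(\RRR \xrightarrow{\phi} A(N) \onto A(N)^\elleng\bigr)$.  Both contain $\ker\phi$, so the quotients by both factor through $\phi$ by construction.  By \cref{level1} and \cref{Avnewstruct}, the targets $A(1) \cong \FF_2\lb x, y\rb$ and $A(N)^\elleng \cong \FF_2\lb y, z\rb$ are $2$-dimensional integral domains, so $\pp_1$ and $\pp_2$ are prime with Krull-dimension-$2$ quotients.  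Since $\dim \RRR = 2$ by \cref{krulldimR}, neither $\pp_i$ can strictly contain another prime, so both $\pp_1$ and $\pp_2$ are minimal.

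For uniqueness, let $\mathfrak{q}$ be an arbitrary minimal prime of $\RRR$ and let $t : G \to B := \RRR/\mathfrak{q}$ be the induced pseudodeformation to the coefficient-domain $B$.  By \cref{tracedecomp0}, $t = \tr \rho$ for a semisimple representation $\rho$, and either (a) $\rho(I_N) = 1$, or (b) $\left.\rho\right|_{D_N}$ is unipotent.  In Case (a), $t$ factors through $G_{\QQ, 2}^\protwo$, so by universality the map $\RRR \to B$ factors through $\psi_{N,1} : \RRR \onto \RRR(1)_\one \toiso A(1)$ (the isomorphism being \cref{level1}\eqref{BR1}); comparing universal properties identifies this composition with $\pi^\old \circ \phi$.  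Thus $\mathfrak{q} \supseteq \pp_1$, and minimality forces $\mathfrak{q} = \pp_1$.

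Case (b) is the main obstacle.  First, unipotence of $\rho(\frob_N)$ gives $F_N := t(\frob_N) = 0$, and iterating the Cayley--Hamilton identity $\rho(\frob_N)^2 = F_N\rho(\frob_N) - 1$ yields $t(\frob_N^n) = 0$ for all $n$; combined with level-$N$ shape this forces $t|_{D_N} = 0$ in $B$.  The analogous calculation on the Hecke side --- using \eqref{fun2} so that $U_N' = 0$ in $A(N)^\elleng$ forces $F_N = 0$ --- gives $\tau^\elleng|_{D_N} = 0$.  Hence both $\RRR \to B$ and $\RRR \onto A(N)^\elleng$ factor through a common quotient $\RRR^{(b)}$ of $\RRR$ classifying pseudodeformations that additionally satisfy $t|_{D_N} = 0$ together with the Case-(b) relation $t(gi) - t(g) = t(dgi) - t(dg)$ for all $g \in G$, $d \in D_N$, $i \in I_N$ (valid in any Case-(b) pseudodeformation by \cref{tracedecomp0}\eqref{fourtermitem}).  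A tangent-space count in the spirit of \cref{chenny,tanlemma,propdef} --- using $|\barr D_N| = 4$, $c \notin \barr D_N$, and the observation that the Case-(b) relation reduces to a single independent linear condition on the $b$-values on the four-element complement $G/G^2 \setminus \barr D_N$ --- yields $\tandim \RRR^{(b)} \leq 2$.  Since $\RRR^{(b)}$ surjects onto $A(N)^\elleng \cong \FF_2\lb y, z\rb$, we have $\dim \RRR^{(b)} \geq 2$, forcing $\RRR^{(b)}$ to be regular of dimension $2$ and the surjection to be an isomorphism.  Thus $\mathfrak{q} \supseteq \pp_2$, and minimality gives $\mathfrak{q} = \pp_2$.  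The technical heart is the tangent-space bound for $\RRR^{(b)}$ combined with the nilpotence-method structure of $A(N)^\elleng$ from \cref{Avnewstruct}.
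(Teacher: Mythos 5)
Your argument is correct in substance and follows the same strategy as the paper: identify your $\pp_1$ and $\pp_2$ (the paper's $\pp_\old$ and $\pp_\elleng$) as kernels of the maps onto the two-dimensional domains $A(1)$ and $A(N)^\elleng$, then use the dichotomy of \cref{tracedecomp0} (unramified at $N$ versus unipotent on $D_N$) to show that every prime of $\RRR(N)$ contains one of them, pinning down the ramified case by a cotangent bound of $2$ together with the surjection onto $A(N)^\elleng \cong \FF_2\lb y, z\rb$. The only packaging difference is in that ramified case: the paper works with the ideal $J$ generated by $\tau^\univ(\frob_N)$ and the nilradical and bounds $\tandim \RRR(N)/J \leq \tandim \RRR(N)^\red - 1 \leq 2$ via \cref{propdef}, whereas you impose the trace identities $t|_{D_N} = 0$ and the four-term relation of \cref{tracedecomp0}\eqref{fourtermitem} directly on an auxiliary quotient $\RRR^{(b)}$ and count tangent conditions from scratch. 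The two computations are the same in substance, and your count (vanishing on the four elements of $\barr D_N$, one sum condition on the complementary coset, and $b(c)=0$, giving six independent conditions on the eight values of $b$ on $G/G^2$) is valid for $N \equiv 3, 5 \pmod 8$; the identification $\RRR^{(b)} \toiso A(N)^\elleng$ then goes exactly as in the paper.

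One small omission: you never check that $\pp_1 \neq \pp_2$, which is needed to conclude that $\RRR(N)$ has \emph{two} minimal primes rather than one. This follows immediately from facts you already invoke: $\tau^\univ(\frob_N)$ lies in $\pp_2$ because $F_N$ maps to zero in $A(N)^\elleng$ (\cref{fnto0}), but it does not lie in $\pp_1$ because its image in $A(1)$ is $T_N$, which is nonzero (indeed a generator of the maximal ideal of $A(1)$ for $N \equiv 3, 5 \pmod 8$). This is exactly how the paper separates the two primes, so the fix is one sentence.
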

\begin{proof} 
By \cref{krulldimR}, the Krull dimension of $\RRR(N)$ is $2$. On one hand, its quotient $\RRR(1)$ is isomorphic to~$A(1)$, which is {a domain of Krull dimension $2$} (\cref{level1}). Therefore the kernel $\pp_\old: = J_{N, 1}$ of the map~$\psi_{N, 1}: \RRR(N) \onto \RRR(1)$ from \eqref{psiN1} is a minimal prime ideal of $\RRR(N)$. Moreover, $\psi_{N, 1}$ factors through~$A(N)$, since again its target is isomorphic to $A(1)$. On the other hand, $A(N)^{\elleng}$ is also a quotient, via $A(N)$, of~$\RRR(N)$; and for $N = 3, 5$ we know that $A(N)^{\elleng}$ is also {a domain of Krull dimension $2$} (\cref{Aellengred}). Therefore the kernel $\pp_\elleng$ of the corresponding surjection $\RRR(N) \onto A(N)^{\elleng}$ is also a minimal prime of $\RRR(N)$. 
\begin{equation}
\begin{tikzpicture}
\node(RN)	{$\RRR(N)$};
\node(AN)[right = 1.5cm of RN]{$A(N)$};
\node(A1)[above right = 0.3cm and 1.5cm  of AN]{$A(1)$};
\node(Anew)[below right =  0.3cm and 1.5cm of AN]{$A(N)^\elleng$};
\node(R1)[left =10pt of A1]{$\RRR(1)$};
\node(Anewring)[right = 10pt of Anew]{$\FF_2\lb y, z \rb$};
\node(A1ring)[right = 20pt of A1]{$\FF_2\lb x, y \rb$};
\draw(RN)[->>] -- (AN) node[near end, above] {$\scriptstyle \phi$};
\draw(AN)[->>] -- (A1); 
\draw(AN)[->>] -- (Anew);
\draw(RN)[->>] -- (Anew) node[midway, below] {$\scriptstyle \phi^\elleng$};
\draw(RN)[->>] -- (R1) node[midway, above] {$\scriptstyle \psi_{N, 1}$};
\draw(R1)[->] -- (A1) node[midway, above = -0.1cm]{$\sim$};
\draw(A1)[->] -- (A1ring) node[midway, above = -0.1cm]{$\sim$};
\draw(A1)[->] -- (A1ring) node[midway, above = -0.1cm]{$\sim$};
\draw(Anew)[->] -- (Anewring) node[midway, above = -0.1cm]{$\sim$};
\end{tikzpicture}
\end{equation}

We claim that $\pp_\old \neq \pp_\elleng$. Indeed, any element of $\RRR(N)$ of the form $\tau^\univ(\frob_N)$ must be in $\pp_\elleng$, since any such element maps to $F_N = \tau^\hecke(\frob_N)$ in $A(N)$ and then to zero in $A(N)^{\elleng}$ by \cref{fnto0}. At the same time the image of $F_N$ in $A(1)$ is $T_N$, which for $N = 3, 5$ modulo $8$ is a generator of the maximal ideal, and is in any case nonzero, so that none of the {elements of the form} $\tau^\univ(\frob_N)$ are in $\pp_\elleng$. Furthermore,~$\pp_\old \not\subset \pp_\elleng$, since the two have the same depth.

We now claim that every prime ideal of $\RRR(N)$ contains either $\pp_\old$ or $\pp_\elleng$, so that these are the only minimal primes. 
Let $J$ be the ideal of $\RRR(N)$ generated by elements of the form $\tau^\univ(\frob_N)$ and the nilradical of~$\RRR(N)$, so that $J \subseteq \pp_\elleng$. We show that $J = \pp_\elleng$. Since $J$ contains the nilradical, $\RRR(N)/J$ is a quotient of $\RRR(N)^\red$. Further, the image of $\tau^\univ(\frob_N)$ is a generator of the maximal ideal of $\RRR(N)^\red$ (in other words, {the image of} $\tau^\univ(\frob_N)$ is nonzero in $\mm(N)^\red/(\mm(N)^\red)^2$), because this is true in its quotient~$A(1)$. Therefore, 
$$\tandim \RRR(N)/J \leq \tandim \RRR(N)^\red - 1 \leq 2,$$
where the last inequality follows from \cref{propdef}. 
Therefore, $\RRR(N)/J$ is a quotient of a {complete regular local $\FF_2$-algebra of Krull dimension $2$}, which has $A(N)^\elleng \cong \FF_2\lb y, z\rb$ as a quotient. In other words, $\RRR(N)/J \cong A(N)^\elleng$ and $J = \pp_\elleng$. Finally, by \cref{tanlemma}, any prime ideal $\pp$ of $\RRR(N)$ that does not contain $\pp_\old$ contains the set $\tau^\univ(D_N)$ and hence, in particular $\tau^\univ(\frob_N)$. Therefore, any such $\pp$ contains $J = \pp_\elleng$, as claimed. 
\end{proof} 

\begin{mycor}\label{isomcor} The surjections $\hat \RRR(N) \onto \RRR(N) \stackrel{\phi}\onto A(N)$ induce isomorphisms
$$\hat\RRR(N)^\red \simeq \RRR(N)^\red \simeq A(N)^\red.$$
\end{mycor}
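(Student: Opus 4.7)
The plan is to derive both isomorphisms as formal consequences of \cref{tanlemma} and \cref{minprimes}; no new ideas are required, and the corollary is essentially bookkeeping with nilradicals.

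For the first isomorphism $\hat\RRR(N)^\red \simeq \RRR(N)^\red$, I would invoke \cref{tanlemma}, which says every prime of $\hat\RRR(N)$ is the preimage of a prime of $\RRR(N)$. In particular, the kernel $\hat J_N$ of the surjection $\hat\RRR(N) \onto \RRR(N)$ lies in every prime of $\hat\RRR(N)$, hence in its nilradical. Passing to reduced quotients then yields the desired isomorphism directly.

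For the second isomorphism $\RRR(N)^\red \simeq A(N)^\red$, I would use \cref{minprimes}, which furnishes the two minimal primes $\pp_\old, \pp_\elleng$ of $\RRR(N)$, together with the fact that both quotients $\RRR(N)/\pp_\old \cong A(1)$ and $\RRR(N)/\pp_\elleng \cong A(N)^\elleng$ factor through $\phi$. This gives $\ker \phi \subseteq \pp_\old \cap \pp_\elleng = \mathrm{nil}(\RRR(N))$, so $\phi$ descends to a surjection $\RRR(N)^\red \onto A(N)/\mathrm{nil}(A(N)) = A(N)^\red$. The remaining step---verifying injectivity---reduces to identifying the minimal primes of $A(N)$: any prime of $A(N)$ pulls back to a prime of $\RRR(N)$ containing $\ker \phi$, hence containing either $\pp_\old$ or $\pp_\elleng$. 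Since both $\pp_\old/\ker\phi$ and $\pp_\elleng/\ker\phi$ are prime in $A(N)$ (their quotients $A(1)$ and $A(N)^\elleng$ are domains) and are minimal there (by minimality in $\RRR(N)$), these are exactly the minimal primes of $A(N)$, and their intersection $(\pp_\old \cap \pp_\elleng)/\ker\phi$ is the nilradical of $A(N)$. Combining gives $A(N)^\red = \RRR(N)/(\pp_\old \cap \pp_\elleng) = \RRR(N)^\red$.

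Since the main obstacle---the classification of minimal primes of $\RRR(N)$---was already overcome in \cref{minprimes} (which itself rests on the nilpotence-method bound of \cref{nilpy}) and the tangent-space bounds of \cref{tanlemma}, the corollary itself presents no further difficulty; the argument is a pure exercise in matching minimal primes across the two surjections.
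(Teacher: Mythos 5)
Your proposal is correct and follows the paper's own argument: the first isomorphism comes from \cref{tanlemma} (every prime of $\hat\RRR(N)$ contracts from $\RRR(N)$, so the kernel is nilpotent), and the second from \cref{minprimes} (both minimal primes contain $\ker\phi$, so $\ker\phi$ is nilpotent and the minimal primes of $\RRR(N)$ and $A(N)$ match). Your write-up merely spells out in more detail the matching of minimal primes and nilradicals that the paper leaves implicit.
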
 
\begin{proof}
By \cref{minprimes}, {all} the minimal primes of $\RRR(N)$ {contain} $\ker \phi$, so that they also correspond to the minimal primes of $A(N)$. {Hence, that $\ker(\phi)$ is nilpotent.} By \cref{redlemma}, the primes of $\hat \RRR(N)$ are all contractions 
of primes of $\RRR(N)$. 
\end{proof}

\begin{mythm}
\label{mainthm}
For $N = 3, 5$, the reduced Hecke algebra $A(N)^{\red}$ is isomorphic to $\displaystyle \frac{\FF_2\lb a, b, c\rb}{\big(ab\big)},$ with 
$$a \in {\tau^\red}(g_N^\pm) + (\mm(N)^\red)^2,\qquad b \in {\tau^\red}(g_7^-) + (\mm(N)^\red)^2, \qquad\mbox{and}\qquad c = {\tau^\red}(g_{-N}^\pm).$$
\begin{enumerate}[itemsep = 8pt]
\item For $N = 3$, let $f \in \FF_2\lb x, y\rb$ be the power series satisfying $f(T_{11}, T_{13}) = T_7$ in $A(1)$, and let $g \in \FF_2\lb y, z\rb$  be the power series satisfying $g(T_{13}, T_{7}) = T_{11}$ in $A(N)^\elleng$: that is, 
\begin{align*} 
f&= xy + x^3 y + x y^5 + x^7y + x^5y^3 + x^3 y^5 + x^9y + x^5y^5 + x^3y^7 + xy^9 + xyO\big((x^2, y^2)^5\big)\\
 \mbox{and} \quad g &=yz + z^2 + yz^3 + y^2 z^2 + y^3 z + y^3 z^3 + y^2 z^4 + O\big(y^4, (y,z)^8\big). \quad \mbox{Then the map}
\end{align*}
$$\frac{\FF_2\lb x, y, z\rb}{\big(z - f(x, y)\big)\big(x - g(y, z)\big)} \longrightarrow A(N)^\red \qquad \mbox{induced by}
\qquad {x \mapsto T_{11}, y \mapsto T_{13}, z \mapsto T_{7}}$$
is an isomorphism.  
\item For $N = 5$, let $f' \in \FF_2\lb x, y\rb$ be the power series satisfying $f'(T_{13}, T_{11}) = T_7$ in $A(1)$, and let $g' \in \FF_2\lb y, z\rb$  be the power series satisfying $g'(T_{11}, T_{7}) = T_{13}$ in $A(N)^\elleng$.
Then the map 
$$\displaystyle \frac{\FF_2\lb x, y, z\rb}{\big(z - f'(x, y)\big)\big(x - g'(y, z)\big)} \to A(N)^\red \quad\mbox{ induced by }\quad
x \mapsto T_{13}, y \mapsto T_{11}, z \mapsto T_{7}$$
is an isomorphism.  

\end{enumerate}
\end{mythm}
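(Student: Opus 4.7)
The plan is to realize $A(N)^\red$ as the pullback of its two integral-domain quotients $A(1)$ and $A(N)^\elleng$ (see \cref{minprimes,Avnewstruct,level1}), glued along a shared tangent direction.

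First, by \cref{tandimAred,genN}, I would construct a surjection
$$\pi: \FF_2\lb x, y, z \rb \onto A(N)^\red$$
sending $(x, y, z)$ to the indicated Hecke operators: $(T_{11}, T_{13}, T_7)$ for $N = 3$ and $(T_{13}, T_{11}, T_7)$ for $N = 5$. By \cref{minprimes}, $A(N)^\red$ has exactly two minimal primes $\pp_\old$ and $\pp_\elleng$; since $A(N)^\red$ is reduced, $\ker \pi = P_\old \cap P_\elleng$, where $P_\old := \pi^{-1}(\pp_\old)$ and $P_\elleng := \pi^{-1}(\pp_\elleng)$.

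Next, I would identify these two primes explicitly. For the level-$1$ side, \cref{level1} gives $A(N)^\red / \pp_\old \cong A(1)$. Using the grading of \cref{level1} (or \cref{rgrade}) together with the tangent-space data from \cref{gen3,gen5}, one verifies that $T_{11}$ and $T_{13}$ already generate $A(1)$, so that $A(1) \cong \FF_2\lb T_{11}, T_{13} \rb$ for $N = 3$ (and analogously for $N = 5$). Therefore $T_7$ is expressible as $f(T_{11}, T_{13})$ for a unique $f \in \FF_2\lb x, y\rb$ (with no linear term by grading considerations), so $z - f(x, y) \in P_\old$. The quotient $\FF_2\lb x, y, z \rb / (z - f(x, y))$ is visibly isomorphic to $\FF_2\lb x, y \rb$, a Krull-dimension-two domain; since it surjects onto the two-dimensional domain $A(1)$, the surjection is forced to be an isomorphism, so $P_\old = (z - f(x, y))$. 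The analogous argument using \cref{Avnewstruct} produces $g \in \FF_2 \lb y, z \rb$ expressing $T_{11}$ in $A(N)^\elleng$ as a power series in $T_{13}, T_7$, and yields $P_\elleng = (x - g(y, z))$.

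Finally, I would intersect these principal ideals inside the UFD $\FF_2\lb x, y, z \rb$ (a regular local ring of dimension three). The generators $z - f(x, y)$ and $x - g(y, z)$ are each irreducible (being linear in a distinct variable) and non-associate (having distinct nonzero images in $\mm/\mm^2$), hence coprime, so
$$\ker \pi \; = \; P_\old \cap P_\elleng \; = \; \bigl((z - f(x, y))(x - g(y, z))\bigr),$$
yielding the claimed isomorphism. The main technical point is verifying the principality $P_\old = (z - f(x, y))$ (and the analogous statement for $P_\elleng$): this hinges on matching Krull dimensions with the already-known structure of $A(1)$ and $A(N)^\elleng$, which is precisely what lets us promote the containment into an equality. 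The low-order expansions of $f$ and $g$ printed in the theorem can then be extracted by computing the Hecke action on $\barr\Delta$ and its powers and inverting the resulting linear system, in the spirit of \cref{gen3,gen5}.
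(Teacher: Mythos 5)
Your proposal is correct and is essentially the paper's own argument: both identify $\ker$ of the surjection $\FF_2\lb x,y,z\rb \onto A(N)^\red$ as the intersection of the pullbacks of the two minimal primes from \cref{minprimes}, show these pullbacks are the principal ideals $\big(z-f(x,y)\big)$ and $\big(x-g(y,z)\big)$ using the known structure of $A(1)$ and $A(N)^\elleng$, and conclude via the nonassociate-irreducibles/UFD argument that the intersection equals the product. The only cosmetic difference is that you force $P_\old=(z-f)$ by a Krull-dimension (catenary) argument where the paper just observes the induced map $\FF_2\lb x,y\rb\to A(1)$ is already the isomorphism of \cref{level1}; do also record the trivial change of variables giving the $\FF_2\lb a,b,c\rb/(ab)$ form, and note that the computation of $g$ requires dual bases in $K(N)^\elleng$, not only powers of $\barr\Delta$.
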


\begin{proof}

Following the notation of the proof of \cref{minprimes}, write $\pp_\old^\red$ and $\pp_\elleng^\red$ for the images of $\pp_\old$ and $\pp_\elleng$ respectively, in $A(N)^\red$. These are the two minimal primes of the reduced ring $A(N)^\red$, so that in particular $\pp_\old^\red \cap \pp_\elleng^\red = \big(0 \big).$ We know from \cref{tandimAred} that $\tandim A(N)^\red=3$; moreover {by} putting \cref{genN}, \cref{level1}, and \cref{Avnewstruct} together, we know that can choose an ordered triple of generators $(X, Y, Z)$ of $\mm(N)^\red$ so that the first two generate $\mm(1)$ and the second and third generate~$\mm(N)^\elleng$: namely $X = \tau(g_N^\pm)$, $Y = \tau(g_{-N}^\pm)$, and $Z = \tau(g_7^-)$. In particular for $N=3$ we can choose $(T_{11}, T_{13}, T_7)$; for $N = 5$ we choose $(T_{13}, T_{11}, T_7)$. Then we can find a unique power series $f \in \FF_2\lb x, y \rb$ so that $Z = f(X, Y)$ in $A(1)$; more precisely, $f$ is in $\mm_{\FF_2\lb x, y \rb}^2$ because $Z$ maps to $0$ in the cotangent space of~$A(1)$. Similarly, there is a unique power series $g \in \mm_{\FF_2\lb y, z \rb}^2$ with $X = g(Y,Z)$ in~$A(N)^\elleng$.

Now consider the map  
$\alpha: \FF_2 \lb x, y, z \rb \onto A(N)^\red$
given by $x \mapsto X, y \mapsto Y,  z \mapsto Z$. By construction, $z - f(x, y) \in \pp_\old^\red$ and $x - g(y, z) \in \pp_\elleng^\red$. Since furthermore, $$\frac{\FF_2\lb x, y, z \rb}{\big(z - f(x, y)\big)} \simeq \FF_2 \lb X, Y\rb = A(1),$$ we have $\alpha^{-1}(\pp_\old^\red) = \big(z - f(x, y)\big)$. Similarly, $\alpha^{-1}(\pp_\elleng^\red) = \big(x - g(y, z)\big)$. 
By considering degrees, it is clear that the elements $z - f(x, y)$ and $x - g(y, z)$ of the UFD $\FF_2\lb x, y, z \rb$ have no common factors --- they are nonassociate irreducibles --- so that the intersection and the product of the ideals they generate agree: 
$$\ker \alpha = \alpha^{-1}(\pp_\old^\red \cap \pp_\elleng^\red) = \alpha^{-1}(\pp_\old^\red) \cap \alpha^{-1}(\pp_\elleng^\red) = \big(z - f(x, y)\big)\big(x - g(y, z)\big).$$
Since {the images of} $x - g(y, z)$, $y$, and $z - f(x, y)$ form a basis {of} the cotangent space of $\FF_2 \lb x, y ,z \rb$, the reduced Hecke algebra has the structure $\FF_2\lb a, b, c \rb/(ab)$ as claimed. The computation of $f$ and $g$ in the case~$N = 3$ and $(X, Y, Z) = (T_{11}, T_{13}, T_7)$ proceeds by linear algebra by constructing a basis of $K(1)$ (respectively, $K(3)^\elleng$) dual to the monomials in $x$ and $y$ of $A(1)$ (respectively, $y$ and $z$ of $A(3)^\elleng$) 
\end{proof}

\begin{remark} \label{thing} The obstacle to extending \cref{mainthm} from $N = 3, 5$ to all primes \mbox{$N \equiv 3,5 \pmod{8}$} is the lower bound on the {Krull dimension of} $A(N)_\one^\elleng$ from \cref{nilpy}, which implies that $A(N)_\one^\elleng \simeq \FF_2 \lb y, z \rb$ (\cref{Aellengred}). The limitation is twofold.
\begin{itemize}[topsep = -3pt, itemsep = 5pt]
\item Condition~\eqref{power} of the nilpotence method (\cref{nilpthm}) requires that the ambient space of modular forms be a polynomial algebra over a finite field. Using  Riemann-Hurwitz and the ideas described in \cref{serresupersing}, one can show that 
this happens if and only if $X_0(2N)$ has genus $0$. This a technical, not a conceptual, limitation to the nilpotence method, but we do not know of any workarounds at the moment.
\item Condition~\eqref{four} of the nilpotence method requires as input an infinite sequence of forms in  $K(N)_\one^\new$ whose filtration grows at most linearly. In the proof of \cref{nilpy}, we essentially obtain such a sequence from dimension formulas for $M_k(1) = M_k(1)_\one$ and $M_k(N) = M_k(N)_\one$, from which we can deduce information about $\dim M_k(N)_\one^\new$. But in general, standard dimension formulas are insufficient: they must be refined for various~$\rhobar$.

There are several  known $\rhobar$-dimension formula techniques: Bergdall-Pollack \cite[Proposition 6.9(a)]{bergdallpollack} uses Ash-Stevens, which requires $p \geq 5$; {Jochnowitz \cite[Lemma 6.4]{JochCong}, uses $\theta$ twists  for $p \geq 5$ and the Eichler-Selberg trace formula, but is limited by dimension $< p$ in characteristic $p$;} Anni-Ghitza-Medvedovsky (forthcoming, currently also only for $p \geq 5$) refines Jochnowitz's trace formula work with deeper congruences; see also \cite{AGM}. All of these rely on propagation from low weight and none of them yet allow for $p=2$. The low-weight piece in our context is done: since the level-raising condition for $\one$ modulo~$2$ is satisfied at every odd $N$, \cite[Theorem 2(2b)]{deomed} guarantees that $K_k(N)_\one^\new$ is nonempty for \emph{some} weight $k$. But the propagation part for $p =2$ awaits further development. 
\end{itemize}
Despite these limitations, one would not be surprised to discover that $\RRR(N)_\one^\red \cong A(N)_\one^\red \cong \FF_2\lb a, b, c\rb/(ab)$ for all primes $N \equiv 3, 5 \cmod{8}$. 
\end{remark}

\section{Structure of $A(N, \FF_2)^\pf$ for $N=3,5$}
We now determine the structure of $A(N)^\pf$ for $N=3,5$. 
{Let $\tau^\pf : G_{\QQ,2N} \to A(N)_{\one}^\pf$ be the pseudorepresentation obtained by composing $\tau^\hecke$ with the natural injection $A(N)_{\one} \to A(N)_{\one}^\pf$.} In this section we write $\tau$ for $\tau^\pf$. Recall that $c \in G_{\QQ, 2N}$ is a complex conjugation. 
\begin{mythm}
\label{pfprop}
Let $N = 3, 5$, and let $Y = \tau(g_{-N}^\pm)$ and {$Z = \tau(ci)$} for any choice of $g_{-N}^\pm$, $c$ and $i \in I_N$. 
Then the map 
$$\frac{\FF_2\lb y, z, u \rb}{(zu^2)} \onto A(N)^\pf \qquad \mbox{induced by} \qquad { u \mapsto U_N', \qquad y \mapsto Y \qquad z \mapsto Z}$$
is an isomorphism. {If $N = 3$ then $(Z, Y)$ may be $(T_{13}, T_7)$; if $N = 5$  then $(Z, Y)$ may be 
$(T_{11}, T_7)$.} 
\end{mythm}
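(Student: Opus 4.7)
My plan is to define $\phi: B := \FF_2\lb y, z, u\rb/(zu^2) \to A(N)^\pf$ by $y \mapsto Y$, $z \mapsto Z$, $u \mapsto U_N'$, verify it is well-defined via the relation $Z(U_N')^2 = 0$ in $A(N)^\pf$, prove surjectivity by complete Nakayama, and establish injectivity by showing $\ker \phi \subseteq (u^2)$ and $\ker \phi \subseteq (z)$, so that $\ker \phi \subseteq (u^2) \cap (z) = (zu^2) = 0$ in $B$.

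For the defining relation, I will apply \cref{s2killsnew}: $(U_N^2 - N^{-2}\mathcal{S}_N)M(N) \subseteq M(N)^\old$. In characteristic $2$, since $N$ is odd and $\mathcal{S}_N$ scales $M_k(N,\FF_2)$ by $N^k \equiv 1 \pmod 2$, the operator $N^{-2}\mathcal{S}_N$ acts as the identity, and the inclusion simplifies to $(U_N')^2 M(N) \subseteq M(N)^\old$. On the other hand, $Z = \tau(ci)$ is killed by the quotient $A(N) \twoheadrightarrow A(1) = A(N)^\old$: the pseudorepresentation at level one factors through $G_{\QQ,2}$ where $i$ becomes trivial, and oddness gives $\tau(c) = 0$. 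Hence $Z$ annihilates $M(N)^\old$, so $Z(U_N')^2$ annihilates all of $M(N)$ and vanishes in $A(N)^\pf$ by faithfulness. Surjectivity of $\phi$ then follows from \cref{pflem} (which gives $\mm^\pf = (U_N', Y, Z)$, noting $ci \in G_7^-$) together with complete Nakayama.

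For the first containment, I use the presentation $A(N)^\pf \cong A(N)[u]/(u^2 + F_N u + F_N)$ obtained from \cref{apffinite}, upgraded from a surjection to an isomorphism because $\{1, U_N\}$ are $A(N)$-linearly independent (as $U_N$ has characteristic-zero eigenvalues not lying in $A(N)$). Modding out $(U_N')^2$ yields $A(N)^\pf/(U_N')^2 \cong A(N)/(F_N)\,[u]/(u^2)$; combining \eqref{avnewpf} (which, together with the presentation above, identifies $A(N)/(F_N)$ with $A(N)^\elleng$) and \cref{Avnewstruct} shows this equals $\FF_2\lb y, z\rb[u]/(u^2)$, matching $B/(u^2)$ under $\phi$. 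For the second containment, I factor $B \to A(N)^\pf/(Z)$ through $A(N)^{\pf, \old}$, valid because $Z$ annihilates $M(N)^\old$. A parallel computation gives $A(N)^{\pf, \old} \cong A(1)[u]/(u^2 + T_N u + T_N)$, and the relation forces $T_N = u^2(1+u)^{-1}$ as a power series in $u$. For $N = 3, 5$, the element $T_N$ is a cotangent generator of $A(1) \cong \FF_2\lb T_3, T_5\rb$ by \cref{level1} and the $(\ZZ/8\ZZ)^\times$-grading, so eliminating $T_N$ yields $A(N)^{\pf, \old} \cong \FF_2\lb Y, u\rb$, using that $Y$ is a distinct cotangent generator by \cref{gen3,gen5}. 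Hence $B/(z) = \FF_2\lb y, u\rb \to A(N)^\pf/(Z) \twoheadrightarrow A(N)^{\pf, \old}$ is a composition with isomorphic endpoints, so each arrow is an isomorphism; in particular $\ker \phi \subseteq (z)$.

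The main technical obstacle is the identification $A(N)^{\pf, \old} \cong \FF_2\lb Y, u\rb$: after eliminating $T_N$ via the quadratic relation, one must verify that $Y$ still projects to a cotangent generator of the resulting two-dimensional regular local ring. This requires tracking the image of $Y = \tau(g_{-N}^\pm)$ under $A(N) \twoheadrightarrow A(1)$ and using that $Y$ and $T_N$ lie in distinct graded pieces of $A(1)$ (as $-N$ and $N$ differ in $(\ZZ/8\ZZ)^\times$ when $N \equiv 3, 5 \pmod 8$), so that $\{Y, T_N\}$ is a regular system of parameters of $A(1)$; this is where the hypothesis $N = 3, 5$ enters the injectivity argument in an essential way.
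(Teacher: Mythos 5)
Your overall skeleton (well-definedness from $Z(U_N')^2=0$ via \cref{s2killsnew}, surjectivity from \cref{pflem}, and bounding the kernel by mapping to the old and very-new quotients) matches the paper's, but your injectivity argument has a fatal flaw in the first containment. The claim that the surjection of \cref{apffinite} is an isomorphism --- i.e.\ that $A(N)^\pf$ is free of rank $2$ over $A(N)$ with basis $\{1,U_N\}$ --- is false, and in fact contradicts the very theorem you are proving: granting \cref{pfprop}, the element $ZU_N' = Z + U_NZ$ lies in $A(N)$ (it equals $\tau(ci\frob_N)+\tau(c\frob_N)$ by \eqref{funcp}), is nonzero, and satisfies $ZU_N'\cdot 1 + ZU_N'\cdot U_N = Z(U_N')^2 = 0$, a nontrivial $A(N)$-linear relation between $1$ and $U_N$. (Your parenthetical justification via "characteristic-zero eigenvalues not lying in $A(N)$" has no bearing on $\FF_2$-module freeness.) Consequently the identifications you derive from it are also false: $A(N)^\pf/(U_N')^2$ is \emph{not} $(A(N)/(F_N))\lb u\rb/(u^2)$ (their cotangent dimensions are $3$ and $4$ once one knows \cref{Astructurethm}), and $A(N)/(F_N)$ is \emph{not} $A(N)^\elleng$ --- the kernel of $A(N)\onto A(N)^\elleng$ contains $Z+U_NZ$ in addition to $F_N$. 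So the containment $\ker\phi\subseteq(u^2)$, equivalently the structure of $A(N)^\new$ (\cref{anewstructure}), is exactly what remains unproved; in the paper it is a \emph{consequence} of \cref{pfprop}, not an input.

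The missing idea is the paper's mechanism for separating $(zu)$ from $(zu^2)$. The paper first gets $\ker\beta\subseteq(z)\cap(u)=(zu)$ from the two isomorphisms $A(N)^{\pf,\old}\cong\FF_2\lb y,u\rb$ (\cref{powerlem}) and $A(N)^\elleng\cong\FF_2\lb y,z\rb$ (\cref{Avnewstruct}), and then shows a putative kernel element $zut$ with $t\notin(u)$ cannot die: since $\beta(zt)\neq 0$ in $A(N)^\elleng$, faithfulness \eqref{avnewfaith} gives $f\in K(N)^\elleng$ with $\beta(zt)f\neq 0$, and \cref{shaunakveryclever} (every form in $K(N)^\elleng$ is $U_N'$ of some $g\in K(N)$, proved via the $(\ZZ/8\ZZ)^\times$-grading) yields $\beta(zut)g\neq0$. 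Your proposal contains no substitute for this step. As a secondary point, your second containment also invokes an unproved freeness of $A(N)^{\pf,\old}$ over $A(1)$; that endpoint ($A(N)^{\pf,\old}\cong\FF_2\lb Y,u\rb$) is true, but the paper proves it more robustly via finiteness over $A(1)$, the cotangent computation killing $F_N$ by \eqref{fun2}, and regularity, rather than by a freeness claim.
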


The proof has a number of steps. We first determine the structure of $A(N)^{\pf,\old}$. Recall from \cref{pfoldnew} that $M(N)^\old = M(1) +W_NM(1)$ is a subspace of $M(N)$  stable under the action of both $A(N)$ and $U_N$, and $A(N)^{\pf,\old}$ is the largest quotient of $A(N)^\pf$ {acting faithfully on $M(N)^\old$}. 
\begin{mylemma}
\label{powerlem}
For any prime $N \equiv 3,5 \cmod{8}$ we have $A(N)_\one^{\pf,\old} \simeq \FF_2 \lb U_N' , Y \rb$.\\
 More precisely, given any $g_{-N}^\pm$, the map  $\FF_2 \lb u,y \rb \to A(N)_\one^{\pf, \old}$ given by $u \mapsto U_N'$ and $y \mapsto \tau(g_{-N}^\pm)$ is an isomorphism. In particular, 
 $$A(3)^{\pf, \old} \cong \FF_2\lb U_3', T_{13}\rb \qquad \mbox{ and } \qquad A(5)^{\pf, \old} \cong \FF_2 \lb U_5', T_{11} \rb.$$
\end{mylemma}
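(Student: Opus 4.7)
The plan is to realize $A(N)_\one^{\pf,\old}$ as a 2-dimensional complete regular local $\FF_2$-algebra topologically generated by $U_N'$ and $Y$, and then identify it with $\FF_2\lb u,y\rb$. The starting point is that $A(N)_\one^{\pf,\old}$ is the subring of $\End_{\FF_2}(M(N)_\one^\old)$ generated by the image of $A(N)_\one$ together with $U_N$. Since the image of the shallow Hecke algebra on $M(N)_\one^\old$ is exactly $A(N)_\one^\old = A(1)$, and since $U_N$ satisfies the monic quadratic $U_N^2 + T_N U_N + 1 = 0$ (namely, the image in $A(N)_\one^{\pf,\old}$ of the characteristic polynomial relation \eqref{Pmod2} from \cref{uellpoly}, noting that $F_N$ projects to $T_N$ under $A(N)_\one \onto A(1)$), we see that $A(N)_\one^{\pf,\old}$ is generated as an $A(1)$-module by $\{1, U_N\}$. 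In particular it is a finite $A(1)$-algebra, so by \cref{level1} it has Krull dimension $2$.

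Next I would eliminate $T_N$ in favor of $U_N'$. Substituting $U_N = U_N' + 1$ in the quadratic above and simplifying in characteristic $2$ gives $(U_N')^2 = T_N(1 + U_N')$, exactly \eqref{fun2}. Since $U_N'$ lies in the maximal ideal, $1 + U_N'$ is a unit, so $T_N = (U_N')^2(1+U_N')^{-1}$ lies in the closed $\FF_2$-subalgebra generated by $U_N'$. On the other hand, \cref{level1} --- applied with $i := N \bmod 8 \in \{3,5\}$, $g_i := \Frob_N$ (so that $\tau^\hecke(g_i) = T_N$), and $g_{-i} := g_{-N}^\pm$ arbitrary --- shows that $A(1)$ is topologically generated by $T_N$ and $Y := \tau^\hecke(g_{-N}^\pm)$. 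Combining the two observations, $A(N)_\one^{\pf,\old}$ is topologically generated by $U_N'$ and $Y$, yielding a continuous surjection $\alpha \colon \FF_2\lb u, y\rb \onto A(N)_\one^{\pf,\old}$ with $u \mapsto U_N'$, $y \mapsto Y$.

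For injectivity, $\alpha$ forces the tangent dimension of $A(N)_\one^{\pf,\old}$ to be at most $2$; together with Krull dimension $2$, this makes $A(N)_\one^{\pf,\old}$ a 2-dimensional complete regular local ring. The kernel of $\alpha$ is therefore a prime ideal of height $\dim \FF_2\lb u,y\rb - \dim A(N)_\one^{\pf,\old} = 0$ in the integral domain $\FF_2\lb u,y\rb$, hence vanishes, and $\alpha$ is an isomorphism. The explicit statements for $N = 3, 5$ follow by taking $Y = T_{13}$ and $Y = T_{11}$, respectively, since $\Frob_{13}$ and $\Frob_{11}$ lie in the appropriate opposite-grading cosets. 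The main subtlety throughout is ensuring that $T_N$ itself --- rather than some other Hecke operator of the same grading --- can serve in a minimal topological generating set of $A(1)$; this is handled uniformly by the freedom in choosing $g_i$ within the coset $G^i_{\QQ, 2}$ afforded by \cref{level1}.
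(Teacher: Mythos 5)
Your proposal is correct and follows essentially the same route as the paper's proof: finiteness of $A(N)_\one^{\pf,\old}$ over $A(1)$ gives Krull dimension $2$, the relation \eqref{fun2} eliminates $F_N$ (the paper says it dies in the cotangent space, you write $T_N=(U_N')^2(1+U_N')^{-1}$, which is the same point), \cref{level1} with $N\equiv 3,5\pmod 8$ supplies $T_N$ and $Y$ as generators of $A(1)$, and the resulting surjection $\FF_2\lb u,y\rb\onto A(N)_\one^{\pf,\old}$ between rings of equal dimension $2$ is forced to be an isomorphism. The only cosmetic difference is that you spell out the regularity/prime-kernel step that the paper leaves implicit.
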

\begin{proof}
Recall that the Hecke algebra $A(N)$ sits inside $A(N)^\pf$ as a closed subalgebra, with the latter finite over the former (\cref{Apfsec,apffinite}). Moreover, the image of {$A(N)_{\one}$} under the natural surjection $A(N)_\one^\pf \onto A(N)_\one^{\pf, \old}$ is $A(N)_\one^\old \cong A(1)$. Therefore $A(N)_\one^{\pf, \old}$ is finite over $A(1)$, so $\dim A(N)_\one^{\pf, \old} = \dim A(1) = 2$ (\cref{level1}). We now look at generators closely: the maximal ideal of~$A(1)$ is generated by the images $F_N$ and $Y$ of $\tau(\frob_N)$ and $\tau(g_{-N}^\pm)$, respectively.
Therefore, the finiteness of~$A(N)_\one^{\pf, \old}$ over $A(1)$ implies that the maximal ideal of $A(N)_\one^{\pf,\old}$ is generated by $F_N$, $Y$, and~$U_N' = U_N +1$. Since, by \eqref{fun2}, the image of $F_N$ in the cotangent space of $A(N)_\one^{\pf,\old}$ is $0$, the maximal ideal of~$A(N)_\one^{\pf, \old}$ is generated by $Y$ and $U_N'$. As $\dim A(N)_\one^{\pf, \old} = 2$, this is a minimal generating set, and the map
$$\FF_2 \lb y, u \rb \to A(N)^{\pf, \old}_\one$$
given by $y\mapsto Y = {\text{image of }\tau(g^{\pm}_{-N})}$ and $u \mapsto U_N'$ is an isomorphism. 
\end{proof}

\begin{mylemma}
\label{partlem}\label{genlem}
For $N = 3,5$ we have $\tandim A(N)^\pf =3$. The maximal ideal is generated by $U_N'$ and any choice of ${\tau}(g_{-N}^{\pm})$ and ${\tau}(g_7^-)$. 
For example, the maximal ideal of $A(3)^\pf$ is generated by $T_{13}$, $T_7$,  and $U_3'$, and the maximal ideal of $A(5)^\pf$ is generated by $T_{11}$, $T_7$, and $U_5'$.
\end{mylemma}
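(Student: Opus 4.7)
The plan is to combine the upper bound from \cref{pflem} with a lower bound deduced from the two known quotients of $A(N)^\pf$ whose structures were already determined. Let me write $\mm^\pf = \mm(N)_\one^\pf$.

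For the upper bound and the generation claim, \cref{pflem} already shows that for every prime $N \equiv 3, 5 \pmod{8}$ (which covers both $N = 3$ and $N = 5$), the tangent dimension is at most $3$ and $\mm^\pf$ is generated by $U_N'$, $\tau(g_7^-)$, and $\tau(g_{-N}^\pm)$. The specific choices $T_{13}, T_7, U_3'$ for $N=3$ and $T_{11}, T_7, U_5'$ for $N=5$ are instances via the Frobenius identifications in the proofs of \cref{gen3,gen5}.

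To establish the lower bound $\tandim A(N)^\pf \geq 3$, I would use the two surjections
\[
A(N)^\pf \twoheadrightarrow A(N)^{\pf,\old} \qquad \text{and} \qquad A(N)^\pf \twoheadrightarrow A(N)^\elleng.
\]
By \cref{powerlem}, the first target is isomorphic to $\FF_2\lb U_N', Y\rb$ with $Y := \tau(g_{-N}^\pm)$, so $\{U_N', Y\}$ maps to a basis of its cotangent space. By \cref{Avnewstruct}, the second target is isomorphic to $\FF_2 \lb Y, Z\rb$ with $Z := \tau(g_7^-)$, so $\{Y, Z\}$ maps to a basis of its cotangent space, while $U_N'$ maps to $0$ there (since $A(N)^\elleng = A(N)^\pf/(U_N')$ by \eqref{avnewpf}).

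Now suppose $a U_N' + bY + cZ \in (\mm^\pf)^2$ for some $a,b,c \in \FF_2$. Projecting to $A(N)^\elleng$ gives $bY + cZ \in (\mm^\elleng)^2$, whence $b = c = 0$ by the regularity of $A(N)^\elleng$. Projecting the remaining relation $a U_N' \in (\mm^\pf)^2$ to $A(N)^{\pf,\old}$ and using the freeness of $\FF_2\lb U_N', Y\rb$ gives $a = 0$. Hence $\{U_N', Y, Z\}$ is linearly independent in $\mm^\pf/(\mm^\pf)^2$, so $\tandim A(N)^\pf \geq 3$, completing the proof.

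There is no serious obstacle here: once the structure results \cref{powerlem} and \cref{Avnewstruct} are in place, the argument is pure linear algebra using the two projections. The only conceptual point is that the kernels of the two projections intersect nontrivially in $\mm^\pf$ (they kill complementary pieces of the cotangent space), but this is immediate from the explicit quotient descriptions.
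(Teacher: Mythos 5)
Your proof is correct, and it takes a genuinely different route from the paper's. The paper establishes the lower bound $\tandim A(N)^\pf \geq 3$ indirectly: it observes from \cref{powerlem} that $\dim A(N)^\pf \geq 2$, so $\tandim \geq 2$, and then rules out equality by contradiction --- if $\tandim A(N)^\pf = 2$ then $A(N)^\pf$ would be regular local of dimension $2$, hence isomorphic to its quotient $A(N)^{\pf,\old}$, forcing the annihilator of $M(N)^\old$ in $A(N)$ to vanish, which contradicts the fact that $A(N)$ has two distinct minimal primes (\cref{minprimes}). You instead run a direct linear-algebra argument on the cotangent space, projecting a putative relation $aU_N' + bY + cZ \in (\mm^\pf)^2$ to the two regular quotients $A(N)^\elleng \cong \FF_2\lb Y, Z\rb$ (\cref{Avnewstruct}) and $A(N)^{\pf,\old} \cong \FF_2\lb U_N', Y\rb$ (\cref{powerlem}), killing $b, c$ in the first and $a$ in the second. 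Your route is cleaner and more self-contained in that it avoids the detour through Krull dimension, regularity, and \cref{minprimes}; it does explicitly invoke the structure of $A(N)^\elleng$, but since that and \cref{minprimes} both rest on the nilpotence result \cref{nilpy}, the two proofs ultimately carry the same hard input. One small wording quibble: ``freeness of $\FF_2\lb U_N', Y\rb$'' is not quite the right phrase --- what you actually use is that $U_N'$ is part of a regular system of parameters and hence nonzero in the cotangent space of $A(N)^{\pf,\old}$ --- but the mathematics is sound.
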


\begin{proof}
On one hand, $\tandim A(N)^\pf \leq 3$ (\cref{pflem}). On the other hand, \cref{powerlem} implies that $\dim A(N)^\pf \geq 2$, so that $\tandim A(N)^\pf \geq 2$ as well. If $\tandim A(N)^\pf =2$, then $A(N)^\pf$ is a regular local ring of dimension $2$ and hence isomorphic to its quotient $A(N)^{\pf, \old}$. But that would imply that the annihilator of $M(N)^\old$ in $A(N)^\pf$ is trivial, which would mean that the same is true for the annihilator $\pp_\old$ of $M(N)^\old$ in $A(N)$, which in turn would mean that $A(N)\simeq A(1)$. But that is absurd --- for example, \cref{minprimes} shows that $A(N)$ has two {distinct} minimal primes, so that $\pp_\old \neq (0)$. 
The precise {statement about} generators follows from the analysis in \cref{pflem}.
\end{proof}

We are now ready to prove the structure theorem for $A(N)^\pf$ (\cref{pfprop}).

\begin{proof}[Proof of \cref{pfprop}]
From \cref{genlem} we know that $Y = {\tau}(g_{-N}^\pm)$, $Z = {\tau(ci)}$ and $U_N'$ generate the maximal ideal of $A(N)^\pf$. Let $\beta: \FF_2 \lb y, z, u \rb \onto A(N)^\pf$ be the map sending $y \mapsto Y$, $z \mapsto Z$, and $u \mapsto U_N'$.

\underline{{\bf Claim 1:} $\ker \beta \subseteq (zu)$:} {Note that $\tau^\hecke_{2, 1}(ci) = \tau^\hecke_{2, 1}(c)=0$ for any complex conjugation $c$ and $i \in I_N$. Here $\tau^\hecke_{2,1} : G_{\QQ,2} \to A(1)$ is the level-one modular pseudorepresentation.}\label{there}
Hence, $\beta(z)$ annihilates $M(N)^\old$.
Consider the natural surjections $A(N)^\pf \onto A(N)^{\pf, \old}$ and $A(N)^\pf \onto A(N)^\elleng$. Precomposed with $\beta$ and the isomorphism from \cref{powerlem}, the former becomes the quotient map $\FF_2\lb y, z, u \rb \onto \FF_2 \lb y, u\rb$, with kernel~$(z)$.
Indeed, from the observation above, we see that $z$ is in the kernel and \cref{powerlem} then implies that the kernel is~$(z)$. 
Similarly, precomposed with $\beta$ and the isomorphism from \cref{Aellengred} the latter becomes the quotient map $\FF_2\lb y, z, u \rb \onto \FF_2 \lb y, z\rb$, with kernel $(u)$. 
\begin{equation}\label{Apfstructure}
\begin{tikzpicture}
\node(POL)	{$\FF_2 \lb y, z, u \rb$};
\node(Apf)[right = 2cm of POL]{$A(N)^\pf$};
\node(Apf1)[above right = 0.3cm and 1.5cm  of Apf]{$A(N)^{\pf, \old} $};
\node(Anew)[below right =  0.3cm and 1.5cm of Apf]{$A(N)^\elleng$};
\node(POLApf1)[left =10pt of Apf1]{$\FF_2\lb y, u \rb$};
\node(POLAnew)[left = 10pt of Anew]{$\FF_2\lb y, z \rb$};
\draw(POL)[->>] -- (Apf) node[midway, above = -0.1cm] {$\scriptstyle \beta$};
\draw(Apf)[->>] -- (Apf1);
\draw(Apf)[->>] -- (Anew) node[pos = 0.9, above] {$\scriptstyle \ker = (U_N')$};
\draw(POL)[->>] -- (POLApf1) node[midway, above] {$\scriptstyle \ker = (z)$};
\draw(POL)[->>] -- (POLAnew) node[midway, below] {$\scriptstyle \ker = (u)$};
\draw(POLApf1)[->] -- (Apf1) node[midway, above = -0.1cm]{$\sim$};
\draw(POLAnew)[->] -- (Anew) node[midway, above = -0.1cm]{$\sim$};
\end{tikzpicture}
\end{equation}
Since both the maps $\FF_2\lb y,z,u\rb\onto A(N)^{\pf,\old}$ and $\FF_2\lb y,z,u\rb \onto A(N)^{\elleng}$ factor through $\beta$, its kernel $\ker(\beta)$ is contained in $(z) \cap (u) = (zu).$

\underline{{\bf Claim 2:} $\ker \beta \supseteq (zu^2)$:} Since $\beta(u^2) M(N) = (U_N')^2 M(N) \subseteq M(N)^\old$ (\cref{s2killsnew}) and $\beta(z)$ annihilates $M(N)^\old$, the operator $\beta(zu^2)$ annihilates $M(N)$. 

\underline{{\bf Claim 3:} $\ker \beta = (zu^2)$:} By Claim 1 above, any element of $\ker \beta$ is of the form $zut$ for some $t \in \FF_2 \lb y, z, u\rb$. Suppose $t \not\in (u)$. Then $\beta(zt)$ does not annihilate $M(N)^\elleng$, so that {by \eqref{avnewfaith} there is an $f \in K(N)^\elleng$ with $\beta(zt) (f) \neq 0$.} By \cref{shaunakveryclever} below, $f$ is in the image of~$U_N'$. This means that if $U_N'(g) = f$ for some~$g \in K(N)$, then $\beta(zut) (g) \neq 0$. Hence $zut \not\in \ker \beta$. 
\end{proof}

To complete the argument, we show that every very new form in $\ker U_2$ is in the image of $U_N'$:
\begin{mylemma}\label{shaunakveryclever}\ \\
Let $f$ be in $K(N)^\elleng$ for a prime $N \not \equiv 1 \cmod{8}$. There exists $g \in K(N)$ with $U_N' (g) = f$. 
\end{mylemma}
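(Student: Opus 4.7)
The plan is to construct $g$ explicitly from $f$ by projecting onto one coset of $\langle N\rangle\subseteq\frat_2$ in the $\frat_2$-grading on $K(N,\FF_2)$.

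First, invoke \cref{gradingthm}\eqref{formgrade} to decompose $f=\sum_{i\in\frat_2} f^i$ with $f^i\in K(N,\FF_2)^i$; the crucial point is that each $f^i$ is itself a mod-$2$ form in $K(N)$, not merely a $q$-expansion projection. By \cref{heckegrade}, the operator $U_N$ maps $K(N)^i$ into $K(N)^{Ni}$. Since every element of $\frat_2$ has order dividing $2$, so $N^2\equiv 1\pmod 8$, the hypothesis $U_N' f=0$ (that is, $U_N f=f$), upon comparing graded components, forces the relations $U_N f^i = f^{Ni}$ for every $i\in\frat_2$.

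Now use the hypothesis $N \not\equiv 1\pmod 8$: the subgroup $\langle N\rangle\subseteq\frat_2$ has order $2$, so its two cosets partition $\frat_2$. Pick a set $S$ of coset representatives, so $|S|=2$ and $\frat_2 = S\sqcup NS$, and set $g:=\sum_{i\in S} f^i \in K(N,\FF_2)$. Then
\begin{equation*}
U_N'\, g \;=\; \sum_{i\in S}\bigl(U_N f^i + f^i\bigr) \;=\; \sum_{i\in S} f^{Ni} + \sum_{i\in S} f^i \;=\; \sum_{j\in NS} f^j + \sum_{j\in S} f^j \;=\; \sum_{j\in\frat_2} f^j \;=\; f,
\end{equation*}
which is the desired equality.

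The substantive input here is \cref{gradingthm}, which provides a genuine decomposition of $f$ into modular forms, together with the graded-ness of $U_N$ from \cref{heckegrade}; everything else is bookkeeping in $\frat_2$. The hypothesis $N\not\equiv 1\pmod 8$ is essential for this particular construction: when $N\equiv 1\pmod 8$, the subgroup $\langle N\rangle$ is trivial and $U_N$ preserves the grading, so $U_N f^i=f^i$ for each $i$ and the analogous sum yields $U_N' g = 2\sum_{i\in S}f^i = 0 \ne f$. Recovering the statement in that excluded case (if true) would require a genuinely different argument.
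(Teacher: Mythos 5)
Your argument is correct and is essentially the paper's own proof: the paper likewise decomposes $f = f_1 + f_3 + f_5 + f_7$ via \cref{gradingthm}, uses that $U_N$ is $N$-graded to get $U_N f_i = f_{Ni}$, and takes $g = f_1 + f_i$ for $i \not\equiv 1, N \pmod 8$, which is exactly your sum over a set of coset representatives of $\langle N\rangle$ in $\frat_2$.
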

\begin{proof}
Write $f = f_1 + f_3 + f_5 + f_7$, with $f_i \in K(N)$ as in \cref{gradingthm}. Since $U_N$ is an $N$-graded operator and $f \in K(N)^\elleng = \ker (U_N + 1)$, we have $U_N f = f$, so that $U_N f_i$ must equal $f_{Ni}$.
For $i \not\equiv 1, N \bmod 8$, let~$f_{1, i} := f_1 + f_i$. Then $U_N' (f_{1, i}) = f$. 
\end{proof}

We deduce the structure of $A(N)^\new$. Write $\tau^\new$ for the composition $G_{\QQ, 2N} \stackrel{\tau^\hecke }{\longrightarrow} A(N) \onto A(N)^\new$. 
\begin{mycor}\label{anewstructure}
Let $N = 3, 5$, and let $Y = \tau^\new(g_{-N}^\pm)$ and {$Z = \tau^\new(ci)$} for any choice of $g_{-N}^\pm$, {complex conjugation} 
$c$, and $i \in I_N$. 
Then the following map is an isomorphism: 
$$\frac{\FF_2\lb y, z, u \rb}{(u^2)} \onto A(N)^\new \qquad \mbox{defined by} \qquad { u \mapsto {U_N'}, \qquad y \mapsto Y \qquad z \mapsto Z}.$$
{If $N = 3$ then we may take  $(Z, Y)=(T_{13}, T_7)$; if $N = 5$  then $(Z, Y)$ may be 
$(T_{11}, T_7)$.} 
\end{mycor}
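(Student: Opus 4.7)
The plan is to deduce this corollary directly from Theorem~\ref{pfprop} combined with the quotient description $A(N)^\new = A(N)^\pf/(U_N')^2$ recorded in~\eqref{anewpf}.

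First, I would invoke Theorem~\ref{pfprop} to obtain the isomorphism $\beta: \FF_2\lb y,z,u\rb/(zu^2) \toiso A(N)^\pf$ sending $u \mapsto U_N'$, $y \mapsto \tau^\pf(g_{-N}^\pm)$, and $z \mapsto \tau^\pf(ci)$. Next, using~\eqref{anewpf} I would compose $\beta$ with the surjection $A(N)^\pf \onto A(N)^\pf/(U_N')^2 = A(N)^\new$. The kernel of this composition, viewed inside $\FF_2\lb y,z,u\rb$, is the ideal $(zu^2, u^2)$, which equals $(u^2)$ since $zu^2 = z \cdot u^2$. Thus $\beta$ descends to an isomorphism
\[
\frac{\FF_2\lb y,z,u\rb}{(u^2)} \toiso A(N)^\new.
\]

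The only thing to verify is that the generators on the left map to the asserted elements on the right. This is immediate from the construction of $\tau^\new$ as the composition $G_{\QQ, 2N} \stackrel{\tau^\hecke}{\longrightarrow} A(N) \onto A(N)^\new$ together with the compatibility of $\tau^\pf$ and $\tau^\new$ under the quotient $A(N)^\pf \onto A(N)^\new$: the image of $\tau^\pf(g_{-N}^\pm)$ in $A(N)^\new$ is $\tau^\new(g_{-N}^\pm)$ and similarly for $\tau^\pf(ci)$. The explicit choices $(Z,Y) = (T_{13},T_7)$ for $N=3$ and $(Z,Y)=(T_{11},T_7)$ for $N=5$ transfer from Theorem~\ref{pfprop} without modification.

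There is no real obstacle here; the corollary is a formal consequence of the partially-full structure theorem and the relation $(U_N')^2 = 0$ that defines $A(N)^\new$ inside $A(N)^\pf$. The only mild point of care is noticing that the ideal $(zu^2) + (u^2)$ in the free power series ring collapses to $(u^2)$, which makes the $zu^2$ relation redundant after passing to the newform quotient.
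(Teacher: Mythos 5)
Your argument is correct and is exactly the paper's proof, which simply cites \cref{pfprop} together with \eqref{anewpf}; you have merely spelled out the routine verification that the ideal $(zu^2)+(u^2)$ collapses to $(u^2)$ and that the generators match under the quotient $A(N)^\pf \onto A(N)^\new$. Nothing further is needed.
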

\begin{proof}
\cref{pfprop} and \eqref{anewpf}. 
\end{proof}

\section{Structure of $A(N, \FF_2)$ for $N = 3, 5$}
Finally, we determine the structure of $A(N)$ for $N = 3, 5$. In other words, we prove \cref{B}.

\begin{mythm}
\label{Astructurethm}
Let $N = 3$ or $5$. Choose any $Y = {\tau^\hecke}(g_{-N}^\pm)$, 
and $Z = \tau^\hecke(ci)$ for any choice of 
$c$ and any $i \in I_N$. Recall that $F_N = {\tau^\hecke}(\frob_N)$. Then the map 
$$\frac{\FF_2 \lb x, y, z, w \rb} {(xz, xw, (z + w)^2 )} \onto A(N) \qquad \mbox{induced by} \quad { x \mapsto F_N, \quad  y \mapsto Y, \quad  z \mapsto Z,\quad  w \mapsto U_N Z}$$
is an isomorphism of $(\ZZ/8\ZZ)^\times$-graded algebras. Here the grading on $\frac{\FF_2 \lb x, y, z, w \rb} {(xz, xw, (z + w)^2)}$ is defined by $x$ having grading $N$, $y$ and $w$ having grading $-N$, and $z$ having grading $7$. 
\end{mythm}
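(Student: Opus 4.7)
The plan is to realize $A(N)$ as a subring of $A(N)^\pf$ via the inclusion from \cref{Apfsec} and exploit the explicit presentation of $A(N)^\pf$ from \cref{pfprop} to determine both the relations among $F_N, Y, Z, W$ and the absence of further ones. Set $S := \FF_2\lb x, y, z, w\rb/(xz, xw, (z+w)^2)$. Under the isomorphism $A(N)^\pf \cong \FF_2\lb y, z, u\rb/(zu^2)$ with $u = U_N' = U_N + 1$, identity \eqref{fun2} gives $F_N = u^2(1+u)^{-1}$, while $Z = z$ and $W = U_N Z = (1+u)z$. Using $u^2 z = 0$, a direct check verifies $F_N Z = 0$, $F_N W = 0$, and $(Z+W)^2 = (uz)^2 = 0$, so the assignment $x \mapsto F_N$, $y \mapsto Y$, $z \mapsto Z$, $w \mapsto W$ extends to a well-defined map $\phi: S \to A(N)$.

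For surjectivity I will invoke \cref{genN}, which exhibits a cotangent-space basis $\tau(g_N^\pm), \tau(g_{-N}^+), \tau(g_{-N}^-), \tau(g_7^-)$. Three of these are immediately $F_N, Y, Z$, and I will expand $W = U_N Z = \tau(c\frob_N) + \tau(ci\frob_N)$ via \eqref{funcp}, locating the two summands in $G_{-N}^+$ and $G_{-N}^-$ respectively using \cref{galoissplitDI}. Hence $\{F_N, Y, Z, W\}$ spans $\mm(N)/\mm(N)^2$, and topological Nakayama in the complete local noetherian ring $A(N)$ gives surjectivity.

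The main obstacle is injectivity. I will compose $\phi$ with the embedding $A(N) \into A(N)^\pf \cong \FF_2\lb y, z, u\rb/(zu^2)$ and check $\FF_2$-linear independence on the natural monomial basis of $S$: $y^a$, $y^a x^d$ for $d \geq 1$, $y^a z^b$ for $b \geq 1$, and $y^a z^b w$ for $b \geq 0$. These map to $y^a$, $y^a F_N^d$, $y^a z^b$, and $y^a z^{b+1}(1+u)$ respectively. Separating any alleged relation by $z$-degree, the $z^0$-component lies in the subring $\FF_2\lb y, F_N\rb$ of $\FF_2\lb y, u\rb$, which is itself a two-variable formal power series ring since $F_N$ starts with $u^2$; this forces the $y^a$ and $y^a x^d$ coefficients to vanish. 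For each $b \geq 1$ the $z^b$-component lies in $\FF_2\lb y, u\rb/(u^2)$ (as $z^b u^2 = 0$), and its $u^0$ and $u^1$ parts separately pin down the coefficients of $y^a z^b$ and $y^a z^{b-1} w$. This bookkeeping is the technical crux.

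Finally, the $(\ZZ/8\ZZ)^\times$-grading is automatic from the construction. The generators $F_N, Y, Z, W$ sit in gradings $N, -N, 7, -N$ (the two summands of $W$ both project to $7N \equiv -N \pmod 8$ by \cref{galoissplitDI}), and the three relations $xz$, $xw$, and $z^2 + w^2$ are then homogeneous of gradings $-N$, $7$ (using $N^2 \equiv 1 \pmod 8$), and $1$ respectively. Hence $\phi$ is an isomorphism of $(\ZZ/8\ZZ)^\times$-graded $\FF_2$-algebras. The explicit identifications $(F_N, Y, Z) = (T_N, T_{-N}, T_7)$ for $N = 3, 5$ (here $T_{-N}$ abbreviates $T_{13}$ or $T_{11}$) follow from the tables in the proofs of \cref{gen3,gen5}.
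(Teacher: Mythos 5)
Your proof is correct and takes essentially the same route as the paper: surjectivity via the cotangent computation of \cref{genN} together with \eqref{funcp}, and injectivity by embedding $A(N)$ into $A(N)^\pf \cong \FF_2\lb y,z,u\rb/(zu^2)$ (\cref{pfprop}) and using $F_N=(U_N')^2(1+U_N')^{-1}$ and $U_N=1+U_N'$, your monomial/$z$-degree linear-independence bookkeeping being the same kernel computation the paper performs by reducing to the normal form $a(x,y)+(w+z)\,b(y,z)+c(y,z)$ and computing the preimage of $(z_1u_1^2)$. One small caveat: your closing identification of $F_N$ with $T_N$ and of $Z=\tau^\hecke(ci)$ with $T_7$ is not literal (the operator at $N$ is $U_N$, which lies in $A(N)^\pf$ rather than $A(N)$, and $\tau^\hecke(ci)$ agrees with $T_7$ only in the cotangent space), but this remark is neither needed for nor part of the stated theorem.
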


\begin{proof} {Denote $\tau^\hecke$ by $\tau$ throughout the proof.}
From \cref{genN}, $A(N)$ is topologically generated by~$F_N$,~$Y$,~$Z$, and any 
$Y' = \tau(g_{-N}^{-\eps})$ chosen so that $Y = \tau(g_{-N}^\eps)$. 
First we adjust these generators slightly. Recall that \eqref{funcp} gives us that, for any $i \in I_N$, 
$$U_NZ=U_N \,\tau(c i) = \tau(c\,i\, \frob_N) - \tau(c\,\frob_N);$$ 
moreover, the image of the set $\{Y,Y'\}$ in $\mm(N)/\mm(N)^2$ is the same as the image of the set 
$\{\tau(c\,i\, \frob_N), \tau(c\,\frob_N)\}$.
 Therefore we can replace $Y'$ by $U_N Z$ in the generating set. 

Now let ${\FF_2 \lb x, y, z, w \rb}$ be an abstract power series ring endowed with the grading as described in the statement of \cref{Astructurethm}. By the discussion above, the map $\gamma$ defined by sending $x \mapsto F_N$, $y \mapsto Y$, $z \mapsto Z$, and~$w \mapsto {U_N Z}$ gives us a surjection of $(\ZZ/8\ZZ)^\times$-graded $\FF_2$-algebras $\gamma: {\FF_2 \lb x, y, z, w \rb} \onto A(N)$. We now view $A(N)$ as a subalgebra of $A(N)^\pf \simeq \FF_2 \lb y_1, z_1, u_1 \rb/(z_1u_1^2)$ via $\delta$, the inverse of the map $\beta$ from \cref{pfprop}, in order to understand $\ker \gamma$: 
$$\FF_2 \lb x, y, z, w \rb \stackrel{\gamma}\onto A(N) \subset A(N)^\pf \stackrel{\delta}\toiso \FF_2 \lb y_1, z_1, u_1 \rb/(z_1u_1^2).$$
Here the $y$ and the $z$ of the first ring 
 map to the image of {$y_1$ and $z_1$}, respectively, of 
 the last.
 
We show that $(w + z)^2$, $xz$ and $xw$ are all in $\ker \gamma = \ker \delta \circ \gamma$. The first is simple: $\delta\circ\gamma(w + z) = \delta(U_N Z + Z) = {{u_1}{z_1}}$, so that $\delta\circ\gamma\big((w + z)^2\big) = 0$ and $(w + z)^2 \in \ker \gamma$. For the second and third, recall that $F_N = F_N U_N' + (U_N')^2$~\eqref{fun2}, so that 
$\delta\circ\gamma(x(w + z)) = \delta(F_N U_N' Z) = 0$, as $F_N$ is a multiple of $U_N'$ and $(U_N')^2 Z$ is in $\ker \delta$. Moreover~$\delta \circ \gamma (xz) = \delta(F_N Z) = \delta( F_N U_N' Z + (U_N')^2 Z) = 0.$ Therefore both $xw$ and $xz$ are in $\ker \gamma$. 

Finally, we claim that $\ker \gamma = \big((w + z)^2, xw, xz\big)$. For simplicity, reparametrize by letting $w_0 = w + z$ (note that this is not a graded parameter!) and {rephrase the claim in the following way: consider the map} $$\eta : \FF_2 \lb x, y, z, w_0 \rb \to \FF_2\lb y_1, z_1, u_1 \rb$$ sending $x \mapsto u_1^2/(1 + u_1)$, $y \mapsto y_1$, $z \mapsto z_1$, and $w_0 \mapsto z_1 u_1$. Then our claim $\ker \gamma = \big((w + z)^2, xw, xz\big)$ is equivalent to the claim $\eta\big((w_0^2, xw_0, xz)\big) = (z_1 u_1^2)$.
 Note that we have shown that $\eta\big((w_0^2, xw_0, xz)\big) \subset (z_1 u_1^2)$; our goal is to show that there is nothing else in $\eta^{-1}\big( (z_1 u_1^2)\big)$. Observe that any $f$ in $\FF_2\lb x, y, z, w_0\rb$ is equivalent modulo $\big(w_0^2, xw_0, xz\big)$ to a power series of the form $g = a(x, y) + w_0 b(y, z) + c(y, z)$ for some $a \in \FF_2 \lb x, y \rb$ and~$b, c \in \FF_2 \lb y, z \rb$. Then $\eta(g) = a\big({u_1^2}/{(1 + u_1)}, y_1\big) + z_1 u_1 b(y_1, z_1) + c(y_1, z_1)$. By inspection it is clear that~$\eta(g)$ is a multiple of $z_1 u_1^2$ only if $g = 0$. Therefore $\eta^{-1}\big( (z_1 u^2)\big)= (w_0^2, xw_0, xz)$, as claimed. Returning to our original graded parametrization, we have shown that $\ker \gamma = \big((w + z)^2, xw, xz\big)$. Note that this is a graded ideal, as expected. 
\end{proof} 

Note that the element $z + w$ in $\frac{\FF_2 \lb x, y, z, w \rb} {(xz, xw, (z + w)^2 )}$ is nilpotent, and the reduced quotient $$\left(\frac{\FF_2 \lb x, y, z, w \rb} {\big(xz, xw, (z + w)^2 \big)}\right)^{\!\!\red} \simeq \frac{\FF_2 \lb x, y, z\rb}{(xz)}$$ matches the results of \cref{mainthm}. 

\section{An $R=\mathbb{T}$ theorem for $A(N, \FF_2)_\one^\pf$}\label{rtpfsec}
In this section we prove an $R=\mathbb{T}$ theorem for the partially full Hecke algebra $A(N)^{\pf} = A(N, \FF_2)_\one^\pf$ for~$N=3,5$. To construct a deformation ring surjecting onto $A(N)^\pf$ we interpolate the deformation conditions of Wake--Wang-Erickson \cite{WWE} and Calegari--Specter \cite[arxiv source file]{CalegariSpecter}.

Let $U$ be a formal variable, and let $\RRR(N)_\one^\pf := \RRR(N,\FF_2)_{\one}^\pf$ be the quotient of the polynomial ring~$\RRR(N)_{\one}[U]$ by the closed ideal $J$ generated by the following elements:
\begin{enumerate}[itemsep = 5pt, listparindent = 0pt, parsep = 2pt]
\item\label{cond1} 
$U^2-\tau^{\univ}(\Frob_N)\,U+1$,
\item\label{cond2} 
$\tau^{\univ}(g\Frob_N i)-\tau^{\univ}(g \Frob_N) - U\big(\tau^{\univ}(gi)-\tau^{\univ}(g)\big),
\qquad  \mbox{for $g \in G_{\QQ, Np}$ and $i \in I_N$};$
\item\label{cond3} 
$\tau^{\univ}(g\Frob_N i)-\tau^{\univ}(g \Frob_N) - U^{-1}\big(\tau^{\univ}(gi)-\tau^{\univ}(g)\big), 
\qquad \mbox{for $g \in G_{\QQ, Np}$ and $i \in I_N$}.$

Here $U^{-1} := \tau^{\univ}(\Frob_N) - U$, as suggested by \eqref{cond1}. 
\end{enumerate}

Then 
$\RRR(N)^\pf_\one$ is a complete Noetherian local $\FF_2$-algebra with residue field $\FF_2$.
Moreover, given a profinite local $\FF_2$-algebra $B$, the morphisms $\RRR(N)^\pf_{\one} \to B$ correspond to tuples $(t,\alpha)$, where $t$ is a $B$-valued pseudorepresentation with constant determinant and level-$N$ shape deforming $\one$ and $\alpha \in B$ is an element satisfying the relations \eqref{cond1}--\eqref{cond3} given above. 

We are now ready to state and prove the main result of this section.
\begin{mythm}\label{rt}
For $N= 3, 5$, the tuple $(\tau^{\pf}, U_N)$ induces an isomorphism $$\RRR(N)^\pf_{\one} \simeq A(N)^{\pf}.$$
\end{mythm}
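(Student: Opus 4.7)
The plan is to construct a natural surjection $\eta\colon \RRR(N)^\pf_{\one} \to A(N)^{\pf}$ via the universal property of $\RRR(N)^\pf_{\one}$ and then prove it is injective by sandwiching it between two copies of $\FF_2\lb y,z,u\rb/(zu^2)$, invoking the structure theorem \cref{pfprop}.

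To construct $\eta$, I would verify that $(\tau^{\pf}, U_N)$ satisfies the three defining relations in $A(N)^\pf$. Relation~(1) is \cref{ulemma}, and relation~(2) is \cref{wwerk} (in characteristic~$2$ the factor $N$ equals $1$ since $N$ is odd). For relation~(3), combining \cref{ulemma} with \eqref{fun2} gives $U_N^2 + 1 = (U_N')^2$ in $A(N)^\pf$, while \cref{s2killsnew} shows $(U_N')^2 M(N) \subseteq M(N)^{\old}$, on which $\tau^{\hecke}(gi) - \tau^{\hecke}(g)$ vanishes for every $i \in I_N$ (since $\Gamma_0(1)$-modular representations are unramified at $N$); by faithfulness of the action of $A(N)^\pf$ on $M(N)$ this gives $(U_N^2 + 1)\bigl(\tau^{\hecke}(gi) - \tau^{\hecke}(g)\bigr) = 0$ in $A(N)^\pf$, which combined with relation~(2) yields relation~(3). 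Surjectivity of $\eta$ is then immediate: its image contains $\phi\bigl(\RRR(N)_{\one}\bigr) = A(N)_{\one}$ and $U_N$, which together topologically generate $A(N)^\pf$ by \cref{apffinite}.

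For injectivity, the key observation is that relations~(2) and~(3) together force
\[
(U')^{2}\bigl(\tau^{\univ}(gi) - \tau^{\univ}(g)\bigr) = 0 \quad \text{in } \RRR(N)^\pf_{\one} \text{ for all } g \in G_{\QQ,2N},\ i \in I_N,
\]
since together they imply $(U - U^{-1})\bigl(\tau^{\univ}(gi) - \tau^{\univ}(g)\bigr) = 0$, multiplication by $U$ yields $(U^{2} - 1)\bigl(\tau^{\univ}(gi) - \tau^{\univ}(g)\bigr) = 0$, and $U^{2} - 1 = (U + 1)^{2} = (U')^{2}$ in characteristic~$2$. Setting $g = c$ and using oddness $\tau^{\univ}(c) = 0$ yields the crucial identity $(U')^{2}Z = 0$, where $Z := \tau^{\univ}(ci)$ for any complex conjugation $c$ and $i \in I_N$. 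Next, I would bound $\tandim \RRR(N)^\pf_{\one} \leq 3$ by adapting the tangent-space analyses of \cref{chenny,tandim,pflem}: tangent vectors correspond to pairs $(b, \gamma)$ where $b\colon G_{\QQ,2N}/G_{\QQ,2N}^{2} \to \FF_2$ vanishes at $1$ and $c$, satisfies the level-$N$ shape condition, has $b(\Frob_N) = 0$ (from relation~(1) in dual numbers), and satisfies $b(g\Frob_N i) - b(g\Frob_N) = b(gi) - b(g)$ (from relations~(2) and~(3) in dual numbers), while $\gamma \in \FF_2$ parametrizes $U = 1 + \eps\gamma$; bookkeeping via \cref{galoissplitDI,decomplem} should yield exactly $3$ free parameters, matching $\tandim A(N)^{\pf} = 3$ (\cref{partlem}).

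Since $\eta$ is surjective and both source and target have cotangent dimension~$3$, a minimal generating set $\{Y, Z, U'\}$ of $\mm\bigl(A(N)^\pf\bigr)$, with $Y := \tau^{\univ}(g_{-N}^{\pm})$, lifts to a minimal generating set of $\mm\bigl(\RRR(N)^\pf_{\one}\bigr)$. Let $\pi\colon \FF_2\lb y, z, u \rb \onto \RRR(N)^\pf_{\one}$ be the associated surjection sending $y \mapsto Y$, $z \mapsto Z$, $u \mapsto U'$. By the identity above, $zu^2 \in \ker \pi$; meanwhile $\eta \circ \pi$ agrees with the isomorphism of \cref{pfprop}, whose kernel is exactly $(zu^2)$. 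The sandwich $(zu^2) \subseteq \ker \pi \subseteq \ker(\eta \circ \pi) = (zu^2)$ forces $\ker \pi = (zu^2)$, making both $\pi$ (on the quotient) and $\eta$ isomorphisms. The main obstacle will be the tangent-space bookkeeping; the identity $(U')^2 \bigl(\tau^{\univ}(gi) - \tau^{\univ}(g)\bigr) = 0$, which mirrors in $\RRR(N)^\pf_{\one}$ the fact that $(U_N')^2$ kills the new/old obstruction on modular forms, is the conceptual crux.
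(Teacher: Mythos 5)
Your proposal is correct and follows essentially the same route as the paper: verify relations (1)--(3) to obtain the surjection, extract the key identity $(U+1)^2\,\tau^{\univ}(ci)=0$ from (2) and (3), identify a three-element generating set of the maximal ideal of $\RRR(N)^\pf_\one$ lifting $\{Y, Z, U_N'\}$, and pin down the kernel by comparison with the presentation of \cref{pfprop}. The only minor variations are that you verify relation (3) from relation (2) via \cref{s2killsnew} and faithfulness rather than citing \cref{fourlemma}, and you bound $\tandim \RRR(N)^\pf_\one$ by a direct dual-numbers computation where the paper instead re-runs the cotangent-generator argument of \cref{pflem}; both variants are sound.
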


\begin{proof}
Write $\tau^\pf: G_{\QQ, 2N} \to A(N)^\pf$ for the modular pseudorepresentation $\tau^\hecke: G_{\QQ, 2N} \to A(N)$ composed with the natural inclusion $A(N) \into A(N)^\pf$. By \eqref{charpoly}, \cref{fourlemma}, and \cref{wwerk}, the element $U_N$ of $A(N)^\pf$ satisfies \eqref{cond1}--\eqref{cond3}, inducing the surjective morphism 
$$\varphi^\pf: \RRR(N)^\pf_\one \onto A(N)^\pf.$$
Conditions \eqref{cond1}-\eqref{cond3}, 
together with the proof of \cref{pflem}, implies that the cotangent space of $\RRR(N)^\pf_{\one}$ is spanned by the images of $U+1$, $\tau^{\univ}(ci)$ and $\tau^{\univ}(c\Frob_N)$.
Note that $$\varphi^\pf(X+1) =U_N',\quad  \varphi^\pf\big(\tau^{\univ}(ci)\big) = \tau^{\hecke}(ci), \quad \mbox{and} \quad \varphi^\pf\big(\tau^{\univ}(c\Frob_N)\big) = \tau^{\hecke}(c\Frob_N).$$
From \cref{pfprop}, we know that there exists a surjective map $$\beta : \FF_2\lb y,z,u \rb \to A(N)^{\pf}  \quad\text{ sending }\quad y \mapsto \tau^{\hecke}(c\Frob_N), \quad z \mapsto \tau^{\hecke}(ci), \quad u \mapsto U_N'.$$
Therefore, the surjective map
$$\beta' : \FF_2\lb y,z, u \rb \to \RRR(N)^\pf_{\one} \quad \text{ sending } \quad y \mapsto \tau^{\univ}(c\Frob_N), \quad z \mapsto \tau^{\univ}(ci), \quad u \mapsto U+1.$$
is a lift of $\beta$: that is, $\varphi^\pf \circ \beta' = \beta$. Combining conditions \eqref{cond2} and \eqref{cond3} 
for $g =c$, we obtain $U\tau^{\univ}(ci)=U^{-1}\tau^{\univ}(ci)$: i.e., $(U+1)^2\,\tau^{\univ}(ci)=0$.
Thus $zu^2 \in \ker \beta'$.
On the other hand, by \cref{pfprop}, $\ker \beta=(zu^2)$.
Since~$\varphi^\pf \circ \beta' = \beta$ and $\beta'$ is surjective, it follows that $\ker \beta = \ker \beta'$ so that $\varphi^\pf$ is injective. Since $\varphi^\pf$ is surjective by construction, our claim is proved.
\end{proof}

Note that, for $N=3,5$, the trace algebras of $\tau^{\hecke}$ and $\tau^{\univ}$ in $A(N)^\pf$ and $\RRR(N)^\pf_\one$ are $A(N)$ and $\RRR(N)_\one$, respectively.
So we obtain the following immediate corollary.
\begin{mycor}\label{ANinRNpf}
For $N=3,5$, under the isomorphism obtained in \cref{rt}, $A(N)$ is isomorphic to the image of $\RRR(N)_{\one}$ in $\RRR(N)^\pf_{\one}$.
\end{mycor}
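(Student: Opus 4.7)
The proof proposal is essentially a diagram chase on top of \cref{rt}. The plan is to exhibit a commutative square
\begin{equation*}
\begin{tikzpicture}[baseline=(R.base)]
\node(R) {$\RRR(N)_{\one}$};
\node(Rpf)[right = 1.5cm of R]{$\RRR(N)^\pf_{\one}$};
\node(A)[below = 0.7cm of R]{$A(N)$};
\node(Apf)[below = 0.7cm of Rpf]{$A(N)^\pf$};
\draw(R)[->] -- (Rpf) node[midway, above]{$\scriptstyle \iota_\RRR$};
\draw(A)[right hook->] -- (Apf) node[midway, above]{$\scriptstyle \iota_A$};
\draw(R)[->>] -- (A) node[midway, left]{$\scriptstyle \phi$};
\draw(Rpf)[->] -- (Apf) node[midway, right]{$\scriptstyle \varphi^\pf$};
\end{tikzpicture}
\end{equation*}
and conclude by chasing images. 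Here $\iota_\RRR$ is the natural $\FF_2$-algebra map given by universality (sending $\tau^\univ(g)$ in $\RRR(N)_\one$ to $\tau^\univ(g)$ in $\RRR(N)^\pf_\one$, which is a pseudodeformation of $\one$ to $\RRR(N)^\pf_\one$ of level-$N$ shape), $\iota_A$ is the natural inclusion of Hecke algebras from \cref{apffinite}, $\phi$ is the surjection from \cref{RNtoA}\eqref{Rnshape}, and $\varphi^\pf$ is the isomorphism of \cref{rt}.

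Commutativity of the square holds by tracking the universal pseudodeformation: both compositions send $\tau^\univ(g) \in \RRR(N)_\one$ to $\tau^\hecke(g) \in A(N)^\pf$. Indeed, $\iota_A \circ \phi$ sends $\tau^\univ(g)$ to $\tau^\hecke(g)$ viewed in $A(N)^\pf$, and $\varphi^\pf \circ \iota_\RRR$ sends $\tau^\univ(g)$ in $\RRR(N)_\one$ first to $\tau^\univ(g)$ in $\RRR(N)^\pf_\one$ and then to $\tau^\hecke(g)$ in $A(N)^\pf$, by the definition of $\varphi^\pf$ in the proof of \cref{rt}. Since these common images topologically generate the respective subalgebras, the square commutes.

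Now let $S \subseteq \RRR(N)^\pf_\one$ denote the image of $\iota_\RRR$. By commutativity, $\varphi^\pf(S) = \iota_A(\phi(\RRR(N)_\one)) = \iota_A(A(N))$, using that $\phi$ is surjective. Since $\iota_A$ is injective and $\varphi^\pf$ is an isomorphism, the restriction $\varphi^\pf|_S : S \toiso \iota_A(A(N)) \cong A(N)$ is an isomorphism, which is exactly the assertion of the corollary. No serious obstacle is expected: the content is entirely packaged inside \cref{rt} and the universal property of $\RRR(N)_\one$; the only thing to verify carefully is that $\iota_\RRR$ is well-defined, i.e., that the composition $\tau^\univ_{\RRR^\pf}: G_{\QQ, 2N} \to \RRR(N)^\pf_\one$ is a constant-determinant, odd, level-$N$-shape pseudodeformation of $\one$, which is immediate from the construction of $\RRR(N)^\pf_\one$ as a quotient of $\RRR(N)_\one[U]$.
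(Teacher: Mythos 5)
Your proposal is correct and is exactly the argument the paper leaves implicit when it calls the corollary ``immediate'': the square $\varphi^\pf \circ \iota_\RRR = \iota_A \circ \phi$ commutes because both composites classify the pseudodeformation $\tau^\pf = \iota_A\circ\tau^\hecke$, and surjectivity of $\phi$ plus injectivity of $\iota_A$ and bijectivity of $\varphi^\pf$ give $\varphi^\pf|_S : S \toiso A(N)$. Nothing is missing; the only point worth stating, which you do, is that $\iota_\RRR$ is simply the composite $\RRR(N)_\one \into \RRR(N)_\one[U] \onto \RRR(N)^\pf_\one$, so the ``image of $\RRR(N)_\one$'' in the statement is its image under this map.
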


\section{Complements and questions}
In this last section we gather some easy-to-deduce information about Hecke algebras and deformation rings of levels closely related to $\Gamma_0(3)$ and $\Gamma_0(5)$. We also include some unanswered questions.

\subsection{Hecke algebras for $\Gamma_0(9)$ and $\Gamma_0(25)$ modulo 2}
The analysis we have already done allows us to determine the structure of the reduced quotients $A(9, \FF_2)^\red$ and $A(25, \FF_2)^\red_\one$ with minimal additional work. 

\begin{myprop} If $N = 3$ or $5$, then we have an 
isomorphism 
$$\hat \RRR(N, \FF_2)_\one^\red \simeq  A(N^2, \FF_2)_\one^\red \simeq A(N, \FF_2)_\one^\red$$
\end{myprop}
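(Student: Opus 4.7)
The plan is to deduce the claim directly from \cref{isomcor} using two natural surjections whose composition recovers the canonical map $\hat\phi$ from~\eqref{RtoA}. First, since $G_{\QQ, 2N} = G_{\QQ, 2N^2}$ (the set of ramified primes is the same), the functor represented by $\hat \RRR(N, \FF_2)_\one$ is insensitive to whether we regard $N$ or $N^2$ as the level. Applying the universal property to the $\Gamma_0(N^2)$-modular pseudorepresentation $\tau^\hecke_{\one, N^2}: G_{\QQ, 2N^2} \to A(N^2, \FF_2)_\one$ therefore yields a surjection
$$\hat\phi_{N^2}: \hat\RRR(N, \FF_2)_\one \onto A(N^2, \FF_2)_\one,$$
with surjectivity (as in \cref{pdonA}) because $A(N^2, \FF_2)_\one$ is topologically generated by $\{T_\ell : \ell \nmid 2N\}$. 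Second, the inclusion $M(N, \FF_2) \subseteq M(N^2, \FF_2)$ is stable under all shallow Hecke operators $T_n$ with $\gcd(n, 2N) = 1$, so restriction of the action defines a surjection $\pi: A(N^2, \FF_2)_\one \onto A(N, \FF_2)_\one$ sending $T_\ell \mapsto T_\ell$; this map respects localization at $\one$ because Hecke-equivariant ring maps preserve the idempotent decomposition from \eqref{Asplit}.

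By universality, the composition $\pi \circ \hat\phi_{N^2}$ coincides with $\hat\phi: \hat\RRR(N, \FF_2)_\one \onto A(N, \FF_2)_\one$ from~\eqref{RtoA}, since both are morphisms in $\CCC$ inducing the pseudodeformation $\tau^\hecke_{\one, N}$. Passing to reduced quotients produces the factorization
$$\hat\RRR(N, \FF_2)_\one^\red \xrightarrow{\ \hat\phi_{N^2}^\red\ } A(N^2, \FF_2)_\one^\red \xrightarrow{\ \pi^\red\ } A(N, \FF_2)_\one^\red,$$
and by \cref{isomcor} the composition is an isomorphism. Since both $\hat\phi_{N^2}^\red$ and $\pi^\red$ are surjective, a routine argument now forces each individually to be an isomorphism: any element in the kernel of either lifts, via the other surjection, to an element of $\hat\RRR(N, \FF_2)_\one^\red$ annihilated by the composition, and hence is zero.

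There is no serious obstacle here: the substantive content has already been done in \cref{isomcor}. The additional observation afforded by the proposition is that the reduced structure of $A(N^j, \FF_2)_\one$ is already stable at $j = 1$, whereas the footnote following \eqref{surjR}, invoking Carayol, only guarantees stabilization of the full Hecke algebra beginning at $j = 2$; in other words, the subtlety distinguishing level $N$ from level $N^2$ is entirely nilpotent.
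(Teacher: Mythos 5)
Your proposal is correct and is essentially the paper's own argument: both identify $\hat\RRR(N^2,\FF_2)_\one$ with $\hat\RRR(N,\FF_2)_\one$ (the deformation problem only sees the ramified primes $\{2,N\}$), compose the universal surjection onto $A(N^2,\FF_2)_\one$ with the restriction surjection onto $A(N,\FF_2)_\one$, pass to reduced quotients, and invoke \cref{isomcor} together with the standard fact that a composition of surjections which is an isomorphism forces each factor to be one. The only difference is that you spell out the universality comparison and the sandwich step, which the paper leaves implicit.
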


Note that $A(9, \FF_2) = A(9, \FF_2)_\one$ is a local ring, whereas $A(25, \FF_2)$ has two local components.
\begin{proof} 
On the deformation side, the rings $\hat \RRR(N^2, \FF_2)_\one$ and $\hat \RRR(N, \FF_2)_\one$ coincide by definition. On the Hecke side, restriction to modular forms of level $N$ induces a surjective morphism $A(N^2, \FF_2)_\one \onto A({N}, \FF_2)_\one$. Combining these with the map $\hat \phi$ from \eqref{RtoA} gives us 
\begin{equation}\label{thang}
\hat \RRR(N, \FF_2)_\one = \hat \RRR(N^2, \FF_2)_\one \stackrel{\hat \phi} \onto A(N^2, \FF_2)_\one \onto A({N}, \FF_2)_\one.
\end{equation}
Taking reduced quotients, along with \cref{isomcor},
completes the proof.
\end{proof}

Since characteristic-zero Galois representations attached to a form of level $N^2$ --- for example, to the twist of a level-one eigenform by a Dirichlet character modulo $N$ --- need not have level-$N$ shape, we do not expect~$\hat \phi$ to factor through $\RRR(N, \FF_2)_\one$.

\subsection{Hecke algebras of for $\Gamma_1(3)$  and $\Gamma_1(5)$ modulo 2}
We can achieve similar results for $\Gamma_1(3)$ and~$\Gamma_1(5)$. The construction of the spaces of mod-$2$ modular forms and the Hecke algebra are similar to those for~$\Gamma_0(N)$ described in \cref{modformspace,heckealgmodpsec}. For odd $N$ we define spaces $M_k\big(\Gamma_1(N), \FF_2\big)$ of \mbox{mod-$2$} modular forms of level $\Gamma_1(N)$ as reductions of $q$-expansions of characteristic-zero $\Gamma_1(N)$-modular forms of weight $k$ whose $q$-expansion at the infinity cusp is integral. We also let $A_k\big(\Gamma_1(N), \FF_2\big)$ be the Hecke algebra generated by the action of the Hecke operators prime to $2N$ acting on $M_k\big(\Gamma_1(N), \FF_2)$. As described in \cref{weightsec}, we have embeddings $M_k\big(\Gamma_1(N), \FF_2\big) \into M_{k + 1}\big(\Gamma_1(N), \FF_2\big)$ induced by multiplication by~$E_{1, \chi_N}$ lifting the Hasse invariant, so that restriction defines  projections $A_{k+1}\big(\Gamma_1(N), \FF_2\big) \onto A_k\big(\Gamma_1(N), \FF_2\big)$, and we can form the Hecke algebra 
$$A\big(\Gamma_1(N), \FF_2\big) := \varprojlim_k A_k\big(\Gamma_1(N), \FF_2\big) \quad\mbox{acting on} \quad 
M\big(\Gamma_1(N), \FF_2\big) := \bigcup_k M_k\big(\Gamma_1(N), \FF_2\big).$$
Both the Hecke algebra and the space of forms break up into $\rhobar$ components as in \eqref{Asplit}, \eqref{Msplit}. 

\begin{myprop} For $N = 3, 5$ restriction to forms of level $\Gamma_0(N)$ induces the isomorphism 
$$A\big(\Gamma_1(N), \FF_2\big)_\one^\red \simeq A(N, \FF_2)_\one^\red$$

For $N = 3$ we further have the isomorphism $A\big(\Gamma_1(N), \FF_2\big)_\one \simeq A(N, \FF_2)_\one$.
\end{myprop}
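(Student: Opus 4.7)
Proof plan. The plan is to sandwich $A(\Gamma_1(N), \FF_2)_\one$ between $\hat\RRR(N, \FF_2)_\one$ and $A(N, \FF_2)_\one$ and then invoke \cref{isomcor} for the reduced statement. The sharper non-reduced statement at $N = 3$ will be deduced from a direct comparison of ambient spaces of mod-$2$ modular forms.

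First I would construct a constant-determinant odd pseudodeformation $\tau_{\Gamma_1}^{\hecke}: G_{\QQ, 2N} \to A(\Gamma_1(N), \FF_2)_\one$ of $\one$, by gluing characteristic-zero Galois pseudorepresentations attached to $\Gamma_1(N)$-eigenforms and reducing mod $2$, in complete analogy with \cref{pdonA}. The one subtle point is the determinant: characteristic-zero $\Gamma_1(N)$-forms may carry nontrivial nebentypus characters $\chi : (\ZZ/N\ZZ)^\times \to \bar\ZZ_2^\times$, contributing a factor of $\chi$ to $\det \rho_f = \chi_2^{k-1}\chi$. But for $N = 3, 5$ the group $(\ZZ/N\ZZ)^\times$ has $2$-power order $N-1 \in \{2,4\}$, so such $\chi$ reduces to the trivial character of $\bar\FF_2^\times$ (which contains no nontrivial $2$-power roots of unity); hence the mod-$2$ determinant is trivial, as required for a pseudodeformation of $\one$. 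Level-$N$ shape follows from \cref{fieldnshape}, since the attached characteristic-zero representations have prime-to-$2$ Artin conductor dividing $N$.

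By universality, $\tau_{\Gamma_1}^{\hecke}$ induces a surjection $\hat\RRR(N, \FF_2)_\one \twoheadrightarrow A(\Gamma_1(N), \FF_2)_\one$ (surjective because the $T_\ell$ topologically generate the target and are images of $\hat t_\ell$). Composing with the natural restriction surjection $A(\Gamma_1(N), \FF_2)_\one \twoheadrightarrow A(N, \FF_2)_\one$, induced by restricting the Hecke action to the stable subspace $M(N, \FF_2) \subset M(\Gamma_1(N), \FF_2)$, the resulting chain
$$\hat\RRR(N, \FF_2)_\one \twoheadrightarrow A(\Gamma_1(N), \FF_2)_\one \twoheadrightarrow A(N, \FF_2)_\one$$
is exactly the map $\hat\phi$ of \eqref{RtoA}, which by \cref{isomcor} becomes an isomorphism on maximal reduced quotients. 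Hence both surjections above are isomorphisms after passage to $\red$, proving $A(\Gamma_1(N), \FF_2)_\one^\red \simeq A(N, \FF_2)_\one^\red$.

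For the refinement at $N = 3$, I would prove directly that $M(\Gamma_1(3), \FF_2) = M(3, \FF_2)$ inside $\FF_2\lb q \rb$, which at once forces $A(\Gamma_1(3), \FF_2) = A(3, \FF_2)$ and hence equality of their $\one$-components. The inclusion $\supseteq$ is tautological. For $\subseteq$, let $f \in M_k(\Gamma_1(3), \QQ)$ have $2$-integral $q$-expansion; if $k$ is even then $\chi_3(-1) = -1$ forces $f$ to already be a $\Gamma_0(3)$-form. If $k$ is odd then $f$ has nebentypus $\chi_3$, and the weight-$1$ Eisenstein series $6 E_{1, \chi_3} \in M_1(\Gamma_1(3), \ZZ_{(2)})$ has $q$-expansion $1 + 6q + 6q^3 + 6q^4 + \cdots \equiv 1 \pmod 2$ (using $L(0, \chi_3) = -B_{1,\chi_3} = 1/3$), so the product $f \cdot 6 E_{1, \chi_3} \in M_{k+1}(\Gamma_0(3), \ZZ_{(2)})$ has the same mod-$2$ reduction as $f$. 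The main effort in this plan is the determinant and ramification bookkeeping in the first step; I expect the analogous non-reduced iso at $N = 5$ to fail or to require genuinely new input, because the Eisenstein constant terms $L(0, \chi)/2$ for nontrivial $\chi$ on $(\ZZ/5)^\times$ carry $2$-adic denominators that obstruct a uniform weight-$1$ lift of the Hasse invariant with $q$-expansion $\equiv 1 \pmod 2$.
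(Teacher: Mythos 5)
There is a genuine gap in your first step: the claim that the glued pseudorepresentation $\tau^{\hecke}_{\Gamma_1}\colon G_{\QQ,2N}\to A\big(\Gamma_1(N),\FF_2\big)_\one$ has constant determinant. Your argument that the nebentypus ``reduces to the trivial character of $\bar\FF_2^\times$'' only applies to the individual residual representations $\rhobar_f$, which take values in a \emph{field}. The determinant of the glued pseudorepresentation takes values in the big Hecke algebra $A\big(\Gamma_1(N),\FF_2\big)_\one$ itself, which is not reduced in general; there the nebentypus/diamond contribution is a character of $2$-power order into the unit group, so it is trivial only \emph{modulo nilpotents} (if $x^{2^k}=1$ in an $\FF_2$-algebra then $(x-1)^{2^k}=0$, but $x\ne 1$ is possible when nilpotents are present). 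This is exactly the source of the nilpotent elements coming from diamond operators in $\Gamma_1$-Hecke algebras noted in the paper (footnote to \cref{a1}, and \cite[Lemma 5 and the discussion after it]{D}), and the paper explicitly asserts that this pseudorepresentation does \emph{not} have constant determinant and hence does \emph{not} factor through $\hat\RRR(N,\FF_2)_\one$. Consequently your chain $\hat\RRR(N,\FF_2)_\one\onto A\big(\Gamma_1(N),\FF_2\big)_\one\onto A(N,\FF_2)_\one$ is not justified at the middle, non-reduced stage. (For $N=3$ the middle map happens to exist a posteriori, since your own second step shows $M\big(\Gamma_1(3),\FF_2\big)=M(3,\FF_2)$ and hence $A\big(\Gamma_1(3),\FF_2\big)_\one=A(3,\FF_2)_\one$; but for $N=5$ the claim is unjustified and is precisely what one must avoid.)

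The repair is the paper's move, and your sandwich then goes through verbatim: pass to the maximal reduced quotient \emph{before} invoking universality. In $A\big(\Gamma_1(N),\FF_2\big)_\one^\red$ the image of the determinant is a $2$-power-order character into the units of a reduced $\FF_2$-algebra, hence genuinely trivial, so universality gives $\hat\RRR(N,\FF_2)_\one\onto A\big(\Gamma_1(N),\FF_2\big)_\one^\red$; composing with the reduction of the restriction surjection onto $A(N,\FF_2)_\one^\red$ recovers the reduced $\hat\phi$, and \cref{isomcor} forces both arrows to be isomorphisms. Your second step for $N=3$ (even weights have trivial nebentypus; odd-weight forms multiplied by a weight-one Eisenstein lift of the Hasse invariant with $q$-expansion $\equiv 1 \bmod 2$ land in $M_{k+1}(3,\FF_2)$) is correct and is essentially identical to the paper's argument.
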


\begin{proof} 
Although the modular pseudorepresentation taking values in $A\big(\Gamma_1(N), \FF_2\big)_\one$ does not have constant determinant, and hence does not factor through $\hat \RRR(N, \FF_2)_\one$, its image in $A\big(\Gamma_1(N), \FF_2\big)_\one^\red$ does \cite[Lemma~5~and~ff.]{D}, so that universality gives us a surjective morphism 
$$\hat \RRR(N, \FF_2)_\one \onto A\big(\Gamma_1(N), \FF_2\big)_\one^\red.$$
On the other hand, restriction from forms of level $\Gamma_1(N)$ to forms of level $\Gamma_0(N)$ gives a surjective map $A\big(\Gamma_1(N), \FF_2\big)_\one\onto A(N, \FF_2)_\one$. {Now take reduced quotients and apply \cref{isomcor}.}

The second statement comes from equality on the space of forms. Indeed, for $k$ even, we already have $M_k(3, \ZZ_2) = M_k\big(\Gamma_1(3), \ZZ_2\big)$ since the only even Dirichlet character modulo $3$ is the trivial one. And for~$k$ odd, we have $M_k\big(\Gamma_1(3), \FF_2\big) \subseteq M_{k+1}(3, \FF_2)$ with the embedding induced by multiplying by the Hasse invariant lifted by $E_{1, \chi}$, where $\chi$ is the nontrivial mod-$3$ Dirichlet character. Thus $M(\Gamma_1(3), \FF_2) = M(3, \FF_2)$, and the claim follows.
\end{proof}

\subsection{Remaining questions}\label{questions}
We close with a number of questions not answered in this text.
\begin{enumerate}[itemsep = 5pt]
\item Can \cref{a1,B} be generalized to all primes $N \equiv 3, 5 \mod{8}$? See \cref{thing} for a discussion of the limitations of our methods.
\end{enumerate}
Let $\RRR'(N, \FF_2)_\one$ be the deformation ring with {conditions \eqref{wwe} and \eqref{wwe2} imposed}. 
Recall that $\RRR(N, \FF_2)_\one$ is the quotient of $\hat \RRR(N, \FF_2)_\one$ subject to the purely local level-$N$ shape condition \eqref{levelNintro}, whereas $\RRR'(N, \FF_2)_\one$ a priori satisfies a global condition (see also the discussion in \cref{levelnshapeintro}). One can show that the surjection from $\RRR(N, \FF_2)_\one$ to $A(N, \FF_2)_\one$ factors through $\RRR'(N, \FF_2)_\one$, so that there are natural surjective~maps 
\begin{equation}\label{wwemany} \RRR(N, \FF_2)_\one \onto \RRR'(N, \FF_2)_\one \onto A(N, \FF_2)_\one.
\end{equation} 
Furthermore, \cref{isomcor} implies that all three rings have the same reduced quotient. 
\begin{enumerate}[resume, itemsep = 5pt]
\item\label{wwehope} Is the second map in \eqref{wwemany} an isomorphism? 
\item\label{wwehope2}  Does the first surjection in \eqref{wwemany} have a nontrivial kernel? Note that by \cref{ANinRNpf} this map is an isomorphism if and only if the structure map $\RRR(N, \FF_2)_\one \to  \RRR(N, \FF_2)_\one^\pf$ is injective. 
\item For $N = 3, 5$, is the map $\hat \RRR(N, \FF_2)_\one \onto A(N^2, \FF_2)_\one$ from \eqref{thang} an isomorphism? 
\item In general, given a modular representation $\bar\rho : G_{\QQ,Np} \to \gl_2(\FF)$ and a prime $\ell \nmid Np$, can one determine the structure of the level $N\ell$ Hecke algebra $A(N\ell,\FF)_{\bar\rho}$ from the structure of the level $N$ Hecke algebra $A(N,\FF)_{\bar\rho}$?
\end{enumerate}



\end{document}